\newtheorem{theorem}{Theorem}
\newtheorem{lemma}{Lemma}
\newtheorem{proposition}{Proposition}
\newtheorem{definition}{Definition}
\newtheorem{claim}{Claim}
\newtheorem{remark}{Remark}
\newtheorem{example}{Example}
\newtheorem{corollary}{Corollary}
\theoremstyle{definition}
\theoremstyle{remark}
\numberwithin{equation}{section}
\def\CC{\mathbb C}
\def\HH{\mathscr H}
\def\A{{{\mathbb A}}}
\def\I{{{\mathbb I}}}
\def\E{{{\mathbb E }}}
\def\C{{{\mathbb C }}}
\def\Z{{{\mathbb Z }}}
\def\P{{{\mathbb P }}}
\def\F{{{\mathbb F }}}
\def\L{{{\mathbb L }}}
\def\CC{{{\mathcal C}}}
\def\EE{{{\mathcal E}}}
\def\FF{{{\mathcal F}}}
\def\HH{{{\mathcal H}}}
\def\II{{{\mathcal I}}}
\def\LL{{{\mathcal L}}}
\def\OO{{{\mathcal O}}}
\def\SS{{{\mathcal S}}}
\def\TT{{{\mathcal T}}}
\def\VV{{{\mathcal V}}}
\def\EExt{{{\mathcal E}xt}}
\def\TTor{{{\mathcal T}or}}
\def\FFitt{{{\mathcal F}itt}}
\def\Hom{{{\rm Hom }}}
\def\Ext{{{\rm Ext }}}
\def\Spec{{{\rm Spec \,}}}
\def\Proj{{{\rm Proj \,}}}
\def\Hilb{{{\rm Hilb \,}}}
\def\Univ{{{\rm Univ \,}}}
\def\Grass{{{\rm Grass }}}
\def\Quot{{{\rm Quot \,}}}
\def\Supp{{{\rm Supp \,}}}
\def\dim{{{\rm dim \,}}}
\def\ker{{{\rm ker \,}}}
\def\rank{{{\rm rank \,}}}
\def\id{{{\rm id }}}
\begin{document}
{\sl MSC 14J60, 14D20, 14M27}

 {\sl UDC 512.722+512.723}
\medskip

\begin{center}
{\Large\sc On a new compactification of moduli of vector bundles
on a surface, III: Functorial approach}\end{center}
\medskip
\begin{center}
Nadezda V. TIMOFEEVA

\smallskip

Yaroslavl' State University

Sovetskaya str. 14, 150000 Yaroslavl', Russia

e-mail: {\it ntimofeeva@list.ru}
\end{center}
\bigskip

\begin{quote}
A new compactification for the scheme of moduli for
Gieseker-stable vector bundles with prescribed Hilbert polynomial,
on the smooth projective polarized surface $(S,L)$, is
constructed. We work over the field $k=\bar k$ of characteristic
zero. Families of locally free sheaves on the surface $S$ are
completed with locally free sheaves on schemes which are
modifications of $S$. Gieseker -- Maruyama moduli space has a
birational morphism onto the new moduli space. We propose the
functor for families of pairs "polarized scheme -- vector bundle"
with moduli space of such type. \\
{\it Keywords:} moduli space, semistable coherent sheaves, moduli
functor, alge\-braic surface.\\
Bibliography: 16 items.
\end{quote}

\markright{On a new compactification of moduli of vector bundles,
III}

\section*{Introduction} Let $S$ be smooth irreducible
projective algebraic surface over an algebra\-ically closed field
$k$ of characteristic zero. Fix an ample invertible sheaf $L \in
Pic\, S$. We will call it for brevity as polarization of the
surface $S$. In the whole of the text of the present article
$r=\rank E$ is the rank, $p_E(m)=\chi(E\otimes L^{m})/r$ is
reduced Hilbert polynomial of the coherent sheaf $E$ on the scheme
$S$ with respect to the polarization $L$. As usually, the symbol
$\chi(\cdot)$ denotes the Euler characteristic. We work with the
notion of (semi)stability of a coherent sheaf $E$ on a surface $S$
in the sense of D.Gieseker \cite{Gies}.
\begin{definition} Coherent  $\OO_S$-sheaf
$E$ is {\it stable} (respectively, {\it semistable}), if for any
proper subsheaf  $F\subset E$ of rank $r'=\rank F$ for $m\gg 0$
$$
p_E(m)>p_F(m),\;\; ({\mbox{\rm respectively,}} \;\; p_E(m)\ge
p_F(m)\;).
$$
\end{definition}

It is well-known \cite{EG}, if the structure of the space of
moduli for semistable sheaves depends strongly on the choice of
polarization. Analogously, for a sheaf $\widetilde E$ of rank $r$
on a projective scheme  $\widetilde S$ with polarization
$\widetilde L$ we have  the notation $p_{\widetilde
E}(m)=\chi(\widetilde E\otimes \widetilde L^{m})/r$. The Gieseker
-- Maruyama moduli scheme for semistable torsion-free sheaves on
the surface $S$, with Hilbert polynomial $rp_E(m)$ with respect to
$L$, is denoted by the symbol $\overline M.$ It is well-known that
this is a projective scheme of finite type over $k$. Points
corresponding to the stable locally free sheaves (vector bundles),
form Zariski-open subscheme  $M_0$ in $\overline M$. Let the
scheme  $\overline M$ be a fine moduli space. Then there is a
trivial product $\Sigma :=\overline M\times
S\stackrel{\pi}{\longrightarrow}\overline M$ with a universal
family of stable sheaves $\E$. In \cite{Tim0,Tim1} the projective
scheme $\widetilde M$ and non-trivial flat family of schemes
$\widetilde \Sigma \stackrel{\pi}{\longrightarrow}\widetilde M$
are constructed. The family $\widetilde \Sigma $ is supplied with
the family of locally free sheaves $\widetilde \E$. Also the
birational morphism of schemes $\Phi:\widetilde \Sigma \to \Sigma
$ such that $(\Phi_{\ast}\widetilde \E)^{\vee \vee}=\E$, is
defined. In \cite{Tim2} the analogous constructions (flat families
of schemes $\widetilde \Sigma_i \stackrel{\widetilde \pi_i}
{\longrightarrow}\widetilde B_i$ with locally free sheaves
$\widetilde \E_i$) are performed over \'{e}tale neighborhoods
$\widetilde B_i \stackrel{\acute{e}tale}{\longrightarrow
}\widetilde M$. This is done for the case when $\overline M$
carries no universal family of sheaves. In any case the scheme
$\widetilde M$ contains Zariski-open subscheme which is isomorphic
to $M_0$.  We will call this construction as a {\it standard
resolution}. To perform the procedure of standard resolution one
needs a trivial family $\Sigma=T\times S$ with reduced base $T$
and a $T$-flat family $\E$ of torsion-free coherent sheaves.
Generally, for standard resolution they need not to be semistable.
The $\OO_{T\times S}$-sheaf $\E$ must be of homological dimension
1. This condition is guaranteed by fibrewise torsion-freeness
\cite[proof of proposition 4.3]{O'Gr}.

Besides, the birational morphism $\Phi: \widetilde \Sigma \to
\Sigma$ done in \cite{Tim0, Tim1}, establishes a correspondence
$(\widetilde S, \widetilde E) \mapsto (S,E)$ among pairs
$(\widetilde S, \widetilde E)\in \widetilde M$ and $(S,E)\in
\overline M.$ The birational morphism $\phi: \widetilde M \to
\overline M$ is also constructed there. Then when working
pointwise (fibrewise) we say that a fibre $\pi^{-1}(y)=S$ of the
family $\Sigma$ is an image of the fibre $\widetilde
\pi^{-1}(\widetilde y)=\widetilde S$. The coherent sheaf
 $E$ on the fibre  $S$ is an image of vector bundle
$\widetilde E$ on the fibre $\widetilde S$. Scheme-theoretic
description of surfaces $\widetilde S$ arising as fibres of flat
families  $\widetilde \Sigma$ is given in \cite{Tim3}.

Also note for the further consideration that all the manipulations
and reasoning done in \cite{Tim1} with the universal family of
stable sheaves $\E$, hold for any its twist $\E \otimes \L^{m}$ by
fibrewise ample invertible sheaf $\L$. The same is true
\cite{Tim2} for any flat family of (semi)stable sheaves
parametrized by a smooth quasiprojective algebraic scheme.

In the present paper we work under assumption that all irreducible
components of the Gieseker -- Maruyama moduli scheme contain
locally free sheaves. This holds asymptotically \cite[Theorem
0.3]{Gies-Li}, \cite[Theorem D]{O'Gr} if the discriminant of
sheaves is very big: $\Delta:= c_2-((r-1)/2r)c_1^2\gg 0$. In the
most general case this is not true.

The purpose of the present article is to develop a functorial
approach to the compactification of the moduli scheme for stable
vector bundles on the surface, which was built up by the author in
\cite{Tim0, Tim1, Tim2} using blowups of Fitting ideals. We will
refer to the compactification constructed in these articles as to
{\it constructive} compactification and will denote it as
$\widetilde M^c$ and its morphism onto Gieseker -- \linebreak
Maruyama scheme as $\phi^c: \widetilde M^c \to \overline M$. The
constructive compactification was built up using additional
blowups (smooth resolutions, partial resolutions due to Kirwan as
well as flattening birational transformation at the formation of
image in the Hilbert scheme). It is clear that to supply the
constructive compactification with a transparent geometrical
meaning as moduli space seems not to be possible. Although we will
construct a birational morphism of the constructive
compactificat\-ion onto the projective moduli scheme for pairs
polarized projective scheme -- vector bundle.

Following \cite[ch. 2, sect. 2.2]{HL} we recall some definitions.
Let $\CC$ be a category, $\CC^o$ its opposite,
$\CC'={\FF}unct(\CC^o, Sets)$ -- a category of functors to the
category of sets. By Yoneda lemma, the functor $\CC \to \CC':
F\mapsto (\underline F: X\mapsto \Hom_{\CC}(X, F))$ includes $\CC$
as a full subcategory in  $\CC'$.

\begin{definition}\cite[ch. 2, definition 2.2.1]{HL}
The functor ${\mathfrak f} \in {\OO}b\, \CC'$ is {\it
corepres\-ent\-ed by the object} $F \in {\OO}b \,\CC$, if there
exist
 $\CC'$-morphism $\psi : {\mathfrak f} \to
\underline F$ such that any morphism $\psi': {\mathfrak f} \to
\underline F'$ factors through the unique morphism  $\omega:
\underline F \to \underline F'$.
\end{definition}

Let $T$ be a scheme over the field $k$.  Consider families of
semistable pairs
\begin{equation}{\mathfrak F}_T= \left\{
\begin{array}{l}\pi: \F \to T,  \;\widetilde \L\in Pic \F ,
\;\forall t\in T \;\widetilde L_t=\widetilde \L|_{\pi^{-1}(t)}\mbox{\rm \; is ample;}\\
(\pi^{-1}(t),\widetilde L_t) \mbox{\rm \;admissible scheme
with distinguished}\\ \mbox{\rm polarisation}; \\
 \widetilde \E - \mbox{\rm locally free } \OO_{\F}-\mbox{\rm
 sheaf};\\
 \chi(\widetilde \E\otimes\widetilde \L^{m})|_{\pi^{-1}(t)})=
 rp_E(m);\\
 ((\pi^{-1}(t), \widetilde L_t), \widetilde \E|_{\pi^{-1}(t)}) - \mbox{\rm (semi)stable pair}
 \end{array} \right\}. \nonumber\end{equation}
 and a functor ${\mathfrak f}: (Schemes_k) \to (Sets)$ from the
 category of $k$-schemes to the category of sets. This functor
 assigns to any scheme $T$ the set of equivalence classes $({\mathfrak F}_T/\sim).$

 The equivalence relation $\sim$ is defined as follows. The families $((\pi: \F \to T, \widetilde \L),
 \widetilde \E)$ and $((\pi': \F' \to T, \widetilde \L'), \widetilde
 \E')$ of the class $\mathfrak F$ are said to be equivalent (notation: \linebreak $((\pi: \F \to T, \widetilde \L),
 \widetilde \E) \sim ((\pi': \F' \to T, \widetilde \L'), \widetilde
 \E')$) if\\
 1) there is an isomorphism $\F \stackrel{\sim}{\longrightarrow}
 \F'$ such that the diagram \begin{equation}\xymatrix{\F \ar[rd]_{\pi}\ar[rr]^{\sim}&&\F' \ar[ld]^{\pi'}\\
&T }
 \end{equation} commutes.\\
 2) There is a linear bundle $L$ on $T$ such that $\widetilde \E' = \widetilde \E \otimes \pi^{\ast} L.$

By technical reason (the construction of standard resolution
described in \S 2 operates with reduced base scheme) we restrict
by full subcategory $(RSchemes_k)$ of reduced schemes in
$(Schemes_k)$. All the reasonings and results of this article are
done for the functor $\mathfrak f$ restricted to this subcategory.
Also we mean by $\overline M$ the reduced scheme corresponding to
Gieseker -- Maruyama moduli scheme.

 \begin{definition} The scheme $\widetilde M$ is a {\it coarse moduli space of
 the functor } $\mathfrak f$ if $\mathfrak f$ is corepresented by
 the scheme $\widetilde M$.
 \end{definition}

\begin{theorem} \label{th} The functor $\mathfrak f$
has a coarse moduli space $\widetilde M$ with the following properties:\\
(i) $\widetilde M$ is projective Noetherian algebraic scheme;\\
(ii) there is a birational morphism of the union of main
components of the Gieseker -- Maruyama scheme:
$\kappa:\overline M \to \widetilde M$;\\
(iii) there is a birational morphism of the constructive
compactification: $\phi:\widetilde M^c \to \widetilde M$;\\
(iv) there is a commutative triangle of compactifications
\begin{equation}\label{tri}\xymatrix{&\ar[ld]_{\phi^c} \widetilde M^c \ar[rd]^{\phi}\\
\overline M \ar[rr]^{\kappa}&& \widetilde M}
\end{equation}\\
(v) there is a Zariski-open subscheme $\widetilde M_0\subset
\widetilde M$ corresponding to the stable $S$-pairs, over which
morphisms in the diagram (\ref{tri}) are iso\-morphisms. Namely,
$M_0 \cong \widetilde M^c_0\cong
\widetilde M_0$;\\
(vi) there is a relation of M-equivalence defined on the class of
semistable pairs, such that pairs are represented by the same
point in $\widetilde M$ if and only if they are M-equivalent.
\end{theorem}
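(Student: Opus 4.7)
\medskip\noindent\textbf{Proof proposal.}

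The plan is to construct $\widetilde M$ as a good GIT-type quotient of a parameter scheme for admissible polarized schemes carrying a vector bundle, then verify that the resulting projective scheme corepresents $\mathfrak f$ and fits into the diagram (\ref{tri}). I would first fix the notion of M-equivalence on semistable pairs in the Jordan--H\"older style: two semistable pairs $((\widetilde S,\widetilde L),\widetilde E)$ and $((\widetilde S',\widetilde L'),\widetilde E')$ are M-equivalent iff their associated graded objects, with respect to a Jordan--H\"older filtration in the category of semistable pairs of reduced Hilbert polynomial $p_E$, are isomorphic. On stable pairs this relation is trivial. A preliminary step is to verify that M-equivalence is algebraic, i.e.\ cut out by a closed subscheme of $\widetilde M^c\times\widetilde M^c$, using boundedness of the class of semistable pairs inherited from the boundedness of $\overline M$.

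For the construction itself I would parallel Gieseker's argument \cite{Gies}. Parametrize admissible triples (polarized scheme, vector bundle, basis of $m$-regular global sections, for $m\gg 0$) by a locally closed subscheme $\mathfrak Q$ of a suitable Hilbert/Quot-type scheme, with a natural reductive group action $G$ changing the basis, and define $\widetilde M$ as the GIT quotient $\mathfrak Q^{ss}/\!\!/G$. Properties in (i) are then standard: projectivity is inherited from the ambient Hilbert scheme via the constructive picture of \cite{Tim0,Tim1,Tim2,Tim3}. The morphism $\phi$ in (iii) is the tautological factorization of $\widetilde M^c$ (each of whose points classifies a pair together with extra resolution data) through $\mathfrak Q^{ss}$ followed by the quotient map. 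The morphism $\kappa$ in (ii) is built by sending a semistable sheaf $E$ on $S$ to the M-equivalence class of the pair $((\widetilde S,\widetilde L),\widetilde E)$ produced by the standard resolution of \cite{Tim0,Tim1,Tim2}; on the union of main components the standard resolution is birational and descends S-equivalence of $E$ to M-equivalence of the pair, so $\kappa$ is well-defined and birational.

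Item (iv) reduces to a pointwise compatibility: a point of $\widetilde M^c$ classifies a specific pair, and both $\phi$ and $\kappa\circ\phi^c$ send it to the M-equivalence class of that pair. Item (v) follows from the observation that on the stable locus the standard resolution is the identity modification of $S$ and M-equivalence is trivial, so all three morphisms restrict to canonical isomorphisms of the open subschemes representing stable locally free sheaves.

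The principal obstacle is the corepresentation statement together with item (vi). Given $\mathfrak F_T\in\mathfrak f(T)$, one twists $\widetilde\E$ by a high enough power of $\widetilde\L$ to make it fibrewise globally generated with vanishing higher cohomology, thereby producing a canonical lift to a $T$-point of $\mathfrak Q^{ss}$; composing with the GIT quotient yields the required $T\to\widetilde M$. Independence of the twist uses precisely the second clause of the equivalence defining $\mathfrak F_T/{\sim}$ (replacement $\widetilde\E\mapsto\widetilde\E\otimes\pi^{\ast}L$), and uniqueness of the factorization is the universal property of the good quotient. Item (vi) then reads off from the standard identification of closed $G$-orbits in $\mathfrak Q^{ss}$ with polystable (Jordan--H\"older) representatives. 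The hardest technical point is constructing $\mathfrak Q$ and controlling its closed-orbit structure when the fibre $\widetilde S$ is a possibly nonreduced admissible scheme arising from the constructive procedure; this rests on the scheme-theoretic description of such fibres given in \cite{Tim3} and on the fibrewise torsion-freeness invoked in the standard resolution of \cite{O'Gr}.
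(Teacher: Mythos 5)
Your overall strategy does track the paper's: the paper also realizes $\widetilde M$ as a good $PGL(V)$-quotient of a parameter scheme sitting inside $\Hilb^{P(t)}G(V,r)$ (the image $\mu(\widetilde Q)$ of the standard resolution of $Q\subset\Quot^{rp_E(t)}(V\otimes L^{(-m)})$), identifies closed orbits with the polystable ("gr") pairs, and gets corepresentability by twisting, trivializing the Grassmannian bundle and passing to the quotient. But two of your steps have genuine gaps. First, your definition of M-equivalence --- ``the associated graded objects are isomorphic'' --- is not well-posed: $gr(\widetilde E)$ and $gr(\widetilde E')$ live on admissible schemes $\widetilde S$, $\widetilde S'$ that need not be isomorphic even for S-equivalent sheaves (the paper's example on $\P^2$: for $E$ a nontrivial extension of $I'$ by $I$ one gets $\FFitt^0\EExt^2(\varkappa(E),\OO_S)={\mathfrak m}_x$, while the polystable $I\oplus I'$ gives ${\mathfrak m}_x^2$, hence different $\widetilde S$), and the obvious fix of comparing pullbacks on the fibred product $\widetilde S\times_S\widetilde S'$ fails (the paper's second example: the two pullbacks restrict to non-isomorphic bundles on $\P^1\times\P^1$). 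The paper's way out is the $\diamond$-product $\widetilde S\diamond\widetilde S'$ and the monoid $\diamondsuit[E]$ with minimal resolution $\overline S_{\ast}$, on which the pulled-back filtration quotients become locally free; M-equivalence is the isomorphism $\overline\sigma'^{\ast}\bigoplus_i gr_i(\widetilde E)/tors\cong\overline\sigma^{\ast}\bigoplus_i gr_i(\widetilde E')/tors$, and Sections 6, 7 and 11 (passing-to-the-limit) are exactly what make item (vi) and the identification of GIT-equivalence with M-equivalence work. Your proposed verification that the relation is ``cut out by a closed subscheme of $\widetilde M^c\times\widetilde M^c$'' does not substitute for this.

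Second, projectivity in (i) is not ``inherited from the ambient Hilbert scheme'': $\mu(\widetilde Q)$ is only quasiprojective, and the paper explicitly does not know whether all semistable points of its projective closure lie in $\mu(\widetilde Q)$, so the GIT quotient is a priori only a quasiprojective Noetherian scheme. The paper obtains projectivity the other way around: it first builds the surjective birational morphism $\kappa:\overline M\to\widetilde M$ from the projective Gieseker--Maruyama scheme (via the closure of the graph of $E\mapsto(\widetilde S,\sigma^{\ast}E/tors)$ over the stable locus) and then uses the lemma that a complete quasiprojective scheme is projective. Relatedly, you leave open precisely the step the paper spends Section 9 on: verifying Hilbert--Mumford (semi)stability for points of the Hilbert scheme corresponding to pairs with possibly nonreduced $\widetilde S$. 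The paper does this not by a direct weight computation on $\Hilb^{P(t)}G(V,r)$ but by showing that the $GL(V)$-linearizations of $\widetilde L_l$ on $\mu(\widetilde Q)$ and of $L_l$ on $Q$ are fibrewise concordant (using the isomorphism $\upsilon$ of Section 4), so corresponding $\lambda$-fixpoints have equal weights and (semi)stability transfers from $Q$; without an argument of this kind your $\mathfrak Q^{ss}$ and its closed-orbit structure are not under control, which is the point you acknowledge but do not resolve.
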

All the reasoning of the present paper is applicable to any
Hilbert polynomial with no relation to the value of discriminant
as well as to the number and geometry of irreducible components in
the corresponding Gieseker -- Maruyama scheme. In general
(reducible) case the theorem provides the existence of the coarse
moduli space for any maximal (under inclusion) irreducible
substack in $\coprod({\mathfrak F}_T/\sim)$ if it contains pairs
$((\pi^{-1}(t), \widetilde L_t), \widetilde \E|_{\pi^{-1}(t)})$
such that $(\pi^{-1}(t), \widetilde L_t)\cong (S,L)$. Such pairs
will be referred to as
 {\it $S$-pairs}. We mean under $\widetilde M$
the moduli space of a substack containing semistable $S$-pairs.

Section 1 comprises some results which will be of use in the
sequel. Besides, the structure of vector bundle $\widetilde
E=\widetilde \E|_{\pi^{-1}(t)}$ on the scheme $\widetilde
S=\pi^{-1}(t)$ is compute under the assumption that the bundle
$\widetilde E$ is obtained from semistable coherent sheaf by the
procedure of articles \cite{Tim0,Tim1,Tim2}. Pairs of such view
are called as
 {\it $dS$-pairs}.

In \S  2 we turn to the construction of the Gieseker -- Maruyama
scheme $\overline M$ as GIT-quotient $\overline M= Q/SL(V)$ of an
appropriate subscheme $Q$ in the Grothendieck's scheme of
quotients. Let  $\widetilde Q$ be the quasiprojective scheme
obtained from the scheme $Q$ by the procedure of papers
\cite{Tim0,Tim1,Tim2}. We construct a morphism $\mu$ of scheme
$\widetilde Q$ into the appropriate Hilbert scheme of subschemes
in the Grassmann variety $G(V,r)$. It is proven that the image
$\mu(\widetilde Q)$ is a quasiprojective $SL(V)$-invariant
subscheme in the Hilbert scheme.

In \S 3 the explicit view of distinguished polarizations
$\widetilde L$ is compute on  schemes  $\widetilde S.$

Section 4 is devoted to the study of the isomorphism
$\upsilon:H^0(\widetilde S, \widetilde E \otimes \widetilde L^m)
\stackrel{\sim}{\to} H^0(S, E\otimes L^m)$ of global sections,
induced by the procedure of resolution of singularit\-ies of
semistable sheaves. This isomorphism is used in \S\S  5,6.

In \S  5 the notion of (semi)stability on the set of pairs
polarized scheme -- vector bundle $((\widetilde S, \widetilde L),
\widetilde E)$ is introduced. Also we examine the relation of this
new notion of (semi)stability to the classical Gieseker
(semi)stability of coherent sheaves on the polarized surface
$(S,L)$.

In \S 6 the notion of M-equivalence is introduced and motivated.
Particularly it is shown that S-equivalent semistable coherent
sheaves are resolved into M-equivalent semistable pairs.

Section 7 plays an auxiliary role. It contains results concerning
with local freeness of subsheaves and quotient sheaves in the
inverse image of Jordan -- H\"{o}lder filtration for sheaves of
the form $\widetilde E=\sigma^{\ast} E/tors.$

In \S 8 we prove the boundedness of families of $dS$-pairs and
show that the sub\-scheme formed by $dS$-pairs in the Hilbert
scheme coincides with $\mu(\widetilde Q)$.

Section 9 is devoted to the investigation of the action of the
group $PGL(V)$ upon the set of points of Hilbert scheme
corresponding to  $dS$-pairs. Also we verify the condition of
Hilbert -- Mumford criterion for the GIT-(semi)stability. The
results of this section guarantee the existence, quasiprojectivity
and being Noetherian for the scheme $\widetilde M$ as
GIT-quotient. Besides, the existence of the open subscheme $M_0$
in $\widetilde M$ and its isomorphism to the open subscheme in the
Gieseker -- Maruyama scheme follows immediately from this section.

In \S 10 it is shown that there exists a birational morphism of
Gieseker -- Maruyama scheme onto the scheme
 $\widetilde M$. This proves the projectivity of the scheme $\widetilde M.$

In \S 11 we study the relation of M-equivalence of semistable
pairs to GIT-equival\-ence of the corresponding points in Hilbert
scheme.

Finally, in \S 12 we prove that the scheme $\widetilde M$
constructed is indeed the moduli space for semistable pairs.

\section{Coherent sheaves and their resolutions}
This section plays auxiliary role. It contains results from
author's previous \linebreak papers which are necessary in the
sequel. Also we deduce some corollaries of these results.

Remember some formulations and results from \cite{Tim3}. Since not
all algebraic schemes of the present paper are varieties, the
ample divisor class $\textsf{H}$ used in  \cite{Tim3} is replaced
with the correspondent ample invertible sheaf $L.$ For convenience
of computations we suppose that $L$ is very ample. If this is not
so, we replace the sheaf $L$ with its very ample tensor power.

\begin{definition}\cite{Tim3} {\rm Artinian sheaf $\varkappa$ is said to be
$(S, L, r, p_E(m))$-{\it admissible} if there is an exact
$\OO_S$-triple
\begin{equation}\label{mex} 0\to E \to E^{\vee \vee} \to \varkappa
\to 0,
\end{equation} where the coherent sheaf $E$ of rank
$r$ with Hilbert polynomial $rp_E(m)$ is semistable with respect
to the polarization $L$.}\end{definition}

The relation of  $(S, L, r, p_E(m))$-admissibility to Fitting
ideal sheaves \linebreak $\FFitt^0 \EExt^1(E,\OO_S)$ for
semistable coherent sheaves $E$ is given by the following
propos\-ition.

\begin{proposition}\cite{Tim3}\label{sing} {The class of all $\,\FFitt^0
\EExt^1(E,\OO_S)\,$ for semistable sheaves $E$ is contained in the
class of all sheaves of the view \linebreak $\FFitt^0
\EExt^2(\varkappa, \OO_S)$ for all Artinian quotient sheaves
$\varkappa$ of length $l=l(\varkappa)=h^0(S,\varkappa)< c_2$ of
the sheaf $ \bigoplus^r \OO_S.$ }
\end{proposition}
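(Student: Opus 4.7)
The plan is to identify, for each semistable sheaf $E$, an explicit Artinian quotient of $\bigoplus^r\OO_S$ realising the given Fitting ideal, and the obvious candidate is the cokernel $\varkappa:=E^{\vee\vee}/E$ of the canonical inclusion appearing in (\ref{mex}) itself. I would then verify three properties of this $\varkappa$: the identification $\EExt^1(E,\OO_S)\cong\EExt^2(\varkappa,\OO_S)$, existence of a surjection $\bigoplus^r\OO_S\twoheadrightarrow\varkappa$, and the length bound $l(\varkappa)<c_2$.

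For the $\EExt$-identification the argument is formal. Since $S$ is a smooth surface, the reflexive sheaf $E^{\vee\vee}$ is locally free, so $\EExt^i(E^{\vee\vee},\OO_S)=0$ for all $i\ge 1$; and since $\varkappa$ is Artinian its support has codimension $2$, whence $\EExt^i(\varkappa,\OO_S)=0$ for $i\le 1$. Applying $\HHom(\,\cdot\,,\OO_S)$ to (\ref{mex}) and reading off the long exact sequence of $\EExt$-sheaves immediately yields
$$\EExt^1(E,\OO_S)\;\stackrel{\sim}{\longrightarrow}\;\EExt^2(\varkappa,\OO_S),$$
and hence equality of the zeroth Fitting ideals $\FFitt^0\EExt^1(E,\OO_S)=\FFitt^0\EExt^2(\varkappa,\OO_S)$.

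The epimorphism $\bigoplus^r\OO_S\twoheadrightarrow\varkappa$ I would produce locally and then globalise. Since $E^{\vee\vee}$ is locally free of rank $r$, each stalk $\varkappa_x$ is a quotient of $\OO_{S,x}^{\oplus r}$ and, by Nakayama, is generated by at most $r$ elements. Because $\varkappa$ has finite support it splits as a direct sum of skyscrapers at its support points, so the local generating $r$-tuples can be chosen independently and lifted to $r$ global sections of $\varkappa$ which simultaneously generate every stalk; these sections assemble into the required surjection.

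The length bound is where Gieseker-semistability enters, and I expect this to be the main obstacle. The standard Chern-class identity $c_2(E)=c_2(E^{\vee\vee})+l(\varkappa)$ reduces $l(\varkappa)<c_2$ to $c_2(E^{\vee\vee})>0$. Semistability of $E$ passes to the reflexive hull $E^{\vee\vee}$, and in characteristic zero Bogomolov's inequality $\Delta(E^{\vee\vee})\ge 0$ supplies the required positivity in the non-trivial range; when $E$ is already locally free one has $\varkappa=0$ and the claim is vacuous. The $\EExt$-identification and the construction of the generating $r$-tuple are essentially formal, but the strict length estimate is precisely the place where Gieseker-semistability must be used in a quantitative way, via Bogomolov.
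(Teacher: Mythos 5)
The paper does not actually prove this proposition --- it is quoted from \cite{Tim3} --- so your argument can only be judged on its own terms. Your first two steps are correct and are exactly the construction the paper itself relies on later: with $\varkappa:=E^{\vee\vee}/E$ from (\ref{mex}), local freeness of $E^{\vee\vee}$ (reflexive on a smooth surface) and the vanishing $\EExt^i(\varkappa,\OO_S)=0$ for $i\le 1$ turn the long exact $\EExt$-sequence into the isomorphism $\EExt^1(E,\OO_S)\cong\EExt^2(\varkappa,\OO_S)$, hence equality of the zeroth Fitting ideals; and since $\varkappa$ is a skyscraper whose stalks are quotients of $\OO_{S,x}^{\oplus r}$, one does get a surjection $\bigoplus^r\OO_S\twoheadrightarrow\varkappa$ (this is the same device the paper uses in Section 1, replacing $\OO_U^{\oplus r}\twoheadrightarrow\varkappa$ by $\OO_S^{\oplus r}\twoheadrightarrow\varkappa$). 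One imprecision: Gieseker-semistability does not in general pass to $E^{\vee\vee}$; only slope-semistability does. That is all Bogomolov needs, but you should say it that way.

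The genuine gap is the length estimate, which you yourself flag as the crux. From $c_2(E)=c_2(E^{\vee\vee})+l(\varkappa)$ you need $c_2(E^{\vee\vee})\ge 1$, but Bogomolov only gives $c_2(E^{\vee\vee})\ge\frac{r-1}{2r}c_1^2$, i.e.\ (in the paper's normalization $\Delta=c_2-\frac{r-1}{2r}c_1^2$) only $l(\varkappa)\le\Delta(E)$. This yields $l<c_2$ precisely when $c_1^2>0$, and gives no positivity at all when $c_1^2\le 0$; nothing in Bogomolov sees whether $E$ is locally free. Indeed the implication you want fails under exactly the hypotheses your proof uses: on an abelian surface take $L_1\not\cong L_2$ in $\Pic S$ numerically trivial and $E=\ker\bigl(L_1\oplus L_2\twoheadrightarrow k_x\bigr)$ with both component maps nonzero at $x$; then $E$ is Gieseker-stable, $E^{\vee\vee}=L_1\oplus L_2$ is slope-semistable with $c_2(E^{\vee\vee})=0$, and $l(\varkappa)=1=c_2(E)$, so the strict inequality does not hold. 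Consequently the strict bound cannot be extracted from semistability plus Bogomolov alone; whatever proof \cite{Tim3} has must exploit additional normalizations or hypotheses (for instance the replacement of $L$ by a high tensor power and twisting of $E$, as in Section 1, which forces $c_1^2\gg 0$, or the standing assumptions on the moduli space), and your proposal never invokes any of these. Note also that the paper itself only uses the weaker bound $l\le c_2$ elsewhere (definition \ref{admis} and $\coprod_{l\le c_2}\Quot^l\OO_S^{\oplus r}$), which is exactly what your Chern-class identity would give if you could prove $c_2(E^{\vee\vee})\ge 0$ --- and even that needs $c_1^2\ge 0$.
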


Let $I$ be the sheaf of ideals of some zero-dimensional subscheme
$Z$ on the surface $S$ and $t$ be a symbol which is transcendent
over $\OO_S(U)$ for all open $U\subset S$. Let $\OO_S[t]$ be the
sheaf of polynomial algebras over $\OO_S$ and $I[t]$ its subsheaf
of ideals defined by the correspondence $U \mapsto I(U)[t].$ The
symbol from the right stands for the polynomial ring (without
unity) over $I(U)$. Also consider the principal ideal subsheaf
$(t)\subset \OO_S[t]$ and the sum of ideal subsheaves $I[t]+(t).$

Form $s$-th power $(I[t]+(t))^s$, $s>0$, as a subsheaf of ideals
in $\OO_S[t].$ It contains $(t)^{s+1}$ as a submodule generated by
the element $t^{s+1}$. Then the quotient module
$(I[t]+(t))^s/(t)^{s+1}$ is defined. For $s=0$ set
$(I[t]+(t))^s/(t)^{s+1}:=\OO_S$. Now form a graded $\OO_S$-algebra
$\bigoplus_{s\ge 0}(I[t]+(t))^s/(t)^{s+1}.$ It is generated by the
component of degree 1, namely by the subgroup $(I[t]+(t))/(t)^2.$

The $\OO_S$-module morphism $\OO_S \to \bigoplus_{s\ge
0}(I[t]+(t))^s/(t)^{s+1}$ leads to the morphism of projective
schemes $\sigma: \Proj \bigoplus_{s\ge 0}(I[t]+(t))^s/(t)^{s+1}
\to S$. This morphism will be referred to as {\it canonical}. As
shown in \cite{Tim3}, the scheme $\Proj \bigoplus_{s\ge
0}(I[t]+(t))^s/(t)^{s+1}$ is obtained as a fibre of the composite
map $\widehat{T\times S}
\stackrel{\sigma\!\!\!\sigma}{\longrightarrow} T\times S
\stackrel{\pi}{\longrightarrow}T$ for $T= \Spec k[t]\cong \A^1_k$
and $\sigma\!\!\!\sigma$ be a morphism of blowing up of the
trivial family of surfaces $T \times S$ in the sheaf of ideals
$\II=I[t]+(t).$ This sheaf if ideals defines the subscheme $Z$ in
the fibre $\pi^{-1}(0)\cong S.$

\begin{definition} \label{admis} Polarized algebraic scheme $(\widetilde S,
\widetilde L)$ is called {\it $(S, L, r, p_E(m))$-\linebreak
admissible} if the scheme  $(\widetilde S,\widetilde L)$ satisfies
one of the following conditions

i) $(\widetilde S, \widetilde L) \cong (S,L)$,

ii) $\widetilde S \cong \Proj \bigoplus_{s\ge
0}(I[t]+(t))^s/(t^{s+1})$, where $I=\FFitt^0 \EExt^2(\varkappa,
\OO_S)$ for Artinian quotient sheaf $q: \bigoplus^r
\OO_S\twoheadrightarrow \varkappa$ of length $l(\varkappa)\le c_2$
and $\widetilde L^m = L^m \otimes (\sigma ^{-1} I \cdot
\OO_{\widetilde S})$ for some $m\gg 0.$ In this case for any $m$
when the sheaf $L^{m} \otimes (\sigma ^{-1} I \cdot
\OO_{\widetilde S})$ is very ample, the polarization $\widetilde
L^m$ is called  {\it distinguished}.
\end{definition}

In the present paper the parameters $(S, L, r, p_E(m))$ are fixed.
We will call for brevity $(S, L, r, p_E(m))$-admissible schemes
and $(S, L, r, p_E(m))$-admissible Artinian sheaves as simply
admissible schemes and admissible Artinian sheaves respectively.

By the constructive built up of the Fitting compactification in
\cite{Tim0, Tim1, Tim2} admissible schemes include into one or
finite collection of flat families immersed as locally closed
subschemes into the projective scheme (for example, into the
universal subscheme of the Hilbert scheme). Then there exists
positive integer $m_0$ such that for all $m\ge m_0$ and for all
isomorphism classes of schemes $\widetilde S$ sheaves $L^m \otimes
(\sigma ^{-1} I \cdot \OO_{\widetilde S})$ are very ample.

Now redenote $L^m$ for $L$ and $\widetilde L^m$ for $\widetilde
L$. It is shown in \cite{EG} that the class of (semi)stable
coherent sheaves is invariant under the change of the very ample
invertible sheaf by its tensor power.

Since in the present paper the parameters $(S,L,r,p_E(m))$ are
fixed then we will refer for brevity to
$(S,L,r,p_E(m))$-admissible schemes  and
$(S,L,r,p_E(m))$-admiss\-ible Artinian sheaves as to simply
admissible schemes and admissible sheaves respect\-ively.

It is clear that  admissible scheme of the view $\widetilde
S=$\linebreak $\Proj \bigoplus_{s\ge 0}(I[t]+(t))^s/(t^{s+1})$ can
be naturally represented as a union of irreducible components
$\widetilde S= \bigcup_{i\ge 0}\widetilde S_i$ where the  {\it
main} component $\widetilde S_0=\Proj \bigoplus_{s\ge 0}(I)^s$ is
the blowup of the surface $S$ in the sheaf of ideals $I$ and for
$i>0$ $\widetilde S_i$ are irreducible  {\it additional}
components $\bigcup_{i>0} \widetilde S_i$. As it is shown in
\cite{Tim3}, in this case the additional component can have a
structure of nonreduced scheme. Obviously, admissible scheme
consists of a single component $S\cong \widetilde S=\widetilde
S_0$ if and only if it is isomorphic to the initial surface $S$.

An admissible scheme $\widetilde S$ has a morphism $\sigma:
\widetilde S \to S$. In this case the restriction
$\sigma_0=\sigma|_{\widetilde S_0}: \widetilde S_0 \to S$ of the
canonical morphism  $\sigma$ onto the main component $\widetilde
S_0$ is a blowup morphism.

A coherent torsion-free sheaf $E$ is said to be {\it deformation
equivalent to a locally free sheaf} if $E$ can be include into a
flat family of $\OO_S$-sheaves $\E$ over a connected base $T$ and
restrictions of $\E$ on fibres of view $t \times S$ for $t\in T$
is general enough, are locally free.

\begin{proposition}\label{resdes} {Let the coherent sheaf
$E$ is deformation equivalent to a locally free sheaf and is an
image of vector bundle $\widetilde E$ on an admissible scheme
$\widetilde S$. Then there is a canonically defined subsheaf\,\,
$tor\!s \subset \sigma^{\ast} E$ such that $\widetilde E \cong
\sigma^{\ast} E /tors$.}
\end{proposition}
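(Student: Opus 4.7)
The plan is to obtain the isomorphism by descent from the total space of a deformation family, resolved as in \cite{Tim0,Tim1,Tim2}. First, using the hypothesis that $E$ is deformation equivalent to a locally free sheaf, I would embed $E$ into a $T$-flat family $\E$ of fibrewise torsion-free $\OO_{T\times S}$-sheaves over a connected reduced base $T$, with $\E|_{t_0\times S}\cong E$ at some $t_0\in T$ and $\E|_{t\times S}$ locally free for general $t$. Fibrewise torsion-freeness guarantees that $\E$ has homological dimension one on $T\times S$ (as recalled in the introduction, cf.\ \cite[proof of proposition 4.3]{O'Gr}), so the standard resolution of \cite{Tim0,Tim1,Tim2} applies and produces a $T$-flat family $\widetilde\pi:\widetilde\Sigma\to T$ of admissible schemes, a locally free sheaf $\widetilde\E$ on $\widetilde\Sigma$, and a birational $T$-morphism $\Phi:\widetilde\Sigma\to T\times S$ whose restriction to $\widetilde\pi^{-1}(t_0)$ recovers the given data $\sigma:\widetilde S\to S$ and $\widetilde E=\widetilde\E|_{\widetilde S}$.

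Second, by the very construction of $\widetilde\E$ as a flattening of $\Phi^{\ast}\E$ there is a canonical short exact sequence
\begin{equation}\label{plan:tors}
0\to\mathcal T\to\Phi^{\ast}\E\to\widetilde\E\to 0,
\end{equation}
in which $\mathcal T$ is the $\OO_{\widetilde\Sigma}$-torsion subsheaf of $\Phi^{\ast}\E$. Because $\widetilde\E$ is locally free on the $T$-flat scheme $\widetilde\Sigma$, it is $\widetilde\pi$-flat, and tensoring \eqref{plan:tors} with $k(t_0)$ over $\OO_T$ preserves exactness on the right, producing a surjection
\[
\sigma^{\ast}E=(\Phi^{\ast}\E)|_{\widetilde S}\twoheadrightarrow\widetilde E.
\]
I would then define $tors\subset\sigma^{\ast}E$ as the kernel of this surjection. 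Canonicity is immediate since $\mathcal T$ is intrinsic to $\Phi^{\ast}\E$, the identification $\widetilde\pi^{-1}(t_0)\cong\widetilde S$ is part of the resolution, and two different deformations of $E$ are compared by passing to a common refinement on which both induce the same $\widetilde S$ and $\widetilde E$.

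Third, I would verify that $tors$ really is a torsion subsheaf of $\sigma^{\ast}E$, i.e.\ that its support is nowhere dense on each irreducible component of $\widetilde S=\widetilde S_0\cup\bigcup_{i>0}\widetilde S_i$. On the main component $\widetilde S_0\to S$ this follows because $E$ is locally free off a finite set in $S$, so both $\sigma^{\ast}E$ and $\widetilde E$ are locally free of rank $r$ on a dense open subset of $\widetilde S_0$ and the map between them is a generic isomorphism. On each additional component $\widetilde S_i$ one combines the explicit description of $\widetilde S$ from \cite{Tim3} as $\Proj\bigoplus_{s\ge 0}(I[t]+(t))^s/(t)^{s+1}$ with the fact that $\widetilde E$ has fibrewise rank $r$ on every component, to obtain the analogous generic isomorphism there.

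The main obstacle is this third step: making the restriction of \eqref{plan:tors} to the possibly non-reduced and reducible fibre $\widetilde S$ genuinely yield a torsion kernel on every component, rather than merely a subsheaf contained in the torsion of $\sigma^{\ast}E$. Flatness of $\widetilde\E$ is what allows the right-exact restriction, but identifying $tors$ component by component relies on the structural description of the additional components $\widetilde S_i$ from \cite{Tim3} together with the rank-matching argument above. This is the delicate technical point around which the proof will revolve.
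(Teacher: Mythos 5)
Your first two steps coincide with the paper's argument: one places $E$ in a $T$-flat family of homological dimension one, performs the standard resolution, obtains the exact triple $0\to\tau\to\sigma\!\!\!\sigma^{\ast}\E\to\widehat\E\to 0$ on the resolved total space (in the paper this comes from dualizing a two-term locally free resolution of $\E$ and is exactly your sequence, with $\tau=\EExt^1(\sigma\!\!\!\sigma^{\ast}\EExt^1(\E,\OO_{T\times S}),\OO_{\widehat{T\times S}})$), and restricts it to the fibre to get the surjection $\varpi:\sigma^{\ast}E\twoheadrightarrow\widetilde E$. Up to that point you are reproducing the paper's proof.

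The gap is in what you do next. You \emph{define} $tor\!s$ as $\ker\varpi$, which makes the isomorphism $\widetilde E\cong\sigma^{\ast}E/tor\!s$ tautological and shifts the entire content of the proposition onto canonicity -- and your canonicity argument ("$\mathcal T$ is intrinsic to $\Phi^{\ast}\E$, compare two deformations by passing to a common refinement on which both induce the same $\widetilde S$ and $\widetilde E$") is not a proof: the kernel is a priori an artifact of the chosen family, and no construction of such a "common refinement" is given. The paper does the opposite: it defines $tor\!s\subset\sigma^{\ast}E$ \emph{intrinsically} on $\widetilde S$, component by component, as the sections annihilated by a local section $y$ with either $y$ lying in a prime ideal of positive codimension in $\OO_{\widetilde S_i}(U)$, or the section extending over a larger open set so that its restriction to the main component $\widetilde S_0$ lies in the usual torsion of $\sigma^{\ast}E|_{\widetilde S_0}$; only then does it prove the two inclusions $tor\!s\subset\ker\varpi$ and $\ker\varpi\subset tor\!s$. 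This intrinsic, family-independent description is precisely what "canonically defined" means here, and it is what the rest of the paper uses (e.g.\ $\sigma^{\ast}F/tor\!s$ for subsheaves $F\subset E$, the quasi-ideal condition on additional components, the constructions with $\overline\sigma^{\ast}\bigoplus_i gr_i(E)/tor\!s$), where no ambient deformation is available. Your third step also uses the wrong criterion: on the possibly non-reduced additional components $\widetilde S_i$ the relevant notion is not "support nowhere dense" (sections killed only by elements of minimal, codimension-zero primes must \emph{not} be discarded), but annihilation by positive-codimension primes together with the compatibility condition along $\widetilde S_0$. You flag this as "the delicate technical point around which the proof will revolve," but that point is exactly the substance of the proposition, and it is left unproved in your proposal.
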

\begin{proof} Consider a flat family $\E$ of semistable
sheaves on the surface  $S$. Let $T$ be the base of the family. By
the construction developed in \cite{Tim0,Tim1} it has homological
dimension equal to 1. Fix an exact sequence
\begin{equation} \label{triple}0\to E_1 \to E_0 \to \E \to 0,
\end{equation} where the sheaves $E_0$ and
$E_1$ are locally free. Consider the morphism of blowing up
$\sigma \!\!\! \sigma: \widehat{T\times S} \to T\times S$ of the
coherent sheaf of ideals $\I=\FFitt^0 \EExt^1(\E, \OO_{T\times
S}).$ Applying the dualisation and inverse image under the
morphism $\sigma \!\!\! \sigma$ to  (\ref{triple}) we define
sheaves  $A=\ker (\sigma \!\!\! \sigma ^{\ast} E_1^{\vee} \to
\sigma \!\!\! \sigma ^{\ast} \EExt^1 (\E, \OO_{T\times S}))$ and
$\widehat \E=(\ker(\sigma \!\!\! \sigma ^{\ast}E_0^{\vee} \to
A))^{\vee}$. According to  \cite{Tim1} they are locally free.
There is a
following exact diagram \begin{equation}\label{exdia}\xymatrix{&0\\
&\tau\ar[u]&&0\\
0\ar[r]&A^{\vee}\ar[u] \ar[r]& \sigma \!\!\! \sigma^{\ast}E_0
\ar[r]&\widehat \E \ar[u] \ar[r]&0 \\
0\ar[r]&\sigma \!\!\! \sigma^{\ast}E_1 \ar[r] \ar[u]& \sigma
\!\!\! \sigma^{\ast} E_0 \ar[u]_= \ar[r]& \sigma \!\!\!
\sigma^{\ast} \E \ar[u]\ar[r]&0 \\
& 0\ar[u]&&\tau\ar[u]\\
&&&0\ar[u] }
\end{equation}
where the symbol  $\tau$ denotes the torsion
$\OO_{\widehat{T\times S}}$-sheaf\linebreak $\EExt^1(\sigma \!\!\!
\sigma^{\ast}\EExt^1(\E, \OO_{T\times S}), \OO_{\widehat{T\times
S}})$. The right vertical triple in (\ref{exdia}) leads to the
express\-ion $\widehat \E=\sigma \!\!\! \sigma^{\ast}\E/\tau$.
Consider the restriction of this equality to the fibre
$pr^{-1}(t)=\widetilde S$ of the composite map $pr:
\widehat{T\times S} \stackrel{\sigma\!\!\!\sigma}{\longrightarrow}
T\times S \stackrel{\pi}{\longrightarrow}T$ over a closed point
$t\in T.$ Let $i_t: pr^{-1}(t) \hookrightarrow \widehat{T\times
S}$ be the morphism of inclusion of the fibre. Then there are a
morphism $\sigma: \widetilde S \to S$ and isomorphisms
$i_t^{\ast}\sigma\!\!\!\sigma^{\ast}\E=\sigma^{\ast} E$ and
$i_t^{\ast}\widehat \E =\widetilde E.$ Since the sheaf $\widehat
\E$ is locally free as a sheaf of $\OO_{\widehat{T\times
S}}$-modules then the sheaf $\widetilde E$ is also locally free as
a sheaf of $\OO_{\widetilde S}$-modules.

In the case when $\widetilde S$ is a reduced scheme the
restriction of right vertical triple in (\ref{exdia}) on the fibre
$\widetilde S$ gives an isomorphism $\widetilde E = \sigma^{\ast}
E/tor\!s.$

In the general case of (possibly nonreduced) scheme $\widetilde S$
consider its decomposition into the union of irreducible
components $\widetilde S=\bigcup_{i\ge 0}\widetilde S_i$ for
$\widetilde S_0$ being its principal component. It has a structure
of a reduced scheme. Additional components $\widetilde S_i,$
$i>0,$ can be nonreduced. Let $U$ be a Zariski open subset of one
of components $\widetilde S_i, i\ge 0,$ and let
$\sigma^{\ast}E|_{\widetilde S_i}(U)$ be the corresponding group
of sections. This group is $\OO_{\widetilde S_i}(U)$-module.
Consider sections $s\in \sigma^{\ast}E|_{\widetilde S_i}(U)$ which
are annihilated by prime ideals of positive codimension in
$\OO_{\widetilde S_i}(U)$. They constitute a submodule in
$\sigma^{\ast}E|_{\widetilde S_i}(U)$. This submodule will be
referred to as $tor\!s_i(U).$ The correspondence $U \mapsto
tor\!s_i(U)$ defines the subsheaf $tor\!s_i \subset
\sigma^{\ast}E|_{\widetilde S_i}.$ Note that associated primes of
positive codimension annihilating sections $s\in
\sigma^{\ast}E|_{\widetilde S_i}(U)$, correspond to subschemes
supported in $\sigma^{-1}(\Supp \varkappa)=\bigcup_{i>0}\widetilde
S_i$. Since by the construction the scheme $\widetilde S$ is
connected then the sheaves $tor\!s_i, i\ge 0$ permit to construct
a subsheaf $tor\!s\subset \sigma^{\ast}E$. It is defined as
follows. The section $s \in \sigma^{\ast}E|_{\widetilde S_i}(U)$
satisfies $s\in tor\!s|_{\widetilde S_i}(U)$ if and only if
\begin{itemize}
\item{there is a section $y\in \OO_{\widetilde S_i}(U)$ such that $ys=0,$}
\item{at least one of the following two requirements is fulfilled: either
$u\in {\mathfrak p}$ for $\mathfrak p$ prime of positive
codimension, or there are Zariski-open subset $V\subset \widetilde
S$ and a section $s'\in \sigma^{\ast}E(V)$ such that $V\supset U,$
$s'|_U=s$ and $s'|_{V\cap \widetilde S_0}\in
tor\!s(\sigma^{\ast}E|_{\widetilde S_0})(V\cap \widetilde S_0)$.
The torsion subsheaf $tor\!s(\sigma^{\ast}E|_{\widetilde S_0})$ in
the last expression is understood in usual sense.}
\end{itemize}

The role of the subsheaf $tor\!s \subset \sigma^{\ast}E$  is
analogous to the role of torsion subsheaf in the case of reduced
and irreducible basis scheme. Since there is no confusion the
symbol $tor\!s$ is understood everywhere as described and the
subsheaf $tor\!s$ is called a torsion subsheaf.

Consider the epimorphism $\varpi: \sigma^{\ast}E
\twoheadrightarrow \widetilde E$ defined by the restriction of the
epimorphism $\sigma\!\!\!\sigma^{\ast}\E \twoheadrightarrow
\widehat \E$ to the fibre $\widetilde S$. It is clear that $tor\!s
\subset \ker \varpi$. Note that $\ker \varpi$ is annihilated by
local sections which belong to ideals of positive codimension on a
component $\widetilde S_i,$ $i>0$, or when restricted to the
component $\widetilde S_0$. Hence we conclude that $\ker \varpi
\subset tor\!s$ and $\widetilde E= \sigma^{\ast}E/tor\!s$, as
required.
\end{proof}

In \cite{Tim3} it is shown that the fibre of the composite
morphism $\widehat{T\times S}\stackrel{\sigma \!\!\!
\sigma}{\longrightarrow} T\times S
\stackrel{\pi}{\longrightarrow}T$ at the point $t \in T$ has a
structure of the scheme $\widetilde S =\Proj \bigoplus_{s\ge
0}(I[t]+(t))^s/(t^{s+1})$ for $I=\FFitt^0 \EExt^1(\E|_t,
\OO_{t\times S})$.

The behavior of vector bundles $\widetilde E$ on additional
components  $\widetilde S_i \subset \widetilde S$, $i> 0$, is
given by the following simple computation. The standard exact
triple (1.1) is taken by the functor of the inverse image
 $\sigma_i^{\ast}$ to the exact sequence
\begin{equation}\label{upsing}\dots \to
\TTor_1^{\sigma_i^{-1}\OO_S}(\sigma^{-1}\varkappa, \OO_{\widetilde
S_i})\to \sigma_i^{\ast}E \to \sigma_i^{\ast}E^{\vee \vee}\to
\sigma_i^{\ast}\varkappa \to 0.\end{equation} In an appropriate
neighborhood  $U\subset S$ of the support  $\Supp \varkappa$ the
locally free sheaf $E^{\vee \vee}|_U$ can be replaced by its local
trivialization $\OO_U^{\oplus r}$. Then the exact sequence
(\ref{upsing}) takes the view \begin{equation*}\label{addex}\dots
\to \TTor_1^{\sigma_i^{-1}\OO_S}(\sigma^{-1}\varkappa,
\OO_{\widetilde S_i})\to \sigma_i^{\ast}E \to
\sigma_i^{\ast}\OO_U^{\oplus r}\to \sigma_i^{\ast}\varkappa \to 0.
\end{equation*}
Consequently for  $\widetilde
E_i=\sigma^{\ast}E/tor\!s|_{\widetilde
S_i}=\sigma_i^{\ast}E/tor\!s_i$ we have
\begin{equation}\label{eei}\dots \to \sigma_i^{\ast}E/tor\!s_i \to
\sigma_i^{\ast}\OO_U^{\oplus r} \to \sigma_i^{\ast} \varkappa \to
0,\end{equation} where the subsheaf $tor\!s_i$ on (possibly,
nonreduced) scheme  $\widetilde S_i$ is defined as before and
 $tor\!s_i=tor\!s|_{\widetilde S_i}$.
Dots from the left hand side indicate the terms violating
exactness. These terms are not obliged to have positive
codimension in $\widetilde S_i$.
\begin{example} Let  $\varkappa= k_x$, then $\widetilde S$
consists of two reduced components: $\widetilde S_0$ is the
surface obtained by blowing up of $S$ in the reduced point $x$,
and $\widetilde S_1\cong \P^2$. The morphism  $\sigma_1$ is a
constant morphism  $\sigma_1: \P^2 \to x$. Then $\sigma_1^{\ast}
\varkappa =\sigma_1^{\ast} k_x=\OO_{\P^2}$, and easy counting of
ranks gives  $\rank \ker (\sigma_1^{\ast}E/tor\!s_1 \to
\OO_{\widetilde S_1}^{\oplus r})=1$.
\end{example}

Since the sheaf  $\varkappa$ is supported in the finite collection
of points then the morphism $\OO_U^{\oplus r} \twoheadrightarrow
\varkappa $ can be replaced by the morphism $\OO_S^{\oplus
r}\twoheadrightarrow \varkappa$.

Let  $q_0: \OO_S^{\oplus r}\twoheadrightarrow \varkappa$ be the
morphism induced by the exact triple (1.1). One has
\begin{equation}\label{ei} \widetilde E_i=\sigma_i^{\ast}\ker q_0/tor\!s.
\end{equation}

By the Proposition \ref{resdes}, for all semistable coherent
sheaves $E$ with fixed Hilbert polynomial $rp_E(m)$, all sheaves
$\widetilde E_i$ on additional components $\widetilde S_i$ can be
described by the relations  (\ref{ei}) for appropriate  $q_0\in
\coprod_{l\le c_2} \Quot^l \OO_S^{\oplus r}$. \vspace{5mm}

\section{Standard resolution and Grassmannians}

 The procedure of transformation of a
flat family $\E$ of coherent torsion-free sheaves on the surface
$S$ over (quasi)projective base $T$ into the flat family of
schemes over the base $\widetilde T$ with a locally free sheaf
$\widetilde \E$, is given and motivated in \cite{Tim0} --
\cite{Tim2}. The scheme $\widetilde T$ is birational to $T$.

To proceed further it is necessary to turn to the construction of
Gieseker -- Maruyama scheme using geometric invariant theory. Let
$E$ be a semistable coherent torsion-free sheaf with Hilbert
polynomial equal to $rp_E(t)$. Also let $H^0(S, E\otimes L^{m})=V$
be $k$-vector space of global sections and the sheaf $E\otimes
L^{m}$ is assumed to be globally generated. Consider
Grothendieck's Quot-scheme $\Quot^{rp_E(t)}(V \otimes L^{(-m)})$
parameterizing quotient sheaves of the form
\begin{equation}\label{quosh}
V \otimes L^{(-m)}\twoheadrightarrow E,
\end{equation}
with Hilbert polynomial equal to $\chi(E\otimes L^{t})= rp_E(t)$.
Families of Gieseker-semistable coherent sheaves $E$ on the
surface  $S$ with fixed Hilbert polynomial $rp_E(t)$ are bounded.
Then there exists integer $m_0$ such that for $m> m_0$ all the
sheaves $E \otimes L^{m}$ are globally generated and all vector
spaces $H^0(S, E\otimes L^m)$ are of dimension $rp_E(m).$ This
$m_0$ is common for all $E.$ Then all semistable coherent sheaves
$E$ under consideration can be interpreted as quotient sheaves of
the form (\ref{quosh}). The projective scheme $\Quot^{rp_E(t)}(V
\otimes L^{ (-m)})$ contains a quasiprojective subscheme $Q'$ of
points corresponding to Gieseker-semistable quotient sheaves $E$
in (\ref{quosh}) with an isomorphism $H^0(S, E\otimes L^{m})\cong
V$. The scheme $\overline M$ is obtained as GIT-quotient of a
subset of GIT-semistable points in the quasiprojective subscheme
\linebreak $Q' \subset \Quot^{rp_E(t)}(V \otimes L^{(-m)})$ by the
action of the group  $PGL(V).$  This action is induced by choices
of bases in the space $V$. Let $Q$ be the component or the union
of those components in $Q'$ which correspond to the components in
$\overline M$ containing locally free sheaves. The scheme
$\Quot^{rp_E(t)}(V \otimes L^{(-m)})$ is supplied with the
universal family of quotient sheaves $\E_{\Quot}$. Let
$\E_Q:=\E_{\Quot}|_Q$ be its restriction on the subscheme $Q$.

The procedures of the paper \cite{Tim2} are applicable to the pair
$(Q, \E_Q)$. In particular, set the base of the family of sheaves
is taken to be the quasiprojective scheme $Q$. Repeating the proof
of the proposition 1.2 in \cite{Tim2} and the constructions of \S
2 in \cite{Tim2} we obtain the following objects:
\begin{itemize}
\item{$\widetilde Q$ -- quasiprojective scheme,}
\item{$\phi: \widetilde Q \to Q$ -- birational projective morphism,}
\item{$\widetilde \Sigma_Q \stackrel{\pi}{\to}
\widetilde Q$ -- flat family of schemes,}
\item{$\widetilde \E_Q$ -- locally free sheaf on the scheme $\widetilde \Sigma_Q$,}
\item{$\Phi: \widetilde \Sigma_Q \to \Sigma_Q$ -- birational
projective morphism onto the product $\Sigma_Q=Q \times S$ and
$(\Phi_{\ast}\widetilde \E_Q)^{\vee \vee}=\E_Q$ (the analog of the
proposition 2.9 in \cite{Tim2}).}
\item{for the immersion $\widetilde \Delta=\widetilde Q \times S \hookrightarrow
\widetilde Q \times \Sigma_Q$ defined by the closure of the image
of the diagonal immersion $Q_0 \times S \hookrightarrow Q_0 \times
\pi^{-1}(Q_0)$, there is the following explicit description of the
morphism $\phi$ (the analog of the corollary 2.10 in
\cite{Tim2}):\begin{eqnarray*}\phi: \widetilde Q &\to& Q:
\nonumber\\
\widetilde y&\mapsto&((id_{\widetilde Q},
\Phi)_{\ast}\OO_{\widetilde Q}\boxtimes \widetilde \E_Q )^{\vee
\vee}|_{\widetilde \Delta}|_{\widetilde y \times S} \label{expl}
\end{eqnarray*}}
\end{itemize}
Everywhere in the further text the invertible sheaf $\widetilde
\L_Q\in Pic\, \widetilde \Sigma_Q$ is assumed to be very ample
relatively to the base $\widetilde Q$. Also the fibres of the
family $\widetilde \Sigma_Q$ are mentioned to have constant
Hilbert polynomial if it is compute relatively to $\widetilde
\L_Q$. Moreover, the Euler characteristic $\chi (\widetilde \E_Q
\otimes \widetilde \L_Q^{m}|_{\pi^{-1}( \widetilde q)})$ does not
depend on the choice of the point $\widetilde q \in \widetilde Q$.

We assume that the sheaf  $\widetilde \E_Q\otimes \widetilde
\L_Q^{ m}$ is globally generated on fibres of the morphism $\pi$.
Then $\pi_{\ast} ({\widetilde \E_Q\otimes \widetilde \L_Q^{ m}})$
is a locally free sheaf of rank $rp_E(m)$ on the scheme
$\widetilde Q.$ Form a Grassmannian bundle $\Grass(
\pi_{\ast}(\widetilde \E_Q\otimes \widetilde \L_Q^{m}), r)$ of
$r$-dimensional quotient spaces in the fibres of vector bundle  $
\pi_{\ast}(\widetilde \E_Q\otimes \widetilde \L_Q^{m})$. The fibre
of Grassmannian bundle $\Grass( \pi_{\ast}(\widetilde \E_Q\otimes
\widetilde \L_Q^{ m}), r)$ is isomorphic to the ordinary
Grassmannian $G(V,r)$.

Vector bundle  $\pi_{\ast} (\widetilde \E_Q\otimes \widetilde
\L_Q^{ m})$ is locally trivial. Fix any finite trivializing open
cover  $\widetilde Q =\bigcup_i U_i$ together with the
trivializing isomorphisms
$$\tau_i: \pi_{\ast} (\widetilde \E_Q\otimes \widetilde \L_Q^{
m})|_{U_i} \stackrel{\sim}{\longrightarrow} V \otimes \OO_{U_i}.$$
Then there is the induced trivialization of the Grassmannian
bundle \linebreak $\Grass(\pi_{\ast}(\widetilde \E_Q\otimes
\widetilde \L_Q^{m}), r)$:
\begin{eqnarray*}&&\Grass (\tau_i):
\Grass ( \pi_{\ast}(\widetilde \E_Q\otimes
\widetilde \L_Q^{m}), r)|_{U_i}\cong\nonumber \\
&&\Grass( \pi_{\ast}(\widetilde \E_Q\otimes \widetilde
\L_Q^{m})|_{U_i}, r) \stackrel{\sim}{\longrightarrow} \Grass (V
\otimes \OO_{U_i}, r) \cong G(V,r) \times U_i,\end{eqnarray*}
where the arrow is induced by the morphism $\tau_i^{-1}$. The
gluing data defined on the overlaps $U_{ij}=U_i \cap U_j$ by the
composite maps  $\varphi_{ij}=\Grass(\tau_i)\circ \Grass
(\tau_j)^{-1}: G(V,r)\times U_{ij}\to \Grass (\pi_{\ast}
(\widetilde \E_Q\otimes \widetilde \L_Q^{m})|_{U_{ij}},r) \to G(V,
r)\times U_{ij}$ are given by the elements of group $GL(V)$. These
elements act upon the space $V$ by linear transformations. The
corresponding action of the same elements on the Grassmannian
$G(V,r)$ factors through the action of projective group $PGL(V)$.

For any element $U_i$ of the trivializing cover of the scheme
$\widetilde Q$ and for the restriction  $\widetilde
\Sigma_i=\widetilde \Sigma_Q |_{U_i}$ of the family $\widetilde
\Sigma_Q $ there is a morphism $\widetilde \Sigma_i\to \Grass
(\pi_{\ast}(\widetilde \E_Q\otimes \widetilde
\L_Q^{m}),r)|_{U_i}$. After the restriction on the fibres of the
structure morphism of the Grassmannian for $m\gg 0$ it becomes an
immersion. The application of the trivializing isomorphism
 $\Grass(\tau_i)$ leads to the commutative diagram
\begin{equation*}\label{tograss}\xymatrix{
\widetilde \Sigma_i \ar[d]_{\pi_i} \ar[r] & G(V,r)\times U_i
\ar[ld]^{pr_2} \ar[r]^{\;\;\;\;pr_1}& G(V,r)\\
U_i}\end{equation*} The horizontal composite map restricted on any
fibre $\widetilde S_{\widetilde q}=\pi_i^{-1} (\widetilde q)$ of
the projection $\pi_i$, gives an immersion $j_{\widetilde q}:
\widetilde S_{\widetilde q} \hookrightarrow G(V,r)$ of the scheme
$\widetilde S_{\widetilde q}$ into the Grassmannian $G(V,r)$. The
image  $j_{\widetilde q}(\widetilde S_{\widetilde q})$ is a closed
subscheme in $G(V,r)$.  Hilbert polynomial of this subscheme
$\chi(j_{\widetilde q}^{\ast} \OO_{G(V,r)}(t))$ is constant for
all fibres, for all elements of cover and is uniform for all
trivializing covers. We denote it  $P(t).$

Let $\Hilb^{P(t)} G(V,r)$ be the Hilbert scheme of subschemes in
 $G(V,r)$ with Hilbert polynomial equal to $P(t)$.
For any element $U_i$ of the trivial\-izing cover there is a map
$\widetilde \mu_i: \widetilde \Sigma_i \to \Univ^{P(t)}G(V,r)$
into the universal subscheme $$\Univ^{P(t)}G(V,r) \subset
 \Hilb^{P(t)}G(V,r)\times G(V,r).$$ Also there is a mapping
$\mu_i: U_i \to \Hilb^{P(t)}G(V,r)$ of the base $U_i$ into the
Hilbert scheme. These morphisms include into the fibred square
\begin{equation*}\xymatrix{\widetilde \Sigma_i
\ar[d]_{\pi_i} \ar[r]^{\widetilde
\mu_i\;\;\;\;\;\;\;\;\;\;\;\;\;\;}
& \Univ^{P(t)} G(V,r) \ar[d]^{\pi_H}\\
U_i \ar[r]^{\mu_i\;\;\;\;\;\;\;\;\;\;\;\;\;\;}& \Hilb^{P(t)}
G(V,r)}
\end{equation*}

\begin{remark} Since  $U_i$ are quasiprojective schemes and
$\Hilb^{P(t)}G(V,r)$ is a projective scheme, then $\mu_i$ are
projective morphisms.
\end{remark}

Let $T$ be a regular scheme of dimension 1. We introduce the
notation $\Sigma=T\times S$. Let also $\E$ be a flat family of
coherent semistable torsion-free sheaves on the surface $S$ with
fixed Hilbert polynomial $rp_E(t)$. We suppose that $\E$ is
parameterized by $T$. Let $T_0 \subset T$ be nonempty open subset
such that the sheaf $\E_0=\E|_{\Sigma_0}$ is locally free on the
preimage $\pi^{-1}(T_0)=\Sigma_0\subset \Sigma$. Consider a
blowing up $\sigma\!\!\! \sigma : \widehat \Sigma \to \Sigma$ of
the sheaf of ideals $\I=\FFitt^0 \EExt^1 (\E, \OO_{\Sigma})$. The
composite map $\widehat \Sigma \stackrel{\sigma \!\!\!
\sigma}{\longrightarrow} \Sigma \stackrel{\pi}{\longrightarrow} T$
is a flat projective morphism because its fibres are projective
schemes, $\widehat \Sigma$ is an irreducible scheme and $T$ is
regular scheme of dimension 1. It is clear that $\widehat
\Sigma_0= \sigma\!\!\! \sigma^{-1} \Sigma_0 \cong \Sigma_0$. For
any $x\in T_0$ the sheaf $\E_0|_{x\times S}= E_x$ induces an
immersion $j_x: S\hookrightarrow G(V,r)$. The composite of this
immersion with a Pl\"{u}cker immersion  $Pl: G(V,
r)\hookrightarrow P(\bigwedge^r V)$ distinguishes a very ample
invertible sheaf $L_E=j_x^{\ast}\OO_{G(V,r)}(1)=
j_x^{\ast}Pl^{\ast}\OO_{P(\bigwedge^r V)}(1)$. The composite map
$j_x\circ Pl: S \hookrightarrow P(\bigwedge^r V)$ is given by the
composite morphism of sheaves
\begin{eqnarray}\bigwedge^r V \otimes \OO_S
&\stackrel{\sim}{\longrightarrow}& \bigwedge^r H^0(S, E\otimes
L^{m})\otimes
\OO_S \nonumber \\
&\twoheadrightarrow& H^0(S, \bigwedge^r (E\otimes L^{ m}))\otimes
\OO_S\twoheadrightarrow \bigwedge^r (E\otimes
L^{m}).\nonumber\end{eqnarray} In this case, the epimorphicity of
two recent maps is provided by the choice of  $m\gg 0.$ The same
reason guarantees the fact that the sheaf $\bigwedge^r (E\otimes
L^{m})$ is very ample. It follows by the construction of the
morphism $ j_x\circ Pl$ that $L_E=\bigwedge^r (E\otimes L^{m})=
L^{mr} \otimes \det E$. In the further text we will not
distinguish in the notation the first Chern class $c_1=c_1(E)$ and
its image in the Picard group of the surface $S$. For example we
write $L_E^{mr}=L^{mr}\otimes c_1.$

The subscheme $\widehat \Sigma_0=T_0 \times S$ has an immersion
into the relative projective space $P(\bigwedge^r \pi_{\ast}
(\E_0\otimes \L_0^m))\cong P(\bigwedge^r V)\times T_0$. This
immersion is given by the  sheaf $\L_E=L_E \boxtimes \OO_{T_0}$
very ample relatively $T_0$, and includes in commutative diagram
\begin{equation}\label{triangle}\xymatrix{\Sigma_0 \ar@{^(->}[r] \ar[rd]_{\pi}&
P(\bigwedge^r \pi_{\ast} (\E_0\otimes \L_0^{m}))\ar[d]_p\\
&T_0}
\end{equation}
Here and further $\L_0:=L\boxtimes \OO_{T_0}$ is an invertible
$\OO_{\Sigma_0}$-sheaf very ample relatively $T_0.$

Note that the scheme $\widehat \Sigma$ is supplied with the
locally free sheaf $\widehat \E$. It is flat over the base $T$.

Also note that for $m\gg 0$ there exists an epimorphism $V\otimes
\L_0^{(-m)} \twoheadrightarrow \E_0$. Fix it. Hence there is a
morphism $\nu_0:T_0 \to \Quot^{rp_E(t)} (V\otimes L^{(-m)})$ of
the base scheme $T_0$ into the Grothendieck Quot-scheme
$\Quot^{rp_E(t)} (V\otimes L^{(-m)})$. Since the sheaf $\E$ is
fibrewise semistable we assume without loss of generality that
 $\nu_0 (T_0)\subset Q.$ Let $\widetilde T=T\times _Q
\widetilde Q$ and  $\nu: \widetilde T \to \widetilde Q$ be the
corresponding morphism. Then for the preimage $\widetilde \Sigma=
\widehat \Sigma \times _T \widetilde T$ of the family $\widehat
\Sigma$ by the universality of the Hilbert scheme
$\Hilb^{P(t)}\Grass(\pi_{\ast} (\widetilde \E_Q\otimes \widetilde
\L_Q^{m}),r)$ one has a fibred diagram
\begin{equation*}\xymatrix{
\Univ^{P(t)}\Grass(\pi_{\ast} (\widetilde \E_Q\otimes \widetilde
\L_Q^{m}),r) \ar[d]_{\widetilde \pi}
&\ar[l]_{\;\;\;\;\;\;\;\;\;\;\;\;\;\;\;\;\;\;\;\;\;\;\;\;\;\;\;\;\;\;\;\widetilde
\mu} \widetilde \Sigma_Q \ar[d]_{\pi}&&
\ar[ll]_{\widetilde \nu} \widetilde \Sigma \ar[d]\\
\Hilb^{P(t)}\Grass(\pi_{\ast} (\widetilde \E_Q\otimes \widetilde
\L_Q^{m}),r) &
\ar[l]_{\;\;\;\;\;\;\;\;\;\;\;\;\;\;\;\;\;\;\;\;\;\;\;\;\;\;\;\;\;\mu}
\widetilde Q && \ar[ll]_{\nu} \widetilde T }
\end{equation*}

Let $\widetilde \L= \widetilde \nu^{\ast}\widetilde \L_Q.$
One-dimensional base $T$ does not undergo a birational
trans\-formation, namely, $\widetilde T \cong T.$

Since it is clear that $\L_0\cong \widetilde \L|_{\Sigma_0}$, then
the combination of the diagram (\ref{triangle}) with the open
immersion $P(\bigwedge^r \pi_{\ast} (\E_0\otimes
\L_0^{m}))\hookrightarrow P(\bigwedge^r \pi_{\ast}(\widetilde
\E\otimes \widetilde \L^{m}))$ and the formation of closure
$\overline \Sigma_0$ for the image of the scheme $\Sigma _0$ in
projective bundle  $P(\bigwedge^r \pi_{\ast} (\widetilde \E\otimes
\widetilde \L^{m})),$ lead to the commutative diagram
\begin{equation*}\xymatrix{\overline \Sigma_0 \ar[dr] \ar@{^(->}[r]
& P(\bigwedge^r  \pi_{\ast} (\widetilde \E\otimes
\widetilde \L^{m})) \ar[d]^p\\
& \widetilde T}
\end{equation*}
Here the scheme $\overline \Sigma_0$ coincides on the open subset
with the image of the scheme $\widetilde \Sigma.$ Since both
schemes are irreducible and flat over the same base $\widetilde
T,$ the image  $\widetilde \Sigma$ coincides with $\overline
\Sigma_0.$ This proves that one-dimensional flat family
$\widetilde \Sigma$ can be considered as the closure of the image
for its open subscheme $\Sigma_0$ under the immersion into the
projective bundle $P(\bigwedge^r \pi_{\ast} (\widetilde \E\otimes
\widetilde \L^{m}))$.

For the comparison of the structure of the families $\widetilde
\Sigma$ and $\overline \Sigma_0$ and polarizations on their fibres
we construct the immersions of both families into the same
relative projective space. Firstly consider the composite of the
immersion into the relative Grassmannian with the Pl\"{u}cker
imbedding, and a commutative triangle
\begin{equation*}\xymatrix{ \widetilde \Sigma_Q \ar[drr]_{ \pi}
\ar@{^(->}[r]^<<<<{j} &\Grass(\pi_{\ast} (\widetilde \E_Q\otimes
\widetilde \L_Q^{m}),r) \ar@{^(->}[r]^{Pl}& P(\bigwedge^r
\pi_{\ast} (\widetilde \E_Q\otimes
\widetilde \L_Q^{m}))\ar[d]_p\\
&&\widetilde Q}
\end{equation*}
Here  $\pi$ is flat morphism, $\widetilde \L_Q^{m}$  an invertible
sheaf very ample relatively $\widetilde Q$. Secondly, the morphism
 $\nu$ induces a fibred square
\begin{equation*}\xymatrix{
P(\bigwedge^r \pi_{\ast} (\widetilde \E\otimes \widetilde
\L^{m}))\ar[d]_p \ar[r]& P(\bigwedge^r \pi_{\ast} (\widetilde
\E_Q\otimes
\widetilde \L_Q^{m}))\ar[d]^p\\
\widetilde T \ar[r]^{\nu}& \widetilde Q}
\end{equation*}
The combination of two recent diagrams
\begin{equation}\label{redto1}\xymatrix{
&\widetilde \Sigma_Q \ar[drr]^{\pi} \ar@{^(->}[rr]^{Pl \circ j} &&
P(\bigwedge^r \pi_{\ast}(\widetilde \E_Q\otimes
\widetilde \L_Q^{m}))\ar[d]^p\\
\widetilde \Sigma \;\;\ar[ur] \ar@{^(->}[rr] \ar[drr]&&
P(\bigwedge^r \pi_{\ast} (\widetilde \E\otimes \widetilde \L^{m}))
\ar[ur] \ar[d]
&\widetilde Q\\
&&\widetilde T\ar[ur]_{\nu}}
\end{equation}
shows that to compute the distinguished polarizations on fibres of
the family $\widetilde \Sigma_Q$ one can use one-parameter
families of the form  $\widetilde \Sigma \to \widetilde T$.

\begin{proposition} There exist $PGL(V)$-invariant morphisms
$$\widetilde \mu:
\widetilde \Sigma_Q \to \Univ^{P(t)}G(V,r), \;\;\;\; \mu:
\widetilde Q \to \Hilb^{P(t)}G(V,r)$$ include into the fibred
diagram
\begin{equation}\label{glue}\xymatrix{&\widetilde \Sigma_Q
\ar[rd]^{\widetilde \mu} \ar[dd]^>>>>>>{\pi}\\
\widetilde \Sigma_i \ar[dd]_{\pi_i} \ar@{^{(}->}[ur]
\ar[rr]^{\widetilde \mu_i} && \Univ^{P(t)} G(V,r) \ar[dd]^\pi\\
&\widetilde Q \ar[rd]^\mu\\
U_i \ar@{^{(}->}[ur] \ar[rr]^{\mu_i}&& \Hilb^{P(t)} G(V,r)}
\end{equation}
\end{proposition}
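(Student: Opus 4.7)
The plan is to show that the locally defined maps $\widetilde\mu_i$ and $\mu_i$ assemble into global morphisms $\widetilde\mu$ and $\mu$, with the ambiguity on overlaps absorbed by the natural $PGL(V)$-action on the target schemes. I would proceed in three steps: first recall the construction of the local data via the trivializations, then analyze their transformation on overlaps under the cocycle $\varphi_{ij}$, and finally invoke an equivariant-descent argument together with the universal property of the Hilbert scheme.

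For the first step, on each $U_i$ the isomorphism $\Grass(\tau_i)$ identifies $\Grass(\pi_{\ast}(\widetilde\E_Q\otimes\widetilde\L_Q^m),r)|_{U_i}$ with the trivial Grassmannian bundle $G(V,r)\times U_i$, so that the fibrewise closed immersion $\widetilde\Sigma_i \hookrightarrow \Grass(\pi_{\ast}(\widetilde\E_Q\otimes\widetilde\L_Q^m),r)|_{U_i}$ becomes a $U_i$-parameterized flat family of closed subschemes of $G(V,r)$ with constant Hilbert polynomial $P(t)$. The universal property of $\Hilb^{P(t)}G(V,r)$ then yields $\mu_i: U_i \to \Hilb^{P(t)}G(V,r)$ together with the induced $\widetilde\mu_i: \widetilde\Sigma_i \to \Univ^{P(t)}G(V,r)$ making the relevant square Cartesian.

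For the second step, on $U_{ij}$ the transition $\varphi_{ij} = \Grass(\tau_i)\circ\Grass(\tau_j)^{-1}$ acts on $G(V,r)\times U_{ij}$ through the natural $PGL(V)$-action on $G(V,r)$, as already observed in the setup. Consequently, at each $u\in U_{ij}$ the images $\mu_i(u)$ and $\mu_j(u)$ differ by the $PGL(V)$-element $\varphi_{ij}(u)$ acting on $\Hilb^{P(t)}G(V,r)$ through the inherited action, and analogously for $\widetilde\mu_i$ on the universal-subscheme side. Since this action lifts compatibly to both $\Hilb^{P(t)}G(V,r)$ and $\Univ^{P(t)}G(V,r)$ commuting with the universal projection, the local morphisms agree modulo this $PGL(V)$-action on the target; $PGL(V)$-equivariant descent then assembles them into global morphisms $\mu$ and $\widetilde\mu$ whose restrictions to the cover elements recover the original $\mu_i$ and $\widetilde\mu_i$. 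The Cartesian property of the back face of the diagram follows from the analogous local property of each pair $(\mu_i,\widetilde\mu_i)$ together with the universal property of $\Univ^{P(t)}G(V,r) \to \Hilb^{P(t)}G(V,r)$.

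The principal obstacle is the non-triviality of the transition cocycle $\varphi_{ij}$, which prevents naive gluing of the local morphisms; the key observation that resolves it is that this cocycle lies in $PGL(V)$, which already acts canonically on both target schemes, so the overlap discrepancy is entirely captured by the action on the target. The resulting global morphisms are therefore $PGL(V)$-invariant in the sense required: their image subschemes in $\Hilb^{P(t)}G(V,r)$ and $\Univ^{P(t)}G(V,r)$ are $PGL(V)$-stable, which is exactly what is needed for the subsequent GIT-quotient construction of $\widetilde M$.
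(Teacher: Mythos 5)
Your first step (obtaining $\mu_i$ and $\widetilde\mu_i$ on each $U_i$ from the trivialized flat family and the universal property of $\Hilb^{P(t)}G(V,r)$) agrees with the paper's setup, but your gluing step contains a genuine gap. You correctly observe that on $U_{ij}$ the maps satisfy $\mu_j(u)=\varphi_{ij}(u)\cdot\mu_i(u)$ with $\varphi_{ij}(u)$ a generally nontrivial element of $PGL(V)$, and then claim that ``$PGL(V)$-equivariant descent'' assembles the $\mu_i$ into a single morphism $\mu:\widetilde Q\to \Hilb^{P(t)}G(V,r)$ \emph{whose restriction to each $U_i$ recovers $\mu_i$}. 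These two statements are incompatible: if $\mu|_{U_{ij}}=\mu_i|_{U_{ij}}$ and $\mu|_{U_{ij}}=\mu_j|_{U_{ij}}$ while $\mu_i$ and $\mu_j$ differ by a nontrivial automorphism of the target, no such $\mu$ exists. Gluing local maps that agree only up to a target group action yields at best a morphism to a quotient (or quotient stack) of the Hilbert scheme, not to $\Hilb^{P(t)}G(V,r)$ itself; since the GIT quotient is exactly what is being constructed later, this cannot be invoked here. You also never introduce a $PGL(V)$-action on the source $\widetilde Q$, so the sense in which the glued morphism would be equivariant (rather than merely having invariant image) is left undefined.

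The paper's proof supplies precisely the mechanism you are missing. It first refines the cover so that $U_i=\phi^{-1}(U'_i)$ for an open cover $\{U'_i\}$ of $Q$, where $\phi:\widetilde Q\to Q$ is the resolution morphism; it then forms the saturations $PGL(V)\star U_i$, which cover $\widetilde Q$, and uses the fibred diagram (\ref{twoact}), in which the two induced actions on $PGL(V)\times U_i$ coincide ($\alpha'=\beta'$), to extend each $\mu_i$ equivariantly over its saturation and to check that the resulting map $\mu=\bigcup_i PGL(V)\star\mu_i$ is well defined on all of $\widetilde Q$. The $PGL(V)$-invariance of $\mu(\widetilde Q)$ then follows from the equivariance of the $\mu_i$, and only after $\mu$ exists does the universal property of the Hilbert scheme produce $\widetilde\mu$ and the fibred diagram (\ref{glue}). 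To repair your argument you would need either this saturation-and-compatible-actions device, or an argument that the cocycle $\varphi_{ij}$ can be trivialized (which is exactly what cannot be assumed, as you yourself note); as written, the descent step does not produce the asserted morphisms.
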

\begin{proof} Refining the cover
$U_i$, if necessary, we achieve that every element $U_i$ has an
image $U'_i = \phi(U_i)$ which is open in $Q$ and
$U_i=\phi^{-1}(U'_i)$ where subschemes $U'_i$ form a cover for the
scheme $Q$. As previously, the covering by the schemes $U_i$
carries the trivialization of locally free sheaf $ \pi_{\ast}
(\widetilde \E_Q\otimes \widetilde \L_Q^{ m})$.

Schemes $Q\subset \Quot^{rp_E(t)} (V\otimes L^{(-m)})$ and
$\Hilb^{P(t)}G(V,r)$ are supplied with actions of the group
$PGL(V)$. These actions are induced by linear transformations of
vector space $V$. Let
\begin{eqnarray*} &&\alpha:
PGL(V)\times Q  \to Q \nonumber \\
&&\beta:PGL(V) \times \Hilb^{P(t)}G(V,r)\to
\Hilb^{P(t)}G(V,r)\end{eqnarray*} be corresponding morphisms. We
will denote restrictions of these morph\-isms on any subsets by
the same symbols. The abbreviation  $PGL(V)\star U$ denotes
 $\alpha(U \times PGL(V))$ or $\beta(U \times PGL(V))$
respectively. It is clear that sets of the form $PGL(V) \star U$
are  $PGL(V)$-invariant.

Although open subschemes $U'_i$ and $U_i$  a priori are not
$PGL(V)$-invariant, they generate the following fibred diagram
\begin{equation}\label{twoact}\xymatrix{PGL(V)\times U'_i
\ar[r]^{\alpha}& PGL(V)\star U'_i\\
PGL(V)\times U_i \ar[u]^{1\times \phi} \ar[d]_{1\times
\mu_i}\ar@{->} @< 2pt>[r]^{\alpha'}
\ar@{->} @<-2pt> [r]_{\beta'}&PGL(V)\star U_i \ar[u]_{\phi} \ar[d]^{\mu_i}\\
PGL(V)\times \mu_i(U_i) \ar[r]_{\beta}& PGL(V)\star \mu_i(U_i)}
\end{equation}
where $\alpha'=\beta'$ by the definition of actions $\alpha$ and
$\beta.$ Hence the scheme  $\widetilde Q$ can be expressed in the
form $\widetilde Q=\bigcup_i PGL(V) \star
\phi^{-1}(U'_i)=\bigcup_i PGL(V) \star U_i.$ By (\ref{twoact}) we
obtain morphisms
\begin{equation*}\xymatrix{\bigcup_i PGL(V)\star \mu_i(U_i)
&\ar[l]_{\;\;\;\;\mu} \bigcup_i PGL(V)\star U_i \ar[r]^{\phi}
& \bigcup_i PGL(V)\star U'_i\\
\mu(\widetilde Q) \ar@{=} [u]& \widetilde Q \ar@{=} [u]& Q\ar@{=}
[u]}
\end{equation*}
The morphism $\mu:=\bigcup_i PGL(V)\star \mu_i$ is defined by maps
 $\mu_i$ and their composites with group actions. By the
 central equality,
connected com\-ponents of the scheme $\widetilde Q$ are taken by
the map  $\bigcup_i PGL(V)\star \mu_i$ to connected sets.
Consequently, morphism $\mu$ is well-defined. Since morph\-isms
$\mu_i$ are by definition equivariant, then the subscheme
$\mu(\widetilde Q)$ is $PGL(V)$-invariant in $\Hilb^{P(t)}
G(V,r).$

The universal property of the Hilbert scheme guarantees the
existence of the morphism  $\widetilde \mu$ and of fibred diagram
(\ref{glue}).\end{proof}

The following simple result provides independence of the moduli
scheme $\widetilde M$ of the choice of the cover of the scheme
$\widetilde Q$ trivializing the locally free sheaf $ \pi_{\ast}
(\widetilde \E_Q \otimes \widetilde \L_Q^{m}).$

\begin{proposition} {The subscheme  $\mu(\widetilde Q)\subset
\Hilb^{P(t)}G(V,r)$ does not depend on the choice of the covering
$\bigcup_i U_i=\widetilde Q.$}
\end{proposition}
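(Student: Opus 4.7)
The plan is to compare two different trivializing covers and exploit the fact that $\mu(\widetilde Q)$ is constructed as a union of $PGL(V)$-orbits, so the $GL(V)$-ambiguity of trivializations is absorbed. Let $\{U_i\}$ with trivializations $\tau_i: \pi_{\ast}(\widetilde \E_Q\otimes \widetilde \L_Q^{m})|_{U_i}\stackrel{\sim}{\to}V\otimes \OO_{U_i}$ and $\{U'_j\}$ with trivializations $\tau'_j$ be two such covers, producing morphisms $\mu_i: U_i\to \Hilb^{P(t)}G(V,r)$ and $\mu'_j: U'_j\to \Hilb^{P(t)}G(V,r)$ respectively. Denote their associated subschemes $\mu(\widetilde Q)=\bigcup_i PGL(V)\star \mu_i(U_i)$ and $\mu'(\widetilde Q)=\bigcup_j PGL(V)\star \mu'_j(U'_j)$.

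First I would pass to the common refinement $W_{ij}=U_i\cap U'_j$, which still trivializes the locally free sheaf $\pi_{\ast}(\widetilde\E_Q\otimes\widetilde\L_Q^{m})$ via either $\tau_i|_{W_{ij}}$ or $\tau'_j|_{W_{ij}}$. The composition $g_{ij}:=\tau'_j\circ \tau_i^{-1}$ is an automorphism of $V\otimes\OO_{W_{ij}}$, i.e.\ a morphism $W_{ij}\to GL(V)$. By the same trivialization-gluing reasoning used earlier in the text (where overlaps $U_{ij}$ produced $GL(V)$-cocycles $\varphi_{ij}$ whose action on $G(V,r)$ factors through $PGL(V)$), the induced automorphism of $G(V,r)\times W_{ij}$ is given by the $PGL(V)$-valued map obtained from $g_{ij}$.

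Next I would observe that the two morphisms $\widetilde\mu_i$ and $\widetilde\mu'_j$ to $\Univ^{P(t)}G(V,r)$ (and hence $\mu_i$ and $\mu'_j$ to $\Hilb^{P(t)}G(V,r)$) constructed from $\tau_i$ and $\tau'_j$ are related on $W_{ij}$ by precisely this action: for every $\widetilde q\in W_{ij}$, the immersed subscheme $j'_{\widetilde q}(\widetilde S_{\widetilde q})\subset G(V,r)$ obtained from $\tau'_j$ is the translate of $j_{\widetilde q}(\widetilde S_{\widetilde q})$ by $\overline{g_{ij}(\widetilde q)}\in PGL(V)$. Consequently $\mu'_j(W_{ij})\subset PGL(V)\star \mu_i(W_{ij})$ and vice versa, which gives the inclusions $PGL(V)\star \mu'_j(W_{ij})=PGL(V)\star \mu_i(W_{ij})$ at the level of $PGL(V)$-saturations.

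Finally, using $\widetilde Q=\bigcup_{i,j}W_{ij}$, I would write
\[
\mu'(\widetilde Q)=\bigcup_{i,j}PGL(V)\star \mu'_j(W_{ij})=\bigcup_{i,j}PGL(V)\star \mu_i(W_{ij})=\mu(\widetilde Q),
\]
which gives the desired independence. The routine obstacle is mostly bookkeeping: one must verify carefully that the identification of $W_{ij}$-points of $GL(V)$ with transition isomorphisms of the trivialized bundle is compatible with the formation of the induced morphisms into the Grassmannian bundle, and hence with the morphisms $\mu_i,\mu'_j$ into the Hilbert scheme. Once this is checked, the conclusion is automatic since $\mu(\widetilde Q)$ was defined as a $PGL(V)$-invariant union from the outset.
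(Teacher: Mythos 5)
Your argument is correct and follows essentially the same route as the paper: compare the two trivializations on overlaps $U_i\cap \widetilde U_j$, note they differ by a $GL(V)$-valued transition whose effect on the Grassmannian (hence on the induced maps to the Hilbert scheme) factors through $PGL(V)$, and conclude equality of the $PGL(V)$-saturated unions. Your pointwise formulation of the translation by $\overline{g_{ij}(\widetilde q)}$ just makes explicit what the paper states more briefly.
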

\begin{proof} Choose an another covering  $\widetilde Q=
\bigcup_j \widetilde U_j$ with trivializing isomorphisms
$\widetilde \tau_j:  \pi_{\ast} (\widetilde \E_Q \otimes
\widetilde \L_Q^{m})|_{\widetilde U_j}\stackrel{\sim}{\to} V
\otimes \OO_{\widetilde U_j}$ and repeat the construction of
subscheme $\mu(\widetilde Q).$ Notice that
\begin{eqnarray*}\bigcup_j PGL(V) \star \widetilde U'_j=\bigcup_i
PGL(V)\star U'_i=Q,\nonumber \\
\bigcup_j PGL(V) \star \widetilde U_j=\bigcup_i PGL(V)\star
U_i=\widetilde Q.\nonumber
\end{eqnarray*}

Now consider the intersection  $U_i \cap \widetilde U_j$ and
induced morphisms of trivialization
\begin{equation*}
\xymatrix{V\otimes \OO_{U_i \cap \widetilde U_j}&
\ar[l]_{\!\!\!\!\!\!\!\!\!\tau_i}^{\!\!\!\!\!\!\!\!\!\sim}
\pi_{\ast} (\widetilde \E_Q \otimes \widetilde \L_Q^{m})|_{U_i
\cap \widetilde U_j} \ar[r]^{\;\;\;\;\;\;\;\;\widetilde
\tau_j}_{\;\;\;\;\;\;\;\;\sim}& V\otimes \OO_{U_i \cap \widetilde
U_j}}
\end{equation*}
Hence, on the common part $U_i \cap \widetilde U_j$
trivializations are identified by an appropriate
$GL(V)$-transformation. Consequently, images of the induced maps
of the scheme $U_i \cap \widetilde U_j$ in the Hilbert scheme
$\Hilb^{P(t)} G(V,r)$ are identified by the corresponding
 $PGL(V)$-transformation.
Then we have $$\bigcup_i PGL(V)\star \mu_i(U_i)=\bigcup_j
PGL(V)\star \widetilde \mu_j(\widetilde U_j)=\mu(\widetilde  Q).$$
\end{proof}

\begin{proposition}\label{qpro} {$\mu(\widetilde Q)$ is the quasiprojective
subscheme in  $\Hilb^{P(t)}G(V,r)$.}
\end{proposition}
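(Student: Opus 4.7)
The goal is to exhibit $\mu(\widetilde Q)$ as a locally closed subscheme of the projective scheme $\Hilb^{P(t)}G(V,r)$, which, since the target is projective, is equivalent to quasiprojectivity. The plan rests on combining the quasiprojectivity of $\widetilde Q$ with the projectivity of the morphisms $\mu_i$ asserted in Remark 1 and with the $PGL(V)$-equivariant gluing carried out in the proof of the preceding proposition.

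First I would refine the trivializing cover $\widetilde Q=\bigcup_i U_i$ to a \emph{finite} collection; this is possible because $\widetilde Q$ is quasiprojective, hence Noetherian. On each piece, the morphism $\mu_i: U_i\to\Hilb^{P(t)}G(V,r)$ is projective (Remark 1). Extend $U_i$ to its projective closure $\overline{U_i}$ in an ambient projective space; since $\Hilb^{P(t)}G(V,r)$ is projective, $\mu_i$ extends to a morphism $\overline\mu_i:\overline{U_i}\to\Hilb^{P(t)}G(V,r)$ whose image is closed (being the image of a proper scheme in a separated scheme). Then
\[
\mu_i(U_i) \;=\; \overline\mu_i(\overline{U_i})\,\setminus\,\overline\mu_i(\overline{U_i}\setminus U_i),
\]
which exhibits $\mu_i(U_i)$ as a locally closed, hence quasiprojective, subscheme of $\Hilb^{P(t)}G(V,r)$.

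Second, I would invoke the identity $\mu(\widetilde Q)=\bigcup_i PGL(V)\star\mu_i(U_i)$ established in the proof of the previous proposition. The $PGL(V)$-action on $\Hilb^{P(t)}G(V,r)$ is morphic and by the equivariance of $\mu$ the image subset $\mu(\widetilde Q)$ is $PGL(V)$-stable; consequently each $PGL(V)\star\mu_i(U_i)$ is the $PGL(V)$-saturation of a locally closed subset and is itself locally closed in the Hilbert scheme. Taking the finite union, together with its reduced induced scheme structure inherited from the Hilbert scheme, yields a quasiprojective subscheme, namely $\mu(\widetilde Q)$.

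The main obstacle is the passage from ``constructible'' (which is automatic from Chevalley's theorem applied to the morphism $\mu$ from the Noetherian scheme $\widetilde Q$) to ``locally closed.'' This step is exactly what properness of the $\mu_i$ (coming from Remark 1) is designed to handle: it forces the frontier of each piece $\mu_i(U_i)$ to sit inside a closed subset that is excised in a controlled way, and the $PGL(V)$-equivariance then ensures the global image does not develop additional non-locally-closed behaviour when the local pieces are assembled.
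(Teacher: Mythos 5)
Your overall strategy (compactify, extend $\mu$, excise the image of the boundary) is the same as the paper's, but two of your steps do not hold as stated. First, the extension step: projectivity of $\Hilb^{P(t)}G(V,r)$ does not let you extend $\mu_i$ from $U_i$ to a pre-chosen projective closure $\overline{U_i}$; properness of the target only gives the valuative criterion, and the rational map $\overline{U_i}\dashrightarrow \Hilb^{P(t)}G(V,r)$ may well have indeterminacy along the boundary. The paper obtains a compactification to which the map extends \emph{by construction}, namely by taking (iterated) closures of the graph of $\mu$ — first $\widetilde Q\hookrightarrow \overline{\mu(\widetilde Q)}\times\overline{\widetilde Q}$, then into a further product with $\overline Q$ — so that $\overline\mu$ is simply a projection restricted to a closed subscheme. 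Second, even granting an extension $\overline\mu_i$, your identity $\mu_i(U_i)=\overline\mu_i(\overline{U_i})\setminus\overline\mu_i(\overline{U_i}\setminus U_i)$ requires that no boundary point of $\overline{U_i}$ map into $\mu_i(U_i)$, which is not automatic: take $U=\P^1\setminus\{-1\}$ and the map $t\mapsto t^2$ to $\P^1$; the boundary point $-1$ maps to $1$, which already lies in the image of $U$, so "image of the closure minus image of the boundary" does not recover the image. The paper isolates exactly this point as Lemma \ref{boundary} and argues it for its particular compactification; you assert it silently.

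The more serious gap is the assembly. Even if each $\mu_i(U_i)$ were locally closed, the $PGL(V)$-saturation of a locally closed subset need not be locally closed (already for $\mathbb{G}_m$ acting on $\A^2$ by scaling the first coordinate, the saturation of the closed diagonal is $\{x\neq 0,\ y\neq 0\}\cup\{(0,0)\}$, which is constructible but not locally closed), and a finite union of locally closed subsets is in general only constructible. Your closing appeal to equivariance and to properness of the $\mu_i$ is precisely the point that needs proof, and no argument is offered; note moreover that if the $\mu_i$ really were proper in the way you read Remark 1, then each $\mu_i(U_i)$ would already be closed and no compactification would be needed, so that properness cannot be what rescues this step. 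The paper avoids piecewise assembly altogether: it compactifies all of $\widetilde Q$ at once, shows $\overline{\mu(\widetilde Q)}\setminus\mu(\widetilde Q)=\overline\mu(\overline{\widetilde Q}\setminus\widetilde Q)$, which is closed because $\overline\mu$ is proper and the boundary $\overline{\widetilde Q}\setminus\widetilde Q$ is closed, and concludes that $\mu(\widetilde Q)$ is open in its closure. To repair your argument you would essentially have to carry out this same global graph-closure construction rather than glue local pieces.
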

\begin{proof} Form a closure  $\overline
{\mu(\widetilde Q)}$ of the subscheme  $\mu(\widetilde Q)$ in the
projective scheme $\Hilb^{P(t)}G(V,r)$. It is enough to confirm
that the subset $\mu(\widetilde Q)$ is open in
$\overline{\mu(\widetilde Q)}.$

Let $\overline Q$ be the scheme-theoretic closure of the subscheme
$Q$ in \linebreak $\Quot^{rp_E(t)}(V \otimes L^{(-m)}),$
$\overline{ \widetilde Q}$ be the projective closure of the
quasiprojective scheme $\widetilde Q.$ We claim that the scheme
$\overline{\widetilde Q}$ can be chosen so that there is a fibred
diagram
\begin{equation} \label{close}\xymatrix{\mu(\widetilde Q)
\ar@{^(->} [d]& \ar[l]_{\mu} \widetilde Q \ar@{^(->} [d]
\ar[r]^{\phi}&
Q\ar@{^(->} [d]\\
\overline{\mu(\widetilde Q)}& \ar[l]_{\overline \mu} \overline
{\widetilde Q} \ar[r]^{\overline \phi}& \overline Q}
\end{equation}
with open immersions.

Indeed, let the scheme  $\overline {\widetilde Q}$ is not include
in the diagram (\ref{close}). Consider the locally closed
"diagonal"\, embedding  $\widetilde Q \hookrightarrow
\overline{\mu(\widetilde Q)} \times \overline{\widetilde Q}$. Let
 $Q'$ be the closure if its image. Now form a locally closed
 immersion $\widetilde Q \hookrightarrow Q' \times \overline Q$.
Let  $Q''$ be the closure of the image if this immersion.
Redenoting $\overline{\widetilde Q_1}:=Q''$ we have the required
projective closure. Morphisms $\overline \mu$ and $\overline \phi$
are given by the composite maps of closed immersions and
projections onto the direct summand
\begin{eqnarray*}
&&\overline{\mu}:\overline{\widetilde Q_1}\hookrightarrow Q'
\times \overline Q \stackrel{pr_1}{\longrightarrow}
Q'\hookrightarrow \overline{\mu(\widetilde Q)} \times
\overline{\widetilde Q} \stackrel{pr_1}{\longrightarrow}
\overline{\mu(\widetilde Q)},\nonumber \\
&&\overline{\phi}: \overline{\widetilde Q_1}\hookrightarrow Q'
\times \overline Q \stackrel{pr_2}{\longrightarrow} \overline
Q.\nonumber \end{eqnarray*} Then $\overline{\widetilde Q_1}$ is
the required projective closure. We will denote it by the symbol
$\overline{\widetilde Q}$.

Now we need a lemma to be proven later.

\begin{lemma}\label{boundary}
$\overline{\mu(\widetilde Q)}\setminus \mu(\widetilde Q)=
\overline{\mu} (\overline {\widetilde Q}\setminus \widetilde Q).$
\end{lemma}

Since the scheme  $\widetilde Q$ is quasiprojective, then the
"boundary"\, $\overline{\widetilde Q} \setminus \widetilde Q$ is
closed in  $\overline{\widetilde Q}$. Since the morphism
$\overline \mu$ is projective, it is proper and takes closed
subsets to closed subsets. Hence, the image $\overline{\mu}
(\overline {\widetilde Q}\setminus \widetilde Q)$ is closed in
 $\overline{\mu}(\overline {\widetilde Q}).$ By lemma \ref{boundary}
 the subset $\overline{\mu(\widetilde Q)}\setminus
\mu(\widetilde Q)$ is closed in $\overline{\mu}(\overline
{\widetilde Q}).$ Hence the subset $\mu (\widetilde Q)$ is open in
the projective scheme $\overline{\mu}(\overline {\widetilde Q}).$
Then  $\mu (\widetilde Q)$ is quasiprojective scheme.
\end{proof}
\begin{proof}[of lemma \ref{boundary}]
It suffices to check the set-theoretical equality. The inclusion
 $\overline{\mu(\widetilde Q)}\setminus \mu(\widetilde
Q)\supset \overline{\mu} (\overline {\widetilde Q}\setminus
\widetilde Q)$ follows immediately form the con\-struct\-ion. To
prove the opposite inclusion assume that there is "non-boundary"\,
point $x$ in the preimage
$\overline{\mu}^{\;-1}(\overline{\mu(\widetilde Q)}\setminus
\mu(\widetilde Q))$ of the "boundary". Namely, we suppose that
 a point $x$ does not belong to the subset
$\overline{\widetilde Q} \setminus \widetilde Q$. This means that
$x\in \widetilde Q$ but $\overline{\mu}(x)\in \mu(\widetilde Q).$
This contradiction proves the lemma.
\end{proof}

\section{Distinguished polarization of the scheme  $\widetilde S$
}\label{polar} In this section we obtain the explicit form of the
distinguished polarization on the scheme  $\widetilde S$. Since
this scheme can fail to be a variety we will work with very ample
invertible sheaves instead divisorial classes. Properties of a
morphism $\sigma: \widetilde S \to S$ are mostly similar to ones
of the blowup morphism since $\sigma $ is a structure morphism of
a projective spectrum of an appropriate sheaf algebra.

\begin{proposition} Distinguished polarizations $\widetilde L$
of schemes of the form $\widetilde S$ \linebreak described in the
definition \ref{admis}, provide a Hilbert polynomial which is
constant in flat families of admissible schemes.
\end{proposition}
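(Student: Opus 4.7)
The plan is to realize the distinguished polarization as the fibrewise restriction of a relatively very ample invertible sheaf on a flat family of schemes, and then invoke the standard fact that in such a setting the Hilbert polynomial is locally constant on the base.

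First I would recall from \S 1 that any admissible scheme $\widetilde S \cong \Proj \bigoplus_{s\ge 0}(I[t]+(t))^s/(t^{s+1})$ with $I = \FFitt^0 \EExt^2(\varkappa, \OO_S)$ arises as a fibre of the composite morphism $pr: \widehat{T\times S}\stackrel{\sigma\!\!\!\sigma}{\longrightarrow} T\times S \stackrel{\pi}{\longrightarrow} T$ over a closed point, where $\sigma\!\!\!\sigma$ is the blowup of $T\times S$ in the coherent sheaf of ideals $\I = \FFitt^0 \EExt^1(\E, \OO_{T\times S})$ attached to a $T$-flat family $\E$ of torsion-free sheaves. As has already been argued in \S 2, $pr$ is a flat projective morphism whenever $T$ is regular of dimension one.

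Next I would exhibit the relative polarization. By the universal property of blowing up, $\sigma\!\!\!\sigma^{-1}\I \cdot \OO_{\widehat{T\times S}}$ is an invertible ideal sheaf that is anti-ample relative to $\sigma\!\!\!\sigma$. Since $L$ is very ample on $S$, the sheaf $\OO_T\boxtimes L^m$ is $\pi$-very ample on $T\times S$, and for $m\gg 0$ the twist
\[
\widetilde \L^m := \sigma\!\!\!\sigma^{\ast}(\OO_T \boxtimes L^m) \otimes (\sigma\!\!\!\sigma^{-1}\I \cdot \OO_{\widehat{T\times S}})
\]
becomes very ample relative to the composite $pr$. Its restriction to the fibre $pr^{-1}(t) = \widetilde S$ is $L^m \otimes (\sigma^{-1} I \cdot \OO_{\widetilde S}) = \widetilde L^m$, which is precisely the distinguished polarization of Definition~\ref{admis}. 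Flatness of $pr$ together with relative very ampleness of $\widetilde \L^m$ then forces the function $t \mapsto \chi(\widetilde S_t, \widetilde L^{mn}|_{\widetilde S_t})$ to be a polynomial in $n$ that is locally constant in $t$, hence constant on each connected component of $T$. For an arbitrary flat family of admissible schemes the same conclusion follows either by reproducing the construction locally on the base or, more directly, by applying the global form of the same flatness theorem, since any such family is by hypothesis equipped with a fibrewise distinguished polarization.

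The step that will take most care is the middle one: verifying that the globally defined relatively very ample sheaf $\widetilde \L^m$ really restricts on every fibre to the sheaf named the distinguished polarization in Definition~\ref{admis}. This amounts to identifying the fibre at $t\in T$ of $\FFitt^0\EExt^1(\E, \OO_{T\times S})$ with $\FFitt^0\EExt^1(\E_t, \OO_S) = \FFitt^0\EExt^2(\varkappa_t, \OO_S)$ via Proposition~\ref{sing}, and thereby matching the scheme-theoretic fibre of $pr$ with the $\Proj$ appearing in Definition~\ref{admis}; this is where $T$-flatness of $\E$ and the explicit fibrewise description of $\widetilde S$ recalled in \S 1 have to be used carefully.
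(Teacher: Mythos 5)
Your proposal takes essentially the same route as the paper: both realize the distinguished polarization as the fibrewise restriction of the relatively very ample sheaf $\OO_{\widehat \Sigma}(1)\otimes \sigma\!\!\!\sigma^{\ast}\L^{m}$ on the blown-up family $\widehat{T\times S}\to T$ over a regular one-dimensional base (the paper makes the relative very ampleness explicit via an embedding into a projective bundle and Hartshorne II, ex. 7.14(b)), and then conclude by constancy of Hilbert polynomials in flat projective families, identifying the special-fibre restriction with $L^{m}\otimes(\sigma^{-1}I\cdot\OO_{\widetilde S})$ exactly as you do. One small correction: $\sigma\!\!\!\sigma^{-1}\I\cdot\OO_{\widehat{T\times S}}$ is relatively \emph{ample} (indeed it is the Serre twist $\OO(1)$ of the blowup), not anti-ample relative to $\sigma\!\!\!\sigma$ -- but the claim your argument actually uses, namely very ampleness of the twist over $T$ for $m\gg 0$, is correct and is the same fact the paper invokes.
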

\begin{proof} Due to (\ref{redto1}), we can work over a regular
one-dimen\-sional base $T$. Let  $\Sigma:= T\times S$ be a trivial
family of surfaces  $S$ supplied with a flat family $\E$ of
semistable coherent torsion-free sheaves with prescribed Hilbert
polynomial $rp_E(t)$. Let $\I=\FFitt^0 \EExt^1(\E, \OO_{\Sigma})$,
and $\sigma \!\!\! \sigma: \widehat \Sigma  \to \Sigma$ be the
morphism of blowing up of the scheme $\Sigma$ in the sheaf of
ideals $\I$. Since $\sigma \!\!\! \sigma$ is a projective
morphism, then there exist a projective bundle $p:\P_{\Sigma}\to
\Sigma$ and a closed immersion  $i: \widehat \Sigma
\hookrightarrow \P_{\Sigma}$ such that the triangle
\begin{equation*}\xymatrix{\widehat \Sigma \ar[r]^i
\ar[rd]_{\sigma \!\!\! \sigma}& \P_{\Sigma} \ar[d]^p\\
&\Sigma}
\end{equation*}
commutes. Indeed, since $\I$ is a finitely generated
$\OO_{\Sigma}$-module, then there exist a locally free
$\OO_{\Sigma}$-sheaf of finite rank $F$ and an epimorphism of
$\OO_{\Sigma}$-modules $F\twoheadrightarrow \I.$ Then there are
the induced epimorphism of symmetric algebras $Sym^{\cdot} F
\twoheadrightarrow \bigoplus_{s \ge 0} \I^s$ and, consequently,
the induced morphism of immersion of projective spectra $\widehat
\Sigma = \Proj \bigoplus_{s \ge 0} \I^s
\stackrel{i}{\hookrightarrow}\P_{\Sigma}=\Proj Sym^{\cdot}F.$

Now consider a composite  $\P_{\Sigma}
\stackrel{p}{\longrightarrow} \Sigma
\stackrel{\pi}{\longrightarrow} T$ of projective morphisms. As
previously, $\L$ is an invertible $\OO_{\Sigma}$-sheaf, very ample
relatively $T$. It corresponds to the sheaf  $L$ under the
restriction onto any fibre of the projection $\pi$. Then by
\cite[ch. II, exercise 7.14 (b)]{Hart}, for  $m\gg 0$ the sheaf
 $\L_T:=\OO_{\P_{\Sigma}}(1) \otimes p^{\ast} \L ^{m}$
is very ample relatively  $T$. The restriction of this sheaf onto
the image of the immersion $i$ gives a very ample relatively $T$
sheaf $\widehat \L =i^{\ast}\L_T= \OO_{\widehat \Sigma}(1) \otimes
\sigma \!\!\! \sigma^{\ast} \L ^{m}$ on the scheme $\widehat
\Sigma.$ Then there exist a projective bundle $p_T:\P_T\to T$ and
a closed immersion $j: \widehat \Sigma \hookrightarrow \P_T$,
include into the commutative triangle
\begin{equation*}\xymatrix{\widehat \Sigma \ar[r]^{j}
\ar[rd]_{\pi \circ p}& \P_T \ar[d]^{p_T}\\
& T}\end{equation*} Here $\widehat \Sigma$ is the closure of the
image of the open subset $\Sigma_0$ under the immersion $j$. It
follows  \cite[ch. III, theorem 9.9]{Hart} that the Hilbert
polynomi\-al of the image  $j(\widehat \Sigma)$ is fibrewise
constant over  $T$.

Prove that the sheaf  $\widehat \L$ provides a polarization given
by $L$, on a general enough fibre which is isomorphic to the
surface $S$. The restriction onto the fibre $\pi^{-1}(t) \cong S$
yields $\widehat \L |_{\pi^{-1}(t)}= i^{\ast}
(\OO_{\P_{\Sigma}}(1) \otimes \sigma \!\!\!
\sigma^{\ast}\L^{m})|_{\pi^{-1}(t)} =L^{m}.$

Restriction onto the special fibre $\pi^{-1} (t_0)\cong \widetilde
S \not \cong S$ results in the equalities $\widetilde L^m:=
\widehat \L |_{\pi^{-1}(t_0)}=
 (\OO_{\widehat \Sigma}(1)\otimes
\sigma \!\!\! \sigma^{\ast}\L^{ m})|_{\pi^{-1}(t_0)}=
 (\sigma \!\!\! \sigma^{-1} \I \cdot \OO_{\widehat \Sigma})\otimes
\sigma \!\!\! \sigma^{\ast}\L^{m}|_{\pi^{-1}(t_0)}= (\sigma^{-1}I
\cdot \OO_{\widetilde S})\otimes \sigma^{\ast} L^{m}$, where
$I\subset \OO_S$ is the sheaf of ideals obtained by the
restriction of the sheaf $\I$ on the fibre $\pi^{-1}(t_0).$
\end{proof}

\section{Isomorphism $H^0(\widetilde S, \widetilde E \otimes \widetilde L^m)
\stackrel{\sim}{\to} H^0(S, E\otimes L^m)$.} Obviously, such an
isomorphism exists because both the spaces of global sections have
equal dimensions. In this section we compute the isomorph\-ism
$H^0(\widetilde S, \widetilde E \otimes \widetilde L^m)
\stackrel{\sim}{\to} H^0(S, E\otimes L^m)$ distinguished by the
construction of standard resolution. This construction puts into
the correspondence a pair $(\widetilde S,\widetilde E)$ to the
coherent torsion-\linebreak free sheaf $E$. First we construct the
required homomorphism of vector spaces and then prove that this is
an isomorphism.

In the sequel we replace schemes $Q$ and $\widetilde Q$ by their
nonsingular resolutions $\xi: Q' \to Q$ and $\widetilde \xi:
\widetilde Q' \to \widetilde Q$ such that there is a birational
morphism $\phi': \widetilde Q' \to Q'.$ The family $\widetilde
\Sigma_Q$ is replaced by its preimage $\widetilde
\Sigma'=\widetilde \Sigma_Q \times _{\widetilde Q} \widetilde Q'$.
Form inverse images $\widetilde \E'_Q=\widetilde \xi^{\ast}
\widetilde \E_Q$ and $\E'_Q=\xi^{\ast}\E_Q.$ Since $\mu(\widetilde
Q')=\mu(\widetilde Q)$ we preserve notations $\widetilde Q$ and
$Q$ for nonsingular resolutions, $\phi: \widetilde Q \to Q$ for
the corresponding birational morphism, $\widetilde \Sigma_Q$ for
the family of schemes, $\widetilde \E_Q$ and $\E_Q$ for families
of sheaves.

\begin{proposition} Apply the standard resolution  to the family
$p: Q\times S \to Q$ and to the family of sheaves $\E_Q$. This
induces for any pair of points $(\widetilde q, q),$ $\widetilde q
\in \widetilde Q,$
 $q=\phi(\widetilde q) \in Q$, the fixed isomorphism $\upsilon:H^0(\widetilde S, \widetilde E \otimes \widetilde L^m)
\stackrel{\sim}{\to} H^0(S, E\otimes L^m)$.
\end{proposition}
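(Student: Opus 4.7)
The plan is to construct $\upsilon$ by reducing to a one-parameter deformation in which the generic fiber is the Quot-point realization of a locally free sheaf and the special fiber is $(\widetilde S, \widetilde E, \widetilde L)$, and then using cohomology-and-base-change for the relative pushforward of $\widetilde\E\otimes\widetilde\L^m$.

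First I would pass to a one-dimensional test base. Using diagram \eqref{redto1} and the discussion that preceded it (the one-parameter families $\widetilde\Sigma\to\widetilde T$ suffice to detect the polarization structure on fibers of $\widetilde\Sigma_Q\to\widetilde Q$), I choose a smooth curve $\widetilde T$ and a morphism $\widetilde T\to\widetilde Q$ sending a closed point $t_0$ to $\widetilde q$, and sending $\widetilde T\setminus\{t_0\}$ into the open subscheme $\widetilde Q_0\subset\widetilde Q$ where $\phi:\widetilde Q\to Q$ restricts to an isomorphism onto the locally free locus. Pulling back the standard-resolution data produces the family $\widetilde\pi:\widetilde\Sigma\to\widetilde T$ together with the locally free sheaf $\widetilde\E=\widetilde\nu^{\ast}\widetilde\E_Q$ and the relatively very ample sheaf $\widetilde\L=\widetilde\nu^{\ast}\widetilde\L_Q$, whose fiber over $t_0$ is exactly $(\widetilde S,\widetilde E,\widetilde L)$ while for $t\neq t_0$ the fiber is $(S,E_t,L)$ with $E_t$ locally free and corresponding to a point of $Q$.

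Second I would apply cohomology and base change. By the proposition of Section~\ref{polar}, the Hilbert polynomial $\chi(\widetilde\E|_{\widetilde\pi^{-1}(t)}\otimes\widetilde\L|_{\widetilde\pi^{-1}(t)}^{m})$ is constant in $t$ and equals $rp_E(m)$. For $m\gg 0$ the higher direct images $R^i\widetilde\pi_{\ast}(\widetilde\E\otimes\widetilde\L^{m})$ vanish for $i>0$, so $\widetilde\pi_{\ast}(\widetilde\E\otimes\widetilde\L^{m})$ is a locally free $\OO_{\widetilde T}$-sheaf of rank $rp_E(m)=\dim V$ whose formation commutes with base change. In particular its fiber at $t_0$ is $H^0(\widetilde S,\widetilde E\otimes\widetilde L^{m})$, and for $t\neq t_0$ it is $H^0(S,E_t\otimes L^{m})$.

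Third I would assemble $\upsilon$ from the Quot-structure and a trivialization. Over $\widetilde T\setminus\{t_0\}$, each $t$ corresponds to a point of $Q\subset\Quot^{rp_E(t)}(V\otimes L^{(-m)})$, and the chosen quotient $V\otimes L^{(-m)}\twoheadrightarrow E_t$ produces the canonical isomorphism $V\stackrel{\sim}{\to}H^0(S,E_t\otimes L^{m})$. Shrinking $\widetilde T$ if necessary, I trivialize the locally free sheaf $\widetilde\pi_{\ast}(\widetilde\E\otimes\widetilde\L^{m})$ on $\widetilde T\setminus\{t_0\}$ compatibly with this Quot-identification; since $\widetilde T$ is a regular curve of dimension one and the pushforward is locally free of finite rank, the trivialization extends across $t_0$. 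Specializing at $t_0$ and composing with the canonical iso $V\cong H^0(S,E\otimes L^{m})$ attached to $q=\phi(\widetilde q)\in Q$ yields the desired $\upsilon$, and it is an isomorphism because the source and target are the fibers of the same locally free sheaf of rank $\dim V$.

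The main obstacle will be well-definedness: one must show that $\upsilon$ depends only on $(\widetilde q,q)$ and not on the choice of curve $\widetilde T$ or of trivialization. I would handle this by invoking the global structure of Section~2: the trivializations $\tau_i$ of $\pi_{\ast}(\widetilde\E_Q\otimes\widetilde\L_Q^{m})$ on the cover $\widetilde Q=\bigcup_i U_i$ have transition functions in $GL(V)$ that match, under the $PGL(V)$-equivariant morphism $\mu:\widetilde Q\to\Hilb^{P(t)}G(V,r)$, the transitions coming from the $PGL(V)$-action on $Q$; hence any two trivializations of $\widetilde\pi_{\ast}(\widetilde\E\otimes\widetilde\L^{m})$ along $\widetilde T$ produce the same composite $\upsilon$ once the Quot-representative at $q$ is fixed. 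Equivalently, the natural surjection $\sigma^{\ast}E\twoheadrightarrow\widetilde E$ from Proposition~\ref{resdes} together with $\widetilde L^{m}=\sigma^{\ast}L^{m}\otimes(\sigma^{-1}I\cdot\OO_{\widetilde S})$ gives a canonical morphism $H^0(\widetilde S,\widetilde E\otimes\widetilde L^{m})\to H^0(S,E\otimes L^{m})$ obtained by pushing forward along $\sigma$, and the family argument above identifies this with $\upsilon$.
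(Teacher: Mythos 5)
Your overall strategy (reduce to a one-parameter family, use constancy of the Hilbert polynomial and cohomology-and-base-change to make $\widetilde\pi_{\ast}(\widetilde\E\otimes\widetilde\L^{m})$ locally free of rank $\dim V$, then produce $\upsilon$ as a fibre map) is close in spirit to the paper, but the decisive step is asserted rather than proved. Over $\widetilde T\setminus\{t_0\}$ the Quot-structure does give a canonical identification $V\otimes\OO\cong\widetilde\pi_{\ast}(\widetilde\E\otimes\widetilde\L^{m})$, but the claim that this trivialization ``extends across $t_0$'' because $\widetilde T$ is a regular curve and the pushforward is locally free is exactly the point at issue: local freeness only guarantees that \emph{some} trivialization exists near $t_0$, not that the given one extends as an isomorphism. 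A frame over the punctured curve extends, after clearing poles, only up to elementary divisors, and the extended morphism may degenerate on the special fibre; equal ranks of source and target do not make the specialized linear map an isomorphism, so your closing sentence of the third step is circular. This degeneracy question is precisely what the paper spends its effort on: it builds a global inclusion of locally free $\OO_{\widetilde Q}$-sheaves $\Upsilon:\pi_{\ast}(\widetilde\E_Q\otimes\widetilde\L_Q^{m})\hookrightarrow p_{\ast}(\phi\times 1)^{\ast}(\E_Q\otimes\L^{m}\otimes\LL^{nm})$ via Lemma \ref{monosh} and the base-change chain (\ref{chaim}), defines $\upsilon$ as its fibre map, and then proves injectivity (hence bijectivity by dimension count) by factoring through the monomorphism $\xi:H^0(\widetilde S,\widetilde E\otimes\widetilde L^m)\hookrightarrow H^0(\widetilde S,\widetilde E\otimes\sigma^{\ast}L^m)$ coming from $\widetilde L^m\hookrightarrow\sigma^{\ast}L^m$, together with $\zeta$ and $H^0(\sigma^{\ast})$.

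Your fallback in the last paragraph does not repair this. The natural maps induced by $\sigma^{\ast}E\twoheadrightarrow\widetilde E$ go in the opposite direction: adjunction gives $E\to\sigma_{\ast}\widetilde E$, i.e.\ $H^0(S,E\otimes L^m)\to H^0(\widetilde S,\widetilde E\otimes\sigma^{\ast}L^m)$, and the target carries $\sigma^{\ast}L^m$, not $\widetilde L^m$; there is no evident canonical morphism $H^0(\widetilde S,\widetilde E\otimes\widetilde L^m)\to H^0(S,E\otimes L^m)$ ``by pushing forward along $\sigma$''. In the paper these canonical maps are used only to test injectivity of an already-constructed $\upsilon$, not to construct it. So to complete your argument you would need either the paper's sheaf-theoretic inclusion over $\widetilde Q$ (or its restriction to your curve) to produce a canonical extension of the generic identification, plus an injectivity argument at $t_0$ of the above type, and at present both ingredients are missing.
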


\begin{proof} Remind that there is a map $\mu: \widetilde Q
\to \Hilb^{P(t)}G(V,r)$, and $\widetilde \Sigma=\pi^{-1} \mu
(\widetilde Q)$. The other notation is fixed in the following
diagram:
\begin{equation}\label{big}\xymatrix{\widetilde \Sigma \ar[d]_{\pi}
\ar[rdd]^<<<<<<<<M && \ar[ll]_{\widetilde \mu}\widetilde \Sigma_Q
\ar[d]_{\pi} \ar[rdd]^<<<<<<<<{\Phi} \ar[rr]^{\widetilde \phi}&&
Q\times S\ar[d]_p \ar[rdd]^=\\
\mu(\widetilde Q)&& \ar[ll]^{\mu}\widetilde Q
\ar[rr]_{\phi}&&Q\\
&\ar[ul]^p \mu(\widetilde Q) \times S&& \ar[ll]^{\mu \times 1}
\ar[ul]^p \widetilde Q \times S \ar[rr]_{\phi \times 1}&&\ar[ul]^p
Q\times S}
\end{equation}
All the parallelograms of the left hand part and the bottom
parallelogram of the right hand part are fibred. All the morphisms
marked with the symbol $\pi$ are inherited from the structure
morphism of the universal scheme $\Univ^{P(m)}G(V,r).$ All the
morphisms marked with the symbol $p$ are projections of the
corresponding direct products onto the first summands.

The scheme  $Q\times S$ carries the coherent reflexive sheaf
$\E_Q$ with homolo\-gical dimension equal to 1. It is flat over
$Q$. Schemes  $\widetilde \Sigma$ and $\widetilde \Sigma_Q$ are
supplied with locally free sheaves $\widetilde \E$ and $\widetilde
\E_Q$ respectively and  $\widetilde \E_Q= \widetilde \mu^{\ast}
\widetilde \E$. On quasiprojective schemes $\widetilde \Sigma$ and
$Q\times S$ there are fixed invertible sheaves $\widetilde \L$ and
$\L$. They are very ample relatively $\mu(\widetilde Q)$ and $Q$
respectively. In this case  $ (\widetilde \phi_{\ast}\widetilde
\mu^{\ast}\widetilde \L^{m})^{\vee \vee}= \L^{ m}$. The following
notation is introduced: $\widetilde \L_Q:=\widetilde \mu^{\ast}
\widetilde \L$. Also, according to the standard resolution,
$(\widetilde \phi_{\ast} \widetilde \E_Q)^{\vee \vee}=\E_Q$. This
equality implies the following
\begin{lemma} \label{monosh} For appropriate very ample invertible
$\OO_{\widetilde Q}$-sheaf $\widetilde \LL$ and $\OO_Q$-sheaf
$\LL$ such that  $(\phi_{\ast}\widetilde \LL)^{\vee \vee}=\LL$,
and for an appropriate integer  $n\gg 0$ there is an inclusion
\begin{equation*}
(\Phi_{\ast}(\widetilde \E_Q\otimes \widetilde \L_Q^{m}\otimes
\widetilde \LL^{mn}))^{\vee \vee}\hookrightarrow (\phi \times
1)^{\ast}(\E_Q \otimes \L^{m} \otimes \LL^{ mn}).\end{equation*}
\end{lemma}
\begin{corollary}\label{cordesc} There is an inclusion \begin{equation*}
[(\mu \times 1)^{\ast} M_{\ast}(\widetilde \E\otimes \widetilde
\L^{m})]^{\vee \vee}\hookrightarrow (\phi \times 1)^{\ast}(\E_Q
\otimes \L^{m} \otimes \LL^{mn}).
\end{equation*}
\end{corollary}
Proofs of lemma \ref{monosh} and corollary \ref{cordesc} will be
done later.

The consequent base changes accordingly to (\ref{big}) lead to the
chain  \begin{eqnarray}\label{chaim}\pi_{\ast}(\widetilde
\E_Q\times \widetilde \L^{m} )= \pi_{\ast} \widetilde \mu^{\ast}
(\widetilde \E \otimes \widetilde
\L^{m})\stackrel{\sim}{\leftarrow} \mu^{\ast} \pi_{\ast}
(\widetilde \E \otimes \widetilde \L^{m}) = \mu^{\ast}
p_{\ast}M_{\ast}(\widetilde \E \otimes \widetilde \L^{m})\nonumber
\\
\hookrightarrow p_{\ast}(\mu \times 1)^{\ast}M_{\ast} (\widetilde
\E \otimes \widetilde \L^{m})/tors \hookrightarrow p_{\ast}[(\mu
\times 1)^{\ast}M_{\ast} (\widetilde \E \otimes \widetilde \L^{
m})]^{\vee \vee}\nonumber\\
\hookrightarrow \linebreak p_{\ast}(\phi \times
1)^{\ast}(\E_Q\otimes \L^{m}\otimes \LL^{nm}).\end{eqnarray} The
first arrow in (\ref{chaim}) is an isomorphism since $m\gg 0$
\cite[lecture 7, $3^{\circ}$]{Mum}. The second arrow is a morphism
of locally free sheaf and both sheaves coincide on the open
subset. Hence, the second arrow is a monomorphism. The sheaf
morphism induced by the inclusion into the reflexive hull, is an
inclusion because the sheaf from the left is torsion-free. The
recent morphism is defined by the corollary \ref{cordesc}.

For $m\gg 0$ we have the inclusion map of  $\OO_{\widetilde Q}$-
sheaves
\begin{equation}\label{inc}\Upsilon:\pi_{\ast}
(\widetilde \E_Q\otimes \widetilde \L_Q^{m} )\hookrightarrow
p_{\ast} (\phi \times 1)^{\ast} (\E_Q \otimes \L^{m}\otimes
\LL^{nm}).
\end{equation}
Since the sheaf $\widetilde \E_Q\otimes \widetilde \L_Q^{m}$ is
flat over $\widetilde Q$ and the sheaf $\E_Q\otimes \L^{m}$ is
flat over $Q$, both sheaves in (\ref{inc}) are locally free of
rank $rp_E(m)$ .

Since $\pi$ is a projective morphism of Noetherian schemes and the
sheaf $\widetilde \E_Q \otimes \widetilde \L_Q^m$ is flat over
$\widetilde Q$, and for $m\gg 0$ functions $\dim
H^i(\pi^{-1}(\widetilde q), \widetilde E \otimes \widetilde L^m)$
are constant as functions of a point $\widetilde q \in \widetilde
Q$, then by  \cite[ch. III, corollary 12.9]{Hart} there is an
isomorphism $\pi_{\ast} (\widetilde \E_Q\otimes \widetilde \L_Q^m
)\otimes k_{\widetilde q}\cong H^0(\widetilde S, \widetilde E
\otimes \widetilde L^m)$. Here we use following notations:
$\widetilde S= \pi^{-1}(\widetilde q),$ $\widetilde q \in
\widetilde Q,$ $\widetilde E= \widetilde \E|_{\pi^{-1}(\widetilde
q)}$. By the similar reason, $p_{\ast}(\E_Q\otimes \L^m ) \otimes
k_q\cong H^0(S, E\otimes L^m)$ for $S=p^{-1}(q),$ $q\in Q,$
$E=\E|_{p^{-1}(q)}$. Suppose that points $\widetilde q$ and $q$
satisfy $\phi(\widetilde q)=q.$ Then we have the homomorphism of
vector spaces of global sections $\upsilon:H^0(\widetilde S,
\widetilde E\otimes \widetilde L^m) \to H^0(S, E\otimes L^m)$
induced by the inclusion (\ref{inc}).

Note that by the construction of the morphism $\sigma: \widetilde
S \to S$ by means of blowing up a family of surfaces \cite{Tim3}
there is an isomorphism $\sigma^{\ast} \OO_S= \OO_{\widetilde S}$.
Then, keeping in mind the equality $\widetilde E=
\sigma^{\ast}E/tor\!s$, consider the mapping of spaces of global
sections
\begin{equation*}
H^0(\sigma^{\ast}): H^0(S, E\otimes L^m) \to H^0(\widetilde S,
\sigma^{\ast}(E\otimes L^m)).
\end{equation*}
It is induced by formation of  the inverse image. Also consider
the map
\begin{equation*} \zeta:H^0(\widetilde S, \sigma^{\ast}(E\otimes L^m))
\to H^0(\widetilde S, \widetilde E \otimes \sigma^{\ast}L^m),
\end{equation*}
induced by the morphism  $\sigma^{\ast}E \twoheadrightarrow
\widetilde E$, and the inclusion
\begin{equation*}
\xi: H^0(\widetilde S, \widetilde E \otimes \widetilde L^m)
\hookrightarrow H^0(\widetilde S, \widetilde E \otimes
\sigma^{\ast}L^m),
\end{equation*}
induced by the morphism of the invertible sheaves  $\widetilde L^m
\hookrightarrow \sigma^{\ast} L^m$. Then there is a commutative
diagram of homomorphisms of vector spaces
\begin{equation*}\xymatrix{
H^0(S, E\otimes L^m) \ar[rr]^{H^0(\sigma^{\ast})}&&
H^0(\widetilde S, \sigma^{\ast}(E\otimes L^m))\ar[d]^{\zeta}\\
H^0(\widetilde S, \widetilde E \otimes \widetilde L^m)
\ar[u]^{\upsilon} \ar@{^{(}->}[rr]^{\xi}&& H^0(\widetilde S,
\widetilde E \otimes \sigma^{\ast}L^m)}
\end{equation*}
Since  $\xi$ is a monomorphism, then $\upsilon$ is also
monomorphism. Vector spaces  $H^0(\widetilde S, \widetilde E
\otimes \widetilde L^m)$ and $H^0(S, E\otimes L^m)$ have equal
dimensions hence  $\upsilon$ is an isomorphism.
\end{proof}
\begin{proof}[of lemma  \ref{monosh}] Note that there is
a twisted analogue of the formula $[\widetilde \phi_{\ast}
\widetilde \E_Q]^{\vee \vee}=\E_Q$. In can be proven by the
reasoning completely similar to those done in  \cite{Tim2}. For
invertible sheaves  $\widetilde \L_Q$ and  $\L$ such that
$[\widetilde \phi_{\ast} \widetilde \L_Q]^{\vee \vee}=\L$, the
following equality holds $[\widetilde \phi_{\ast} (\widetilde \E_Q
\otimes \widetilde \L_Q^{m})]^{\vee \vee}=\E_Q \otimes \L^{m}.$
Let  $\widetilde \LL$ and $\LL$ be very ample invertible sheaves
as described in the formulation of lemma.

Consider a sheaf $\Phi_{\ast}(\widetilde \E_{Q} \otimes \widetilde
\L_Q \otimes \pi^{\ast}\widetilde \LL^{nm})=\Phi_{\ast}(\widetilde
\E_{Q} \otimes \widetilde \L_Q \otimes
\Phi^{\ast}p^{\ast}\widetilde \LL^{nm})=\Phi_{\ast}(\widetilde
\E_{Q} \otimes \widetilde \L_Q) \otimes p^{\ast}\widetilde
\LL^{nm}.$ The first isomorphism holds by the diagram (\ref{big}),
the second is true by projection formula. Formation of a reflexive
hull yields $[\Phi_{\ast}(\widetilde \E_{Q} \otimes \widetilde
\L_Q \otimes \pi^{\ast}\widetilde \LL^{nm})]^{\vee
\vee}=[\Phi_{\ast}(\widetilde \E_{Q} \otimes \widetilde
\L_Q)]^{\vee \vee} \otimes p^{\ast}\widetilde \LL^{nm}.$ Now
consider another $\OO_{\widetilde Q\times S}$-sheaf $(\phi \times
1)^{\ast} (\E_Q \otimes \L^m).$ It is also reflexive on
$\widetilde Q \times S$ \cite[lemmata 1.2, 1.3]{Tim1}. Sheaves
$[\Phi_{\ast}(\widetilde \E_{Q} \otimes \widetilde \L_Q)]^{\vee
\vee}$ and $(\phi \times 1)^{\ast} (\E_Q \otimes \L^m)$ coincide
on those open subsets of the scheme  $\widetilde Q \times S$ where
they are locally free. These subsets are obtained by excluding of
closed subschemes of codimension $\ge 3$ from $\widetilde Q \times
S$. By assumption, the scheme  $\widetilde Q \times S$ is integral
and normal. Then  \cite[corollary 1.10]{Tim2}
$[\Phi_{\ast}(\widetilde \E_{Q} \otimes \widetilde \L_Q)]^{\vee
\vee}=(\phi \times 1)^{\ast} (\E_Q \otimes \L^m)$. Hence after
tensoring by the invertible sheaf  $p^{\ast}\widetilde \LL^{nm}$
we have the isomorphism
\begin{equation}\label{isosh}[\Phi_{\ast}(\widetilde \E_{Q}
\otimes \widetilde \L_Q \otimes \pi^{\ast}\widetilde
\LL^{nm})]^{\vee \vee}=(\phi \times 1)^{\ast} (\E_Q \otimes
\L^m)\otimes p^{\ast}\widetilde \LL^{nm}\end{equation}

We claim that  $p^{\ast} \widetilde \LL^{nm}\hookrightarrow
p^{\ast} \LL^{nm}.$ Indeed, $\phi_{\ast} \widetilde
\LL^{nm}\hookrightarrow (\phi_{\ast} \widetilde \LL^{nm})^{\vee
\vee}=\LL^{nm}.$ Applicat\-ion of the inverse image results in
$\phi^{\ast} \phi_{\ast}\widetilde \LL^{nm}\twoheadrightarrow
\widetilde \LL^{nm}$. The epimorphicity is provided by the
condition  $nm \gg 0$. Then
 $\phi^{\ast} \phi_{\ast}\widetilde \LL^{nm}/tors =
\widetilde \LL^{nm}$ yields  $\widetilde \LL^{nm}=\phi^{\ast}
\phi_{\ast}\widetilde \LL^{nm}/tors$ $\hookrightarrow \phi^{\ast}
\LL^{nm}$. Tensoring this inclusion by the right hand side of
(\ref{isosh}) we get $(\phi \times 1)^{\ast}(\E_Q \otimes
\L^m)\otimes p^{\ast}\widetilde \LL^{nm} \hookrightarrow (\phi
\times 1)^{\ast}(\E_Q \otimes \L^m)\otimes
p^{\ast}\phi^{\ast}\LL^{nm}=(\phi \times 1)^{\ast}(\E_Q \otimes
\L^m\otimes p^{\ast}\LL^{nm})$. This completes the proof of the
lemma.
\end{proof}

\begin{proof}[Proof of corollary \ref{cordesc}] Note that
 $\widetilde \E_Q \otimes \widetilde \L_Q^m=\widetilde
\mu^{\ast}(\widetilde \E \otimes \widetilde \L^m).$ Since
$\widetilde \LL$ is very ample invertible $\OO_{\widetilde
Q}$-sheaf, there is an inclusion $[\Phi_{\ast} \widetilde
\mu^{\ast}(\widetilde \E \otimes \widetilde \L^m)]^{\vee
\vee}\hookrightarrow [\Phi_{\ast} (\widetilde
\mu^{\ast}(\widetilde \E \otimes \widetilde \L^m)\otimes
\widetilde \LL^{nm}]^{\vee \vee}.$ The base change applied to the
first sheaf, and lemma \ref{monosh} yield $[\Phi_{\ast} \widetilde
\mu^{\ast}(\widetilde \E \otimes \widetilde \L^m)]^{\vee \vee}=[
(\mu\times 1)^{\ast}M_{\ast}(\widetilde \E \otimes \widetilde
\L^m)]^{\vee \vee}\hookrightarrow (\phi \times
1)^{\ast}(\E_Q\otimes \L^m \otimes \LL^{nm})$.
\end{proof}

\section{(Semi)stability}
The notion of (semi)stability for pairs $(\widetilde S, \widetilde
E)$ is defined in this section.

\begin{definition}\label{semistable} $S$-{\it (semi)stable pair}
$((\widetilde S,\widetilde L), \widetilde E)$ is the following
data:
\begin{itemize}
\item{$\widetilde S=\bigcup_{i\ge 0} \widetilde S_i$ --
admissible scheme, $\sigma: \widetilde S \to S$ -- canonical
morphism, $\sigma_i: \widetilde S_i \to S$ -- its restrictions on
components  $\widetilde S_i$, $i\ge 0;$}
\item{$\widetilde E$ -- vector bundle on the scheme
$\widetilde S$;}
\item{$\widetilde L \in Pic\, \widetilde S$ -- distinguished polarization
;}
\end{itemize}
such that
\begin{itemize}
\item{$\chi (\widetilde E \otimes \widetilde
L^{m})=rp_E(t);$}
\item{the sheaf $\widetilde E$ is {\it Gieseker-(semi)stable} on
the scheme $\widetilde S$. Namely, for any proper subsheaf
 $\widetilde F \subset \widetilde E$
for $m\gg 0$ one has
\begin{eqnarray*}
\frac{h^0(\widetilde F\otimes \widetilde L^{m})}{\rank F}&<&
\frac{h^0(\widetilde E\otimes \widetilde L^{m})}{\rank E},
\\ (\mbox{\rm respectively,} \;\;
\frac{h^0(\widetilde F\otimes \widetilde L^{m})}{\rank F}&\leq&
\frac{h^0(\widetilde E\otimes \widetilde L^{m})}{\rank E}\;);
\end{eqnarray*}}
\item{on each of additional components  $\widetilde S_i, i>0,$
the sheaf  $\widetilde E_i:=\widetilde E|_{\widetilde S_i}$ is
{\it quasi-ideal sheaf,} namely has a description of the form
(\ref{ei}) for some \linebreak $q_0\in \bigsqcup_{l\le c_2}
\Quot^l \bigoplus^r \OO_S$. }\end{itemize}
\end{definition}

\begin{remark} If  $\widetilde S \cong S,$ then (semi)stability
of a pair $(\widetilde S, \widetilde E)$ is equivalent to
Gieseker-(semi)stability of vector bundle $\widetilde E$ on the
surface $\widetilde S$ with respect to the polarization
$\widetilde L \in Pic\, \widetilde S.$\end{remark}

To investigate the relation of $S$-(semi)stability of the pair
 $(\widetilde S, \widetilde E)$ to Gieseker-(se\-mi)stability of
 the corresponding sheaf $E$ on the surface $S$ note that for $m\gg 0$
 $rp_E(m)=h^0(E\otimes
L^{m})$. For the Gieseker-stability the behavior of the Hilbert
polynomial under $m\gg 0$ is important. Therefore we assume that
$m$ is big enough.

\begin{definition} The locally free sheaf $\widetilde E$ on the
admissible scheme $\widetilde S$ is said to be {\it obtained from
the sheaf $E$ by its standard resolution} if there exists a flat
family $\E$ of coherent $\OO_S$-sheaves with base
$T=\Spec k[t]$, such that \\
(i) for $t\ne 0$ sheaves  $E_t=\E|_{t\ne 0}$ are locally free;\\
(ii) for $t=0$ the sheaf $E_0=\E|_{t=0}$ is isomorphic to the
sheaf $E$;\\
(iii) standard resolution yields in the blowing up $\sigma
\!\!\!\sigma: \widetilde{T\times S}\to T\times S$ supplied with
locally free sheaf $\widetilde \E$. The fibre of the composite map
$\widetilde{T\times S} \stackrel{\sigma \!\!\! \sigma}{\to} T
\times S \stackrel{p}{\to} T$ at the point $t=0$ is isomorphic to
 $\widetilde S$ and carries the locally free sheaf $\widetilde \E|_{t=0}\cong \widetilde E.$
\end{definition}
\begin{remark} In particular, by the proposition \ref{resdes} this definition means that
for the locally free $\OO_{\widetilde S}$-sheaf $\widetilde E$
there is a coherent $\OO_S$-sheaf  $E$ such that $\widetilde E=
\sigma^{\ast}E /tor\!s.$
\end{remark}

\begin{proposition}\label{ssc} Let the locally free  $\OO_{\widetilde
S}$-sheaf $\widetilde E$ is obtained from a coherent $\OO_S$-sheaf
$E$ by its standard resolution. The sheaf $\widetilde E$ is
(semi)stable on the scheme
 $\widetilde S$ if and only if the sheaf $E$ is (semi)stable.
\end{proposition}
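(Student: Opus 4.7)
The plan is to set up a correspondence between destabilizing subsheaves of $E$ and of $\widetilde E$, and then use the global-section isomorphism $\upsilon\colon H^0(\widetilde S, \widetilde E \otimes \widetilde L^m) \stackrel{\sim}{\to} H^0(S, E\otimes L^m)$ from \S 4 to transfer the Hilbert-polynomial-based inequalities. I would first fix $m\gg 0$ so that higher cohomology vanishes on both sides and $h^0(\widetilde E \otimes \widetilde L^m) = rp_E(m) = h^0(E\otimes L^m)$; the strategy is then to compare $h^0$ of subsheaves of the same rank on each side.

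For the direction $(\Leftarrow)$, suppose $F\subset E$ is a proper subsheaf of rank $r'$ violating the Gieseker (semi)stability inequality. Since $\widetilde E = \sigma^{\ast} E / tor\!s$ by Proposition \ref{resdes}, I would define $\widetilde F := \sigma^{\ast} F/(\sigma^{\ast} F \cap tor\!s)$ as a subsheaf of $\widetilde E$. Its restriction to the main component $\widetilde S_0$ has generic rank $r'$ because $\sigma_0$ is a blowup, while the contribution from the additional components $\widetilde S_i$ is controlled by the fact that they lie over $\sigma^{-1}(\Supp\varkappa)$ and $\widetilde E_i$ has the quasi-ideal form (\ref{ei}). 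Using the inclusion $\widetilde L^{m}\hookrightarrow \sigma^{\ast}L^{m}$ from Section \ref{polar} and repeating the $\upsilon$-construction of \S 4 for $F$ in place of $E$, I would obtain an injection $H^0(F\otimes L^m)\hookrightarrow H^0(\widetilde F\otimes \widetilde L^m)$, whence the destabilizing inequality transports to $\widetilde F\subset \widetilde E$.

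For $(\Rightarrow)$, suppose $\widetilde F \subset \widetilde E$ destabilizes $\widetilde E$. The inclusion $H^0(\widetilde F \otimes \widetilde L^m)\hookrightarrow H^0(\widetilde E \otimes \widetilde L^m)$ is identified via $\upsilon$ with a subspace $W\subset H^0(E\otimes L^m)$ of dimension $h^0(\widetilde F\otimes \widetilde L^m)$. Since $E\otimes L^m$ is globally generated, evaluating $W\otimes L^{-m}\to E$ produces a subsheaf $F\subset E$; its generic rank can be read off on $\widetilde S_0$ under $\sigma_0$ and equals $\rank \widetilde F$, and $h^0(F\otimes L^m)\ge \dim W$, yielding the analogous destabilizing inequality for $F\subset E$.

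The main obstacle is tightening the rank and $h^0$ comparisons in the presence of the possibly nonreduced additional components $\widetilde S_i$, $i>0$, where the generalized torsion subsheaf $tor\!s$ from the proof of Proposition \ref{resdes} is delicate and the quasi-ideal description (\ref{ei}) governs $\widetilde E|_{\widetilde S_i}$. Concretely, one must verify that sections on the additional components neither inflate $h^0(\widetilde F\otimes \widetilde L^m)$ beyond what $F$ supplies, nor are lost when passing from $\widetilde F$ to $F$. I expect that the blowup origin of the distinguished polarization $\widetilde L$ (Section \ref{polar}), combined with the quasi-ideal structure on each $\widetilde S_i$, forces sections on $\widetilde S$ to be determined by their restriction to the principal component $\widetilde S_0$ up to finite-dimensional corrections that are absorbed into the leading-order Hilbert-polynomial comparison, so that strict and non-strict inequalities transfer faithfully in both directions.
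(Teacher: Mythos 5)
Your overall strategy is the paper's: transport subsheaves through the distinguished section isomorphism $\upsilon$, note that ranks match because $\sigma$ is an isomorphism off the exceptional locus, and compare $h^0$ at $m\gg 0$. Your direction in which a destabilizing $\widetilde F\subset\widetilde E$ produces $F\subset E$ (generate $F$ from $W=\upsilon(H^0(\widetilde S,\widetilde F\otimes\widetilde L^m))$, use $h^0(F\otimes L^m)\ge\dim W$ and equality of ranks) is essentially the argument the paper writes out. The problem is the other direction. You propose $\widetilde F=\sigma^{\ast}F/(\sigma^{\ast}F\cap tor\!s)$ and claim an injection $H^0(S,F\otimes L^m)\hookrightarrow H^0(\widetilde S,\widetilde F\otimes\widetilde L^m)$ obtained from the inclusion $\widetilde L^m\hookrightarrow\sigma^{\ast}L^m$ together with "repeating the $\upsilon$-construction for $F$". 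Neither ingredient gives such a map: pulling back a section of $F\otimes L^m$ and projecting yields a section of $\widetilde F\otimes\sigma^{\ast}L^m$, and to lie in the subsheaf $\widetilde F\otimes\widetilde L^m=\widetilde F\otimes\sigma^{\ast}L^m\otimes(\sigma^{-1}I\cdot\OO_{\widetilde S})$ it would have to vanish along the exceptional components, which pulled-back sections do not do in general; and $\upsilon$ itself maps $H^0(\widetilde S,-)$ to $H^0(S,-)$ (the wrong way for your inequality) and is built from the family-level inclusion (\ref{inc}) for $\E_Q$, so it cannot simply be rerun for an individual subsheaf $F$ with no flat family attached. Note also that an injection in the direction $\upsilon$ would give $h^0(\widetilde F\otimes\widetilde L^m)\le h^0(F\otimes L^m)$, which is useless for showing that $\widetilde F$ destabilizes $\widetilde E$.

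The repair is to treat this direction symmetrically to the other one, which is what the paper means by "the opposite implication is proven similarly": given a destabilizing $F\subset E$, let $\widetilde F\subset\widetilde E$ be the subsheaf generated by the subspace $\upsilon^{-1}\bigl(H^0(S,F\otimes L^m)\bigr)$ of $H^0(\widetilde S,\widetilde E\otimes\widetilde L^m)$. Then $h^0(\widetilde S,\widetilde F\otimes\widetilde L^m)\ge h^0(S,F\otimes L^m)$ is automatic, and $\rank\widetilde F=\rank F$ because over the open set where $\sigma$ is an isomorphism and $\upsilon$ identifies sections the generated subsheaf coincides with $F$; no section bookkeeping on the additional components is needed for the inequality, so the "finite-dimensional corrections" you hoped to control never enter. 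The finer statement you were implicitly after, namely that this generated $\widetilde F$ equals $\sigma^{\ast}F/tor\!s$ (and that $h^0$ is preserved exactly), is genuinely delicate on the quasi-ideal components and is precisely what the paper establishes later as Proposition \ref{corr} and Corollary \ref{sssh}; it is not required for Proposition \ref{ssc}.
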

\begin{proof} Let  $E$ be Gieseker-semistable on
$(S,L)$ and $\widetilde E$ be the locally free sheaf on the scheme
$\widetilde S$. Let $\widetilde E$ be obtained from $E$ by
standard resolution. Obviously, $\widetilde E$ is quasi-ideal
sheaf on additional components of $\widetilde S$ provided it is
obtained from a coherent sheaf by standard resolution. Fix any
point  $q \in \Quot^{rp_E(t)} (V \otimes L^{(-m)})$ corresponding
to the quotient sheaf $E$. Consider a proper subsheaf  $\widetilde
F \subset \widetilde E.$ Since  $m\gg 0$ we assume that both the
sheaves  $\widetilde E \otimes \widetilde L^{m}$ and  $\widetilde
F\otimes \widetilde L^{m}$ are globally generated. Fix an
epimorphism $H^0(\widetilde S, \widetilde E \otimes \widetilde
L^{m}) \otimes \widetilde L^{(-m)} \twoheadrightarrow \widetilde
E$. The subsheaf $\widetilde F$ is generated by a subspace of
global sections $V_{\widetilde F}=H^0(\widetilde S, \widetilde F
\otimes \widetilde L^{m})\subset H^0(\widetilde S, \widetilde E
\otimes \widetilde L^{m}).$ Then a subspace $V_F\subset H^0(S,
E\otimes L^{m})$ which is isomorphic to $V_{\widetilde F}$ and
generates some subsheaf $F\in E$, is given by the distinguished
isomorphism $\upsilon:H^0(\widetilde S, \widetilde E \otimes
\widetilde L^{m}) \stackrel{\sim}{\to} H^0(S,E\otimes L^{m})$ by
the equality $V_F=\upsilon (V_{\widetilde F})$. Since sheaves
 $\widetilde F$ and $F$ are canonically isomorphic on the
 corresponding open subsets of schemes $\widetilde S$ and $S$,
 then their ranks are equal. Clearly,  $V_F= H^0(S, F\otimes L^{ m})$
 and
\begin{eqnarray*}\frac{h^0(\widetilde S,
\widetilde E \otimes \widetilde L^{m})}{r}&-& \frac{h^0(\widetilde
S,
\widetilde F \otimes \widetilde L^{m})}{r'}\nonumber \\
=\frac{h^0(S, E \otimes L^{m})}{r}&-& \frac{h^0( S, F \otimes
L^{m})}{r'}>(\ge) 0.\nonumber
\end{eqnarray*}
This implies the semistability of $\widetilde E.$ The opposite
implication is proven similarly.
\end{proof}
\begin{remark} \label{sscr} This shows that there is a bijection among
subsheaves of $\OO_S$-sheaf $E$ and subsheaves of the
corresponding $\OO_{\widetilde S}$-sheaf $\widetilde E.$ This
bijection preserves Hilbert polynomials.
\end{remark}

\section{M-equivalence of semistable pairs}

In this section we investigate the behavior of Jordan --
H\"{o}lder filtration for semistable coherent sheaf under the
standard resolution. Also the notion of M-equi\-val\-ence for
semistable pairs is introduced and relation of M-equivalence to
S-equival\-ence for semistable coherent sheaves is examined. In
particular it is proven that S-equivalent coherent sheaves on the
surface $S$ are resolved in M-equivalent pairs of the form
$(\widetilde S, \widetilde E)$.

Remind some notions from the theory of semistable coherent
sheaves.
\begin{definition}\cite[definition 1.5.1]{HL}
The {\it Jordan -- H\"{o}lder filtration} for semistable sheaf
 $E$ with reduced Hilbert polynomial $p_E(t)$ on the
 polarized projective scheme  $X$ is a sequence of subsheaves
 \begin{equation*} 0=F_{0}\subset F_1
\subset \dots \subset F_{\ell}=E,
\end{equation*}
such that quotient sheaves  $gr_i(E)=F_i / F_{i-1}$ are stable
with reduced Hilbert polynomials equal $p_E(t)$.
\end{definition}
Denote by the symbol  $gr(E)$ a polystable sheaf
$\bigoplus_{i=1}^{\ell} gr_i(E)$. Well-known theorem \cite[Prop.
1.5.2]{HL} claims that the isomorphism class of the sheaf
 $gr(E)$ has no dependence on a choice of Jordan -- H\"{o}lder
 filtration of $E$.
\begin{definition}\cite[definition 1.5.3]{HL} Semistable
sheaves $E$ and $E'$ are called  {\it S-equivalent} if
$gr(E)=gr(E').$
\end{definition}
\begin{remark} Obviously, S-equivalent stable sheaves are
isomorphic.\end{remark}

Define Jordan-H\"{o}lder filtration for $S$-semistable sheaf on
reducible admissible polarized scheme $(\widetilde S, \widetilde
L)$. This definition will be completely analogous to the classical
definition for Gieseker-semistable sheaf.

\begin{definition} {\it Jordan -- H\"{o}lder filtration}
for a sheaf $\widetilde E$ on the polarized project\-ive reducible
scheme $(\widetilde S, \widetilde L)$ such that a pair
$((\widetilde S, \widetilde L), \widetilde E)$ is semistable in
the sense of definition \ref{semistable}, and with reduced Hilbert
polynomial  $p_{E}(t),$ is a sequence of subsheaves
\begin{equation*} 0=\widetilde F_{0}\subset \widetilde F_1
\subset \dots \subset \widetilde F_{\ell}=\widetilde E,
\end{equation*}
such that quotients  $gr_i(\widetilde E)=\widetilde F_i
/\widetilde F_{i-1}$ are Gieseker-stable with reduced Hilbert
polynomials equal to $p_{E}(t)$.
\end{definition}

The following example shows that S-equivalent coherent sheaves can
have different associated sheaves of Fitting ideals leading to
non-iso\-morphic schemes $\widetilde S$.
\begin{example}
Consider scheme of moduli for semistable coherent sheaves of rank
2 with Chern classes $c_1=0,$ $c_2=2$ on the complex projective
plane $S=\P^2$. As it is proven in \cite{LeP79}, for even values
of $c_1$ and $c_2-c_1^2/4$ the moduli scheme for semistable
sheaves has no universal family. This means that there is strictly
semistable coherent sheaf $E$ with Jordan -- H\"{o}lder filtration
which leads to the exact triple $0\to I \to E \to I' \to 0$. Here
 $I, I'$ are sheaves of maximal ideals of a reduced point $x\in \P^2.$
Note that a polystable sheaf which is S-equivalent to the sheaf
$E$ equals  $I\oplus I'.$ In this case $\Ext^1(I', I)\ne 0.$ To
prove this consider an exact $\OO_S$-triple $0\to I\to \OO_S \to
k_x\to 0$ and apply the functor $\Ext^{\cdot}(I',-)$. Since all
extensions of the form  $0\to k_x \to A \to I' \to 0$ are trivial,
then  $\Ext^1(I', k_x)=0.$ We have an isomorphism of groups of
extensions $\Ext^1(I',I) \cong \Ext^1(I', \OO_S).$ The last group
is non-trivial. Indeed, it contains the class $\varepsilon$ of
non-trivial extension which corresponds to the locally free
resolution for the sheaf of ideals $I'$: $0\to \OO_S \to F_0 \to
I' \to 0$. The non-trivial extension $E$ corresponding to
$\varepsilon$ in  $\Ext^1(I',I)$ includes into the exact diagram
\begin{equation*} \xymatrix{&0&0\\
&k_x \ar[u] \ar[r]^= & k_x \ar[u]\\
0\ar[r]& \OO_S \ar[u] \ar[r]& F_0 \ar[u] \ar[r]& I' \ar[r]& 0\\
0\ar[r]& I \ar[u] \ar[r]& E \ar[u] \ar[r]& I'\ar[u]_{=}
\ar[r]&0\\
&0\ar[u]& 0\ar[u]}
\end{equation*}

For a coherent torsion-free $\OO_S$-sheaf $F$ we use the notation
$\varkappa (F): =F^{\vee \vee}/F.$

From the middle vertical triple we have $\varkappa (E)= k_x.$ Also
for the polystable sheaf $I \oplus I'$ holds $\varkappa(I \oplus
I')=k_x^{\oplus 2}$. Then $\FFitt^0\EExt^2(\varkappa (E),
\OO_S)={\mathfrak m}_x$ and \linebreak
$\FFitt^0\EExt^2(\varkappa(I \oplus I'),\OO_S)=
\FFitt^0\EExt^2(\varkappa(I), \OO_x)\cdot
\FFitt^0\EExt^2(\varkappa(I'), \OO_x)=I I'={\mathfrak m}_x^2$ is a
sheaf of ideals of the first infinitesimal neighborhood of the
point $x$.
\end{example}

The following example shows that the fibred product cannot be used
to construct the notion of equivalence for semistable pairs.

\begin{example} Consider sheaves of maximal ideals
$I_1=I_2={\mathfrak m}_x$ of a reduced point $x\in S$. Then
corresponding schemes $\widetilde S_1$ and $\widetilde S_2$ have
the form $\widetilde S_1=\Proj \bigoplus_{s\ge
0}(I_1[t]+(t))^s/(t^{s+1})$ and $\widetilde S_2=\Proj
\bigoplus_{s\ge 0}(I_2[t]+(t))^s/(t^{s+1})$. As usually $\sigma_i:
\widetilde S_i \to S$ are canonical morphisms, $\widetilde
S_i=\widetilde S_{i0} \bigsqcup_{\sigma_{i0}^{-1}(x)} \P^2$ is the
decomposition into irreducible components where
$\sigma_i|_{\widetilde S_0}=\sigma_{i0}: \widetilde S_{i0} \to S$
is a blowing up of reduced point $x$, $\sigma_{i0}^{-1}(x)\cong
\P^1$ is exceptional divisor of this blowing up. Schemes
$\widetilde S_1$ and $\widetilde S_2$ are isomorphic. Let $i:
\widetilde S_1 \to \widetilde S_2$ be the identifying isomorphism.
Let schemes $\widetilde S_1$ and $\widetilde S_2$ carry stable
vector bundles $\widetilde E_1$ and $\widetilde E_2$ which are
images of nonlocally free coherent sheaf $E$ on the surface $S$.
Obviously, $i_{\ast}\widetilde E_1=\widetilde E_2$. Obviously, in
this case vector bundles $\widetilde E_i,$ $i=1,2$ are nontrivial
under restriction on the exceptional divisor $\P^1$. Form the
fibred product $\widetilde S_1 \times _S \widetilde S_2$, and let
$\sigma'_i: \widetilde S_1 \times _S \widetilde S_2 \to \widetilde
S_i$ be its projections on factors. The product  $\widetilde S_1
\times _S \widetilde S_2$ contains four-dimensional component.
This component is isomorphic to the product $\P^2 \times \P^2$. It
contains  the product of exceptional divisors of blowing ups
$\sigma_{i0}: \widetilde S_{i0} \to S$ as a closed subscheme
isomorphic to a quadric $\P^1 \times \P^1$. Then inverse images
$\sigma'^{\ast}_i \widetilde E_i$ turn to be non-isomorphic on the
fibred product $\widetilde S_1 \times_S \widetilde S_2$. Indeed,
the restriction $\sigma'^{\ast}_1 \widetilde E_1|_{\P^1 \times
\P^1}$ is non-trivial along first factor of the product $\P^1
\times \P^1$ and trivial along the second one. The restriction
$\sigma'^{\ast}_2 \widetilde E_2|_{\P^1 \times \P^1}$ is trivial
along the first factor and non-trivial along the second one.
\end{example}

Now consider the schemes $\widetilde S_1= \Proj \bigoplus _{s\ge
0}(I_1[t]+(t))^s/(t)^{s+1}$ and $\widetilde S_2= \linebreak \Proj
\bigoplus _{s\ge 0}(I_2[t]+(t))^s/(t)^{s+1}$ with their canonical
morphisms $\sigma _1:\widetilde S_1 \to S$ and $\sigma_2:
\widetilde S_2 \to S$ to the surface  $S$. Form inverse images of
sheaves of ideals $I'_2=\sigma_1^{-1} I_2 \cdot \OO_{\widetilde
S_1}\subset \OO_{\widetilde S_1}$ and $I'_1=\sigma_2^{-1} I_1
\cdot \OO_{\widetilde S_2}\subset \OO_{\widetilde S_2}$, and
projective spectra $\widetilde S_{12}=\Proj (\bigoplus _{s\ge
0}(I'_2[t]+(t))^s/(t)^{s+1})$ and $\widetilde S_{21}=\Proj
(\bigoplus _{s\ge 0}(I'_1[t]+(t))^s/(t)^{s+1})$. There are
canonical morphisms  $\sigma'_2: \widetilde S_{12}\to \widetilde
S_1$ and $\sigma'_1: \widetilde S_{21}\to \widetilde S_2$.

\begin{proposition} $\widetilde S_{12}$ and $\widetilde
S_{21}$ are equidimensional schemes. Moreover, $\widetilde S_{12}
\cong \widetilde S_{21}.$
\end{proposition}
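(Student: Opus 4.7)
The plan is to realize both $\widetilde{S}_{12}$ and $\widetilde{S}_{21}$ as the special fibre of one and the same two-parameter blow-up, thereby turning the asserted isomorphism into a tautology coming from the symmetry of an ideal in two variables. First I would recall from \S 1 and \cite{Tim3} that each $\widetilde{S}_i$ is the fibre over $0\in T_i\cong\A^1_k$ of the blow-up $\sigma\!\!\!\sigma_i:\widehat{T_i\times S}\to T_i\times S$ in the sheaf of ideals $\II_i=I_i[t_i]+(t_i)$, and that the canonical morphism $\sigma_i$ is induced by this blow-up. Iterating the construction, $\widetilde{S}_{12}$ is then the fibre over $0\in T_2$ of the blow-up of $T_2\times\widetilde{S}_1$ in $I'_2[t_2]+(t_2)$, and symmetrically for $\widetilde{S}_{21}$.

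Next I would pass to the two-dimensional base $T_1\times T_2\cong\A^2_k$ and consider the blow-up of $T_1\times T_2\times S$ in the product ideal
\[
\II\;:=\;\bigl(I_1[t_1,t_2]+(t_1)\bigr)\cdot\bigl(I_2[t_1,t_2]+(t_2)\bigr).
\]
Using the standard fact that the blow-up of a product of ideal sheaves factors as an iterated blow-up in either order, together with the fibre-description from \cite{Tim3} (which ensures that the blow-up of an ideal of the form $I[t]+(t)$ is compatible with the base change $t\mapsto 0$), the single scheme $\mathrm{Bl}_{\II}(T_1\times T_2\times S)$ restricts over $(0,0)$ to $\widetilde{S}_{12}$ when one first blows up $\II_1$ and then the pullback of $\II_2$, and to $\widetilde{S}_{21}$ when the order is reversed. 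Since $\II$ is manifestly invariant under the involution $(t_1,I_1)\leftrightarrow(t_2,I_2)$, that involution swaps the two descriptions of the special fibre and produces the sought isomorphism $\widetilde{S}_{12}\cong\widetilde{S}_{21}$.

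For equidimensionality I would argue component by component using the decomposition of an admissible scheme from \cite{Tim3}. The main component of $\widetilde{S}_{12}$ is the blow-up of $\widetilde{S}_{1,0}$ in $\sigma_{1,0}^{-1}I_2\cdot\OO_{\widetilde{S}_{1,0}}$; by the same product-ideal identity, this coincides with the blow-up of $S$ in $I_1I_2$ and is in particular a two-dimensional integral surface. The additional components of $\widetilde{S}_{12}$ are projective spectra of finitely generated graded algebras over zero-dimensional closed subschemes of $\Supp\varkappa$ (interpreted as in \S 1), hence each of them has dimension $2$. The same analysis applies to $\widetilde{S}_{21}$, so both schemes are equidimensional of dimension $2$, in agreement with the isomorphism established above.

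The main obstacle is keeping the $\Proj$-of-Rees-algebra bookkeeping honest through the two successive steps: one has to verify that formation of $\Proj$ commutes appropriately with the restrictions $t_1=0$ and $t_2=0$ when the ideals have the mixed form $I[t]+(t)$, and that on the intersection locus (where both $t_i$ vanish and the non-reduced structure on additional components arises) the two orders of blow-up genuinely yield the same graded pieces. Once the product-ideal identification $\mathrm{Bl}_{\II_1}\circ\mathrm{Bl}_{\II_2}=\mathrm{Bl}_{\II_1\II_2}=\mathrm{Bl}_{\II_2}\circ\mathrm{Bl}_{\II_1}$ is set up in this mildly non-reduced setting, the isomorphism of the fibres follows formally from the swap-symmetry of $\II$, and the equidimensionality from the inductive structure of admissible schemes.
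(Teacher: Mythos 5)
Your overall strategy --- pass to a two-parameter family over $T\times T$, use the commutation of iterated blow-ups with the blow-up of a product ideal, and extract the isomorphism $\widetilde S_{12}\cong\widetilde S_{21}$ from the evident symmetry --- is in fact the same skeleton as the paper's argument (the paper's diagram of blow-ups over $T\times S$ gives precisely $\widehat\Sigma_{12}=\widehat\Sigma_{21}$ by this commutation). The problem is that what you defer as ``bookkeeping to be verified'' is exactly the mathematical content of the proposition, so as written the proposal has a genuine gap. The scheme $\widetilde S_{12}$ is \emph{defined} as $\Proj\bigoplus_{s\ge0}(I'_2[t]+(t))^s/(t)^{s+1}$ over $\widetilde S_1$, not as the fibre over $(0,0)$ of a blow-up of $T\times T\times S$; identifying the two requires knowing that formation of the blow-up is compatible with the non-flat base changes $t_1=0$ and $t_2=0$, and blow-ups (equivalently, Rees algebras) do not commute with such restrictions in general. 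The results of \cite{Tim3} that you invoke only cover the first step, where the ideal $I[t]+(t)$ is built from an ideal of a zero-dimensional subscheme on the \emph{smooth surface} $S$; at the second step the ideal $I'_2[t_2]+(t_2)$ lives over the reducible and possibly non-reduced scheme $\widetilde S_1$, and nothing quoted guarantees the same fibre description there. The paper closes precisely this hole: it proves flatness of the restricted family by the $\TTor_1$-vanishing computation (showing the restricted inverse-image ideal acquires no torsion, so restriction and blow-up agree), restricts to lines $at_1+bt_2+c=0$ in $T\times T$ to propagate constancy of Hilbert polynomials and get flatness over $T\times T$, and only then applies the reasoning of \cite{Tim3} to the flat one-parameter family $\widetilde\Sigma_{12}\to T$ to characterize its special fibre as $\Proj\bigoplus_{s\ge0}(I'_2[t]+(t))^s/(t)^{s+1}$.

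Your equidimensionality argument also does not go through as stated: from ``the additional components are projective spectra of graded algebras supported over zero-dimensional subschemes'' nothing about their dimension follows --- a $\Proj$ of a graded algebra over an Artinian base can a priori have any dimension. The paper instead gets the bound $\dim\le 2$ from the fact that $\widetilde S_{12}$ sits as a fibre in a flat family whose generic fibre is a surface ($S$, $\widetilde S_1$ or $\widetilde S_2$, depending on the line chosen), and then the equality of dimensions from the scheme-theoretic characterization of the special fibre just described. So to repair the proposal you would need to (i) prove the base-change compatibility for the second blow-up over $\widetilde S_1$ (the Tor/flatness step), and (ii) replace the dimension count for additional components by the flat-degeneration argument; your main-component computation via the product ideal $I_1I_2$ on $S$ is fine.
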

\begin{proof}
First we prove that $\widetilde S_{12}\cong \widetilde S_{21}$, и
and that these schemes can be include into flat families with
general fibre isomorphic to $S$, or to $\widetilde S_1$, or to
 $\widetilde S_2$. This implies that all components of the scheme
$\widetilde S_{12}$ have dimension not bigger then 2. Then we will
give the scheme-theoretic characterization of schemes $\widetilde
S_{12}$. It proves that  $\widetilde S_{12}$ is equidimensional
scheme, namely, all reduced schemes corresponding to its
components have dimension 2.

Let $T= \Spec k[t].$ Turn to the trivial 2-parameter family of
surfaces  $T\times T \times S$ with projections $T \times S
\stackrel{p_{13}}{\longleftarrow}T\times T\times S
\stackrel{p_{23}}{\longrightarrow} T\times S$. Introduce the
notations $\I_1:=\OO_T \boxtimes I_1 \subset \OO_{T \times S},$
$\I_2:= \OO_T \boxtimes I_2\subset \OO_{T\times S}$. Form inverse
images  $p_{13}^{\ast }\I_1$ and $p_{23}^{\ast} \I_2$. These are
sheaves of ideals on the scheme $T\times T\times S.$ Consider the
morphism $\sigma \!\!\! \sigma_1 \times \id_T: \widehat \Sigma_1
\times T \to T\times T \times S$ with identity map on the second
factor. Also consider a preimage $(\sigma \!\!\! \sigma_1 \times
\id_T)^{-1} p_{23}^{\ast}\I_2 \cdot \OO_{\widehat \Sigma_1 \times
T}$ on the scheme  $\widehat \Sigma_1 \times T$, and the
corresponding morphism of blowing up $\sigma\!\!\! \sigma_{12}:
\Sigma \!\!\! \Sigma_{12} \to \widehat \Sigma_1 \times T$. Now
restrict the sheaf $(\sigma \!\!\! \sigma_1 \times \id_T)^{-1}
p_{23}^{\ast}\I_2 \cdot \OO_{\widehat \Sigma_1 \times T}$ on the
fibre of the composite map $\widehat \Sigma_1 \times T
\stackrel{\sigma\!\!\! \sigma_1 \!\times
\id_T}{-\!\!-\!\!\!\longrightarrow} T\times T\times S
\stackrel{p_{12}}{\longrightarrow} T\times T$ в точке $(t_1,
t_2).$ Let $\widetilde i: \widetilde S_1 \hookrightarrow \widehat
\Sigma_1 \times T$ be the morphism of the embedding of this fibre.
The commutativity of the diagram
\begin{equation*}\xymatrix{\widehat \Sigma_1 \times T
\ar[rr]^{\sigma \!\!\!\sigma_1 \times \id_T} &&T\times T\times S
\ar[rr]^{p_{12}}&& T\times T\\
\widetilde S_1 \ar@{^(->}[u]^{\widetilde i} \ar[rr]^{\sigma_1}&& S
\ar@{^(->}[u]_i \ar[rr]&& (t_1, t_2)\ar@{^(->}[u]}
\end{equation*}
leads to $\widetilde i^{-1}((\sigma \!\!\! \sigma_1 \times
\id_T)^{-1} p_{23}^{\ast} \I_2 \cdot \OO_{\widehat \Sigma_1 \times
T})\cdot \OO_{\widetilde S_1}=\sigma_1^{-1}
i^{-1}(p_{23}^{\ast}\I_2)\cdot \OO_{\widetilde S_1} =\sigma_1^{-1}
I_2 \cdot \OO_{\widetilde S_1}$.

Now consider the embedding of the line $j_T: T \hookrightarrow T
\times T$ fixed by the equation $at_1+bt_2+c=0$, $a,b,c \in k$.
The corresponding fibred diagram
\begin{equation}\label{famil}\xymatrix{\Sigma \!\!\! \Sigma_{12}
\ar[r]^{\!\!\sigma \!\!\! \sigma_{12}\;\;}&\widehat \Sigma_1\times
T \ar[rr]^{\sigma \!\!\! \sigma_1 \times \id_T} &&T\times T\times
S
\ar[r]^{\;\;\;\;p_{12}}&T\times T\\
\Sigma \!\!\! \Sigma_{12j} \ar[u]^{j_{12}} \ar[r]&\Sigma_{1j}
\ar[u]_{j_1} \ar[rr]^{\sigma \!\!\! \sigma'_1}&& T\times S
\ar[u]^{j_T \times  \id_S} \ar[r]&T\ar[u]_{j_T}}
\end{equation}
fixes notations. If the embedding $j_T$ does not correspond to the
case $b=0$ then $\Sigma_{1j}\simeq \widehat \Sigma _1$ and
$j_1^{-1}((\sigma \!\!\! \sigma_1 \times \id_T)^{-1} p_{23}^{\ast}
\I_2 \cdot \OO_{\widehat \Sigma \times T})\cdot \OO_{\Sigma_{1j}}=
\sigma \!\!\! \sigma_1^{-1}\I_2 \cdot \OO_{\Sigma_1}.$ Otherwise
 (for $b=0$) we have $\Sigma_{1j}\cong T \times S.$

The morphism $\sigma_{1j}: \widehat \Sigma_{1j} \to \Sigma_{1j}$
of the blowing up of the sheaf of ideals $\sigma \!\!\!
\sigma_1^{-1} \I_2 \cdot \OO_{\Sigma_1}$ is include into the
commutative diagram
\begin{equation*}\xymatrix{\Sigma \!\!\! \Sigma_{12}
\ar[rr]^{\sigma\!\!\! \sigma_{12}}&& \widehat \Sigma_1 \times T\\
\widehat \Sigma_{1j} \ar[u] \ar[rr]^{\sigma_{1j}} && \Sigma_{1j}
\ar[u]_{j_1}}
\end{equation*}
By the universal property of the left fibred product in
(\ref{famil}), there is a morphism  $u: \widehat \Sigma_{1j}\to
\Sigma \!\!\! \Sigma_{12j}$.

The morphism of blowing up  $\sigma \!\!\! \sigma'_2: \widehat
\Sigma_{12} \to \widehat \Sigma_1$ of the sheaf of ideals $\sigma
\!\!\! \sigma_1^{-1}\I_2 \cdot \OO_{\widehat \Sigma_1}$ is include
into the commutative diagram
\begin{equation}\label{gldiamond}\xymatrix{\widehat \Sigma_{12}
\ar[r]^{\sigma \!\!\! \sigma'_1}\ar[d]_{\sigma \!\!\! \sigma'_2}
& \widehat \Sigma_2 \ar[d]^{\sigma \!\!\! \sigma_2}\\
\widehat \Sigma_1 \ar[r]^{\sigma \!\!\! \sigma_1}& T\times S}
\end{equation}
Note that in this diagram $\sigma\!\!\!\sigma'_1$ is a morphism of
blowing up of the sheaf of ideals $\sigma \!\!\! \sigma_2^{-1}\I_1
\cdot \OO_{\widehat \Sigma_2}$ and it follows that  $\widehat
\Sigma_{21} =\widehat \Sigma_{12}$. Also $\widehat \Sigma_1,$
$\widehat \Sigma_2$, $\widehat \Sigma_{12}$ are reduced
irreducible schemes. Each of them is fibred over the regular
one-dimensional base $T$ with fibres isomorphic to the projective
schemes. Hence schemes $\widehat \Sigma_1,$ $\widehat \Sigma_2$,
$\widehat \Sigma_{12}$ are flat families of projective schemes
over $T$. Each of these families has fibre isomorphic to the
surface $S$, at general enough point of $T$. This implies that
each fibre of the family $\widehat \Sigma _{12}$ has a form of
projective spectrum $\Proj \bigoplus_{s\ge
0}(I[t]+(t))^s/(t)^{s+1}$ for an appropriate sheaf of ideals
$I\subset \OO_{S}$. Fibres of flat family of projective schemes
carry polarizations with following property. Hilbert polynomials
of fibres compute with respect to these polarizations, remain
constant over the base. By the construction, such polarizations on
fibres of schemes $\widehat \Sigma_1,$ $\widehat \Sigma_2$,
$\widehat \Sigma_{12}$ are exactly the same as polarizations
compute in \ref{polar}.

Now we prove that  $\Sigma \!\!\!\Sigma_{12j}$ if family of
schemes flat over $T$. Consider the exact $\OO_{\widehat \Sigma_1
\times T}$-triple induced by the sheaf of ideals $(\sigma \!\!\!
\sigma_1 \times \id_T)^{-1} p_{23}^{\ast}\I_2 \cdot \OO_{\widehat
\Sigma_1 \times T}$:
$$
0\to (\sigma \!\!\! \sigma_1 \times \id_T)^{-1} p_{23}^{\ast}\I_2
\cdot \OO_{\widehat \Sigma_1 \times T} \to \OO_{\widehat \Sigma_1
\times T} \to \OO_Z \to 0
$$
for an appropriate closed subscheme  $Z$. Apply the functor
$j_1^{\ast}$ and note that the sheaf of ideals $j_1^{-1}((\sigma
\!\!\! \sigma_1 \times \id_T)^{-1} p_{23}^{\ast}\I_2 \cdot
\OO_{\widehat \Sigma_1 \times T}) \cdot \OO_{\Sigma_{1j}}$ is
isomorphic to the quotient sheaf $j_1^{\ast}(\sigma \!\!\!
\sigma_1 \times \id_T)^{-1} p_{23}^{\ast}\I_2 \cdot \OO_{\widehat
\Sigma_1 \times T}/tor\!s,$ for the torsion subsheaf given by the
equality $tor\!s= \TTor_1^{j_1^{-1}\OO_{\widehat \Sigma_1 \times
T}} (j_1^{-1} \OO_Z, \OO_{\Sigma_{1j}})$. Note that
$\Sigma_{1j}\cong \widehat \Sigma_1,$ and $j_1^{\ast}\OO_{\widehat
\Sigma_1 \times T}\cong \OO_{\Sigma_{1j}}.$ With the last two
isomorphisms taken into account we have \linebreak
$\TTor_1^{j_1^{-1} \OO_{\widehat \Sigma_1 \times T}}(j_1^{-1}
\OO_Z, \OO_{\Sigma_{1j}})=
\TTor_1^{\OO_{\Sigma_{1j}}}(j_1^{-1}\OO_Z, \OO_{\Sigma_{1j}})=0.$
Then  $$j_1^{\ast}(\sigma \!\!\! \sigma_1 \times \id_T)^{-1}
p_{23}^{\ast}\I_2 \cdot \OO_{\widehat \Sigma_1 \times T}=
j_1^{-1}((\sigma \!\!\! \sigma_1 \times \id_T)^{-1}
p_{23}^{\ast}\I_2 \cdot \OO_{\widehat \Sigma_1 \times T}) \cdot
\OO_{\Sigma_{1j}}=\sigma \!\!\! \sigma_1^{-1} \I_2 \cdot
\OO_{\Sigma_1}.$$ Also for blowups one has $\Sigma \!\!\!
\Sigma_{12j}=\Proj \bigoplus_{s\ge 0} (j_1^{\ast}(\sigma \!\!\!
\sigma_1 \times \id_T)^{-1} p_{23}^{\ast}\I_2 \cdot \OO_{\widehat
\Sigma_1 \times T})^s= \Proj \bigoplus_{s\ge 0} (\sigma \!\!\!
\sigma_1^{-1} \I_2 \cdot \OO_{\Sigma_1})^s=\widehat \Sigma_{12}$.
Since $\widehat \Sigma_{12}$ is a flat family over $T$ then the
scheme $\Sigma \!\!\! \Sigma_{12j}$ is also flat over $T$.

Any two points on  $T\times T$ can be connected by a chain of two
lines satisfying the condition $b\ne 0.$ Then Hilbert polynomials
of fibres of the scheme $\Sigma \!\!\! \Sigma_{12} \to T\times T$
are constant over the base  $T\times T$. Hence the scheme
$\Sigma\!\!\! \Sigma_{12}$ is flat over the base $T\times T.$

To characterize the scheme structure of the special fibre of the
scheme $\Sigma_{12}$ (and consequently the corresponding fibre of
the scheme  $\Sigma \!\!\! \Sigma_{12}$) it is enough to consider
the embedding  $j_T$ defined by the equation $t_2=0,$ and a
subscheme $\widetilde \Sigma _1=j_1(\Sigma_{1j})$. It is a flat
family of subschemes with fibre $\widetilde S_1 =\Proj
\bigoplus_{s\ge 0} (I_1[t]+(t))^s/(t)^{s+1}$. As proven before,
the preimage  $\widetilde \Sigma_{12}= \sigma \!\!\!
\sigma_{12}^{-1}(\widetilde \Sigma_1)$ is also flat over $j_T(T)
\cong T$ with generic fibre isomorphic to $\widetilde S_1=\Proj
\bigoplus_{s\ge 0} (I_1[t]+(t))^s/(t)^{s+1}$. Applying in this
situation the reasoning of the article \cite{Tim3} we obtain that
the special fibre $\widetilde S_{12}$ of the scheme  $\widetilde
\Sigma_{12}$ has the following scheme-theoretic characterization:
$\widetilde S_{12}= \Proj \bigoplus_{s\ge
0}(I'_2[t]+(t))^s/(t)^{s+1}$ for the sheaf of ideals $I'_2 \subset
\OO_{\widetilde S_1}$ defined as $I'_2=\sigma_1^{-1}I_1 \cdot
\OO_{\widetilde S_1}.$
\end{proof}

Hence, for any two schemes $\widetilde S_1=\Proj \bigoplus_{s\ge
0}(I_1[t]+(t))^{s}/(t)^{s+1}$ and $\widetilde S_2=\Proj
\bigoplus_{s\ge 0}(I_2[t]+(t))^{s}/(t)^{s+1}$ the scheme
$\widetilde S_{12}=\Proj \bigoplus_{s\ge
0}(I'_1[t]+(t))^{s}/(t)^{s+1}=\Proj \bigoplus_{s\ge
0}(I'_2[t]+(t))^{s}/(t)^{s+1}$ is defined together with morphisms
$\widetilde S_1\stackrel{\sigma'_1}{\longleftarrow} \widetilde
S_{12} \stackrel{\sigma'_2}{\longrightarrow} \widetilde S_2,$ such
that the diagram
\begin{equation*}\xymatrix{\widetilde S_{12}
\ar[r]^{\sigma'_2}\ar[d]_{\sigma'_1}& \widetilde S_2
\ar[d]^{\sigma_2}\\
\widetilde S_1 \ar[r]_{\sigma_1}&S}
\end{equation*}
commutes. The operation $(\widetilde S_1, \widetilde S_2) \mapsto
\widetilde S_1 \diamond \widetilde S_2=\widetilde S_{12}$ defined
by this way, is obviously associative. Moreover, since for any
admissible morphism $\sigma :\widetilde S\to S$ there are
equalities $\widetilde S \diamond S =S \diamond \widetilde S
=\widetilde S$, then admissible morphisms of each class $[E]$ of
S-equivalent semistable coherent sheaves generate a commutative
monoid  $\diamondsuit[E]$ with binary operation $\diamond$ and
neutral element $\id_S: S \to S.$

Note that by proposition  \ref{ssc} and remark \ref{sscr} there is
a bijective correspondence among subsheaves of coherent
$\OO_S$-sheaf $E$ and subsheaves of the corresponding locally free
$\OO_{\widetilde S}$-sheaf $\widetilde E.$ This correspondence
pre\-serves Hilbert polynomials. Let there is a fixed
Jordan-H\"{o}lder filtration in $E$ formed by subsheaves  $F_i$.
Then there is a sequence of semistable subsheaves $\widetilde F_i$
with the same reduced Hilbert polynomial and $\rank \widetilde
F_i=\rank F_i$ distinguished in $\widetilde E$ by the described
correspondence.

Let $X$ be a projective scheme, $L$ be ample invertible
$\OO_X$-sheaf, $E$ be a coherent $\OO_X$-sheaf. Let the sheaf
$E\otimes L^{m}$ is globally generated, namely, there is an
epimorph\-ism $q: H^0(X, E\otimes L^{m}) \otimes L^{ (-m)}
\twoheadrightarrow E.$ Fix a subspace $H \subset H^0(X, E\otimes
L^{m}).$ The subsheaf $F\subset E$ is said to be {\it generated by
the subspace $H$} if it is an image of the composite map  $H
\otimes L^{(-m)}\subset H^0(X, E\otimes L^{ m}) \otimes L^{
(-m)}\stackrel{q} {\twoheadrightarrow }E.$

\begin{proposition} \label{corr} The transformation $E \mapsto
\sigma^{\ast}E /tors$ is compatible on all sub\-sheaves $F \subset
E$ with the isomorphism  $\upsilon$ for all $m\gg 0.$
\end{proposition}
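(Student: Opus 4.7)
The plan is to reduce the statement to a functoriality property of the isomorphism $\upsilon$ of Section 4. Given a subsheaf $F\subset E$, I would apply the construction of Section 4 to the pair $(F,\widetilde F)$ in place of $(E,\widetilde E)$, producing an isomorphism $\upsilon_F:H^0(\widetilde S,\widetilde F\otimes\widetilde L^m)\stackrel{\sim}{\to}H^0(S,F\otimes L^m)$, where $\widetilde F:=\sigma^{\ast}F/tors$ is the sheaf obtained by applying Proposition~\ref{resdes} to $F$. It then suffices to establish (i) that the natural map $\widetilde F\to\widetilde E$ is injective, so $\widetilde F$ is a subsheaf of $\widetilde E$, and (ii) that $\upsilon_F$ is the restriction of $\upsilon$ to $H^0(\widetilde S,\widetilde F\otimes\widetilde L^m)\subset H^0(\widetilde S,\widetilde E\otimes\widetilde L^m)$. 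Granting (i) and (ii), for $m\gg 0$ both $\widetilde F\otimes\widetilde L^m$ and $F\otimes L^m$ are globally generated, so $\upsilon(H^0(\widetilde S,\widetilde F\otimes\widetilde L^m))=H^0(S,F\otimes L^m)$, and the subsheaves generated by these subspaces are exactly $\widetilde F$ and $F$, which is the compatibility claim.

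For (i), on the main component $\widetilde S_0$ the morphism $\sigma_0$ is the blow-up of $S$ along a zero-dimensional subscheme, so $\sigma_0^{\ast}F\to\sigma_0^{\ast}E$ is an isomorphism outside a subscheme of codimension $\ge 2$ and standard torsion-free arguments give injectivity of $\widetilde F_0\to\widetilde E_0$. On each additional component $\widetilde S_i$, $i>0$, the description of $\widetilde E_i$ as $\sigma_i^{\ast}(\ker q_0)/tors_i$ for an appropriate $q_0:\OO_S^{\oplus r}\twoheadrightarrow\varkappa$, cf.~(\ref{ei}), extends to $\widetilde F_i$ via a compatible quotient $q_{0,F}$; the inclusion of kernels $\ker q_{0,F}\subset\ker q_0$ then survives application of $\sigma_i^{\ast}$ and passage to the quotient by $tors_i$.

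For (ii), I would verify commutativity of the square
\begin{equation*}
\xymatrix{H^0(\widetilde S,\widetilde F\otimes\widetilde L^m)\ar[r]^{\upsilon_F}\ar@{^{(}->}[d]&H^0(S,F\otimes L^m)\ar@{^{(}->}[d]\\H^0(\widetilde S,\widetilde E\otimes\widetilde L^m)\ar[r]^{\upsilon}&H^0(S,E\otimes L^m)}
\end{equation*}
Both $\upsilon$ and $\upsilon_F$ are characterized by the triangular diagram of Section 4 involving the pullback $H^0(\sigma^{\ast})$, the torsion quotient $\zeta$ induced by $\sigma^{\ast}E\twoheadrightarrow\widetilde E$ (respectively $\sigma^{\ast}F\twoheadrightarrow\widetilde F$), and the inclusion $\xi$ arising from $\widetilde L^m\hookrightarrow\sigma^{\ast}L^m$. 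All three operations are functorial in the coherent sheaf, so the characterizing diagrams for $\upsilon$ and $\upsilon_F$ assemble into a commutative prism whose base is the displayed square; injectivity of the right vertical map then forces the desired commutativity. The main obstacle is the injectivity of $\widetilde F_i\to\widetilde E_i$ on additional components: since $tors$ is defined via the nonstandard componentwise prescription of Proposition~\ref{resdes}, one must unwind this definition and verify that any local section of $\sigma_i^{\ast}F$ mapping to zero in $\widetilde E_i$ already lies in $tors_i$ on $\sigma_i^{\ast}F$.
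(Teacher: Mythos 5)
Your strategy hinges on manufacturing a second isomorphism $\upsilon_F$ for the pair $(F,\sigma^{\ast}F/tor\!s)$ "by applying the construction of Section 4", and this is where the genuine gap lies. The isomorphism $\upsilon$ is not an intrinsic invariant of the single pair $(E,\widetilde E)$: it is extracted from the standard resolution of the universal family $\E_Q$ of semistable quotient sheaves over $Q$ (diagram (\ref{big}), lemma \ref{monosh}, the inclusion (\ref{inc})), and the fact that it is an isomorphism rests on flatness, boundedness of the semistable family, the uniform choice of $m$, and cohomology and base change for the two flat families. None of this applies to an arbitrary subsheaf $F\subset E$: $F$ is in general not semistable, it is not a member of the family over $Q$, the scheme $\widetilde S$ and the torsion subsheaf of proposition \ref{resdes} are adapted to $E$ (through $\FFitt^0\EExt^1(E,\OO_S)$) and not to $F$, and $\sigma^{\ast}F/tor\!s$ need not be locally free on $\widetilde S$. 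In particular the equality $h^0(\widetilde S,\widetilde F\otimes\widetilde L^m)=h^0(S,F\otimes L^m)$, which you need both for $\upsilon_F$ to be an isomorphism onto $H^0(S,F\otimes L^m)$ and for your final identification of the generated subsheaves, is not available at this stage; in the paper it appears only downstream of proposition \ref{corr} (corollary \ref{sssh}), so your route is circular unless $\upsilon_F$ is built by an independent argument, which the proposal does not supply. A smaller point: to descend commutativity of your prism to the base square you must cancel the map $\zeta\circ H^0(\sigma^{\ast})$ out of $H^0(S,E\otimes L^m)$, so what is needed is injectivity of that composite (true because $E$ is torsion-free and $\sigma$ is generically an isomorphism), not injectivity of the right vertical inclusion, which sits on the wrong side of the equation.

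For comparison, the paper needs no second isomorphism: it defines $\widetilde F$ as the subsheaf of $\widetilde E$ generated by $\upsilon^{-1}H^0(S,F\otimes L^m)$, observes that this agrees with $\sigma^{\ast}F/tor\!s$ on the open set $W$ where $\sigma$ is an isomorphism, and on the additional components identifies both through the quasi-ideal description (\ref{ei}): trivializing $E^{\vee\vee}$ (hence $F$) near $\Supp\varkappa$ yields the commutative triangle of $q_0$ and $q'_0$, whence $\widetilde E_i=\sigma_i^{\ast}\ker q_0/tor\!s_i$ and $\sigma^{\ast}F/tor\!s|_{\widetilde S_i}=\sigma_i^{\ast}\ker q'_0/tor\!s_i$. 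Your componentwise analysis in step (i) is in the same spirit and could be salvaged along these lines, but as written both the injectivity of $\widetilde F_i\to\widetilde E_i$ (which you yourself flag) and, above all, the existence of $\upsilon_F$ are left open, and the latter cannot be waved through by citing Section 4.
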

\begin{proof} Take an arbitrary subsheaf $F\subset E$
of rank $r'$. It necessary to check that the subsheaf $\widetilde
F\subset \widetilde E=\sigma^{\ast}E /tor\!s$ generated in
 $\widetilde E$ by the subspace $\upsilon^{-1}
H^0(S, F\otimes L^{m})$, coincides with the subsheaf
$\sigma^{\ast}F /tor\!s.$

It is clear that sheaves  $\sigma^{\ast}F /tor\!s $ and
$\widetilde F$ coincide on the open subset $W$ of the scheme
$\widetilde S$ where the scheme morphism $\sigma: \widetilde S \to
S$ is an isomorphism. Then it rests to check their coincidence on
additional components of the scheme $\widetilde S.$ The structure
of the sheaf $\widetilde E$ on additional components is described
by the data (\ref{addker}, \ref{ei}). As earlier, $U$ is the open
neighborhood of $\Supp \varkappa$ in $S$. Coincidence of
subsheaves
 $\widetilde F$ and $\sigma^{\ast}F
 /tor\!s$ on the open subset  $\sigma^{-1}(U)\cap W$ provides
(possibly after diminishing of the open subset $W$) isomorphisms
$\widetilde F|_{\sigma^{-1}(U) \cap W}= (\sigma ^{\ast}
F/tors)|_{\sigma^{-1}(U) \cap W}=\sigma^{\ast} \bigoplus^{r'}
\OO_{U \cap \sigma(W)}$ and  the inclusion of the inverse images
of trivial sheaves $\sigma^{\ast} \bigoplus^{r'} \OO_{U \cap
\sigma(W)}\hookrightarrow$  $\sigma^{\ast} \bigoplus^r \OO_{U \cap
\sigma(W)}$.

There is a commutative triangle
\begin{equation*}\xymatrix{\bigoplus^r \OO_U \ar@{->>}[r]^{q_0}& \varkappa \\
\bigoplus^{r'}\OO_U \ar@{^(->}[u] \ar[ur]^{q'_0} }
\end{equation*}
where the morphism $q'_0$ is defined as composite map. Application
of the functor of the inverse image $\sigma^{\ast}$ and
restrictions on each of additional components lead to the
expressions
\begin{eqnarray*}\widetilde E_i&=&\sigma_i^{\ast}\ker q_0/tor\!s_i,\\
\widetilde F_i=\sigma^{\ast}F/tor\!s|_{\widetilde
S_i}&=&\sigma_i^{\ast}\ker q'_0/tor\!s_i,
\end{eqnarray*}
what completes the proof.

\end{proof}
\begin{corollary} \label{sssh} Sheaves $\widetilde F_i=\sigma^{\ast}
F_i /tor\!s$ are semistable of rank $r_i=\rank F_i$ with reduced
Hilbert polynomial equal to  $p_E(t)$.
\end{corollary}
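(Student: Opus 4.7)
The plan is to combine three earlier ingredients: the structure of the Jordan--H\"{o}lder filtration on $S$, the compatibility Proposition \ref{corr}, and the subsheaf correspondence of Proposition \ref{ssc}/Remark \ref{sscr}. First I would recall that since $E$ is semistable with reduced Hilbert polynomial $p_E(t)$, every term $F_i$ of its Jordan--H\"{o}lder filtration is itself semistable with the same reduced Hilbert polynomial $p_E(t)$, and has rank $r_i=\rank F_i$. This is the starting input; the content of the corollary is transporting these two properties across the standard resolution.

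Next I would identify $\widetilde F_i$ in two equivalent ways. By Proposition \ref{corr}, the sheaf $\sigma^{\ast}F_i/tor\!s$ coincides with the subsheaf of $\widetilde E$ generated by the subspace $\upsilon^{-1}H^0(S,F_i\otimes L^m)\subset H^0(\widetilde S,\widetilde E\otimes \widetilde L^m)$ for $m\gg 0$. This identification means $\widetilde F_i$ is one of the subsheaves of $\widetilde E$ appearing in the bijective correspondence of Remark \ref{sscr}. Because sheaves $\widetilde F_i$ and $F_i$ coincide on the dense open subset $W\subset \widetilde S$ where $\sigma$ is an isomorphism, their ranks agree, giving $\rank\widetilde F_i=r_i$. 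The Hilbert polynomial is transferred by the same correspondence (Remark \ref{sscr}): $\chi(\widetilde F_i\otimes \widetilde L^m)=\chi(F_i\otimes L^m)$ for $m\gg 0$, so the reduced Hilbert polynomial of $\widetilde F_i$ equals $p_E(t)$.

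For semistability, I would argue as in the proof of Proposition \ref{ssc}, now applied to $F_i$ in place of $E$. Given any proper subsheaf $\widetilde G\subset \widetilde F_i$, consider the subspace $V_{\widetilde G}=H^0(\widetilde S,\widetilde G\otimes\widetilde L^m)\subset H^0(\widetilde S,\widetilde F_i\otimes \widetilde L^m)$ for $m\gg 0$. The isomorphism $\upsilon$ restricted to $H^0(\widetilde F_i\otimes \widetilde L^m)\cong H^0(F_i\otimes L^m)$ (which is compatible with the restriction from $\widetilde E$ by Proposition \ref{corr}) sends $V_{\widetilde G}$ to a subspace $V_G\subset H^0(S,F_i\otimes L^m)$ generating a subsheaf $G\subset F_i$ of the same rank and with $h^0(G\otimes L^m)=h^0(\widetilde G\otimes \widetilde L^m)$. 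Semistability of $F_i$ on $(S,L)$ then yields $h^0(\widetilde G\otimes\widetilde L^m)/\rank \widetilde G=h^0(G\otimes L^m)/\rank G\le h^0(F_i\otimes L^m)/\rank F_i=h^0(\widetilde F_i\otimes\widetilde L^m)/\rank\widetilde F_i$, which is exactly the semistability inequality for $\widetilde F_i$.

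The only delicate point, which I would pay attention to, is checking that the subsheaf correspondence of Remark \ref{sscr} (originally formulated between $E$ and $\widetilde E$) applies equally well between $F_i$ and $\widetilde F_i$: this is guaranteed by Proposition \ref{corr}, which shows that restricting $\upsilon$ to global sections of $F_i\otimes L^m$ realizes exactly the subsheaves of $\widetilde E$ of the form $\sigma^{\ast}F/tor\!s$, so the bijection restricts correctly. Once this is recorded, the rank, Hilbert polynomial and semistability of $\widetilde F_i$ all follow formally, and no new geometric input is required.
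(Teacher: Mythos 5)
Your proof is correct and follows essentially the same route as the paper: rank via the open locus where $\sigma$ is an isomorphism, Hilbert polynomial via the equality of spaces of global sections under $\upsilon$, and semistability by rerunning the argument of Proposition \ref{ssc} with $F_i$ in place of $E$, using Proposition \ref{corr} to identify $\widetilde F_i=\sigma^{\ast}F_i/tor\!s$ with the subsheaf generated by $\upsilon^{-1}H^0(S,F_i\otimes L^m)$. The paper's own proof only records the rank and Hilbert-polynomial computation and leaves the semistability transfer implicit, so your explicit verification of that point is a harmless elaboration of the same approach rather than a different argument.
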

\begin{proof} The equality of ranks follows from the equality
$\widetilde F_i=\sigma^{\ast} F_i /tor\!s$ and from the fact that
the morphism  $\sigma$ is an isomorphism on open subscheme in
$\widetilde S$. The Hilbert polynomial for all $t\gg 0$ is fixed
by the equalities $\chi (\widetilde F_i \otimes \widetilde L^{t})=
h^0(\widetilde S, \widetilde F_i \otimes \widetilde L^{ t})=
h^0(S, F_i \otimes L^{ t} )= \chi (F_i \otimes L^{ t})=rp_E(t).$
\end{proof}

For $\widetilde E= \sigma^{\ast} E/tors$ consider epimorphisms
$\widetilde q: H^0(\widetilde S, \widetilde E \otimes \widetilde
L^m) \otimes \widetilde L^{(-m)}\twoheadrightarrow \widetilde E$
and  $q: H^0( S,
 E \otimes  L^m) \otimes
L^{(-m)}\twoheadrightarrow E$.
\begin{definition} Subsheaves  $\widetilde F \subset \widetilde E$ и $F \subset
E$ are called  {\it $\upsilon$-correspond\-ing } if there exist
subspaces  $\widetilde V \subset H^0(\widetilde S, \widetilde E
\otimes \widetilde L^m)$ and $V = \upsilon (\widetilde V)\subset
H^0( S, E \otimes L^m)$ such that  $\widetilde q(\widetilde V
\otimes \widetilde L^{-m})=\widetilde F$, $q(V \otimes L^{-m})=F$.
Notation: $F= \upsilon (\widetilde F).$ The corresponding quotient
sheaves  $\widetilde E /\widetilde F$ и $ E/F$ will be also called
$\upsilon$-{\it corresponding } and denoted
 $E/F=\upsilon(\widetilde E /\widetilde F).$\end{definition}

\begin{proposition} \label{satur} The transformation $E \mapsto
\sigma^{\ast}E /tors$ takes saturated subsheaves to saturated
subsheaves.
\end{proposition}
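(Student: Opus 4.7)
The goal is to show that a saturated subsheaf $F \subset E$ (equivalently, one for which $E/F$ is torsion-free on $S$) is carried to a saturated $\widetilde F \subset \widetilde E$, meaning $\widetilde E/\widetilde F$ has no generalized torsion in the sense defined in the proof of Proposition \ref{resdes}.

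The plan is as follows. By Proposition \ref{corr}, $\widetilde F = \sigma^{\ast} F/tor\!s$ is precisely the image of $\sigma^{\ast} F$ in $\widetilde E$ under the canonical epimorphism $\sigma^{\ast} E \twoheadrightarrow \widetilde E$. Hence $\widetilde E/\widetilde F$ is the cokernel of the composite $\sigma^{\ast} F \to \sigma^{\ast} E \to \widetilde E$, which by right exactness of $\sigma^{\ast}$ equals $\sigma^{\ast}(E/F)$ modulo the image of $tor\!s(\sigma^{\ast} E)$. It therefore suffices to establish the identification
\[
\widetilde E/\widetilde F \;\cong\; \sigma^{\ast}(E/F)/tor\!s,
\]
since the right-hand side has no generalized torsion by construction.

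To verify this identification I would use the family-theoretic setup of Proposition \ref{resdes}. Since saturation is an open condition in the relevant $\Quot$ scheme, I extend the given triple to a flat one-parameter family of exact triples $0 \to \F \to \E \to \E/\F \to 0$ over $T=\Spec k[t]$, with fibre $(F, E, E/F)$ at $t=0$ and with generic fibres consisting of locally free sheaves and saturated subsheaves thereof. Applying the blowup $\sigma\!\!\!\sigma: \widehat{T \times S} \to T \times S$ of $\FFitt^0 \EExt^1(\E, \OO_{T \times S})$ and passing to the locally free resolutions $\widehat\E, \widehat\F$ as in Proposition \ref{resdes}, I obtain a coherent quotient $\widehat\E/\widehat\F$, flat over $T$, whose fibres at $t \ne 0$ are torsion-free (they coincide with $\E_t/\F_t$, since $\sigma\!\!\!\sigma$ is an isomorphism away from $t=0$). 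Restriction to the special fibre recovers $\widetilde E/\widetilde F$; fibrewise torsion-freeness in a flat family then forces generalized torsion-freeness at $t=0$, confirming the identification.

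The main obstacle will be verifying flatness of $\widehat\E/\widehat\F$ over $T$ together with the clean identification of its special fibre with $\sigma^{\ast}(E/F)/tor\!s$. As a fallback I would argue component-wise on $\widetilde S = \bigcup_{i\ge 0} \widetilde S_i$: on the main component $\widetilde S_0$, torsion-freeness of $\sigma_0^{\ast}(E/F)/tor\!s_0$ follows from the standard blowup description combined with torsion-freeness of $E/F$ at the centre; on each additional component $\widetilde S_i$ ($i>0$), the commutative triangle $q_0' = q_0 \circ \iota$ from the proof of Proposition \ref{corr} (with $\iota$ the inclusion $\bigoplus^{r'}\OO_U \hookrightarrow \bigoplus^r\OO_U$) identifies $\widetilde E_i/\widetilde F_i$ with $\sigma_i^{\ast}(\ker q_0/\ker q_0')/tor\!s_i$, and its absence of generalized torsion can be read off from the explicit definition of $tor\!s_i$ given in the proof of Proposition \ref{resdes}.
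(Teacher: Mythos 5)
Your strategy runs in the opposite direction to the paper's, and as written it has genuine gaps. You reduce everything to the identification $\widetilde E/\widetilde F\cong\sigma^{\ast}(E/F)/tor\!s$; but in the paper this identification (for filtration quotients, $\widetilde F_i/\widetilde F_{i-1}\cong\sigma^{\ast}(F_i/F_{i-1})/tor\!s$) is the corollary that comes \emph{after} Proposition \ref{satur} and is deduced from it. The delicate point is exactly the one your reduction leaves open: your cokernel computation only exhibits $\widetilde E/\widetilde F$ as $\sigma^{\ast}(E/F)$ modulo the \emph{image} of $tor\!s(\sigma^{\ast}E)$, and you still must show this image is the whole generalized torsion subsheaf of $\sigma^{\ast}(E/F)$, i.e.\ that every torsion section downstairs lifts to a torsion section of $\sigma^{\ast}E$. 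That is the entire content of the proposition, so assuming the identification is close to circular relative to the paper's logic.

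The two arguments you offer to close this do not work. The family argument needs, first, an extension of $F\subset E$ to a $T$-flat pair $\F\subset\E$ whose generic fibres are saturated subsheaves of locally free sheaves: openness of saturation does not produce this, because you have no family of subsheaves through $[F]$ to begin with (the relative Quot scheme is projective over $T$, but there is no reason the point $[E\twoheadrightarrow E/F]$ lies in the closure of quotients coming from the generic fibres). Second, and fatally, the step ``flat over $T$ and torsion-free on general fibres $\Rightarrow$ torsion-free special fibre'' is false: flat limits of torsion-free (even locally free) sheaves routinely acquire torsion, and this failure is precisely the phenomenon the whole paper is built around, so it cannot be invoked as a principle. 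The component-wise fallback is only a sketch (``can be read off''). The paper's own proof is short and avoids all of this: supposing $\widetilde F_i/\widetilde F_{i-1}$ had a torsion subsheaf $\tau$, it generates $\tau$ by global sections, transports the generating subspaces through the fixed isomorphism $\upsilon$ of Section 4 to obtain $\upsilon$-corresponding subsheaves of $F_i$, and notes that since the corresponding sheaves agree on the open sets where $\sigma$ is an isomorphism, $\upsilon(\tau)$ is again a torsion sheaf, contradicting saturatedness of $F_{i-1}\subset F_i$ on $S$. If you want to rescue your route, you must prove surjectivity of $tor\!s(\sigma^{\ast}E)\to tor\!s(\sigma^{\ast}(E/F))$ directly, and at that point the $\upsilon$-based contradiction argument is the simpler path.
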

\begin{proof} Let $F_{i-1}\subset F_i$ be a saturated subsheaf.
Assume that the quotient sheaf $\widetilde F_i/ \widetilde
F_{i-1}$ has a subsheaf of torsion $\tau.$ This subsheaf is
generated by vector subspace  $\widetilde T \subset H^0
(\widetilde S, \widetilde F_i \otimes \widetilde
L^{m})/H^0(\widetilde S, \widetilde F_{i-1} \otimes \widetilde
L^{m}).$ Let $\widetilde T'$ be its preimage in $H^0(\widetilde S,
\widetilde F_i \otimes \widetilde L^{m})$ and $\TT' \subset
\widetilde F_i$ be a subsheaf generated by subspace $\widetilde
T'.$ Then there is a sheaf  epimorphism $\TT' \twoheadrightarrow
\tau$ with  kernel $\TT' \cap \widetilde F_{i-1}.$ Let $\widetilde
K= H^0(\widetilde S, (\TT' \cap \widetilde F_{i-1})\otimes
\widetilde L^{ m})\subset H^0(\widetilde S, \widetilde
F_{i-1}\otimes \widetilde L^{ m})$ be its generating subspace.
Then the isomorphism $\upsilon$ leads to the exact diagram of
vector spaces
\begin{equation*}\xymatrix{0\ar[r]& \upsilon(\widetilde K)
\ar[d]_{\wr}^{\upsilon} \ar[r]& \upsilon(\widetilde T')
\ar[d]_{\wr}^{\upsilon} \ar[r]& \upsilon(\widetilde
T')/\upsilon(\widetilde K)
\ar[d]_{\wr}^{\overline \upsilon} \ar[r]&0\\
0\ar[r]& \widetilde K \ar[r]& \widetilde T' \ar[r]& \widetilde T'
/\widetilde K \ar[r]&0}
\end{equation*}
with morphism $\overline \upsilon$ induced by the morphism
$\upsilon.$ Also there are exact sequences of
$\upsilon$-corresponding coherent sheaves
\begin{equation*}\xymatrix{
0\ar[r]&\upsilon(\TT' \cap \widetilde F_{i-1}) \ar[r]&
\upsilon(\TT') \ar[r]& \upsilon(\tau) \ar[r]&0,\\
0\ar[r]& \TT' \cap \widetilde F_{i-1} \ar[r]& \TT' \ar[r]& \tau
\ar[r]& 0.}
\end{equation*}
Sheaves $\TT' \cap \widetilde F_{i-1},$ $\upsilon(\TT' \cap
\widetilde F_{i-1}),$ $\TT'$ and $\upsilon(\TT')$ coincide under
restriction on open subsets  $W$ and $\sigma (W)$ respectively.
Then if $\tau$ is a torsion sheaf, then $\upsilon (\tau)$ is also
torsion sheaf. This contradicts saturatedness of the
subsheaf$F_{i-1}.$\end{proof}

\begin{corollary} There are isomorphisms $\sigma^{\ast}
(F_i/F_{i-1})/tor\!s \cong \widetilde F_i/ \widetilde F_{i-1}.$
\end{corollary}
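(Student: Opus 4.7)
The plan is to apply the right-exact functor $\sigma^{\ast}$ to the short exact sequence $0 \to F_{i-1} \to F_i \to F_i/F_{i-1} \to 0$ and then compare the cokernel computation upstairs with the quotient $\widetilde F_i/\widetilde F_{i-1}$. By Proposition \ref{corr} (applied to both $F_{i-1}$ and $F_i$), we have canonical identifications $\widetilde F_j = \sigma^{\ast}F_j/tor\!s$ for $j=i-1,i$, and the inclusion $F_{i-1}\hookrightarrow F_i$ induces a map $\widetilde F_{i-1}\to\widetilde F_i$. This map is injective: on the open set $W\subset\widetilde S$ where $\sigma$ is an isomorphism the map coincides with the inclusion $F_{i-1}|_{\sigma(W)}\hookrightarrow F_i|_{\sigma(W)}$, so any kernel is supported off $W$; since $\widetilde F_i$ is torsion-free (being $\sigma^{\ast}F_i/tor\!s$), the kernel must vanish.

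Next I would construct the comparison morphism. The composite $\sigma^{\ast}F_i\twoheadrightarrow\widetilde F_i\twoheadrightarrow\widetilde F_i/\widetilde F_{i-1}$ is a surjection whose composition with $\sigma^{\ast}F_{i-1}\to\sigma^{\ast}F_i$ vanishes (the image of $\sigma^{\ast}F_{i-1}$ lies in $\widetilde F_{i-1}$ modulo torsion). By right exactness of $\sigma^{\ast}$, $\sigma^{\ast}(F_i/F_{i-1})=\coker(\sigma^{\ast}F_{i-1}\to\sigma^{\ast}F_i)$, so the composite descends to a surjection
\begin{equation*}
\eta:\sigma^{\ast}(F_i/F_{i-1})\twoheadrightarrow \widetilde F_i/\widetilde F_{i-1}.
\end{equation*}
Since $F_{i-1}\subset F_i$ is saturated (the JH filtration has torsion-free quotients $gr_i(E)$), Proposition \ref{satur} shows that $\widetilde F_{i-1}\subset\widetilde F_i$ is saturated, hence $\widetilde F_i/\widetilde F_{i-1}$ is torsion-free. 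Therefore $\eta$ annihilates $tor\!s\subset\sigma^{\ast}(F_i/F_{i-1})$ and factors through a surjection $\bar\eta:\sigma^{\ast}(F_i/F_{i-1})/tor\!s\twoheadrightarrow\widetilde F_i/\widetilde F_{i-1}$.

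Finally I would verify that $\bar\eta$ is injective. Both sheaves are torsion-free in the sense used throughout \S 1, and both restrict on the open set $W$ to $(F_i/F_{i-1})|_{\sigma(W)}$, so $\bar\eta|_W$ is the identity. Consequently the kernel of $\bar\eta$ is supported on $\widetilde S\setminus W$, that is, on the additional components $\widetilde S_i$ ($i>0$) together with the exceptional locus inside $\widetilde S_0$. But this kernel injects into the torsion-free sheaf $\sigma^{\ast}(F_i/F_{i-1})/tor\!s$, hence must be trivial; alternatively, both sheaves have the same Hilbert polynomial (by Corollary \ref{sssh} and the additivity of Euler characteristic in the JH sequence together with the isomorphism $\upsilon$ of global sections of \S 4), so a surjection between them of equal Hilbert polynomial is an isomorphism.

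The main obstacle will be making the last injectivity step rigorous on the possibly non-reduced additional components, where the generalized notion of torsion subsheaf from Proposition \ref{resdes} must be used carefully; in particular, one has to check that the quasi-ideal description (\ref{ei}) for $\widetilde F_{i-1}\subset\widetilde F_i$ really yields a torsion-free quotient on each $\widetilde S_i$ and that no ``hidden'' torsion supported on embedded components of $\widetilde S_i$ can appear in $\ker\bar\eta$. This is handled by invoking Proposition \ref{corr} componentwise together with the explicit form of $tor\!s$ given in the proof of Proposition \ref{resdes}.
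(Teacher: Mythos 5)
Your proposal is correct and follows essentially the same route as the paper: apply the right-exact functor $\sigma^{\ast}$ to the Jordan--H\"older triple, observe that the induced surjection $\sigma^{\ast}(F_i/F_{i-1})\to \widetilde F_i/\widetilde F_{i-1}$ has torsion kernel, and use Proposition \ref{satur} (saturatedness of $F_{i-1}\subset F_i$) to see that the target is torsion-free, so the map factors through the stated isomorphism; the paper merely packages this as a $3\times 3$ exact diagram with kernels $N$, $tor\!s(\sigma^{\ast}F_i)$, $\tau'$. Your auxiliary finishing argument via equality of Hilbert polynomials is a harmless (and slightly more robust) variant of the same conclusion, consistent with the computations in Corollaries \ref{sssh} and \ref{stabquot}.
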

\begin{proof} Take an exact triple
$$
0\to F_{i-1} \to F_i \to F_i/F_{i-1} \to 0
$$
and apply the functor $\sigma^{\ast}.$ This yields
$$
0 \to \sigma^{\ast} F_{i-1}/\tau \to \sigma^{\ast} F_i \to
\sigma^{\ast} F_{i}/F_{i-1}\to 0$$ where the symbol $\tau$ denotes
the subsheaf of torsion violating exactness. Factoring  first two
sheaves by torsion and applying the proposition \ref{corr} one has
an exact diagram
\begin{equation*}\xymatrix{&0\ar[d]&0\ar[d]&0\ar[d]&\\
0\ar[r]& N \ar[d]\ar[r]& tor\!s(\sigma^{\ast}F_i)\ar[r] \ar[d] &\tau'\ar[r] \ar[d]& 0\\
0\ar[r]& \sigma^{\ast} F_{i-1}/\tau \ar[r]\ar[d]& \sigma^{\ast}
F_i\ar[r] \ar[d]&
\sigma^{\ast}(F_i /F_{i-1}) \ar[r] \ar[d]&0\\
0\ar[r]& \widetilde F_{i-1} \ar[r] \ar[d]& \widetilde F_i \ar[r]
\ar[d]& \widetilde F_i/
\widetilde F_{i-1} \ar[r] \ar[d]&0\\
&0&0&0&}
\end{equation*}
where the sheaf $N$ is defined as $\ker (\sigma^{\ast}F_{i-1}/\tau
\to \widetilde F_{i-1})$. It rests to note that the sheaf $\tau'$
is torsion sheaf. Also since the subsheaf $F_{i-1}\subset F_i$ is
saturated then due to the proposition \ref{satur}, the sheaf
$\widetilde F_i/ \widetilde F_{i-1}$ has no torsion. Then
$\widetilde F_i/\widetilde F_{i-1} \cong \sigma^{\ast}(F_i
/F_{i-1}) /tor\!s$.
\end{proof}
\begin{corollary} \label{stabquot} Quotient sheaves $\widetilde F_i/\widetilde
F_{i-1}$ are stable and their reduced Hilbert polynomial is equal
to $p_E(t).$
\end{corollary}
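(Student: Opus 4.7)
The plan is to leverage the isomorphism $\widetilde F_i/\widetilde F_{i-1}\cong \sigma^{\ast}(F_i/F_{i-1})/tor\!s$ supplied by the preceding corollary, combined with two tools established earlier: the $\upsilon$-correspondence of subsheaves in Proposition \ref{corr}, and the distinguished isomorphism $\upsilon\colon H^0(\widetilde S,\widetilde E\otimes \widetilde L^m)\stackrel{\sim}{\to}H^0(S,E\otimes L^m)$ from \S 4, which preserves Hilbert polynomials for $m\gg 0$.

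First I would settle the reduced Hilbert polynomial. Applying $\upsilon$ naturally to the $\upsilon$-corresponding pair $(\widetilde F_i/\widetilde F_{i-1},\, F_i/F_{i-1})$ yields the equality $h^0(\widetilde S,(\widetilde F_i/\widetilde F_{i-1})\otimes \widetilde L^m)=h^0(S,(F_i/F_{i-1})\otimes L^m)$ for $m\gg 0$; since for such $m$ these equal the respective Euler characteristics, the Hilbert polynomials agree as polynomials in $m$. The ranks coincide because $\sigma$ is an isomorphism on an open dense subset of the main component $\widetilde S_0$, so the rank of $\sigma^{\ast}(F_i/F_{i-1})/tor\!s$ there equals $\rank(F_i/F_{i-1})$. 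Since $F_i/F_{i-1}$ has reduced Hilbert polynomial $p_E(t)$ by the choice of Jordan--H\"older filtration, so does $\widetilde F_i/\widetilde F_{i-1}$.

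Second, I would prove stability by contradiction. Suppose $\widetilde G\subsetneq \widetilde F_i/\widetilde F_{i-1}$ is a proper subsheaf with $h^0(\widetilde G\otimes \widetilde L^m)/\rank \widetilde G \ge p_E(m)$ for $m\gg 0$. Pulling $\widetilde G$ back along $\widetilde F_i\twoheadrightarrow \widetilde F_i/\widetilde F_{i-1}$ produces a subsheaf $\widetilde H$ with $\widetilde F_{i-1}\subsetneq \widetilde H\subsetneq \widetilde F_i$, and Proposition \ref{corr} provides a $\upsilon$-corresponding subsheaf $F_{i-1}\subsetneq H\subsetneq F_i$. Then $H/F_{i-1}$ is $\upsilon$-corresponding to $\widetilde G$; since $\upsilon$ preserves ranks and Hilbert polynomials, the reduced Hilbert polynomial of $H/F_{i-1}$ is at least $p_E(t)$, contradicting stability of $F_i/F_{i-1}$.

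The main obstacle I anticipate is the careful bookkeeping showing that the $\upsilon$-correspondence descends from the pair $(\widetilde F_i, F_i)$ to the quotients $(\widetilde F_i/\widetilde F_{i-1},\, F_i/F_{i-1})$, so that subsheaves of the upstairs quotient correspond bijectively to subsheaves of the downstairs quotient with matching reduced Hilbert polynomials. This should follow by combining Proposition \ref{corr} with the saturatedness transfer of Proposition \ref{satur} (ensuring no spurious torsion appears when forming quotients) and the rank equalities of Corollary \ref{sssh}, but the argument must be written out carefully because the subsheaves $\widetilde H$ and $H$ being considered here are not themselves assumed saturated.
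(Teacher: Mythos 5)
Your argument is essentially the paper's own: the paper likewise lifts a subsheaf $\widetilde R\subset\widetilde F_i/\widetilde F_{i-1}$ to a subspace of $H^0(\widetilde S,\widetilde F_i\otimes\widetilde L^m)$, transports it by $\upsilon$ to generate a subsheaf $\HH\subset F_i$ with $\HH/F_{i-1}\subset F_i/F_{i-1}$, and concludes from stability of $F_i/F_{i-1}$ via the equality of the relevant $h^0$'s, while the Hilbert polynomial claim is handled exactly as you do, through the rank and section-space equalities coming from $\upsilon$ and the previous corollaries. The only cosmetic difference is that you phrase the transfer at the level of subsheaves (via Proposition \ref{corr}) and as a proof by contradiction, whereas the paper works directly with the generating subspaces of sections; the substance is the same.
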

\begin{proof} Consider a subsheaf $\widetilde R \subset
\widetilde F_i/\widetilde F_{i-1}$ and the space of global
sections $$H^0(\widetilde S, \widetilde R \otimes \widetilde
L^{m})\subset H^0(\widetilde S, (\widetilde F_i/\widetilde
F_{i-1})\otimes \widetilde L^{m})= H^0(\widetilde S, \widetilde
F_i \otimes \widetilde L^{m})/ H^0(\widetilde S, \widetilde
F_{i-1} \otimes \widetilde L^{m}).$$ We assume as usually $m$ to
be as big as higher cohomology groups vanish. Let $\widetilde H$
be the preimage of subspace $H^0(\widetilde S, \widetilde R
\otimes \widetilde L^{m})$ in $H^0(\widetilde S, \widetilde F_i
\otimes \widetilde L^{m})$, and $H:= \upsilon(\widetilde H)$.
Denote by $\HH$ a subsheaf in $F_i$ if this subsheaf is generated
by the subspace $H$. It is clear that $\HH /F_{i-1} \subset F_i
/F_{i-1}.$ From the chain of obvious equalities
\begin{eqnarray} h^0(\widetilde S, \widetilde R \otimes \widetilde L
^{m})=\dim \widetilde H - h^0(\widetilde S, \widetilde
F_{i-1}\otimes \widetilde L^{ m})=\dim H-h^0(S, F_{i-1} \otimes
L^{m})\nonumber\\
=h^0(S, \HH \otimes L^{m})- h^0(S, F_{i-1}\otimes L^{m})=h^0(S,
(\HH/F_{i-1}) \otimes L^{m})\nonumber \end{eqnarray} it follows
that
\begin{eqnarray*}\frac{h^0(\widetilde S, (\widetilde F_i
/\widetilde F_{i-1})\otimes \widetilde L^{m})} {\rank (\widetilde
F_i/\widetilde F_{i-1})}&-& \frac{h^0(\widetilde S,\widetilde R
\otimes \widetilde
L^{m})}{\rank \widetilde R}=\nonumber \\
\frac{h^0(S, (F_i/F_{i-1})\otimes L^{m})} {\rank (F_i/F_{i-1})}&-&
\frac{h^0(S,(\HH/F_{i-1}) \otimes L^{m})}{\rank (\HH/F_{i-1})} >0.
\end{eqnarray*}
This proves stability of the quotient sheaf $\widetilde F_i /
\widetilde F_{i-1}.$
\end{proof}

Now consider the exact triple $0\to E_1 \to E \to gr_1(E) \to 0$
and the corresponding triple of spaces of global sections
\begin{equation*} 0\to H^0(S, E_1\otimes L^{m})\to H^0(S, E\otimes
L^{m})\to H^0(S, gr_1(E)\otimes L^{m})\to 0.
\end{equation*}
It is exact for $m\gg 0.$ The transition to the corresponding
 $\OO_{\widetilde S}$-sheaf $\widetilde
E$, to its subsheaf $\widetilde E_1$, to global sections, and
application of the isomorphism $\upsilon$, lead to the commutative
diagram of vector spaces
\begin{equation*} \xymatrix{
0\ar[r]& H^0(S, E_1\otimes L^{m})\ar[r]& H^0(S, E\otimes
L^{m})\ar[r]&
H^0(S, gr_1(E)\otimes L^{m})\ar[r]& 0\\
0\ar[r]& H^0(\widetilde S, \widetilde E_1\otimes \widetilde
L^{m})\ar[r] \ar[u]_{\wr}^{\upsilon}& H^0(\widetilde S, \widetilde
E\otimes \widetilde L^{m})\ar[r]\ar[u]_{\wr}^{\upsilon}&
H^0(\widetilde S, gr_1(\widetilde E)\otimes \widetilde L^{
m})\ar[r]\ar[u]_{\wr}^{\overline \upsilon}& 0}
\end{equation*}
where the isomorphism $\overline \upsilon$ is induced by the
isomorphism $\upsilon.$ Continuing the reasoning inductively for
rest subsheaves of Jordan-H\"{o}lder filtration of the sheaf $E$,
we get that bijective correspondence of subsheaves is continued
onto quotients of filtrations. Then the transition from quotient
sheaves $E_i/E_{i-1}$ to  $\widetilde E_i/\widetilde E_{i-1}$
preserves Hilbert polynomials and stability.

\begin{definition} {\it Jordan--H\"{o}lder filtration}
of semistable vector bundle $\widetilde E$ with Hilbert polynomial
equal to $rp(t)$, is a sequence of semistable subsheaves $0
\subset \widetilde F_1 \subset \dots \subset \widetilde F_{\ell}
\subset \widetilde E$ with reduced Hilbert polynomials equal to
$p(t)$, such that quotient sheaves $gr_i(\widetilde E)=\widetilde
F_i/\widetilde F_{i-1}$ are stable.

The sheaf $\bigoplus_i gr_i(\widetilde E)$ will be called as {\it
associated polystable sheaf} for the bundle $\widetilde E.$
\end{definition}

Then it follows from the results of propositions \ref{corr},
\ref{satur} and corollaries \ref{sssh} -- \ref{stabquot} that the
transformation $E \mapsto \sigma^{\ast} E /tor\!s$ takes the
Jordan -- H\"{o}lder filtration of the sheaf $E$ to Jordan --
H\"{o}lder filtration of bundle $\sigma^{\ast} E /tor\!s.$

Let $(\widetilde S, \widetilde E)$ and $(\widetilde S', \widetilde
E')$ be semistable pairs.

\begin{definition} Semistable pairs  $(\widetilde S,
\widetilde E)$ and $(\widetilde S', \widetilde E')$ are called
{\it $M$-equivalent (monoidally equivalent)} if for morphisms of
$\diamond$-product $\widetilde S \diamond \widetilde S'$ to
factors $\overline \sigma': \widetilde S \diamond \widetilde S'
\to \widetilde S$ and $\overline \sigma: \widetilde S \diamond
\widetilde S' \to \widetilde S'$ and for associated polystable
sheaves  $\bigoplus_i gr_i (\widetilde E)$ and $\bigoplus_i gr_i
(\widetilde E')$ there are isomorphisms \begin{equation*}\overline
\sigma'^{\ast} \bigoplus_i gr_i (\widetilde E)/tor\!s \cong
\overline \sigma^{\ast} \bigoplus_i gr_i (\widetilde
E')/tor\!s.\end{equation*}
\end{definition}

\begin{proposition} S-equivalent semistable coherent sheaves $E$
and $E'$ correspond to M-equivalent semistable pairs $(\widetilde
S, \widetilde E)$ and $(\widetilde S', \widetilde E')$.
\end{proposition}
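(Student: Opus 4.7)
The plan is to reduce the required isomorphism on $\widetilde S \diamond \widetilde S'$ to the isomorphism of polystable sheaves $gr(E) \cong gr(E')$ on $S$ that comes from S-equivalence, using the compatibility of the standard resolution with Jordan--H\"{o}lder filtrations that was established earlier in this section.

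First I would invoke the structural results just proven: by Proposition~\ref{corr}, Proposition~\ref{satur} and Corollaries~\ref{sssh}--\ref{stabquot}, applied inductively along a Jordan--H\"{o}lder filtration of $E$, the transformation $E \mapsto \sigma^{\ast}E/tor\!s$ sends a J--H filtration $0 \subset F_1 \subset \dots \subset F_{\ell} = E$ on $S$ to a J--H filtration $0\subset \widetilde F_1 \subset \dots \subset \widetilde F_\ell = \widetilde E$ on $\widetilde S$ with stable quotients satisfying $gr_i(\widetilde E) \cong \sigma^{\ast} gr_i(E)/tor\!s$. The same is true for $E'$ with $\sigma'\colon \widetilde S' \to S$: $gr_i(\widetilde E') \cong \sigma'^{\ast} gr_i(E')/tor\!s$.

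Next I would use the diamond product $\widetilde S \diamond \widetilde S'$ constructed above, with its morphisms $\overline \sigma'\colon \widetilde S \diamond \widetilde S' \to \widetilde S$ and $\overline \sigma\colon \widetilde S \diamond \widetilde S' \to \widetilde S'$ fitting into a commutative square over $S$, i.e.\ $\sigma \circ \overline \sigma' = \sigma' \circ \overline \sigma =: \Sigma\colon \widetilde S \diamond \widetilde S' \to S$. Pulling the associated polystable sheaf of $\widetilde E$ back along $\overline \sigma'$ and killing torsion yields
\begin{equation*}
\overline \sigma'^{\ast}\bigoplus_i gr_i(\widetilde E)/tor\!s
\;\cong\; \overline \sigma'^{\ast}\bigl(\sigma^{\ast}\bigoplus_i gr_i(E)/tor\!s\bigr)/tor\!s
\;\cong\; \Sigma^{\ast}\bigoplus_i gr_i(E)/tor\!s,
\end{equation*}
where the last isomorphism uses that iterated pullback modulo torsion coincides with pullback modulo torsion under the composite $\Sigma$ (a routine check localizing on the flat locus of $\Sigma$ and using that the torsion subsheaf is canonically characterized as in the proof of Proposition~\ref{resdes}). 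The analogous identity $\overline \sigma^{\ast}\bigoplus_i gr_i(\widetilde E')/tor\!s \cong \Sigma^{\ast}\bigoplus_i gr_i(E')/tor\!s$ holds for $E'$.

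Finally, since $E$ and $E'$ are S-equivalent, $\bigoplus_i gr_i(E) \cong \bigoplus_i gr_i(E')$ as $\OO_S$-sheaves; applying $\Sigma^{\ast}(-)/tor\!s$ to this isomorphism and combining with the two displays above yields the required
\begin{equation*}
\overline \sigma'^{\ast}\bigoplus_i gr_i(\widetilde E)/tor\!s \;\cong\; \overline \sigma^{\ast}\bigoplus_i gr_i(\widetilde E')/tor\!s,
\end{equation*}
which is exactly the definition of M-equivalence of $(\widetilde S,\widetilde E)$ and $(\widetilde S', \widetilde E')$. The main subtlety I expect is the compatibility $\overline \sigma'^{\ast}(\sigma^{\ast}(-)/tor\!s)/tor\!s \cong \Sigma^{\ast}(-)/tor\!s$, which requires care on the (possibly nonreduced) additional components of $\widetilde S \diamond \widetilde S'$; this is handled by the extended notion of torsion subsheaf set up in the proof of Proposition~\ref{resdes}, together with the explicit description (\ref{ei}) of quasi-ideal sheaves on additional components, which shows that both sides are determined fibrewise by the same quotient map $q_0\colon \OO_S^{\oplus r}\twoheadrightarrow \varkappa$ factored through $\Sigma$.
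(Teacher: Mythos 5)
Your proposal is correct and follows essentially the same route as the paper's proof: resolve the Jordan--H\"{o}lder filtration via the results of this section, rewrite both associated polystable sheaves as pullbacks (modulo torsion) of $\bigoplus_i gr_i(E)$ and $\bigoplus_i gr_i(E')$ through the $\diamond$-product, and conclude from $gr(E)\cong gr(E')$ together with commutativity of the $\diamond$-product square. The compatibility $\overline\sigma'^{\ast}[\sigma^{\ast}(-)/tor\!s]/tor\!s \cong (\sigma\circ\overline\sigma')^{\ast}(-)/tor\!s$ that you flag as the main subtlety is exactly the identity the paper uses (stated there without further comment), so your extra justification only strengthens the same argument.
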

\begin{proof} Standard resolution takes semistable coherent sheaf
$E$ to semistable pair  $(\widetilde S, \widetilde E)$. Jordan --
H\"{o}lder filtration of the sheaf $E$ is taken to Jordan --
H\"{o}lder filtration of the bundle $\widetilde E.$ Then the
polystable sheaf $\bigoplus_i gr_i(E)$ is taken to the associated
polystable sheaf  $\bigoplus _i gr_i(\widetilde E)$. Hence we have
for the sheaf $E$
\begin{equation}\label{grE}\overline \sigma '^{\ast}\bigoplus_i
gr_i(\widetilde E)/tor\!s= \overline \sigma
'^{\ast}[\sigma^{\ast}\bigoplus_i gr_i(E)/tor\!s]/tor\!s =
\overline \sigma '^{\ast}\sigma^{\ast}\bigoplus_i
gr_i(E)/tor\!s.\end{equation} Analogously for a sheaf $E'$ which
is S-equivalent to the sheaf  $E$ one has
\begin{equation}\label{grE'}\overline \sigma ^{\ast}\bigoplus_i
gr_i(\widetilde E')/tor\!s=\overline \sigma
^{\ast}[\sigma'^{\ast}\bigoplus_i gr_i(E')/tor\!s]/tor\!s
=\overline \sigma ^{\ast} \sigma'^{\ast}\bigoplus_i
gr_i(E')/tor\!s.\end{equation} Right hand sides of (\ref{grE}) and
(\ref{grE'}) are isomorphic by the isomorphism of polystable
$\OO_S$-sheaves $\bigoplus_i gr_i(E)\cong \bigoplus_i gr_i(E')$
and by commutativity of diagram \begin{equation*}
\xymatrix{\widetilde S \diamond \widetilde S' \ar[d]_{\overline
\sigma'} \ar[r]^{\overline \sigma }&\widetilde S'
\ar[d]^{\sigma'}\\
\widetilde S \ar[r]_{\sigma}& S}\end{equation*} for
$\diamond$-product. The proposition is proven. \end{proof}

\section{ Minimal resolution in the monoid $\diamondsuit[E]$}

This section plays auxiliary role. We prove results concerning
with local freeness of subsheaves and quotient sheaves in  Jordan
-- H\"{o}lder filtration for sheaves of the form $\widetilde
E=\sigma^{\ast} E/tors$.
\begin{definition} $\overline \sigma:\overline S_{\ast}
\to S$ is the {\it minimal resolution} in the monoid $\diamondsuit
[E]$ generated by canonical morphisms for the class $[E]$ of
S-equivalent semistable coherent
$\OO_S$-sheaves, if the following hold:\\
i) (resolution) for any canonical morphisms $\sigma_i: \widetilde
S_i \to S,$ $\sigma_j: \widetilde S_j \to S$ diagrams
\begin{equation*}\xymatrix{\overline S_{\ast} \ar[d]_{\overline \sigma_i}
\ar[rd]_{\overline \sigma} \ar[r]^{\overline \sigma_j}& \widetilde S_i \ar[d]^{\sigma_i}\\
\widetilde S_j \ar[r]_{\sigma_j}& S}
\end{equation*}
commute. All sheaves $\overline \sigma ^{\ast} E_i/ tor\!s$ for
$\overline \sigma= \sigma_j \circ \overline \sigma_i= \sigma _i
\circ \overline \sigma_j$ are
locally free; \\
ii) (minimality) for any morphism  $\sigma'\in \diamondsuit [E]$,
$\sigma': \widetilde S' \to S $ such that  i) holds, there exists
a morphism $f: \widetilde S' \to \widetilde S_{\ast}$ which
includes into commutative diagrams
\begin{equation*}\xymatrix{\widetilde S' \ar[ddr] \ar[dr]_{f\!\!\!} \ar[drr]\\
& \overline S_{\ast}\ar[d]^{\overline \sigma_i}
\ar[rd]_>>>>>>{\overline \sigma}\ar[r]_{\overline \sigma_j}&
\widetilde
S_i\ar[d]^{\sigma_i}\\
& \widetilde S_j \ar[r]_{\sigma_j}& S}\end{equation*} for all
$i,j$, and $\sigma'=\overline \sigma \circ f$.
\end{definition}

\begin{remark} We naturally assume that for one-element
class defined by a stable sheaf $E$ the corresponding canonical
morphism $\sigma$ is minimal.
\end{remark}

\begin{remark}\label{diamond} For a pair of semistable
coherent sheaves  $E_1$ and $E_2$, and for cor\-responding
resolutions  $\sigma_1: \widetilde S_1 \to S$ and $\sigma_2:
\widetilde S_2 \to S$ the $\diamond$-product $\widetilde S_1
\diamond \widetilde S_2$ satisfies conditions of resolution and of
minimality. This implies that it is the minimal resolution for the
pair of sheaves $E_1$, $E_2$.
\end{remark}

\begin{proposition} Every class of S-equivalent coherent
$\OO_S$-sheaves has the \linebreak minimal resolution $\overline
S_{\ast}$. This resolution corresponds to the morphism $\overline
\sigma: \overline S_{\ast} \to S$.
\end{proposition}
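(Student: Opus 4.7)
The plan is to construct $\overline S_{\ast}$ as the iterated $\diamond$-product of the canonical morphisms attached to the stable Jordan--H\"{o}lder factors of (any representative of) the class $[E]$, and then to verify the two defining properties.

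First I would fix a Jordan--H\"{o}lder filtration $0=F_0\subset F_1\subset\dots\subset F_\ell=E$ with stable quotients $gr_i(E)=F_i/F_{i-1}$; by \cite[Prop.~1.5.2]{HL} the polystable sheaf $\bigoplus_i gr_i(E)$, hence the isomorphism classes of its stable summands, depend only on the class $[E]$. For each $i$ let $\sigma_i\colon\widetilde S_i\to S$ be the canonical admissible morphism associated to $gr_i(E)$ (equivalently, the one arising from the Fitting ideal $\FFitt^0\EExt^1(gr_i(E),\OO_S)$), and set $\overline S_{\ast}:=\widetilde S_1\diamond \widetilde S_2\diamond\dots\diamond \widetilde S_\ell$, with induced canonical structure morphism $\overline\sigma\colon\overline S_{\ast}\to S$. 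This is well-defined by the associativity and commutativity of $\diamond$, and by iterated application of Remark~\ref{diamond} the scheme $\overline S_{\ast}$ dominates each $\widetilde S_i$ by a canonical morphism $\overline\sigma_i$, so the commutativity of the diagrams required in (i) for any two canonical morphisms $\sigma_i,\sigma_j\in\diamondsuit[E]$ is automatic.

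The substantive part of (i) is to show that $\overline\sigma^{\ast}E'/tors$ is locally free on $\overline S_{\ast}$ for every S-equivalent representative $E'$. I would argue by induction on the length of a Jordan--H\"{o}lder filtration of $E'$. The base case of a stable sheaf reduces to the fact that $\overline S_{\ast}$ dominates the canonical resolution of that stable summand, so Proposition~\ref{resdes} together with the observation that further birational pullbacks of a locally free sheaf remain locally free applies. For the inductive step, apply $\overline\sigma^{\ast}$ to $0\to F'_{k-1}\to F'_k\to gr_k(E')\to 0$; since $gr_k(E')\cong gr_j(E)$ for some $j$, its $\overline\sigma$-pullback modulo torsion is locally free on $\overline S_{\ast}$, and by induction so is the pullback of $F'_{k-1}$. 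Using Propositions~\ref{corr} and~\ref{satur} together with Corollary~\ref{stabquot} to identify these with the $\upsilon$-corresponding sub- and quotient sheaves inside $\overline\sigma^{\ast}F'_k/tors$, one concludes that $\overline\sigma^{\ast}F'_k/tors$ is an extension of locally free $\OO_{\overline S_{\ast}}$-sheaves, hence locally free.

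For minimality (ii), let $\sigma'\colon\widetilde S'\to S$ in $\diamondsuit[E]$ satisfy (i); then in particular $\sigma'^{\ast}gr_i(E)/tors$ is locally free for each $i$. By the scheme-theoretic description of admissible schemes in \cite{Tim3} and the universal property of blowing up the Fitting ideal $\FFitt^0\EExt^1(gr_i(E),\OO_S)$, this forces $\sigma'$ to factor uniquely through each $\sigma_i$. Combining these factorisations via the universal property of the $\diamond$-product (which is itself a finite sequence of blow-ups of inverse-image Fitting ideals, as established in the previous section) produces the required morphism $f\colon\widetilde S'\to\overline S_{\ast}$ with $\sigma'=\overline\sigma\circ f$. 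The main obstacle will be the inductive step of (i): pulling back a non-trivial extension may a priori introduce torsion living outside the exceptional locus of any individual $\widetilde S_i$, and one must verify that exactly this extra torsion is killed by the additional blow-ups realised inside the $\diamond$-product, using the quasi-ideal description~(\ref{ei}) of $\widetilde E$ on the additional components together with the $\upsilon$-correspondence of sub- and quotient sheaves.
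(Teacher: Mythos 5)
Your construction is genuinely different from the paper's, and it has a gap at exactly the point where the paper's proof concentrates all of its effort. Condition (i) in the definition of the minimal resolution quantifies over \emph{all} canonical morphisms attached to sheaves of the class $[E]$: the scheme $\overline S_{\ast}$ must admit morphisms $\overline\sigma_i$ to the admissible scheme of \emph{every} $E'\in[E]$ making the squares commute, and all pullbacks $\overline\sigma^{\ast}E'/tor\!s$ must be locally free. Your candidate is the $\diamond$-product of the resolutions of the Jordan--H\"{o}lder factors $gr_i(E)$ only; these are not generators of the monoid $\diamondsuit[E]$ (the generators are the canonical morphisms of the rank-$r$ sheaves $E'\in[E]$ themselves), and the Fitting ideal of a member of the class is in general different from that of $gr(E)$ --- the paper's own example in \S 6 gives $\FFitt^0\EExt^2(\varkappa(E),\OO_S)={\mathfrak m}_x$ for the extension $E$ but ${\mathfrak m}_x^2$ for $gr(E)=I\oplus I'$. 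So your assertion that the commutativity required in (i) ``is automatic'' is unjustified: it is automatic only for the finitely many factors you put into the product, not for the canonical morphism of an arbitrary $E'\in[E]$, and nothing you say produces the needed morphism $\overline S_{\ast}\to\widetilde S_{E'}$. The same difficulty undermines your minimality step: the factorization of $\sigma'$ through each $\sigma_i$ is argued ``by the universal property of blowing up the Fitting ideal,'' but the admissible schemes here are not blowups of $S$; they are special fibres $\Proj\bigoplus_{s\ge 0}(I[t]+(t))^s/(t)^{s+1}$ of blowups of $T\times S$, and no universal property for such fibres is established in the paper, so this step is an unproved leap rather than a citation.

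The paper's proof is instead a pure finiteness argument: the colength of $\FFitt^0\EExt^1(E',\OO_S)$ is bounded by some $l_0$ uniformly over $\overline M$, and the number of isomorphy classes of Artinian quotients in $\bigsqcup_{l\le l_0}\Quot^l\bigoplus^r\OO_S$ is finite (via the cofiltration of $\varkappa$ by the sheaves $\OO_{Z_i}$ and the finitely many extension types), hence only finitely many isomorphy classes of admissible schemes occur for the class $[E]$; one then takes $\overline S_{\ast}$ to be the $\diamond$-product over \emph{all} of these classes, so that domination of every canonical morphism holds by construction and minimality follows from Remark \ref{diamond}. The local-freeness statements you prove by induction on the Jordan--H\"{o}lder filtration are not part of this proposition in the paper at all: they are the content of the two subsequent propositions, where essentially your extension argument is carried out on the already-constructed $\overline S_{\ast}$ and on $\widetilde S_{gr}$. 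To salvage your smaller candidate you would have to prove that it dominates the canonical morphism of every $E'\in[E]$ (or replace it by the product over all isomorphy classes), which is precisely the finiteness-plus-product step your proposal omits.
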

\begin{proof} It is enough to confirm that there is a finite
collection of canonical morphisms $\sigma: \widetilde S \to S$
corresponding to sheaves of each S-equivalence class. This means
that the minimal resolution $\overline S_{\ast}$ can be
constructed using the operation $\diamond$ by finite number of
steps.

Indeed, every S-equivalence class consists of sheaves with
singularities supported at the same points of the surface $S$.
Namely, $\Supp \EExt^1(E, \OO_S)$ is constant in the S-equivalence
class. Further, colengths of zeroth Fitting ideal sheaves
\linebreak
 $\FFitt^0 \EExt^1(E,\OO_S)= \FFitt^0 \EExt^2(\varkappa,
\OO_S)$ are bounded from above globally over $\overline M.$ Let
$l_0$ be the value of ${\rm colength\;} \FFitt^0
\EExt^2(\varkappa, \OO_S)$ maximal over $\overline M$.

Now turn to the Grothendieck's Quot scheme $\Quot^l \bigoplus^r
\OO_S$ para\-meterizing \linebreak zero-dimensional quotient
sheaves of length $l$ on the surface  $S$. It is well-known that
this is a Noetherian algebraic scheme of finite type. It has
natural stratification defined by isomorphy classes of
zero-dimensional quotient sheaves.
\begin{claim} The number of strata is finite.
\end{claim}
Indeed, consider an epimorphism  $q: \OO_S^{\oplus r}
\twoheadrightarrow \varkappa$ and an inclusion of (any) direct
summand $\OO_S \hookrightarrow \OO_S^{\oplus r}.$ Denote
$\varkappa_r:=\varkappa.$ Then $\OO_{Z_1}$ is the image of this
direct summand under the composite map $\OO_S \hookrightarrow
\OO^{\oplus r} \twoheadrightarrow \varkappa_r.$ There is an exact
diagram
\begin{equation*}\xymatrix{&0\ar[d]& 0\ar[d]& 0\ar[d]\\
0\ar[r]& I_{Z_1} \ar[d]\ar[r]& K_r\ar[d] \ar[r]& K_{r-1}\ar[d] \ar[r]&0\\
0\ar[r]& \OO_S\ar[d] \ar[r]& \OO_S^{\oplus r}
\ar[d] \ar[r]& \OO_S^{\oplus (r-1)} \ar[d] \ar[r]& 0\\
0\ar[r]& \OO_{Z_1} \ar[d]\ar[r]& \varkappa_r \ar[d] \ar[r]&
\varkappa_{r-1} \ar[d] \ar[r]& 0\\
&0&0&0}
\end{equation*}
Now, performing the procedure for the right vertical triple and
iterating the process we come to the cofiltration
$$
\varkappa_r \twoheadrightarrow \varkappa_{r-1}\twoheadrightarrow
\dots \twoheadrightarrow \varkappa_2 \twoheadrightarrow
\varkappa_1=\OO_{Z_r}
$$
with kernels $\OO_{Z_1}, \dots, \OO_{Z_{r-1}}.$ Namely, we have a
series of short exact sequences
\begin{eqnarray*}
0\to \OO_{Z_1} \to \varkappa_r \to \varkappa _{r-1} \to 0, \nonumber\\
0\to \OO_{Z_2} \to \varkappa _{r-1} \to \varkappa_{r-2}\to 0, \nonumber\\
.\;.\;.\;.\;.\;.\;.\;.\;.\;.\;.\;.\;.\;.\;.\;.\;.\;.\;.\;.\;.\;.\;.\nonumber \\
0\to \OO_{Z_{r-1}} \to \varkappa_2 \to \OO_{Z_r} \to 0. \nonumber
\end{eqnarray*}
Then every Artinian sheaf of $\OO_S$-modules, if it is a quotient
sheaf of the sheaf $\OO_S^{\oplus r}$, has a (possibly non-unique)
representation by a sequence of zero-dimensional subschemes $Z_1,
\dots, Z_r$ and a sequence of vectors $\varkappa_2 \in
\Ext^1_{\OO_S}(\OO_{Z_r}, \OO_{Z_{r-1}}),$ $\varkappa_3 \in
\Ext^1_{\OO_S}(\varkappa_2,\OO_{Z_{d-2}}), \dots, \varkappa_r\in
\Ext^1_{\OO_S}(\varkappa_{r-1}, \OO_{Z_1}).$ Here the same symbol
denotes for brevity the element of Ext-group as well as the
extension sheaf itself. It rests to note that the following sets
are finite:

the set of ordered partitions $l_1, l_2, \dots, \l_r$,
$l_1+l_2+\dots +l_r=l,$ of length $r$ for the integer $l$
(appearance of zeroes $l_i=0$ for some $i$ admitted);

the set of isomorphy classes of zero-dimensional subschemes of
length $l_i$ on $k$-scheme $S$ of finite type;

the set of isomorphy classes of extensions which are described by
the elements of the group $\Ext^1_{\OO_S}(\varkappa _i,
\OO_{Z_{r-i}})$.

The finiteness of these sets implies the finiteness of the number
of isomorphy classes of quotient sheaves in $\Quot^l \OO_S^{\oplus
r}$. This proves the claim.

The number of isomorphy classes of schemes $\widetilde S$
corresponding to the sheaves in the class $[E]$, does not exceed
the number of isomorphy classes of Artinian sheaves in the union
$\bigsqcup _{l\le l_0}\Quot^l \bigoplus^r\OO_S$. The last is
finite, as follows from the previous reasoning. Then there is a
product  $\widetilde S_{\ast}=\diamond \widetilde S$ over all
isomorphy classes in the given S-equivalence class. By Remark
\ref{diamond}, this product satisfies minimality condition and
hence $\widetilde S_{\ast}=\overline S_{\ast}$. This completes the
proof.
\end{proof}

\begin{proposition} If $\overline \sigma: \overline
S_{\ast}\to S$ is the minimal resolution in the monoid
$\diamondsuit [E]$ then for any  $E\in [E]$ images $\overline
F_i:= \overline \sigma^{\ast}F_i/tors$ of the sheaves $F_i$ in
Jordan -- H\"{o}lder filtration are locally free and $\overline
\sigma^{\ast} gr_i (E)= \overline F_i /\overline F_{i-1}.$
\end{proposition}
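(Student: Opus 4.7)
The plan is to proceed by induction on $i$, exploiting the single crucial fact that the polystable sheaf $gr(E)=\bigoplus_j gr_j(E)$ is itself a member of the S-equivalence class $[E]$ (since $gr(gr(E))=gr(E)$), so by the very definition of the minimal resolution, the $\OO_{\overline S_\ast}$-sheaf $\overline\sigma^{\ast}gr(E)/tors$ is locally free.

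The preliminary step is to deduce local freeness of each individual direct summand $\overline\sigma^{\ast}gr_j(E)/tors$. The functor $\overline\sigma^{\ast}$ commutes with finite direct sums, and the torsion subsheaf of a direct sum splits as the direct sum of the torsion subsheaves of the summands; on the principal component this is standard, and on the additional (possibly nonreduced) components it follows directly from the local description of $tors$ given in Proposition \ref{resdes}. Thus
\begin{equation*}
\overline\sigma^{\ast}gr(E)/tors=\bigoplus_j \overline\sigma^{\ast}gr_j(E)/tors,
\end{equation*}
and each summand on the right, being a direct summand of a locally free sheaf, is itself locally free.

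The inductive step goes as follows: $\overline F_0=0$ is trivially locally free. Assuming $\overline F_{i-1}$ locally free, apply the corollary after Proposition \ref{satur} (whose proof uses only saturatedness of $F_{i-1}\subset F_i$ and the general formalism of standard resolution, hence goes through verbatim for the admissible scheme $\overline S_\ast$) to obtain an exact sequence
\begin{equation*}
0\to \overline F_{i-1}\to \overline F_i\to \overline\sigma^{\ast}gr_i(E)/tors\to 0
\end{equation*}
on $\overline S_\ast$. Both outer terms are locally free: the left by induction, the right by the preliminary step. Since $\mathrm{Ext}^1$ between finite-rank free modules over a local ring vanishes, this sequence splits locally, so $\overline F_i$ is locally free as an extension of locally free sheaves. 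This simultaneously yields the identification $\overline F_i/\overline F_{i-1}=\overline\sigma^{\ast}gr_i(E)/tors$ asserted in the statement (which is how the expression $\overline\sigma^{\ast}gr_i(E)$ in the proposition should be read, since on $\overline S_\ast$ one passes to the torsion-free quotient).

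The one delicate point is justifying that the above three-term sequence is genuinely short exact on $\overline S_\ast$ rather than only on its reduced locus — that is, that the kernel of $\overline\sigma^{\ast}F_{i-1}\to \overline\sigma^{\ast}F_i$ is entirely absorbed into $tors$. This amounts to checking that $\overline S_\ast$ satisfies the hypotheses used in the proof of Proposition \ref{satur} and its corollary, which it does: $\overline S_\ast$ is constructed by the $\diamond$-product of admissible schemes, is itself admissible, and carries the isomorphism $\upsilon$ on global sections that made the saturation argument work; no new ingredient is needed beyond tracking the torsion subsheaf on the additional components as described in Proposition \ref{resdes}.
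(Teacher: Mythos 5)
Your proof is correct and is essentially the paper's own argument: both hinge on the single observation that the polystable sheaf $gr(E)$ lies in the class $[E]$, so $\overline\sigma^{\ast}gr(E)/tors$ is locally free by the definition of the minimal resolution, hence so is each summand $\overline\sigma^{\ast}gr_i(E)/tors$, after which local freeness is propagated along the Jordan--H\"{o}lder filtration via the exact sequences $0\to\overline F_{i-1}\to\overline F_i\to\overline\sigma^{\ast}gr_i(E)/tors\to 0$ coming from the corollary to Proposition \ref{satur}. The only (immaterial) difference is the direction of the induction: you go upward, using that an extension of a locally free sheaf by a locally free sheaf is locally free, whereas the paper descends from $\overline F_l=\overline\sigma^{\ast}E/tors$ using that the kernel of an epimorphism of locally free sheaves is locally free.
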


\begin{proof} Consider polystable sheaf $\bigoplus_i gr_i(E)$
which belongs to the given S-equi\-valence class. Its resolution
on the scheme $\overline S_{\ast}$ also has a form of direct sum
$\bigoplus _i \overline \sigma^{\ast} gr_i(E)/tor\!s$. Hence the
summands  $\overline \sigma^{\ast} gr_i(E)/tor\!s$ are also
locally free. The kernel of an epi\-morphism of locally free
sheaves is locally free sheaf. Then for any semistable sheaf $E$
in the given S-equivalence class, with Jordan -- H\"{o}lder
filtration $0\subset F_1 \subset \dots \subset F_{l-1}\subset
F_l=E$ the sheaves if the image of the filtration $\overline
F_i=\overline \sigma^{\ast} F_i /tor\!s$ are locally free. Then we
have for quotients of the filtration: $\overline \sigma^{\ast}
gr_i(E)/tor\!s=\overline F_i/\overline F_{i-1}.$
\end{proof}

We turn again to the polystable sheaf $gr(E)= \bigoplus_i gr_i(E)$
in the given S-equival\-ence class. Let $\sigma_{gr}: \widetilde
S_{gr} \to S$ be the corresponding canonical morphism defined by
the sheaf of ideals $I_{gr}=\FFitt^0 \EExt^1(gr(E), \OO_S)$.
\begin{proposition} For all $E \in [E]$ sheaves
$\sigma_{gr}^{\ast}E/tor\!s$ are locally free.
\end{proposition}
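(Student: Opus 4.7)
The plan is to prove the proposition by induction on the length of a Jordan--H\"older filtration of $E$, leveraging the fact that $gr(E)=\bigoplus_i gr_i(E)$ is polystable and is resolved directly by $\sigma_{gr}$.

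First I would apply Proposition \ref{resdes} to the polystable representative $gr(E)$ itself. By construction of $I_{gr}$ the canonical morphism attached to $gr(E)$ is precisely $\sigma_{gr}:\widetilde S_{gr}\to S$, so $\sigma_{gr}^{\ast} gr(E)/tor\!s$ is locally free on $\widetilde S_{gr}$. Because pullback and reduction modulo torsion commute with finite direct sums, each summand $\sigma_{gr}^{\ast} gr_i(E)/tor\!s$ is also locally free.

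Next, fix an arbitrary $E\in[E]$ with Jordan--H\"older filtration $0=F_0\subset F_1\subset\cdots\subset F_\ell=E$, $F_i/F_{i-1}=gr_i(E)$, and induct on $i$ to show $\sigma_{gr}^{\ast} F_i/tor\!s$ is locally free. The base case $i=1$ is immediate from the first step. For the inductive step, apply $\sigma_{gr}^{\ast}$ to the triple $0\to F_{i-1}\to F_i\to gr_i(E)\to 0$ and reduce modulo torsion; the target is the short exact sequence
\[
0\to \sigma_{gr}^{\ast} F_{i-1}/tor\!s\to \sigma_{gr}^{\ast} F_i/tor\!s\to \sigma_{gr}^{\ast} gr_i(E)/tor\!s\to 0.
\]
Injectivity of the left arrow follows because its kernel coincides with the image of a $\TTor_1$-sheaf supported on the proper closed subset $\sigma_{gr}^{-1}(\Supp\,\varkappa(gr_i(E)))$, so it would embed into $\sigma_{gr}^{\ast} F_{i-1}/tor\!s$ as a torsion subsheaf; but the latter is locally free by the inductive hypothesis, forcing the kernel to vanish. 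The identification of the cokernel with $\sigma_{gr}^{\ast} gr_i(E)/tor\!s$ is a transcription of the argument of Proposition~\ref{satur} and its subsequent corollary, using only the saturation of $F_{i-1}$ in $F_i$, torsion-freeness of the stable sheaf $gr_i(E)$, and the explicit local descriptions of $\sigma_{gr}$-inverse images on the main and additional components of $\widetilde S_{gr}$ recorded in formulas (\ref{ei})--(\ref{eei}). Once the sequence is established, the middle term is locally free because on any scheme an extension of a locally free sheaf by a locally free sheaf is locally free: on a sufficiently small affine neighborhood $\Ext^1$ between locally free sheaves vanishes, so the extension splits. Taking $i=\ell$ completes the induction and proves the proposition.

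The main obstacle is the identification of the cokernel in the inductive step. In the corollary following Proposition~\ref{satur} the resolution $\sigma$ was chosen specifically for the sheaf under consideration, so that $\sigma^{\ast} E/tor\!s$ was locally free at the outset; here we must work with the "larger" resolution $\sigma_{gr}$, attached to $gr(E)$ rather than to $E$, and local freeness of $\sigma_{gr}^{\ast} E/tor\!s$ is what we are trying to prove. On the additional (possibly non-reduced) components of $\widetilde S_{gr}$, whose extra complexity corresponds to the extra singularities of $gr(E)$ relative to $E$, the pulled-back sequence may a priori acquire torsion not coming from the source, and one has to use the quasi-ideal description (\ref{ei}) together with the definition of $tor\!s$ on non-reduced components from the proof of Proposition~\ref{resdes} to show that this torsion is exactly absorbed and that the resulting sequence of torsion-free quotients is short exact.
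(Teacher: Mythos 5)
Your proposal is correct and follows essentially the same route as the paper: local freeness of $\sigma_{gr}^{\ast}gr(E)/tor\!s$ (hence of each summand $\sigma_{gr}^{\ast}gr_i(E)/tor\!s$), followed by induction along the Jordan--H\"older filtration using the short exact sequences with locally free outer terms. The only difference is that you spell out the exactness of those sequences and the cokernel identification, which the paper simply asserts, so your write-up is if anything more detailed than the original argument.
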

\begin{proof}
The sheaf $\sigma_{gr}^{\ast} gr(E)/tor\!s$ is locally free. This
implies that all direct summands  $\sigma_{gr}^{\ast}
gr_i(E)/tor\!s$ are also locally free. Consider inductively
following exact sequences
\begin{eqnarray}0\to \sigma_{gr}^{\ast} F_1/tor\!s
=\!\!\!= \sigma_{gr}^{\ast} gr_1(E)/tor\!s\to 0,\nonumber\\
0\to \sigma_{gr}^{\ast} F_1/tor\!s\to \sigma_{gr}^{\ast}
F_{2}/tor\!s \to \sigma_{gr}^{\ast} gr_{2}(E)/tor\!s \to 0,\nonumber\\
.\;.\;.\;.\;.\;.\;.\;.\;.\;.\;.\;.\;.\;.\;.\;.\;.\;.\;.\;.\;.\;.\;.\;.\;.\;.\;.\;.\;.\;.\;.\;.\;.\;.\;.\;.\;.\;.\;.\;.\;.\;.\;.\;.\; \nonumber\\
0\to \sigma_{gr}^{\ast} F_{l-1}/tor\!s \to \sigma_{gr}^{\ast}
E/tor\!s \to \sigma_{gr}^{\ast} gr_l(E)/tor\!s\to 0,\nonumber
\end{eqnarray}
induced by Jordan -- H\"{o}lder filtration of any semistable sheaf
 $E$ of the given S-equi\-valence class. It follows that the sheaf
$\sigma_{gr}^{\ast} E/tor\!s$ is locally free. \end{proof}

\section{ Boundedness of families of semistable pairs}
In this section we show that pairs of the form $((\widetilde S,
\widetilde L),\widetilde E)$, which are deformation equivalent to
pairs with $(\widetilde S, \widetilde L)\cong (S,L),$ constitute
in  $\Hilb^{P(t)}G(V,r)$ a subset equal to $\mu(\widetilde Q).$

\begin{definition} A pair $((\widetilde S, \widetilde L),
\widetilde E)$ is called {\it S-pair} if  $(\widetilde S,
\widetilde L)\cong (S,L)$.
\end{definition}

\begin{definition} A pair $((\widetilde S, \widetilde L),\widetilde
E)$ is {\it deformation equivalent to S-pairs} if there exist
\begin{itemize}
\item{ connected algebraic scheme  $T$ of dimension 1,}
\item{ flat family of schemes $\pi: \Sigma \to T$,}
\item{invertible sheaf $\widetilde
\L$ very ample relative $\pi$ ,}
\item{ locally free  $\OO_{\Sigma}$-sheaf $\widetilde \E$}
\end{itemize}
such that
\begin{itemize}
\item{for any closed point  $t\in T$ the fibre $\pi^{-1}(t)$ is admissible scheme,
$\widetilde \L|_{\pi^{-1}(t)}$ is its distinguished polarization,
$\widetilde \E|_{\pi^{-1}(t)}$ is locally free sheaf with Hilbert
polynomial equal to $\chi(\widetilde \E|_{\pi^{-1}(t)} \otimes
\widetilde \L^t|_{\pi^{-1}(t)})=rp_E(t),$}
\item{there is a point  $t_0 \in T$ such that  $((\pi^{-1}(t_0), \widetilde \L|_{\pi^{-1}(t_0)}),
\widetilde \E|_{\pi^{-1}(t_0)})\cong ((\widetilde S, \widetilde
L), \widetilde E),$}
\item{at every general point  $t \ne t_0$ $((\pi^{-1}(t),\widetilde \L|_{\pi^{-1}(t)}),
\widetilde \E|_{\pi^{-1}(t)})$ is S-pair.}
\end{itemize}
\end{definition}
Pairs deformation equivalent to S-pairs will be called {\it
dS-pairs}. A subset of points in $\Hilb^{P(t)}G(V,r)$
corresponding to $dS$-pairs  is denoted by the symbol $K^{dS}$. A
subset of points corresponding to $S$-pairs is denoted by the
symbol $K^S.$

Now we need  the following well-known
\begin{theorem}\label{bounded}\cite[3.3.7]{HL}
Let $f: X \to Y$ be projective morphism of schemes of finite type
over $k$ and let $\OO_X(1) $ be an $f$-ample line bundle. Let $P$
be a polynomial of degree $d$, and let $\mu_0$ be a rational
number. Then the family of purely $d$-dimensional sheaves on the
fibres of $f$ with Hilbert polynomial $P$ and maximal slope
$\mu_{max}\ge \mu_0$ is bounded.
\end{theorem}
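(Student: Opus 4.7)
The plan is to follow the standard two-step route for boundedness: first reduce to bounding Castelnuovo--Mumford regularity uniformly in the family, and then obtain that regularity bound from a Le Potier--Simpson type estimate on $h^0$ controlled by the maximal slope.

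First I would reduce to the case where $Y$ is (the spectrum of) a field. Since $f$ is projective of finite type over $k$, the base $Y$ is Noetherian, and a standard Noetherian stratification of $Y$ together with spreading out allows one to reduce to producing bounds on each geometric fibre $X_y$ uniformly in $y$. Because $\OO_X(1)$ is $f$-ample, its restriction to each fibre is ample, so the numerical invariants of a sheaf on $X_y$ are computed consistently across the family. By Kleiman's boundedness criterion, a set of coherent sheaves on a projective scheme with fixed Hilbert polynomial is bounded if and only if there exists an integer $m_0$ such that every sheaf in the set is $m_0$-regular in the sense of Castelnuovo--Mumford. The task therefore becomes: produce such an $m_0$ uniformly in the family.

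Second, I would invoke the Le Potier--Simpson estimate. For a purely $d$-dimensional coherent sheaf $F$ on a polarised projective scheme, this estimate bounds $h^0(F(m))$ by an explicit polynomial in $m$ of degree $d$ whose coefficients depend only on the multiplicity of $F$ (equivalently the leading coefficient of its Hilbert polynomial), its rank on the top-dimensional part of its support, and the slope invariant controlled in the hypothesis (here $\mu_{max}$, bounded by $\mu_0$). With Hilbert polynomial $P$ fixed and $\mu_{max}$ uniformly controlled by $\mu_0$, every such coefficient is bounded uniformly over the family, giving a uniform upper bound on $h^0(F(m))$ for all $m \geq 0$.

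Third, I would convert this uniform $h^0$-bound into a uniform regularity bound. Combining $h^0(F(m)) \leq C(m)$ with $\chi(F(m)) = P(m)$, and applying Mumford's lemma (which propagates vanishing of higher cohomology from a single twist to all larger twists once the right $h^0$ growth holds), one extracts a uniform $m_0$ such that $H^i(F(m_0 - i)) = 0$ for $i > 0$ for every $F$ in the family. Kleiman's criterion then yields boundedness.

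The main obstacle is the Le Potier--Simpson estimate itself. Its proof proceeds by induction on the dimension $d$ via cutting with a generic hyperplane section of $\OO_X(1)^{\otimes N}$ for suitable $N$; one must ensure that $F$ restricted to that hyperplane is still purely $(d-1)$-dimensional and that the inductive hypothesis on $h^0$ and on the maximal slope can be fed back through the restriction sequence. Controlling torsion picked up on restriction, and keeping the slope bound quantitative and compatible with the induction, is the delicate technical step; everything else is packaging.
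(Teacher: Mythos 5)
The paper offers no proof of this statement at all: it is quoted as a known result from the cited reference [HL, Theorem 3.3.7], and your sketch is essentially the standard argument given there (reduction via Kleiman's boundedness criterion together with the Le Potier--Simpson $h^0$-estimate proved by induction on dimension through generic hyperplane sections), so there is nothing in the paper to diverge from. One minor remark: for the conclusion to hold the hypothesis must be an upper bound $\mu_{max}\le \mu_0$ (the ``$\ge$'' in the paper's formulation is a slip), and your use of the estimate implicitly and correctly assumes exactly that upper bound.
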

\begin{remark} The theorem is formulated for more general
case of purely $d$-dimen\-sional sheaves and the symbol
$\mu_{max}$ denotes maximal slope in the Harder --\linebreak
Narasimhan filtration of the sheaf $E$. Since we are interested in
semistable sheaves, the Harder -- Narasimhan filtration for
semistable sheaf consists of this sheaf itself. Then
$\mu_{max}=\mu$ is constant.
\end{remark}

Note that by the reasonings of section 6, semistable $dS$-pairs
carry locally free sheaves.  Their slope is constant in
Jordan-H\"{o}lder filtration and therefore is bounded. Then
applying the theorem \ref{bounded} in this situation we have the
following
\begin{proposition} {Family of semistable  $dS$-pairs is
bounded.} \end{proposition}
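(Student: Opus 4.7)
The plan is to reduce the statement to Theorem \ref{bounded} by exhibiting all semistable $dS$-pairs as locally free sheaves on the fibres of a single projective morphism of Noetherian schemes of finite type over $k$, equipped with a relatively ample line bundle.

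First I would parameterise the admissible schemes that can occur. By Definition \ref{admis} together with Proposition \ref{sing}, every admissible scheme $\widetilde{S}$ appearing under a $dS$-pair is either isomorphic to $S$ or has the form $\Proj \bigoplus_{s \ge 0}(I[t]+(t))^s/(t)^{s+1}$ with $I = \FFitt^0 \EExt^2(\varkappa, \OO_S)$ for an Artinian quotient $q_0\colon \OO_S^{\oplus r} \twoheadrightarrow \varkappa$ of length $\le c_2$. Such quotients are parameterised by $Y := \coprod_{l \le c_2} \Quot^l \OO_S^{\oplus r}$, a Noetherian $k$-scheme of finite type. Carrying out the $\Proj$-construction relatively over $Y$ using the universal Artinian quotient, as in Section \ref{polar}, yields a projective morphism $\pi\colon X \to Y$ whose fibre over $[\varkappa]$ is the admissible scheme $\widetilde{S}_\varkappa$, together with an invertible $\OO_X$-sheaf $\widetilde{\L}$ that is $\pi$-ample and restricts fibrewise to the distinguished polarization; by the proposition of Section \ref{polar} this polarization produces a fibrewise-constant Hilbert polynomial.

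Next I would verify the hypotheses of Theorem \ref{bounded} for the morphism $\pi$, the line bundle $\widetilde{\L}$, the Hilbert polynomial $P = rp_E(t)$, and the slope threshold $\mu_0 := \mu(E)$ determined by the fixed invariants $(r, p_E, L)$. For any semistable $dS$-pair $((\widetilde{S}, \widetilde{L}), \widetilde{E})$, the scheme $\widetilde{S}$ is a fibre of $\pi$, the locally free sheaf $\widetilde{E}$ has Hilbert polynomial $rp_E(t)$ with respect to $\widetilde{L}$ by the definition of a $dS$-pair, and $\widetilde{E}$ is purely two-dimensional because it is locally free on a two-dimensional admissible scheme. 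Since $\widetilde{E}$ is Gieseker-semistable in the sense of Definition \ref{semistable}, its Harder--Narasimhan filtration is trivial, so $\mu_{max}(\widetilde{E}) = \mu(E) \ge \mu_0$. Theorem \ref{bounded} then delivers boundedness of the family of such sheaves, and hence of the corresponding family of semistable $dS$-pairs.

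The principal technical obstacle lies in the first step: assembling a single projective family $\pi\colon X \to Y$ with a relatively ample $\widetilde{\L}$ that uniformly realises all admissible schemes and their distinguished polarizations, including the degenerations in which additional, possibly nonreduced, components appear or disappear. The fibrewise constructions are given in Section \ref{polar} and in the cited earlier work of the author; once they are shown to glue into a single flat family over $Y$, the application of Theorem \ref{bounded} is essentially immediate.
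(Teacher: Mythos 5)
Your proposal is correct and follows essentially the same route as the paper: since semistable $dS$-pairs carry locally free, purely two-dimensional sheaves with fixed Hilbert polynomial $rp_E(t)$ and constant (hence bounded) maximal slope, boundedness follows from the cited Huybrechts--Lehn theorem applied to a projective family realizing all admissible schemes with their distinguished polarizations. The only difference is one of explicitness: where the paper's two-line proof simply invokes the fact (recorded in \S 1) that admissible schemes fit into finitely many flat projective families, you spell out such a family via the relative $\Proj$-construction over $\coprod_{l\le c_2}\Quot^l\OO_S^{\oplus r}$, which is exactly the implicit input the paper relies on.
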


Then there is an integer $\widetilde m_0$ such that for
$m>\widetilde m_0$ sheaves $\widetilde E \otimes \widetilde L^m$
define closed immersions of the corresponding schemes  $\widetilde
S$ into Grassmann variety $G(V,r)$ for $V$ being $k$-vector space
of dimension $rp_E(m)$. The integer $\widetilde m_0$ is uniform
for all semistable $dS$-pairs. Therefore in the whole construction
of the moduli scheme one must take  $m> \max (m_0, \widetilde
m_0).$ The number  $m_0$ is characterized before, in section 2.

\begin{proposition} $S$-pairs constitute an open subscheme in $K^{dS}$.
\end{proposition}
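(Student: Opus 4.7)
The plan is to identify $K^S$ with the locus in $K^{dS}$ over which the geometric fiber of the restricted universal Hilbert family is integral, and then invoke the standard openness of this locus for proper flat morphisms.

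First I would restrict the universal family $\pi_H\colon \Univ^{P(t)}G(V,r)\to \Hilb^{P(t)}G(V,r)$ to $K^{dS}$, obtaining a proper and flat morphism $\ZZ\to K^{dS}$ whose fiber over any $h\in K^{dS}$ is an admissible scheme $\widetilde S$ embedded into $G(V,r)$ by the distinguished polarization $\widetilde L$. Flatness is automatic from the definition of the Hilbert scheme, and properness comes from the projectivity of $G(V,r)$.

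Next I would apply the dichotomy of admissible schemes provided by Definition \ref{admis} together with the remark following it: either $(\widetilde S,\widetilde L)\cong (S,L)$, and $\widetilde S$ is integral, or $\widetilde S\cong \Proj\bigoplus_{s\ge 0}(I[t]+(t))^s/(t^{s+1})$ for a proper Fitting ideal sheaf $I=\FFitt^0\EExt^2(\varkappa,\OO_S)$, in which case the excerpt (and \cite{Tim3}) tells us that $\widetilde S$ decomposes as $\widetilde S=\bigcup_{i\ge 0}\widetilde S_i$ with main component $\widetilde S_0$ and at least one additional (possibly nonreduced) component $\widetilde S_i$ for some $i>0$; in particular it is reducible. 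Hence a point $h\in K^{dS}$ lies in $K^S$ if and only if the fiber $\pi_H^{-1}(h)$ is geometrically integral.

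Finally I would appeal to the standard semicontinuity result (EGA IV, 12.2.4) asserting that for a proper flat morphism of finite presentation the locus of base points whose geometric fiber is integral is open. Applied to $\ZZ\to K^{dS}$ this yields openness of $K^S$ in $K^{dS}$, and endowing $K^S$ with the induced open subscheme structure completes the proof.

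The main obstacle, to my mind, is the middle step: one must be confident that the dichotomy is clean, i.e.\ that whenever the Fitting ideal $I$ is a proper ideal the associated admissible scheme $\widetilde S$ genuinely acquires an extra component and therefore fails to be integral. This is precisely the content of the scheme-theoretic description of $\widetilde S$ developed in \cite{Tim3} and recalled in Section 1; once that input is granted, the remainder of the argument is a routine application of the openness of the integral-fiber locus.
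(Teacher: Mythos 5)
Your proposal is correct and follows essentially the same route as the paper: both identify $K^S$ as the intersection of $K^{dS}$ with the locus of points of $\Hilb^{P(t)}G(V,r)$ over which the fibres of the universal family are geometrically integral, and both conclude by the openness of that locus from EGA IV, Th\'eor\`eme 12.2.4(viii). The only difference is presentational — you restrict the family to $K^{dS}$ first and spell out the dichotomy of admissible schemes, while the paper intersects with the open locus in the full Hilbert scheme and treats that identification as immediate.
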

\begin{proof} Note that the set of points of Hilbert scheme where fibres of the morphism  $$\pi : \Univ^{P(t)}G(V,r)\to
\Hilb^{P(t)}G(V,r)$$ correspond to $S$-pairs, is precisely an
intersection of $K^{dS}$ with the set of points of the scheme
$\Hilb^{P(t)}G(V,r)$ where the fibres of the morphism $\pi$ are
geometrically integral. The second set is open due to
\cite[Th\'{e}or\-\`{e}me 12.2.4(viii)]{EGAIV3}. Restriction on the
subscheme $K^{dS}$ proves the proposition.
\end{proof}

It is clear from reasonings in the previous sections that
$\mu(\widetilde Q)\subset K^{dS}.$ Both subsets are
$PGL(V)$-invariant.
\begin{proposition} {There is a coincidence of subsets  $\mu(\widetilde Q) = K^{dS}$.}
\end{proposition}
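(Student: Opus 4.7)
The forward containment $\mu(\widetilde Q)\subset K^{dS}$ has already been noted in the excerpt. For the reverse, my plan is to show that any point $h\in K^{dS}$ arises from the standard-resolution construction of Section~2 applied to a coherent sheaf on $S$, hence lies in $\mu(\widetilde Q)$.

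Fix a semistable $dS$-pair $((\widetilde S,\widetilde L),\widetilde E)$ representing $h$, together with a witnessing family $(\pi:\Sigma\to T,\widetilde\L,\widetilde\E)$ over a connected one-dimensional base $T$, with special fibre at $t_0$ and generic fibres being $S$-pairs over the open subset $T_0\subset T$. The first move is to descend this family to a family of coherent sheaves on $S$: assembling the fibrewise canonical morphisms $\sigma_t$ into a single $T$-morphism $\Phi:\Sigma\to T\times S$, I would set $\E:=(\Phi_{\ast}\widetilde\E)^{\vee\vee}$. Over $T_0$ this reproduces the original locally free family, while at $t_0$ it specialises to $E_{t_0}:=(\sigma_{\ast}\widetilde E)^{\vee\vee}$. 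By Proposition~\ref{resdes} we then have $\widetilde E=\sigma^{\ast}E_{t_0}/tor\!s$, and by Proposition~\ref{ssc} the sheaf $E_{t_0}$ is Gieseker-semistable on $(S,L)$.

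Next, $\E$ determines (after a local trivialisation of global sections) a morphism $\nu_0:T_0\to Q$, which extends to $\nu:T\to Q$ by the semistability of the limit and the regularity of $T$. Pulling back $\E_Q$ by $\nu$ and running the Section~2 procedure yields a one-dimensional base $\widetilde T$ birational to $T$ together with a morphism $\widetilde \nu:\widetilde T \to \widetilde Q$ and a family of admissible polarised schemes carrying a locally free sheaf. Picking any $\widetilde t_0\in\widetilde T$ above $t_0$, the explicit form of the resolution combined with Proposition~\ref{sing} describes the fibre as $\Proj\bigoplus_{s\ge 0}(I[t]+(t))^s/(t^{s+1})$ with $I=\FFitt^0\EExt^1(E_{t_0},\OO_S)$, equipped with the bundle $\sigma^{\ast}E_{t_0}/tor\!s$. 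Matching the distinguished polarization via the computation of Section~3 and the global-sections isomorphism $\upsilon$ of Section~4 should then give $\mu(\widetilde\nu(\widetilde t_0))=h$.

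The main obstacle is this identification step: the admissible scheme $\widetilde S$ packaged into a $dS$-pair is in principle abstract, attached to some Artinian quotient $\varkappa$ of $\OO_S^{\oplus r}$, whereas the scheme produced by the procedure uses the concrete Fitting ideal of the pushed-down sheaf $E_{t_0}$. One has to show these two Artinian data agree, which I expect to handle by invoking flatness of $\E$ over $T$ and compatibility of Fitting-ideal sheaves in flat families of coherent sheaves. As a safety net, a closure argument is also available: since $h_T(T_0)\subset\mu(\widetilde Q)$ and the latter is open in its projective closure by Proposition~\ref{qpro}, Lemma~\ref{boundary} forces $h$ to lie either in $\mu(\widetilde Q)$ or in the image $\overline{\mu}(\overline{\widetilde Q}\setminus\widetilde Q)$, and the second alternative can be ruled out by using that $h$ represents a genuine $dS$-pair rather than a degeneration living on the boundary.
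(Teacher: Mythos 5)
Your overall route is the paper's route: run a one--parameter family through the point $h$, push the bundle down to $T\times S$ (via $(\Phi_{\ast}\widetilde\E)^{\vee\vee}$), apply the standard resolution to the resulting family of coherent sheaves, and identify the resolved special fibre with the given $dS$-pair. But the step you flag as "the main obstacle" is not a technical footnote --- it is the entire mathematical content of the proposition, and your proposed tools do not deliver it. First, the appeal to Proposition \ref{resdes} to write $\widetilde E=\sigma^{\ast}E_{t_0}/tor\!s$ is circular: that proposition assumes $\widetilde E$ is the \emph{image} of $E_{t_0}$ under the resolution construction (its proof starts from the family $\E$ and the diagram used in the standard resolution), which for an arbitrary $dS$-pair is exactly what must be proved; the same remark applies to your use of Proposition \ref{ssc}. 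Second, "compatibility of Fitting ideals in flat families" would at best address the identification of the two admissible \emph{schemes} over $t_0$, and even that is not automatic, since a priori the family $\widetilde\E$ on $\Sigma$ is not known to be the resolution of $\E$; and even once the schemes are identified, the two locally free sheaves could still differ by a twist $\OO(l_1-l_2)$ supported along the additional components. The paper's Lemma \ref{coincide} closes both gaps by a genuinely nontrivial argument: the twisted sheaves $\widetilde E_i\otimes\det\widetilde E_i^{\vee}$ agree on the common blowup and are trivial along the respective exceptional divisors, forcing the two resolutions to coincide, and then an Euler-characteristic computation with the triple involving $\OO(l_1-l_2)$ forces $l_1=l_2$, hence $\widetilde E_1=\widetilde E_2$. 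Nothing of this kind appears in your sketch. (A smaller omission: before invoking the standard resolution you must check that the pushed-down family is flat over $T$ and of homological dimension $1$; the paper does this via torsion-freeness over a one-dimensional regular base and the O'Grady argument.)

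The "safety net" is also circular. Lemma \ref{boundary} only says $\overline{\mu(\widetilde Q)}\setminus\mu(\widetilde Q)=\overline{\mu}(\overline{\widetilde Q}\setminus\widetilde Q)$; it gives no criterion for deciding on which side of this dichotomy a given point of $K^{dS}$ falls. Asserting that a point of the boundary "cannot represent a genuine $dS$-pair" is precisely the assertion $K^{dS}\subset\mu(\widetilde Q)$ that you are trying to prove: a priori a subscheme of $G(V,r)$ parametrized by a boundary point could perfectly well be an admissible scheme carrying a locally free sheaf deformation equivalent to $S$-pairs. So the closure argument cannot replace the identification step; your plan can be completed, but only by supplying the argument of Lemma \ref{coincide} or an equivalent of it.
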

\begin{proof} Assume that  $K^{dS}\setminus \mu(\widetilde Q)
\ne \emptyset.$ Note that subsets formed by semistable $S$-pairs
in $\mu(\widetilde Q)$ and in $K^{dS}$ coincide and equal $K^S$.
Let $T=\Spec k[t]$ be an affine curve in $K^{dS}$ such that its
open part  $T_0=\{t\ne t_0\}\subset T$ belongs to the subset $K^S$
but special point  $t_0=T\setminus T_0\in K^{dS}\setminus
\mu(\widetilde Q).$ Let $\pi_1: \widetilde \Sigma_1\to T$ be the
flat family of subschemes defined by the fibred product
$\widetilde \Sigma_1=T\times_ {\Hilb^{P(t)}G(V,r)}
\Univ^{P(t)}G(V,r)$, carrying the locally free sheaf  $\widetilde
\E_1$, and corresponding to the immersion $T\hookrightarrow
K^{dS}\subset \Hilb^{P(t)}G(v,r)$. Then we have flat family of
schemes  $\pi_1: \widetilde \Sigma_1 \to T$, supplied with locally
free sheaf
 $\widetilde \E_1$, with birational
morphisms $\xi_1: \widehat\Sigma_1 \to \widetilde \Sigma_1$ and
$\sigma \!\!\! \sigma_1: \widehat \Sigma_1 \to T \times S$,
forming a "Hironaka's house". Denote by the symbol $\E_2$ the
reflexive sheaf  $(\sigma \!\!\! \sigma_{1\ast } \xi_1^{\ast}
\widetilde \E_1)^{\vee \vee}=\E_2$ on the product $T \times S$. It
is locally free on the open subset off the codimension 3. Due to
 \cite[corollary 6.3]{Eisen}, the module over the principal ideal domain
is flat if and only if it is torsion-free. Then $\E_2$ is the
sheaf of flat  $\OO_T$-modules. It is a subsheaf of locally free
$\OO_T$ -module  $\FF$ of the same rank. Namely, there is
inclusion $\E_2 \hookrightarrow \FF$. Let  $\alpha \!\!\! \alpha:=
\FF /\E_2$. It is clear that by choice of the determinant of the
sheaf $\FF$ we can achieve  $\alpha \!\!\! \alpha$ to be Artinian
sheaf with support on the fibre  $t_0\times S$: $\alpha \!\!\!
\alpha|_{t_0\times S}\cong \alpha \!\!\!\alpha.$ Applying to the
exact triple of  $\OO_{T \times S}$-sheaves
$$ 0\to \E_2 \to \FF \to \alpha \!\!\! \alpha \to 0$$
the restriction onto the fibre  $t_0\times S$, we get an exact
triple
$$0\to \TTor_1^{T\times S}(\alpha \!\!\! \alpha, k_{t_0}\boxtimes
\OO_S) \to \E_2|_{t_0\times S} \to \FF|_{t_0\times S}\to \alpha
\!\!\! \alpha|_{t_0\times S} \to 0.
$$
Since  $\alpha \!\!\! \alpha $ is a sheaf supported on the fibre
$t_0 \times S,$ then $\TTor_1^{T\times S}(\alpha \!\!\! \alpha,
k_{t_0}\boxtimes \OO_S)=\TTor_1^{S}(\alpha \!\!\! \alpha,
\OO_S)=0$ and $\E_2$ is a flat family of torsion-free sheaves on
the smooth surface. Due to \cite[proof of the proposition
 4.3]{O'Gr}, $\E_2$ is $\OO_{S\times
T}$-sheaf of homological dimension equal to  1. Then it is subject
to standard resolution.

\begin{remark} By the construction of the special fibre
$\pi^{-1}(t_0)=\widetilde S_1$ one can assume that
$\widehat\Sigma_1=\widetilde \Sigma_1$ and $\xi_1$ is an identity
morphism.
\end{remark}

Application of standard resolution to the sheaf  $\E_2$ leads to a
family of schemes $\pi_2:\widehat \Sigma_2 \to T$ which is flat
over  $T$. It is supplied with the locally free sheaf $\widetilde
\E_2$. Also the procedure or resolution gives the birational
morphism  $\sigma \!\!\! \sigma_2: \widehat \Sigma _2 \to T \times
S$ such that  $(\sigma \!\!\! \sigma_{2\ast} \widetilde
\E_2)^{\vee \vee}=\E_2.$ Applying the diagram (\ref{gldiamond})
one gets locally free sheaves $\sigma \!\!\!
\sigma'^{\ast}_1\widetilde \E_2$ and $\sigma \!\!\!
\sigma'^{\ast}_2\widetilde \E_1$. Denoting  $\sigma \!\!\!
\sigma:= \sigma \!\!\! \sigma'_1 \circ \sigma\!\!\! \sigma_2=
\sigma \!\!\! \sigma'_2 \circ \sigma\!\!\! \sigma_1$ one has
obvious equalities  $(\sigma \!\!\! \sigma_{\ast} \sigma \!\!\!
\sigma'^{\ast}_1\widetilde \E_2)^{\vee \vee}= (\sigma \!\!\!
\sigma_{\ast} \sigma \!\!\! \sigma'^{\ast}_2\widetilde \E_1)^{\vee
\vee}=\E_2$.

Note that on open subsets $\pi_1^{-1}T_0\cong \pi_2^{-1}T_0$
sheaves $\sigma \!\!\! \sigma'^{\ast}_1\widetilde \E_2$ and
$\sigma \!\!\! \sigma'^{\ast}_2\widetilde \E_1$ are iso\-morphic.
Denote $\pi_i^{-1} (t_0)=:\widetilde S_i$ for $i=1,2$ and,
respectively, $\widetilde E_i:= \widetilde \E_i |_{\widetilde
S_i}.$ The proof of the following lemma will be given later.

\begin{lemma}\label{coincide} Pairs  $(\widetilde S_1, \widetilde E_1)$
and $(\widetilde S_2, \widetilde E_2)$ are isomorphic.
\end{lemma}
The result of the lemma contradicts the assumption
$K^{dS}\setminus \mu(\widetilde Q)\ne \emptyset$ and completes the
proof.
\end{proof}
\begin{proof}[of the lemma \ref{coincide}] Note that locally
free sheaves  $\sigma \!\!\! \sigma'^{\ast}_1\widetilde \E_2
\otimes \det (\sigma \!\!\! \sigma'^{\ast}_1\widetilde
\E_2)^{\vee}$ and $\sigma \!\!\! \sigma'^{\ast}_2\widetilde \E_1
\otimes \det (\sigma \!\!\! \sigma'^{\ast}_2\widetilde
\E_1)^{\vee}$ coincide on the open subset off the codimension not
less then 2. Hence they coincide on the whole of the scheme
$\widehat \Sigma_{12}$, namely $\sigma \!\!\!
\sigma'^{\ast}_1\widetilde \E_2 \otimes \det (\sigma \!\!\!
\sigma'^{\ast}_1\widetilde \E_2)^{\vee}=\sigma \!\!\!
\sigma'^{\ast}_2\widetilde \E_1 \otimes \det (\sigma \!\!\!
\sigma'^{\ast}_2\widetilde \E_1)^{\vee}.$ Also note that the left
hand part of this equality is a locally free sheaf and it is
trivial along the exceptional divisor of the morphism of blowing
up $\sigma \!\!\! \sigma'_1$. Analogously, the sheaf on the right
is trivial along the exceptional divisor of the morphism of
blowing up  $\sigma \!\!\! \sigma'_2$. This implies that there
exist a scheme  $\widehat \Sigma_0$, a pair of birational
morphisms $\widehat \Sigma_2 \stackrel{\eta_2}{\longrightarrow}
\widehat \Sigma_0 \stackrel{\eta_1}{\longleftarrow} \widehat
\Sigma_1$ and a locally free sheaf $\widetilde \E_0$ on the scheme
$\widehat \Sigma_0$ such that  $\widetilde \E_i \otimes \det
\widetilde \E_i^{\vee}=\eta_i^{\ast} \widetilde \E_0$ for $i=1,2.$

We reduce our consideration to the case with trivial determinant.
Introduce an auxiliary notation $\widetilde E'_i:=\widetilde \E_i
\otimes \det \widetilde \E_i^{\vee}$ for $i=1,2$. Then $\widetilde
E'_i=\eta_i^{\ast} \widetilde \E_0.$ Note that sheaves $E$ and
$E'=E \otimes \det E^{\vee}$ have equal sheaves of singularities
$\varkappa=\varkappa'$. Then along additional components of
reducible fibres $\widetilde \Sigma_i$ we have  $$\widetilde
E'_i|_{\rm add}=\sigma^{\ast}\ker (\oplus^r
\OO_S\twoheadrightarrow\varkappa)|_{\rm add}/tor\!s.
$$
 This means that sheaves $\widetilde E'_i$ are nontrivial along
additional components of schemes $\widetilde S_i$. Since
$\widetilde \E_0$ is locally free sheaf then $\eta_i$ are identity
morphisms.

Hence morphisms $\sigma \!\!\! \sigma_i$ coincide and we will use
 notations
$\widehat \Sigma:=\widehat \Sigma_1=\widehat \Sigma_2$ and $\sigma
\!\!\! \sigma: \widehat \Sigma \to T\times S$. Also there are two
locally free sheaves $\widetilde \E_i$, $i=1,2,$ satisfying the
condition  $\widetilde \E_1 \otimes \det \widetilde
\E_1^{\vee}=\widetilde \E_2 \otimes \det \widetilde \E_2^{\vee}$.
Determinants $\det \widetilde \E_i$ coincide on the supplement of
the exceptional divisor of the morphism $\sigma \!\!\! \sigma$,
and $(\sigma \!\!\! \sigma_{\ast} \det \widetilde \E_i)^{\vee
\vee}= \det \E_2$. Then sheaves $\widetilde E_i=\widetilde
\E_i|_{\widetilde S}$ can differ only on additional components of
the scheme $\widetilde S$. Now note that the special fibre of the
projection $\widetilde S=\pi^{-1}(t_0)$ is a projective spectrum
of  $\OO_S$-algebra. Let $\OO(1)$ be its twisting Serre's sheaf.
Then  $\widetilde E_1=\widetilde E_2 \otimes \OO(l_1-l_2)$ for
some integers  $l_1, l_2$.

Let $l_1-l_2\ge 0$ (the opposite case can be considered
similarly). By the definition of the distinguished polarization
one has $\widetilde L^m=\sigma ^{\ast} L^m \otimes \OO(1).$ There
is an inclusion of invertible sheaves $\sigma ^{\ast} L^m \otimes
\OO(1) \hookrightarrow \sigma ^{\ast} L^m.$ Then there is an
inclusion \linebreak $\widetilde L^m \otimes
\OO(l_1-l_2)\hookrightarrow \widetilde L^m.$ By local freeness of
the sheaf  $\widetilde E_2$ there is an exact triple
\begin{equation}\label{compare} 0\to \widetilde E_2\otimes
\widetilde L^m \otimes \OO(l_1-l_2) \to \widetilde E_2\otimes
\widetilde L^m \to Q \otimes \widetilde E_2\otimes \widetilde L^m
\to 0,
\end{equation}
where  $Q$ is a quotient sheaf supported on additional components
of the scheme $\widetilde S$. For $m\gg 0$ the sequence of spaces
of global sections associated with (\ref{compare}) is exact
\begin{eqnarray} 0\to H^0(\widetilde S,\widetilde
E_2\otimes \widetilde L^m \otimes \OO(l_1-l_2)) \to H^0(\widetilde
S,\widetilde E_2\otimes \widetilde L^m )\nonumber \\
\to H^0(\widetilde S,Q \otimes \widetilde E_2\otimes \widetilde
L^m )\to 0\nonumber
\end{eqnarray}

Since $\widetilde E_2 \otimes \OO(l_1-l_2)=\widetilde E_1$ and
Hilbert polynomials of restrictions $\widetilde \E_1|_{\widetilde
S}$ and  $\widetilde \E_2|_{\widetilde S}$ equal (because sheaves
 $\widetilde \E_i$ are flat over the base $T$ of the family
$\widetilde \Sigma$ and coincide over $T_0$) then $h^0(\widetilde
S,\widetilde E_2\otimes \widetilde L^m \otimes \OO(l_1-l_2))=
h^0(\widetilde S,\widetilde E_2\otimes \widetilde L^m).$ This
implies  that  $\chi(Q \otimes \widetilde E_2\otimes \widetilde
L^m)=0$ for all $m\gg 0$. Hence $Q=0$, and $l_1=l_2.$ The triple
(\ref{compare}) implies that $\widetilde E_1 =\widetilde E_2.$
\end{proof}

\section{$PGL(V)$-actions, GIT-stability and GIT-quot\-ients}

In this section we analyze the numerical Hilbert -- Mumford
criterion for an appropriate  PGL(V)-linearized ample invertible
vector bundle on $\mu (\widetilde Q) $. This shows that good
PGL(V)-quotient $\mu (\widetilde Q)/PGL(V)$ is defined in the
category of algebraic schemes over the field $k$. It turns out
that every $PGL(V)$-orbit in  $\mu (\widetilde Q)$ contains at
least one $PGL(V)$-semistable point.

Let $\SS$ be the universal quotient bundle on the Grassmannian
$G(V.r)$, as usually $\OO_{G(V,r)}(1)$ is the positive generator
in its Picard group. We use following notations for projections of
the universal subscheme $\Hilb^{P(t)}G(V,r)
\stackrel{\pi}{\longleftarrow } \Univ^{P(t)}G(V,r)
\stackrel{\pi'}{\longrightarrow} G(V,r)$. Form following
 sheaves on the Hilbert scheme $\widetilde L_l^h=\det
\pi_{\ast} \pi'^{\ast} \SS (l)$. Since the projection
 $\pi: \Univ^{P(t)}G(V,r) \to \Hilb^{P(t)}G(V,r)$ is a flat
 morphism and sheaves $\SS(l)$ are locally free, then sheaves
$\widetilde L_l^h$ are invertible.

\begin{proposition} Sheaves $\widetilde L_l^h$ are very ample for $l\gg 0$.
\end{proposition}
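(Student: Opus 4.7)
The plan is to realize $\widetilde L_l^h$ as the pullback of the Plücker very ample sheaf under a closed immersion of the Hilbert scheme into a Grassmannian, following Grothendieck's standard construction applied with coefficients in $\SS(l)$.

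First, I would use boundedness of $\Hilb^{P(t)}G(V,r)$: since $P(t)$ is fixed, there exists $l_0$ such that for every $l\ge l_0$ and every closed point $[Z]\in \Hilb^{P(t)}G(V,r)$ one has $H^i(Z,\SS(l)|_Z)=0$ for $i>0$, and the restriction map $H^0(G(V,r),\SS(l))\twoheadrightarrow H^0(Z,\SS(l)|_Z)$ is surjective. By cohomology and base change, $\pi_{\ast}\pi'^{\ast}\SS(l)$ is then locally free of constant rank $N:=\chi(Z,\SS(l)|_Z)$, and there is an evaluation epimorphism $W\otimes \OO_{\Hilb^{P(t)}G(V,r)}\twoheadrightarrow \pi_{\ast}\pi'^{\ast}\SS(l)$ with $W:=H^0(G(V,r),\SS(l))$.

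Second, this surjection defines a classifying morphism $\phi_l\colon \Hilb^{P(t)}G(V,r)\to \Grass(W,N)$ sending $[Z]$ to the quotient $W\twoheadrightarrow H^0(Z,\SS(l)|_Z)$. Composing with the Plücker embedding $\Grass(W,N)\hookrightarrow \mathbb{P}(\bigwedge^N W)$ and invoking functoriality of the determinant along the family, the pullback of the tautological $\OO(1)$ is exactly $\det\pi_{\ast}\pi'^{\ast}\SS(l)=\widetilde L_l^h$. Hence very ampleness of $\widetilde L_l^h$ reduces to showing that $\phi_l$ is a closed immersion for $l\gg 0$.

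The main obstacle is verifying that $\phi_l$ is indeed a closed immersion for sufficiently large $l$. Separation of distinct closed points requires reconstructing $Z$ from $\ker(W\twoheadrightarrow H^0(Z,\SS(l)|_Z))$, which comes down to the uniform vanishing $H^1(G(V,r),I_Z\otimes \SS(l))=0$ for all $[Z]\in \Hilb^{P(t)}G(V,r)$; this is a Castelnuovo--Mumford-regularity statement for the bounded family of ideal sheaves $I_Z$ on $G(V,r)$ twisted by $\SS$. Injectivity on Zariski tangent spaces is a parallel verification using first-order deformations $\widetilde Z\subset G(V,r)\times \Spec k[\varepsilon]/(\varepsilon^2)$ and the same regularity bounds. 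Since $\Hilb^{P(t)}G(V,r)$ is projective over $k$, $\phi_l$ is proper, and a proper monomorphism that is injective on tangent spaces is automatically a closed immersion; the very ampleness of $\widetilde L_l^h$ then follows by pullback from $\OO_{\mathbb{P}(\bigwedge^N W)}(1)$.
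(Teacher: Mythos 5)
Your proposal is correct in outline but follows a genuinely different route from the paper. The paper's proof is a two-line reduction: it quotes \cite[Proposition 2.2.5]{HL} (very ampleness of $\det \pi_{\ast}\pi'^{\ast}L^{n}$ on $\Hilb^{P(t)}X$ for an ample invertible sheaf $L$ and $n\gg 0$), applies it with $X=G(V,r)$ and the line bundle $L=\det\SS\otimes\OO_{G(V,r)}(l')$, and then identifies the resulting determinant bundle with $\widetilde L_l^h=\det\pi_{\ast}\pi'^{\ast}\SS(l)$. You instead reprove the Grothendieck-embedding statement from scratch with coefficients in the rank-$r$ sheaf $\SS(l)$: uniform vanishing and cohomology-and-base-change realize $\pi_{\ast}\pi'^{\ast}\SS(l)$ as a locally free quotient of $W\otimes\OO$, the classifying map $\phi_l$ to $\Grass(W,N)$ pulls the Pl\"{u}cker bundle back to $\widetilde L_l^h$, and you argue $\phi_l$ is a closed immersion (proper, injective on closed points and on tangent spaces). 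What your approach buys is self-containedness and an honest treatment of the higher-rank coefficients, whereas the paper's brevity rests on the unexplained identification $\det\pi_{\ast}\pi'^{\ast}(\det\SS\otimes\OO_{G(V,r)}(l'))=\det\pi_{\ast}\pi'^{\ast}\SS(l)$; the price is that you must actually supply the details you only sketch. Two of them deserve care: separation of points needs more than $H^1(G(V,r),I_Z\otimes\SS(l))=0$ --- you need $I_Z\otimes\SS(l)$ globally generated, so that the subspace $A=H^0(I_Z\otimes\SS(l))\subset W$ generates the subsheaf $I_Z\cdot\SS(l)\subset\SS(l)$ and $Z$ is recovered as the annihilator of $\SS(l)/I_Z\cdot\SS(l)$ (here local freeness and positive rank of $\SS$ are used); and injectivity on tangent vectors requires the same generation statement over $\Spec k[\varepsilon]/(\varepsilon^2)$, deduced from the central fibre by flatness of $I_{\widetilde Z}$, base change and Nakayama. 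Both follow from uniform Castelnuovo--Mumford regularity of the bounded family $\{I_Z\otimes\SS\}$, which you do invoke, so the gaps are expository rather than structural.
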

\begin{proof} Due to \cite[Proposition 2.2.5]{HL}, for a projective scheme $X$,
ample invert\-ible sheaf $L$ on $X$, and Hilbert scheme
$\Hilb^{P(t)} X$ with universal scheme \linebreak $\Hilb^{P(t)}X
\stackrel{\pi}{\longleftarrow} \Univ^{P(t)}X
\stackrel{\pi'}{\longrightarrow}X$, sheaves $\det \pi_{\ast}
\pi'^{\ast}L^n$ are very ample whenever $n\gg 0$. Replace $L$ with
its big enough tensor power so that $\det \pi_{\ast} \pi'^{\ast}L$
is very ample. Take $X=G(V,r)$ and $L=\det \SS\otimes
\OO_{G(V,r)}(l')$ with $l'$ so big as $L$ is ample. Then the sheaf
$\det \pi_{\ast} \pi'^{\ast}L=\det \pi_{\ast} \pi'^{\ast}(\det
\SS\otimes \OO_{G(V,r)}(l'))=\det \pi_{\ast} \pi'^{\ast} \SS
(l)=\widetilde L^h_l$ for the appropriate $l\gg 0$ is very ample.
\end{proof}

Consider the action of linear algebraic group $GL(V)$ on the
vector space $V$ by its linear transformations. Then the induced
actions of the group $PGL(V)$ on Grassmann variety $G(V,r)$ and on
Hilbert scheme $\Hilb^{P(t)}G(V,r)$ are defined. The subscheme
$\mu(\widetilde Q)$ remains $GL(V)$-invari\-ant. Fix the notation
$\widetilde L_l:=\widetilde L_l^h |_{\mu(\widetilde Q)}.$

We remind the following

\begin{definition} \cite[definition 4.2.5]{HL} Let $X$ be a
$k$-scheme of finite type, $G$ an algebraic $k$-group, and
$\alpha: X\times G \to X$ -- group action. {\it $G$-linearization}
of a quasicoherent  $\OO_X$-sheaf $F$ is an isomorphism of
$\OO_{X\times G}$-sheaves $\Lambda: \alpha^{\ast}F \to pr_1^{\ast}
F$ where $pr_1: X\times G \to X$ is the projection and the
following cocycle condition holds:
$$
(\id_X\times mult)^{\ast} \Lambda =pr_{12}^{\ast} \Lambda \circ
(\alpha \times \id_G)^{\ast} \Lambda.
$$
Here $pr_{12}: X\times G \times G \to X\times G$ is a projection
onto first two factors, $mult: G\times G\to G $ is a morphism of
group multiplication in $G$.
\end{definition}

\begin{proposition}
Sheaves $\widetilde L_l$ carry $GL(V)$-linearization.
\end{proposition}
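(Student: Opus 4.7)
The plan is to transport the obvious $GL(V)$-equivariant structure on $V$ through the chain of functorial constructions that define $\widetilde L_l$, and finally restrict to the $GL(V)$-invariant subscheme $\mu(\widetilde Q)$. The point of departure is the tautological exact sequence $0 \to \KK \to V\otimes \OO_{G(V,r)} \to \SS \to 0$ on the Grassmannian. Because the action of $GL(V)$ on $G(V,r)$ is induced by linear transformations of $V$, the middle term carries an evident $GL(V)$-linearization via $(g,v\otimes 1) \mapsto gv \otimes 1$. The subsheaf $\KK$ is $GL(V)$-stable by construction of the action on $G(V,r)$, so the quotient $\SS$ inherits a canonical $GL(V)$-linearization. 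Since $\OO_{G(V,r)}(1) \cong \det \SS$, it also carries the induced linearization, and hence so does $\SS(l) = \SS \otimes \OO_{G(V,r)}(l)$ for every $l$.

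Next I would lift this equivariance to the Hilbert scheme. The $GL(V)$-action on $G(V,r)$ induces an action on $\Hilb^{P(t)}G(V,r)$ whose defining property is that $\Univ^{P(t)}G(V,r) \subset \Hilb^{P(t)}G(V,r)\times G(V,r)$ is a $GL(V)$-invariant closed subscheme and both projections $\pi$ and $\pi'$ are $GL(V)$-equivariant. Equivariance of $\pi'$ gives a $GL(V)$-linearization on $\pi'^{\ast}\SS(l)$ simply by pulling back the linearization $\Lambda$ on $\SS(l)$ through the commutative square
\begin{equation*}
\xymatrix{\Univ^{P(t)}G(V,r)\times GL(V) \ar[r]^<<<<<{\alpha'} \ar[d]_{\pi'\times \id} & \Univ^{P(t)}G(V,r) \ar[d]^{\pi'}\\
G(V,r)\times GL(V) \ar[r]^<<<<{\alpha_G} & G(V,r)}
\end{equation*}
so that $\alpha'^{\ast}\pi'^{\ast}\SS(l) = (\pi'\times \id)^{\ast}\alpha_G^{\ast}\SS(l) \stackrel{\sim}{\to} (\pi'\times \id)^{\ast} pr_1^{\ast}\SS(l) = pr_1^{\ast}\pi'^{\ast}\SS(l)$, and the cocycle condition is preserved because pullback is functorial.

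Now I apply the equivariant flat pushforward: since $\pi$ is flat, $GL(V)$-equivariant, and has geometrically integral proper fibres, the formation of $\pi_{\ast}\pi'^{\ast}\SS(l)$ commutes with base change along the action map, hence $\pi_{\ast}\pi'^{\ast}\SS(l)$ acquires a natural $GL(V)$-linearization inherited from that on $\pi'^{\ast}\SS(l)$. For $l\gg 0$ this pushforward is locally free of rank $P(l)$, so the determinant $\widetilde L_l^h = \det \pi_{\ast}\pi'^{\ast}\SS(l)$ inherits a $GL(V)$-linearization by functoriality of $\det$. Finally, since $\mu(\widetilde Q)$ is $PGL(V)$-invariant (Proposition on $\mu(\widetilde Q)\subset \Hilb^{P(t)}G(V,r)$), a fortiori $GL(V)$-invariant in $\Hilb^{P(t)}G(V,r)$, the restriction $\widetilde L_l = \widetilde L_l^h|_{\mu(\widetilde Q)}$ carries the restricted linearization.

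The verification is essentially bookkeeping: each step uses a functor (quotient, pullback, equivariant flat pushforward, determinant, restriction to an invariant subscheme) that manifestly preserves linearizations together with their cocycle identities. The only point that requires attention is the compatibility of $\pi_\ast$ with the group action, which rests on flatness of $\pi$ together with the flat base change theorem; once that is in place, the cocycle condition on $\widetilde L_l^h$ follows automatically from the cocycle condition on $\SS(l)$, which in turn reduces to the associativity of the $GL(V)$-action on $V$.
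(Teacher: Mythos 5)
Your proof is correct and follows essentially the same route as the paper: linearize $\SS(l)$ on $G(V,r)$ via the tautological epimorphism from $V\otimes\OO_{G(V,r)}(l)$, transport the linearization through $\det\pi_{\ast}\pi'^{\ast}$ using flat base change along the action morphism, and restrict to the invariant subscheme $\mu(\widetilde Q)$, which is exactly the paper's chain of isomorphisms. One small remark: the fibres of $\pi$ over $\mu(\widetilde Q)$ are admissible schemes and thus generally reducible or non-reduced, so your appeal to "geometrically integral fibres" is inaccurate, but it is also unnecessary — flatness of the action morphism and cohomology-and-base-change for $l\gg 0$ already give what you need.
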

\begin{proof} Let $\gamma: G(V,r) \times GL(V) \to G(V,r)$ be the morphism
of the action of the group $GL(V)$. The universal quotient bundle
$\SS(l)$ carries $GL(V)$-linearization $\Lambda: \gamma^{\ast}
\SS(l) \stackrel{\sim}{\to} pr_1^{\ast} \SS(l)$. The linearization
is induced by the epimorphism $V \otimes
\OO_{G(V,r)}(l)\twoheadrightarrow \SS(l)$. Now consider the
morphism of  $GL(V)$-action \begin{equation*}\alpha:
\Hilb^{P(t)}G(V,r) \times GL(V) \to
\Hilb^{P(t)}G(V,r)\end{equation*} induced by the action $\gamma$.
For $l\gg 0$ the following chain of isomorphisms holds:
\begin{eqnarray}\label{chain} \alpha^{\ast}\widetilde L_l=\det
\alpha^{\ast}(\pi_{\ast}\pi'^{\ast}\SS(l))|_{\mu(\widetilde
Q)}=\nonumber\\
=\det \pi_{\ast}\pi'^{\ast}\gamma^{\ast}\SS(l)|_{\mu(\widetilde
Q)} \stackrel{\det (\pi_{\ast}\pi'^{\ast}\Lambda|\mu(\widetilde
Q))}{-\!\!\!-\!\!\!-\!\!\!-\!\!\!-\!\!\!-\!\!\!-\!\!\!-\!\!\!-\!\!\!\longrightarrow}
\det
\pi_{\ast}\pi'^{\ast}pr_1^{\ast}\SS(l)|_{\mu(\widetilde Q)}=\nonumber\\
=\det pr_1^{\ast} \pi_{\ast} \pi'^{\ast}\SS(l)|_{\mu(\widetilde
Q)}=pr_1^{\ast}\widetilde L_l
\end{eqnarray}
The central morphism in (\ref{chain}) is induced by the
linearization $\Lambda$ of the sheaf $\SS(l)$ and provides the
required linearization.
\end{proof}

Now consider \cite[ch. 4, sect. 4.2]{HL} an arbitrary
one-parameter subgroup $\lambda: \A^1 \setminus 0 \to GL(V).$ We
denote the image of the point $t\in \A^1 \setminus 0$ under the
morphism $\lambda$ by the symbol $\lambda (t)$. The composite of
the morphism
 $\lambda$ with the action $\alpha$ leads to the morphism
  $\alpha(\lambda): \A^1 \setminus 0 \to \mu(\widetilde
Q)$
 for any closed point $\widetilde x\in \mu(\widetilde Q)$.
 This morphism is given by the correspondence  $t\mapsto \widetilde
x_t=\alpha(\lambda(t),\widetilde x)$. By the properness of the
scheme  $\mu(\widetilde Q)$ the morphism $\alpha(\lambda)$ can be
uniquely continued to the morphism $\overline{\alpha(\lambda)}:
\A^1 \to \mu(\widetilde Q)$. Then the point $\widetilde
x_0=\overline{\alpha(\lambda)}(0)$ is a fixpoint of the action of
the subgroup $\lambda$. Notation: $\widetilde x_0=\lim_{t\to 0}
\lambda(t)(\widetilde x).$ The subgroup  $\lambda$ acts on the
fibre $L_{\widetilde x_0}$ of $G$-linearized vector bundle
 $L$ with some weight $r$. Namely, if $\Lambda$ is the
 linearization on $L$ then  $\Lambda (\widetilde x_0, g)=
g^r \cdot \id_{L_{\widetilde x_0}}.$ Define the {\it weight} of
the corresponding one-dimensional representation of the group
$\lambda$ as $w^{\widetilde L_l}(\widetilde x, \lambda)=-r.$

Analogously for $GL(V)$-action $\beta: Q \times GL(V) \to Q$ upon
the sub\-scheme  $Q\subset \Quot^{P_E(t)}V\otimes \OO_S$ formed by
semistable sheaves, and for the same one-parameter subgroup
$\lambda$ we have $x_0=\overline{\beta(\lambda)}(0)=\lim_{t\to 0}
\lambda(t)(x)$.

The main tool to analyze the existence of a group quotient is
numerical Hilbert -- Mumford criterion. Recall
\begin{definition} \cite[Definition 4.2.9]{HL} The point  $x\in X$
of the scheme $X$ is {\it semistable with respect to
$G$-linearized ample vector bundle $L$} if there exist an integer
$n$ and an invariant global section $s\in H^0(X, L^{n})$ such that
$s(x)\ne 0$. The point $x$ is {\it stable} if in addition the
stabilizer $Stab(x)$ is finite and $G$-orbit of the point $x$ is
closed in the open set of all semistable points in $X$.
\end{definition}
\begin{theorem} {\bf (Hilbert -- Mumford criterion)} \rm{ \cite[Theorem
4.2.11]{HL}} {\it The point  $x\in X$ is semistable if and only if
for all nontrivial one-parameter subgroups $\lambda: \A^1\setminus
0\to G$ there is a following inequality $$ w(x,\lambda)\ge 0.
$$
The point $x$ is stable if and only if for all $\lambda$ strict
inequality holds.}
\end{theorem}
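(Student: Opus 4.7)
The plan is to establish this classical numerical criterion by reducing to the affine cone over $X$ and invoking Mumford's fundamental lemma on reachability of boundary points of orbit closures by one-parameter subgroups. After replacing $L$ by a sufficiently high tensor power, one obtains a $G$-equivariant closed embedding $X \hookrightarrow \P(V^*)$ with $V = H^0(X, L^N)$ carrying the induced $G$-representation. Every point $x \in X$ lifts to a nonzero vector $\widetilde x$ in the affine cone over $X$, well-defined up to scalar. In these terms, semistability of $x$ is equivalent to $0 \notin \overline{G \cdot \widetilde x}$; stability requires additionally that $G \cdot \widetilde x$ be closed in $V^* \setminus 0$ and that the stabilizer of $\widetilde x$ be finite.

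For the easy direction, suppose $x$ is semistable and let $\lambda: \A^1\setminus 0 \to G$ be a nontrivial one-parameter subgroup. Decompose $\widetilde x = \sum_i \widetilde x_i$ into $\lambda$-weight eigenspaces, so $\lambda(t) \widetilde x = \sum_i t^i \widetilde x_i$, and let $i_0$ denote the minimum weight with $\widetilde x_{i_0} \ne 0$. Under the sign convention fixed in the paper, $w(x, \lambda) = -i_0$: the limit fixpoint $x_0 = \lim_{t \to 0} \lambda(t) x$ corresponds projectively to $\widetilde x_{i_0}$, and $\lambda$ acts on $L_{x_0}$ with weight $i_0$, so $w = -i_0$ by the definition given just before the statement. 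If $w(x, \lambda) < 0$, then $i_0 > 0$, hence $\lim_{t \to 0} \lambda(t) \widetilde x = 0$, placing $0$ in $\overline{G \cdot \widetilde x}$ and contradicting semistability. Thus $w(x, \lambda) \ge 0$ for every $\lambda$. The stable statement additionally rules out $w(x, \lambda) = 0$, which would correspond to $i_0 = 0$ and indicate either a nontrivial $\lambda$-component in the stabilizer of $\widetilde x$ or a degeneration of the orbit to a distinct one.

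The hard direction is the converse, and its main obstacle is Mumford's fundamental lemma: for a reductive $G$ acting linearly on $V^*$ and a vector $\widetilde x$ with $0 \in \overline{G \cdot \widetilde x}$, there exists a one-parameter subgroup $\lambda$ with $\lim_{t \to 0} \lambda(t) \widetilde x = 0$. This lemma is proved by reducing via parabolic subgroups and the Iwasawa or Bruhat decomposition to the case of a maximal torus $T \subset G$, where it becomes the convex-geometric assertion that $0$ lies in the closed convex hull of the $T$-weights occurring in $\widetilde x$ iff some cocharacter of $T$ pairs strictly positively with every such weight. Granting the lemma, if $w(x, \lambda) \ge 0$ for every $\lambda$, then $i_0 \le 0$ always, so no one-parameter subgroup drives $\widetilde x$ to $0$; hence $0 \notin \overline{G \cdot \widetilde x}$ and $x$ is semistable. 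For stability, strict positivity $w > 0$ excludes both non-closedness of $G \cdot \widetilde x$ (which the lemma, applied to a boundary point with closed orbit, would convert into some $\lambda$ with $w(x, \lambda) = 0$) and infinitude of the stabilizer (a nontrivial torus in the identity component of the stabilizer yields some $\lambda$ with $w = 0$); conversely, stability implies strict inequality by the easy-direction argument refined to rule out the $i_0 = 0$ case.
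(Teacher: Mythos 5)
The paper contains no proof of this statement to compare yours against: the criterion is quoted, with citation, from Huybrechts--Lehn [Theorem 4.2.11] and is used in \S 9 purely as a black box. Your outline reproduces the classical Mumford argument, and its architecture is the right one: replace $L$ by a high power to get a $G$-equivariant projective embedding, lift $x$ to the affine cone, identify semistability of $x$ with $0\notin\overline{G\cdot\widetilde x}$ (note this step itself uses reductivity, since one direction separates the disjoint closed invariant sets $\{0\}$ and $\overline{G\cdot\widetilde x}$ by an invariant of positive degree), read off $w(x,\lambda)$ as minus the minimal $\lambda$-weight occurring in $\widetilde x$, and reduce the converse to the fundamental lemma that $0\in\overline{G\cdot\widetilde x}$ forces some one-parameter subgroup to send $\widetilde x$ to $0$.

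Three points in the sketch need repair before it is a proof. First, your torus-case statement is inverted: $0$ lies in the convex hull of the occurring weights if and only if \emph{no} cocharacter pairs strictly positively with all of them; it is precisely when $0$ is \emph{outside} the hull that a separating (rational, hence after scaling integral) cocharacter exists and drives $\widetilde x$ to $0$. Second, the reduction from $G$ to a maximal torus is not accomplished by the Bruhat or Iwasawa decomposition; the standard route is Iwahori's Cartan decomposition $G(K)=G({\mathcal O})\,T(K)\,G({\mathcal O})$ over $K=k((t))$ (or Kempf's instability theory), and as stated this reduction is not justified. Third, in the stability direction an infinite stabilizer need not contain a nontrivial torus a priori, since it could be unipotent; to produce the torus you should first establish that the orbit of $\widetilde x$ is closed, hence affine, and then invoke Matsushima's criterion to conclude the stabilizer is reductive, after which a one-parameter subgroup fixing $\widetilde x$ gives $w(x,\lambda)=0$ and the desired contradiction. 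With these corrections, and with the fundamental lemma granted, your argument is the standard proof of the criterion, which is exactly the source the paper cites.
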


\begin{definition} Let $G$ be an algebraic group, $X,Y$
algebraic schemes, $\mu: X\to Y$ a scheme morphism, $\alpha:
Y\times G \to Y$ action of the group $G$. The morphism $\alpha':
X\times G \to X$ is called the {\it action of the group $G$ upon
the scheme  $X$  induced by the action  $\alpha$ under the
morphism} $\mu$ if the square
\begin{equation*} \xymatrix{X\times G \ar[d]_{(\mu,\id)}
\ar[r]^{\;\;\alpha'}&X
\ar[d]^{\mu}\\
Y\times G \ar[r]^{\;\;\alpha}& Y}
\end{equation*} is cartesian.
\end{definition} Let $G$ be a reductive algebraic group,
 $X,Y,Z$ proper algebraic schemes, $\mu: X\to Y$ and
 $\phi: X \to Z$ scheme morphisms, $\alpha: Y\times G \to Y$
 and $\beta : Z \times G \to Z$ actions of the group $G$. Let also
 $\alpha', \beta': X\times G \to
X$ be actions of the group $G$ on the scheme $X$ induced by
actions  $\alpha$ and $\beta $ respectively.
\begin{definition} Actions  $\alpha$ and $\beta $  are called $X$-{\it
concordant}, if the following diagram commutes
\begin{equation*}\xymatrix{X\times G \ar[d]_{=}
\ar[r]^{\;\;\alpha'}& X\ar[d]^=\\
X\times G \ar[r]^{\;\;\beta'}&X}\end{equation*}
\end{definition}

Consider two morphisms  $\mu(\widetilde Q)
\stackrel{\mu}{\longleftarrow} \widetilde Q
\stackrel{\phi}{\longrightarrow} Q$. By  definitions of actions of
the group $GL(V)$ upon schemes  $\mu(\widetilde Q)$ and $Q$ these
actions are  $\widetilde Q$-concordant.

\begin{definition} Points $y\in Y$ and $z\in Z$ are called {\it corresponding}
with respect to morphisms  $\mu$ and $\phi$ if $\mu^{-1}(y)\cap
\phi^{-1}(z)\ne \emptyset$.
\end{definition}

In the further text the symbol $\lambda (x)$ denotes the orbit of
the point  $x$ under the action of one-parameter subgroup
$\lambda$. The symbol $\lambda (t)(x)$ denotes the point  which
corresponds to the given $t$ in this orbit.

\begin{proposition} \label{fixpoints} Let $y\in Y$ and $z\in Z$ be corresponding points.
For any one-parameter subgroup $\lambda$ fixedness of the point
$y=y_0$ implies existence of the pair $(y_0,z_0)$ of corresponding
points such that the point $z_0$ is fixed, and vice versa.
\end{proposition}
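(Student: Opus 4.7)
The strategy is to lift the $\lambda$-fixed point $y_0\in Y$ to a $\lambda$-fixed point $x_0\in X$ via $\mu$, and then push $x_0$ down to $Z$ through $\phi$ to produce the desired $z_0$; the $X$-concordance of $\alpha$ and $\beta$ is precisely what makes this single-$x_0$ construction well-defined. First I would choose any $x\in \mu^{-1}(y_0)\cap \phi^{-1}(z)$ using the assumption that $y_0$ and $z$ correspond. Since the induced actions $\alpha',\beta':X\times G\to X$ coincide by concordance, the orbit morphism $\A^1\setminus 0\to X$, $t\mapsto \lambda(t)(x)$, is unambiguous, and by properness of $X$ it extends uniquely to a morphism $\A^1\to X$. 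Setting $x_0$ to be its value at $t=0$ yields a $\lambda$-fixed point of $X$, as recalled in the text just before the proposition.

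Next I would apply $\mu$ and $\phi$ to this extension. Cartesianness of the square defining $\alpha'$ from $\alpha$ implies that $\mu$ is $G$-equivariant, so $\mu(\lambda(t)(x))=\lambda(t)(y_0)=y_0$ on the dense open $\A^1\setminus 0$; both sides extend to morphisms $\A^1\to Y$ agreeing on a dense open, which forces $\mu(x_0)=y_0$. Likewise, equivariance of $\phi$ gives $\phi(\lambda(t)(x))=\lambda(t)(z)$, and properness of $Z$ lets this extend to a morphism $\A^1\to Z$; its value at $0$ is then $\phi(x_0)$, which I define to be $z_0$. The point $z_0=\lim_{t\to 0}\lambda(t)(z)$ is $\lambda$-fixed by the standard observation that precomposing the orbit map with the multiplication $t\mapsto st$ does not alter its limit at $0$, so $\lambda(s)(z_0)=z_0$ for every $s$. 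Since $x_0\in \mu^{-1}(y_0)\cap \phi^{-1}(z_0)$, the pair $(y_0,z_0)$ indeed corresponds, and the converse direction follows by the symmetric argument exchanging the roles of $(\mu,\alpha,Y)$ and $(\phi,\beta,Z)$.

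The only step requiring genuine care is the verification that the lifted limit $x_0$ projects to exactly $y_0$ and not to some other $\lambda$-fixed point of $Y$; this is where equivariance of $\mu$ combined with fixedness of $y_0$ collapses the orbit $\lambda(t)(y_0)$ to the constant $y_0$, pinning $\mu(x_0)=y_0$ on the nose. Everything else---existence and uniqueness of the limits $x_0$ and $z_0$, and the fact that both are $\lambda$-fixed---is a routine application of the valuative criterion of properness, enabled by the hypotheses on $X$ and $Z$.
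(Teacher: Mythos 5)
Your proof is correct, and it reaches the conclusion by a mechanism that differs from the paper's. You lift one witness $x\in\mu^{-1}(y_0)\cap\phi^{-1}(z)$, use the concordance $\alpha'=\beta'$ to get a single unambiguous orbit map $\A^1\setminus 0\to X$, extend it to $\A^1$ by properness of $X$ (legitimate here, since the proposition is applied with $X=\overline{\widetilde Q}$, $Y=\overline{\mu(\widetilde Q)}$, $Z=\overline{Q}$, all proper), and then push the limit $x_0$ down: equivariance of $\mu$ (which is exactly the commutativity of the cartesian square defining the induced action) together with fixedness of $y_0$ pins $\mu(x_0)=y_0$, while equivariance of $\phi$ gives $\phi(x_0)=z_0=\lim_{t\to 0}\lambda(t)(z)$, which is $\lambda$-fixed by the usual reparametrization argument; thus $x_0$ itself witnesses that $(y_0,z_0)$ correspond. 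The paper instead stays downstairs: it first uses the big commutative diagram to show that $y_0$ corresponds to $\lambda(t)(z)$ for every $t\ne 0$, takes the limit $z_0$ in $Z$ by properness of $Z$, and then concludes $\phi^{-1}(z_0)\cap\mu^{-1}(y_0)\ne\emptyset$ by an incidence argument about open and closed subsets of $\phi^{-1}(\overline{\lambda(\A^1)})\cap\mu^{-1}(y_0)$. Your route buys a concrete witness point $x_0$ and replaces the paper's somewhat terse point-set step by the valuative criterion on $X$; the paper's route only needs limits taken in $Z$ and never extends an orbit map on $X$, at the cost of a less explicit final step. One cosmetic remark: the $\lambda$-fixedness of $x_0$ that you establish is not actually needed — membership $x_0\in\mu^{-1}(y_0)\cap\phi^{-1}(z_0)$ is all the conclusion requires — and the symmetric argument for the converse direction is fine as you state it.
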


\begin{proof} From the commutativity of the diagram
\begin{equation*}\xymatrix{
y_0 \times \lambda \ar[d] \ar[r]& Y \times \lambda
\ar[d]_{\alpha(\lambda)}&\ar[l] X\times \lambda
\ar[d]_{\alpha'(\lambda)}^{\!\!\!\!=\beta'(\lambda)}
\ar[r]&Z\times
\lambda \ar[d]^{\beta(\lambda)}& \ar[l] \lambda(z)\times \lambda \ar[d]\\
y_0 \ar[r] &Y& \ar[l]_{\mu} X \ar[r]^{\phi}& Z& \ar[l] \lambda(z)
}
\end{equation*}
it follows immediately that if points $y_0$ and $z$ correspond
then for any $t$ points $\lambda(t)(y_0)=y_0$ and $\lambda(t)(z)$
also correspond. Now we note that by the properness of the scheme
 $Z$ there exist a point $z_0=\lim_{t\to 0}\lambda (t)(z).$ Since the subset
$\lambda (z)$ is open in $\overline \lambda(\A^1)$, then we have
for preimages in $X$ that $\phi^{-1}\lambda(z)$ is open in
$\phi^{-1}\overline \lambda(\A^1)$. Also note that the
intersection with the closed subset $\mu^{-1}(y_0)$ yields the
openness of  $\phi^{-1}\lambda(z) \cap \mu^{-1}(y_0)$ in
$\phi^{-1}\overline \lambda(\A^1) \cap \mu^{-1}(y_0)$. Moreover,
for any $t\ne 0$ the intersection $\phi^{-1}\lambda(t)(z) \cap
\mu^{-1}(y_0)$ is nonempty and closed in $\phi^{-1}\overline
\lambda(\A^1) \cap \mu^{-1}(y_0)$. Hence, $\phi^{-1}(z_0)\cap
\mu^{-1}(y_0)$ is nonempty subset as required.
\end{proof}

Let now the schemes  $X,Y,Z$ be projective and morphisms $\mu$ and
$\phi$ be surjective. Let $L_Y$ and $L_Z$ be very ample invertible
$G$-linearized vector bundles on schemes $Y$ and $Z$ respectively.
Let $\Lambda_Y: \alpha ^{\ast} L_Y \to pr_1^{\ast} L_Y$ and
$\Lambda_Z: \beta ^{\ast} L_Z \to pr_1^{\ast}L_Z$ be isomorphisms
of their linearizations. For fibres of bundles $L_Y$ and $L_Z$ at
closed points $y\in Y$ and $z\in Z$ respectively we introduce
notations $L_{Y,y}$ and $L_{Z,z}.$ Then restrictions
$\Lambda_{Y,0}$ and $\Lambda_{Z,0}$ of isomorphisms of
linearizations to actions of one-parameter subgroup $\lambda$ and
to fibres at $\lambda$-fixpoints $y_0$ and $z_0$ has a form:
\begin{equation*} \Lambda_{Y,0}: \alpha(\lambda)^{\ast}L_{Y,y_0}\to pr_1^{\ast}L_{Y,y_0}, \quad
\Lambda_{Z,0}: \beta(\lambda)^{\ast}L_{Z,z_0}\to
pr_1^{\ast}L_{Z,z_0}.
\end{equation*}
\begin{definition} $G$-linearizations $\Lambda_Y$ and $\Lambda_Z$
are {\it fibrewise concordant} if for any two corresponding
$\lambda$-fixpoints  $y_0\in Y$ and $z_0\in Z$ there exists an
isomorphism $f_0:L_{Y,y_0}\to L_{Z,z_0}$ such that the diagram
\begin{equation}\label{char}\xymatrix{\alpha(\lambda)^{\ast}L_{Y,y_0}
\ar[d]_{\alpha(\lambda)^{\ast}f_0}^{\wr}
\ar[rr]^{\Lambda_{Y,y_0}}_{\sim}&&
pr_1^{\ast}L_{Y,y_0} \ar[d]^{pr_1^{\ast}f_0}_{\wr}\\
\beta(\lambda)^{\ast}L_{Z,z_0}\ar[rr]^{\Lambda_{Z,z_0}}_{\sim}&&
pr_1^{\ast}L_{Z,z_0}}
\end{equation}
commutes.
\end{definition}
There is an obvious
\begin{proposition} \label{weight} Fibrewise concordant
linearizations of vector bundles $L_Y$ and $L_Z$ induce for any
one-parameter subgroup $\lambda: \A^1 \setminus 0 \to G$ on fibres
at corresponding fixpoints one-dimensional representations with
equal weights.
\end{proposition}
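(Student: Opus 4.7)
The plan is to unwind the definition of weight and to use the commutative square (\ref{char}) to identify the two resulting one-dimensional representations through the fibre isomorphism $f_0$.

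First I would fix a $\lambda$-fixpoint $y_0\in Y$ and invoke Proposition \ref{fixpoints} to produce a corresponding $\lambda$-fixpoint $z_0\in Z$, so that the hypothesis of fibrewise concordance supplies an isomorphism $f_0:L_{Y,y_0}\stackrel{\sim}{\to}L_{Z,z_0}$ fitting into the square (\ref{char}). I would then unpack the definition of weight recalled just before the proposition: writing $r_Y,r_Z$ for the integers such that $\Lambda_{Y,0}(y_0,\lambda(t))=\lambda(t)^{r_Y}\cdot\id_{L_{Y,y_0}}$ and $\Lambda_{Z,0}(z_0,\lambda(t))=\lambda(t)^{r_Z}\cdot\id_{L_{Z,z_0}}$, the associated weights are by definition $w^{L_Y}(y_0,\lambda)=-r_Y$ and $w^{L_Z}(z_0,\lambda)=-r_Z$.

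Next, I would restrict the commutative square (\ref{char}) to the fibre over $(y_0,\lambda(t))$ on top and $(z_0,\lambda(t))$ on the bottom for each $t\in\A^1\setminus 0$. The commutativity then reduces to the identity $f_0\circ(\lambda(t)^{r_Y}\cdot\id_{L_{Y,y_0}})=(\lambda(t)^{r_Z}\cdot\id_{L_{Z,z_0}})\circ f_0$ in $\Hom(L_{Y,y_0},L_{Z,z_0})$, valid for every nonzero $t$. Since $f_0$ is an isomorphism of one-dimensional vector spaces it is invertible, so the identity simplifies to $\lambda(t)^{r_Y}=\lambda(t)^{r_Z}$ for all $t\in\A^1\setminus 0$. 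The image $\lambda(\A^1\setminus 0)$ is an infinite subset of $G$, so this forces $r_Y=r_Z$, and consequently $w^{L_Y}(y_0,\lambda)=w^{L_Z}(z_0,\lambda)$.

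The argument is essentially a chase of definitions through the square (\ref{char}), so I do not anticipate a real obstacle. The only delicate point is to check that the restriction of (\ref{char}) to the $\lambda$-orbit really expresses an intertwining relation rather than an unrelated identification: both $\alpha(\lambda)^{\ast}L_{Y,y_0}$ and $pr_1^{\ast}L_{Y,y_0}$ must be canonically trivialized over $\{y_0\}\times(\A^1\setminus 0)$ so that $\Lambda_{Y,0}$ corresponds to multiplication by the scalar function $t\mapsto\lambda(t)^{r_Y}$, and symmetrically for $Z$; this follows from the fact that $y_0$ and $z_0$ are fixpoints, so both pullbacks are canonically identified with the constant sheaves with fibres $L_{Y,y_0}$ and $L_{Z,z_0}$.
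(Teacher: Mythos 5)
Your argument is correct and is essentially the paper's own proof: the commutative square (\ref{char}) shows that $f_0$ intertwines the two one-dimensional $\lambda$-representations at the corresponding fixpoints, and the paper concludes via "equivalent representations have equal characters" exactly what you obtain by the explicit scalar computation $t^{r_Y}=t^{r_Z}$ for all $t$, hence $r_Y=r_Z$. Your added remark about the canonical trivializations of the pullbacks at a fixpoint, and the minor notational care that the scalar is the parameter $t$ rather than the matrix $\lambda(t)$, only make explicit what the paper leaves implicit.
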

\begin{proof}
The diagram (\ref{char}) implies the equivalence of
one-dimensional represent\-ations of multiplicative group of the
field $k$. By the multiplicative property of characters
equivalent representations of a
 group have equal characters. Hence the representations
induced by morphisms $\Lambda_{Y,0}$ and $\Lambda_{Z,0}$ have
 equal weights.
\end{proof}

In our situation actions of the group  $GL(V)$ on schemes
$\mu(\widetilde Q)$ and $Q$ are $\widetilde Q$-con\-cordant. The
reasoning in the proof of the proposition \ref{qpro} and, in
particular, the fibred diagram  (\ref{close}) allow to replace the
schemes $\widetilde Q,$ $\mu (\widetilde Q)$, and $Q$ by their
appropriate projective closures. Then setting
$X=\overline{\widetilde Q},$ $Y = \overline{\mu (\widetilde Q)}$,
and $Z= \overline Q$ we apply the proposition \ref{fixpoints}.

\begin{proposition} Bundles $\widetilde L_l$ and bundles
$L_l=p_{1\ast}(\E\otimes \L^l)$ carry concordant linearizations.
\end{proposition}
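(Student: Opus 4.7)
The plan is to exhibit both $\widetilde L_l$ and $L_l$ as determinants of pushforward sheaves whose pullbacks to $\widetilde Q$ are linked by the natural inclusion $\Upsilon$ of (\ref{inc}), and then deduce fibrewise concordance of linearizations at $\lambda$-fixpoints from the $GL(V)$-equivariance of $\Upsilon$.

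First I would identify the fibres at corresponding fixpoints. Fix a one-parameter subgroup $\lambda\subset GL(V)$ and a pair of corresponding $\lambda$-fixpoints $y_0\in \overline{\mu(\widetilde Q)}$, $z_0\in \overline Q$; by Proposition~\ref{fixpoints} applied to the diagram (\ref{close}), there is a point $\widetilde q\in \overline{\widetilde Q}$ with $\mu(\widetilde q)=y_0$ and $\phi(\widetilde q)=z_0$. Cohomology and base change applied to $\pi:\widetilde \Sigma_Q\to \widetilde Q$ and $p:Q\times S\to Q$ give, for $l\gg 0$, identifications $\widetilde L_{l,y_0}\cong \det H^0(\widetilde S,\widetilde E\otimes \widetilde L^l)$ and $L_{l,z_0}\cong \det H^0(S,E\otimes L^l)$, where $(\widetilde S,\widetilde E)$ and $(S,E)$ are the pairs associated with $\widetilde q$ under $\mu$ and $\phi$ respectively. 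The candidate isomorphism $f_0$ required by diagram (\ref{char}) is $\det \upsilon$, with $\upsilon$ the distinguished isomorphism of global sections constructed in Section~4.

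Next I would globalize the identification. By the chain (\ref{chaim}), the inclusion $\Upsilon$ of (\ref{inc}) is a map between locally free $\OO_{\widetilde Q}$-sheaves of the same rank $rp_E(l)$ after the tensor twist of Lemma~\ref{monosh}. Choosing $\widetilde\LL$ and $\LL$ as pullbacks of a single very ample line bundle from an ambient projective space on which $GL(V)$ acts trivially, the auxiliary twists $\widetilde\LL^{nl}$ and $\LL^{nl}$ contribute identical (trivial) $\lambda$-characters to both sides and factor out of the comparison. With this choice $\Upsilon$ becomes an isomorphism of $GL(V)$-equivariant locally free sheaves, and $\det\Upsilon$ descends to a canonical isomorphism $\mu^\ast\widetilde L_l\stackrel{\sim}{\to} \phi^\ast L_l$ of line bundles on $\widetilde Q$ whose fibre at $\widetilde q$ is precisely $\det\upsilon$.

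Finally, I would verify equivariance. The standard resolution of Section~2 and the construction of $\upsilon$ are functorial in the quotient datum $V\otimes L^{(-m)}\twoheadrightarrow E$; a change of basis $g\in GL(V)$ acts on this datum by precomposition with $g$, and the trivializations $\tau_i$ of $\pi_\ast(\widetilde\E_Q\otimes\widetilde\L_Q^m)$ intertwine this action with the $GL(V)$-action on the pushforward. Hence $\det\Upsilon$ is $GL(V)$-equivariant; restriction to $\lambda$ and passage to the fibre at $\widetilde q$ makes diagram (\ref{char}) commute with $f_0=\det\upsilon$, and Proposition~\ref{weight} then yields equality of weights, i.e.\ fibrewise concordance. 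The main obstacle I anticipate is the careful bookkeeping of the auxiliary twists by $\widetilde\LL^{nl}$ and $\LL^{nl}$: one must verify that these can be chosen $GL(V)$-trivially without disturbing the validity of Lemma~\ref{monosh}, which may force enlarging $n$ and rechecking that the reflexive hulls arising in the proof of the lemma remain unaffected by this choice.
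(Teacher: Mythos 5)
Your core idea --- take $f_0$ to be the top exterior power of the distinguished isomorphism $\upsilon$ at each pair of corresponding $\lambda$-fixpoints, deduce the commutativity of (\ref{char}) from compatibility of $\upsilon$ with the evaluation data coming from $V$, and conclude by Proposition \ref{weight} --- is exactly the paper's route (its diagram (\ref{dialin}) and the cube (\ref{cube})). The genuine gap is in your globalization step, on which your final verification actually relies. First, the auxiliary twists cannot be neutralized as you propose: $GL(V)$ acts nontrivially on $\widetilde Q$ and on $Q$, so there is no very ample $\widetilde \LL$ (resp. $\LL$) obtained equivariantly as the pullback of a line bundle from a projective space with trivial $GL(V)$-action --- an equivariant morphism to a trivially-acted target is constant on orbits, and pulling back an ample class through the quotient destroys ampleness. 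Without a linearization on $\widetilde \LL^{nm}$ and $\LL^{nm}$ their fibres at the fixpoints carry no $\lambda$-weight to ``factor out,'' and with an arbitrary linearization those weights need not vanish or agree; since Proposition \ref{weight} has to be applied to $\widetilde L_l$ and $L_l$ themselves, an uncontrolled character from the twists enters the weight comparison and the argument as written does not close. Second, the inclusion (\ref{inc}) compares pushforwards of the sheaves twisted by the fixed $m$ (rank $rp_E(m)$), not by the varying $l$: the fibres of $\widetilde L_l$ and $L_l$ at corresponding fixpoints are $\bigwedge^{rp_E(l+m)}H^0(\widetilde S,\widetilde E\otimes \widetilde L^{l+m})$ and $\bigwedge^{rp_E(l+m)}H^0(S,E\otimes L^{l+m})$, so you need the analogue of $\upsilon$ for the twist $l+m$ (which Section 4 does provide for all large twists), not the literal map (\ref{inc}).

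The paper avoids both problems by never globalizing: at the corresponding fixpoints it maps onto these fibres from $\bigwedge^{rp_E(l+m)}(V\otimes H^0(\widetilde S,\widetilde L^l))$ and $\bigwedge^{rp_E(l+m)}(V\otimes H^0(S,L^l))$, identifies the sources by extending the identity from the locus of $S$-pairs using local trivializations of $\pi_{\ast}\OO_{\Univ^{P(t)}G(V,r)}(l-m)$ and $p_{1\ast}\L^l$, and lets $\upsilon$ supply the right-hand vertical arrow of (\ref{dialin}); since both linearizations are induced from the single $GL(V)$-action on $V$ through these epimorphisms, the cube (\ref{cube}) commutes and (\ref{char}) follows with no auxiliary twist to cancel. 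If you replace your globalization by this fibrewise comparison (or, alternatively, prove equality of the $\lambda$-weights of your chosen auxiliary twists at every pair of corresponding fixpoints), your proof closes.
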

\begin{proof} Two corresponding points $\widetilde x \in \mu(\widetilde Q)$
and $x\in Q$ define epimorphisms  $V \otimes \OO_{\widetilde S}
\twoheadrightarrow \widetilde E\otimes \widetilde L^m$ and $V
\otimes \OO_S \twoheadrightarrow E\otimes L^m$ respectively. Twist
by  $l$ and formation of $rp_E(l+m)$-th exterior power lead to
epimorphisms
\begin{eqnarray*}
\bigwedge^{rp_E(l+m)}(V \otimes \widetilde L^l)\twoheadrightarrow
\bigwedge^{rp_E(l+m)} \widetilde
E\otimes \widetilde L^{(l+m)},\nonumber\\
\bigwedge^{rp_E(l+m)}(V \otimes L^l)\twoheadrightarrow
\bigwedge^{rp_E(l+m)}  E\otimes  L^{(l+m)} \nonumber
\end{eqnarray*}
respectively. Taking of global sections for $l\gg 0$ before
formation exterior powers gives
\begin{eqnarray} \bigwedge^{rp_E(l+m)}(V \otimes H^0(\widetilde
S,\widetilde L^l))\twoheadrightarrow \bigwedge^{rp_E(l+m)}
H^0(\widetilde S,\widetilde E\otimes \widetilde L^{(l+m)}),\nonumber\\
\bigwedge^{rp_E(l+m)}(V \otimes H^0(S,L^l))\twoheadrightarrow
\bigwedge^{rp_E(l+m)} H^0(S,E\otimes L^{(l+m)}).\label{epi}
\end{eqnarray}
Since projections  $\pi: \Univ^{P(t)} G(V,r) \to
\Hilb^{P(t)}G(V,r)$ and \linebreak $p_1: Q \times S \to Q$ are
flat morphisms, then direct images  \linebreak
$\pi_{\ast}\OO_{\Univ^{P(t)}G(V,r)}(l-m)$ and $p_{1\ast}\L^l$ are
locally free of rank $P(l)$. By Grauert theorem\cite[ch. III,
corollary 12.9]{Hart} one has
$\pi_{\ast}\OO_{\Univ^{P(t)}G(V,r)}(l-m) \otimes k_{\widetilde
x}\cong H^0(\widetilde S, \widetilde L^l)$ and also $
p_{1\ast}\L^l\otimes k_x \cong H^0(S, L^l).$

Choose Zariski-open neighborhoods  $\widetilde U \ni \widetilde x$
and $U \ni x$, providing local trivial\-izations
$\pi_{\ast}\OO_{\Univ^{P(t)}G(V,r)}(l-m)|_{\widetilde U}\cong
H^0(\widetilde S, \widetilde L^l)\otimes \OO_{\pi^{-1}(\widetilde
U)}$ and $p_{1\ast}\L^l|_U \cong$ \linebreak $ H^0(S, L^l) \otimes
\OO_{p_1^{-1}(U)}.$ Without loss of generality we can assume that
open subsets $\widetilde U$ and $U$ contain corresponding points
representing objects $(\widetilde S \cong S,\widetilde E \cong E)$
and $E$ respectively. In this case  $E$ is locally free sheaf.
Then on open subsets  $\widetilde U_0 \cong U_0$ formed by these
points, there is identity isomorphism $H^0(\widetilde S,
\widetilde L^l)\otimes \OO_{\pi^{-1}(\widetilde U_0)}\cong H^0(S,
L^l) \otimes \OO_{p_1^{-1}(U_0)}.$ By triviality of sheaves this
isomorphism can be continued up to the isomorphism
$$H^0(\widetilde S, \widetilde L^l)\otimes
\OO_{\pi^{-1}(\widetilde U)}\cong H^0(S, L^l) \otimes
\OO_{p_1^{-1}(U)}.$$ This fixes the isomorphism on fibres
$H^0(\widetilde S, \widetilde L^l)\otimes k_{\widetilde x} \cong
H^0(S, L^l) \otimes k_x$ at points $\widetilde x $ and $x$. Then
epimorphisms  (\ref{epi}) can be include into the commutative
diagram
\begin{equation}\label{dialin}\xymatrix{\bigwedge^{rp_E(l+m)}(V \otimes H^0(\widetilde
S,\widetilde L^l))\ar[r] \ar[d]_{\wr}& \bigwedge^{rp_E(l+m)}
H^0(\widetilde S,\widetilde E\otimes \widetilde L^{(l+m)}) \ar[d]^{\wr}\\
\bigwedge^{rp_E(l+m)}(V \otimes H^0(S,L^l))\ar[r]
&\bigwedge^{rp_E(l+m)} H^0(S,E\otimes L^{(l+m)})}
\end{equation}
where the right hand side arrow is induced by the isomorphism
$\upsilon$ obtained in the section 4.

Let $((\widetilde S_{\widetilde x_0}, \widetilde L_{\widetilde
x_0}), \widetilde E_{\widetilde x_0})$ be a semistable pair
corresponding to the point $\widetilde x_0 \in \mu(\widetilde Q).$
Also $S_{x_0}$ be the fibre of the family $p_1:Q \times S \to Q$
at the point $x_0 \in Q,$ $L_{x_0}=L$ be its polarization and
$E_{x_0}$ be a semistable coherent sheaf corresponding to the
point $x_0 \in Q$. We introduce shorthand notations: $\widetilde
W=\bigwedge^{rp_E(l+m)}(V\otimes H^0(\widetilde S_{\widetilde
x_0}, \widetilde L_{\widetilde x_0}^l))$ and
$W=\bigwedge^{rp_E(l+m)}(V\otimes H^0( S_{ x_0}, L_{x_0}^l))$.
Notifying that the fibres $(\widetilde L_l)_{\widetilde x_0}$ and
$(L_l)_{x_0}$ of vector bundles $\widetilde L_l$ and $L_l$ at
corresponding points $\widetilde x_0$ and $x_0$ are given by the
isomorphisms \begin{eqnarray}(\widetilde L_l)_{\widetilde
x_0}=\bigwedge ^{rp_E(l+m)}H^0(\widetilde S_{\widetilde x_0},
\widetilde E_{\widetilde x_0}\otimes \widetilde L_{\widetilde
x_0}^{(l+m)}),\nonumber \\
(L_l)_{x_0}=\bigwedge ^{rp_E(l+m)}H^0(S_{x_0}, E_{x_0}\otimes
L_{x_0}^{(l+m)}),\nonumber \end{eqnarray}  using restrictions of
linearizing isomorphisms onto fibres of vector bundles, and
involving the diagram (\ref{dialin}) we get the commutative
diagram
\begin{equation}\label{cube}\xymatrix{
\alpha(\lambda)^{\ast} \widetilde W
\ar[dd] \ar[rd] \ar[rr]^{\sim}  &&\beta(\lambda)^{\ast}W \ar[dd] \ar[rd]&\\
\; & \alpha(\lambda)^{\ast} (\widetilde L_l)_{\widetilde x_0}
\ar[rr]
\ar[dd]^>>>>>>>>{\widetilde \Lambda_0}&& \beta(\lambda)^{\ast}(L_l)_{x_0} \ar[dd]_{\Lambda_0}\\
pr_1^{\ast} \widetilde W \ar[rr] \ar[rd]&& pr_1^{\ast} W
\ar[rd]&\\
\; &pr_1^{\ast} (\widetilde L_l)_{\widetilde x_0} \ar[rr]^{\sim}
&& pr^{\ast}_1 (L_l)_{x_0} }
\end{equation}
All slanted arrows are epimorphisms and the rest are isomorphisms.
The front side of the diagram \ref{cube} proves the proposition.
\end{proof}
\begin{corollary} Corresponding $\lambda$-fixpoints of schemes $\mu(\widetilde Q)$
and $Q$ carry $\lambda$-actions with equal weights.
\end{corollary}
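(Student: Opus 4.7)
The plan is to deduce the corollary directly from the preceding proposition together with the general principle of Proposition \ref{weight} and the correspondence of fixpoints given by Proposition \ref{fixpoints}. Fix any one-parameter subgroup $\lambda: \A^1\setminus 0 \to GL(V)$ and any $\lambda$-fixpoint $\widetilde x_0 \in \mu(\widetilde Q)$. As noted just before Proposition \ref{fixpoints}, the fibred diagram (\ref{close}) lets us replace $\widetilde Q$, $\mu(\widetilde Q)$ and $Q$ by the projective closures $\overline{\widetilde Q}$, $\overline{\mu(\widetilde Q)}$ and $\overline Q$, while the $GL(V)$-actions on $\mu(\widetilde Q)$ and $Q$ are $\widetilde Q$-concordant by their very construction. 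Under this setup Proposition \ref{fixpoints} produces a $\lambda$-fixpoint $x_0 \in \overline Q$ corresponding to $\widetilde x_0$, in the sense that $\mu^{-1}(\widetilde x_0)\cap \phi^{-1}(x_0)\neq\emptyset$; the symmetric statement of that proposition also yields a corresponding $\widetilde x_0$ for any $\lambda$-fixpoint $x_0 \in \overline Q$.

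Next I would invoke the preceding proposition, which asserts that the $GL(V)$-linearizations on $\widetilde L_l$ and on $L_l$ are fibrewise concordant: the front face of the commutative cube (\ref{cube}) at a pair of corresponding fixpoints $(\widetilde x_0, x_0)$ is exactly the square (\ref{char}) required in the definition of fibrewise concordance, with the vertical fibre isomorphism
$$
f_0: (\widetilde L_l)_{\widetilde x_0} \xrightarrow{\sim} (L_l)_{x_0}
$$
supplied by the slanted arrows on the right faces of (\ref{cube}), which in turn come from the distinguished isomorphism $\upsilon$ of \S 4 after passing to $rp_E(l+m)$-th exterior powers.

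Finally, Proposition \ref{weight} applies verbatim to the pair of fibrewise concordant linearizations $(\widetilde L_l, \Lambda_0)$ and $(L_l, \Lambda_0)$ at the corresponding $\lambda$-fixpoints $\widetilde x_0$ and $x_0$, yielding the equality of weights of the induced one-dimensional representations of $\lambda$:
$$
w^{\widetilde L_l}(\widetilde x_0, \lambda) = w^{L_l}(x_0, \lambda).
$$
Since the one-parameter subgroup $\lambda$ and the fixpoint $\widetilde x_0$ (respectively $x_0$) were arbitrary, this gives the claim of the corollary in both directions. I expect no genuine obstacle: the corollary is a formal consequence, and all the non-trivial work — the construction of $f_0$, the commutativity of (\ref{char}) on fibres, and the proof that fixpoints in one scheme come paired with fixpoints in the other — has already been carried out in the preceding proposition, in Proposition \ref{fixpoints}, and in Proposition \ref{weight}. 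The only point requiring mild care is to remember that Proposition \ref{fixpoints} is applied to the projective closures (properness being needed to form $\lim_{t\to 0}\lambda(t)(x)$), but this is precisely what was arranged in the paragraph preceding the statement of that proposition.
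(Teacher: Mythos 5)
Your argument is correct and follows essentially the same route as the paper, whose proof simply cites the preceding proposition (concordance of the linearizations of $\widetilde L_l$ and $L_l$, i.e.\ the front face of the cube (\ref{cube})) together with Proposition \ref{weight}; your additional appeal to Proposition \ref{fixpoints} for the existence and pairing of $\lambda$-fixpoints just makes explicit what the paper leaves implicit.
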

\begin{proof}
The result follows immediately from the previous proposition and
the proposition \ref{weight}.
\end{proof}
The application of numerical Hilbert -- Mumford criterion yields
in the following corollary.
\begin{corollary} \label{ew} For any pair of corresponding points $(\widetilde x, x),$
with $\widetilde x \in \mu (\widetilde Q)$ and $x\in Q$
$L_l$-(semi)stability of the point $x$ implies  $\widetilde
L_l$-(semi)stability of the point $\widetilde x$, and vice versa.
\end{corollary}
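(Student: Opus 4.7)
The plan is to combine the numerical Hilbert--Mumford criterion with the equality of weights at corresponding fixpoints established in the previous corollary. Fix a nontrivial one-parameter subgroup $\lambda: \A^1\setminus 0 \to GL(V)$. Working inside the projective closures $\overline{\mu(\widetilde Q)}$ and $\overline Q$ provided by Proposition \ref{qpro}, I would form the $\lambda$-limits $\widetilde x_0 = \lim_{t\to 0}\lambda(t)(\widetilde x)$ and $x_0 = \lim_{t\to 0}\lambda(t)(x)$; these are $\lambda$-fixpoints by construction, and Hilbert--Mumford stability is read off from the weights $w^{\widetilde L_l}(\widetilde x,\lambda)$ and $w^{L_l}(x,\lambda)$ of $\lambda$ on the fibres $(\widetilde L_l)_{\widetilde x_0}$ and $(L_l)_{x_0}$.

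The first real step is to verify that $\widetilde x_0$ and $x_0$ are corresponding points with respect to $\overline \mu$ and $\overline \phi$. For this, I would pick any preimage $\widetilde q \in \mu^{-1}(\widetilde x) \cap \phi^{-1}(x)\subset \widetilde Q$, which is nonempty since $\widetilde x$ and $x$ correspond. The $\widetilde Q$-concordance of the $GL(V)$-actions on $\mu(\widetilde Q)$ and $Q$, already noted before Proposition \ref{fixpoints}, forces $\mu(\lambda(t)(\widetilde q)) = \lambda(t)(\widetilde x)$ and $\phi(\lambda(t)(\widetilde q)) = \lambda(t)(x)$ for every $t\ne 0$. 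By properness of $\overline{\widetilde Q}$ the limit $\widetilde q_0 = \lim_{t\to 0}\lambda(t)(\widetilde q)$ exists, and continuity of the extensions $\overline \mu$, $\overline \phi$ together with the fibred diagram of Proposition \ref{fixpoints} give $\overline \mu(\widetilde q_0) = \widetilde x_0$ and $\overline \phi(\widetilde q_0) = x_0$, so $\widetilde x_0$ and $x_0$ are corresponding $\lambda$-fixpoints.

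Once this correspondence of fixpoints is in place, Corollary \ref{ew}'s predecessor applies and yields $w^{\widetilde L_l}(\widetilde x, \lambda) = w^{L_l}(x,\lambda)$. Since this equality holds for every nontrivial one-parameter subgroup $\lambda$, the numerical Hilbert--Mumford criterion shows that the sign pattern of weights is identical at $\widetilde x$ and at $x$. Therefore $x$ is $L_l$-semistable (respectively stable) if and only if $\widetilde x$ is $\widetilde L_l$-semistable (respectively stable), which is the claimed statement; the stability case additionally uses that finiteness of stabilizers and closedness of orbits are preserved under the identification of one-parameter weights induced by the $\widetilde Q$-concordance.

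The main obstacle is the correspondence of the limit fixpoints $\widetilde x_0$ and $x_0$: one must genuinely lift to $\overline{\widetilde Q}$ and exploit $\widetilde Q$-concordance, rather than work inside $\overline{\mu(\widetilde Q)}$ and $\overline Q$ separately, since a priori Proposition \ref{fixpoints} only pairs a given fixpoint on one side with \emph{some} corresponding fixpoint on the other and not necessarily with the specific limit taken along $\lambda$. Once this lifting is carried out cleanly, the remaining content of the corollary is a formal consequence of Hilbert--Mumford and the already established equality of weights.
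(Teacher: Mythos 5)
Your argument is correct and follows essentially the same route as the paper: equal $\lambda$-weights at corresponding fixpoints (the preceding corollary, resting on the concordant linearizations and Proposition \ref{weight}) combined with the numerical Hilbert--Mumford criterion on the projective closures from Proposition \ref{qpro}. Your explicit lifting of the orbit to $\overline{\widetilde Q}$ via the $\widetilde Q$-concordance, so that the two limits $\widetilde x_0$ and $x_0$ are seen to correspond through $\widetilde q_0$, is just a slightly more direct rendering of what the paper delegates to Proposition \ref{fixpoints} (whose proof uses the same properness and concordance ingredients), so no genuine gap remains; the closing remark about stabilizers and closed orbits is superfluous, since the stable case already follows from strict inequality of weights in the criterion.
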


 To proceed further we need the theorem known from geometric invariant
 theory.

\begin{theorem}\cite[theorem 4.2.10]{HL} {Let $G$ be a reductive group
acting on a project\-ive scheme $X$ with a $G$-linearized ample
line bundle $L$. Then there is a projective scheme $Y$ and a
morphism $\pi: X^{ss}(L)\to Y$ such that $\pi$ is a universal good
quotient for the $G$-action. Moreover, there is an open subset
$Y^s \subset Y$ such that $X^s(L)=\pi^{-1}(Y^s)$ and such that
$\pi: X^s \to Y^s$ is a universal geometric quotient.
}\end{theorem}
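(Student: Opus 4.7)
The plan is to reduce to affine pieces via the $G$-linearization, construct $Y$ as $\Proj$ of a ring of invariants, and then treat the stable locus separately. First I would form the graded $k$-algebra $R=\bigoplus_{n\ge 0}H^0(X,L^{\otimes n})^G$. By Nagata's theorem on finite generation of invariants under reductive group actions, $R$ is a finitely generated graded $k$-algebra, so I would set $Y:=\Proj R$; this is projective over $k$ by construction.

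Next I would build $\pi$ locally. By the definition of semistability, for each $x\in X^{ss}(L)$ there exists $n>0$ and a $G$-invariant section $s\in H^0(X,L^{\otimes n})^G$ with $s(x)\ne 0$; since $L$ is ample, the nonvanishing locus $X_s\subset X$ is $G$-invariant and affine, and $X^{ss}(L)=\bigcup_s X_s$. Writing $X_s=\Spec A_s$ with $A_s$ the degree-zero part of the homogeneous localization of the section ring at $s$, Nagata's theorem again yields finite generation of $A_s^G$, and the induced morphism $\pi_s\colon X_s\to \Spec A_s^G$ is a good categorical quotient. The crucial input is the Reynolds operator $A_s\to A_s^G$, available for reductive $G$ in characteristic zero, which guarantees that $G$-invariant regular functions separate disjoint closed $G$-invariant subsets.

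For the stable locus, I would define $Y^s\subset Y$ as the image under $\pi$ of the open subset $X^s(L)\subset X^{ss}(L)$ consisting of points whose $G$-orbit is closed in $X^{ss}(L)$ and whose stabilizer is finite. The openness of $X^s(L)$ is standard in GIT and follows from upper semicontinuity of stabilizer dimension together with closedness of the locus of non-closed orbits. The restriction $\pi\colon X^s(L)\to Y^s$ is then a geometric quotient: since orbits are closed and stabilizers finite, orbits coincide with fibres of $\pi|_{X^s(L)}$, and the universal property of the good quotient passes to the geometric one. Universality under flat base change in both statements reflects the compatibility of the Reynolds operator with flat extension of the base ring.

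The main obstacle, and the technical heart of the classical proof, is the gluing step: one must show that on overlaps $X_s\cap X_{s'}=X_{ss'}$ the operation of localization at $ss'$ commutes with the functor of $G$-invariants, so that $\Spec A_s^G$ and $\Spec A_{s'}^G$ glue along $\Spec (A_{ss'})^G$. This commutation holds precisely because of reductivity---the Reynolds operator is natural with respect to localization at invariant elements---and the resulting glued scheme is identified with $\Proj R$, yielding the projectivity of $Y$ together with the universal good quotient structure of $\pi$.
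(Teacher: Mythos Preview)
The paper does not give its own proof of this statement: it is quoted verbatim from \cite[theorem 4.2.10]{HL} and then immediately applied to the situation $X=\overline{\mu(\widetilde Q)}$, $G=PGL(V)$, $L=\widetilde L_l$. So there is nothing in the paper to compare your argument against.

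That said, your sketch is the standard Mumford--Seshadri proof of the GIT quotient theorem and is essentially what one finds in \cite{MumFo} or in the reference \cite{HL} the paper cites: form the invariant ring, set $Y=\Proj R$, cover $X^{ss}(L)$ by invariant affine opens $X_s$, take $\Spec A_s^G$ locally, and glue via the compatibility of invariants with localization at invariant elements. Your identification of the gluing step as the technical crux is accurate. One small remark: you invoke the Reynolds operator as a characteristic-zero device, but the theorem as stated in \cite{HL} holds over an arbitrary field, with Haboush's theorem (geometric reductivity) replacing linear reductivity in positive characteristic; since the present paper works over $k=\bar k$ of characteristic zero this does not matter here, but it is worth being aware that your argument as written is slightly narrower than the cited statement.
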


We apply this theorem in the following situation: $X=\overline{\mu
(\widetilde Q)},$ $G=PGL(V),$ $L= \widetilde L_l$, $l\gg 0$. Since
we do not know if the equality $(\overline {\mu (\widetilde
Q)})^{ss}=\mu (\widetilde Q)$ holds, by the corollary \ref{ew} we
have the following proposition.
\begin{proposition} There is a quasiprojective algebraic scheme $\widetilde M$ with a morphism  $\pi: \mu(\widetilde Q)
\to \widetilde M$, and $\pi$ is a universal good
$PGL(V)$-quotient. The scheme  $\widetilde M$ contains an open
subscheme $\widetilde M^s \supset \widetilde M$ such that the
restriction $\pi|_{\mu (\widetilde Q)^s}: \mu (\widetilde Q)^s \to
\widetilde M$ is a universal geometric quotient.
\end{proposition}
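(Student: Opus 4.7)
The plan is to apply the cited GIT theorem to the projective closure $\overline{\mu(\widetilde Q)}$ with the $PGL(V)$-linearized ample sheaf $\widetilde L_l$ for $l\gg 0$, and then to descend from the resulting projective quotient to an open subscheme that realizes the quotient of $\mu(\widetilde Q)$ itself. Concretely, first I would take $X=\overline{\mu(\widetilde Q)}$, $G=PGL(V)$, $L=\widetilde L_l$ in the quoted theorem, obtaining a projective scheme $\overline M$, a universal good quotient $\overline\pi: X^{ss}(\widetilde L_l)\to \overline M$, and an open subscheme $\overline M^s\subset \overline M$ with $\overline\pi: X^s(\widetilde L_l)\to \overline M^s$ a universal geometric quotient.

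Next I would establish the inclusion $\mu(\widetilde Q)\subset X^{ss}(\widetilde L_l)$. The point $x\in Q$ corresponding to a semistable quotient sheaf is $L_l$-GIT-semistable by the classical Gieseker--Maruyama construction; the same holds for stability with respect to $L_l$. Applying Corollary \ref{ew} pointwise to every pair of corresponding points $(\widetilde x,x)\in \mu(\widetilde Q)\times Q$, we conclude that each $\widetilde x$ is $\widetilde L_l$-semistable, and that $\widetilde x$ is $\widetilde L_l$-stable whenever $x$ is $L_l$-stable. Thus $\mu(\widetilde Q)\subset X^{ss}$ and $\mu(\widetilde Q)\cap X^s$ is the image under $\mu\circ\phi^{-1}$ of the stable locus of $Q$. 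I then set $\widetilde M:=\overline\pi(\mu(\widetilde Q))$ and $\widetilde M^s:=\overline\pi(\mu(\widetilde Q)\cap X^s)$.

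The central point of the argument, and in my view the main obstacle, is to show that $\mu(\widetilde Q)$ is $PGL(V)$-saturated inside $X^{ss}$, i.e.\ that $\overline\pi^{-1}(\widetilde M)=\mu(\widetilde Q)$. Only after this is known can one conclude that $\widetilde M$ is open in $\overline M$ (hence quasiprojective), and that the restriction of $\overline\pi$ to $\mu(\widetilde Q)$ remains a good quotient. For saturation I would argue via Hilbert--Mumford degenerations: a point of $X^{ss}\setminus \mu(\widetilde Q)$ belonging to the fibre $\overline\pi^{-1}(\widetilde M)$ would, by closure of orbits in the semistable locus of a projective GIT problem, arise as a limit $\widetilde x_0=\lim_{t\to 0}\lambda(t)\widetilde x$ for some one-parameter subgroup $\lambda$ and some $\widetilde x\in \mu(\widetilde Q)$. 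Proposition \ref{fixpoints}, together with the concordance of the $\widetilde Q$-actions of $PGL(V)$ on $\mu(\widetilde Q)$ and on $Q$ and the fibrewise concordance of linearizations established just before Corollary \ref{ew}, produces a corresponding limit $x_0=\lim_{t\to 0}\lambda(t)x\in \overline Q$. But $Q$ is closed under GIT-degeneration (limits of semistable sheaves along one-parameter subgroups are again Gieseker-semistable, giving their Jordan--H\"older filtration), so $x_0\in Q$, and then the fibred-product description of $\widetilde Q$ gives a point in $\mu^{-1}(\widetilde x_0)\cap \phi^{-1}(x_0)$, forcing $\widetilde x_0\in \mu(\widetilde Q)$. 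This contradicts the assumption $\widetilde x_0\notin \mu(\widetilde Q)$ and proves saturation.

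With saturation established, the good-quotient property restricts automatically: $\widetilde M$ is open in $\overline M$, hence quasiprojective and Noetherian, and $\pi:=\overline\pi|_{\mu(\widetilde Q)}$ is a universal good quotient because universality is stable under restriction to saturated open subschemes (base change commutes with this restriction). Applying the same saturation argument to the stable locus $\mu(\widetilde Q)\cap X^s$, which is open and $PGL(V)$-invariant, I obtain that $\widetilde M^s$ is open in $\widetilde M$ and that $\pi|_{\mu(\widetilde Q)\cap X^s}:\mu(\widetilde Q)\cap X^s\to \widetilde M^s$ is a universal geometric quotient, finishing the proof.
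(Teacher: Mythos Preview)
Your overall strategy coincides with the paper's: apply the quoted GIT theorem to $X=\overline{\mu(\widetilde Q)}$ with the $PGL(V)$-linearized ample bundle $\widetilde L_l$, and use Corollary~\ref{ew} to conclude that every point of $\mu(\widetilde Q)$ is $\widetilde L_l$-semistable, so that $\mu(\widetilde Q)\subset X^{ss}$. The paper's argument ends there: it explicitly states that it does \emph{not} know whether $(\overline{\mu(\widetilde Q)})^{ss}=\mu(\widetilde Q)$, and simply records the proposition as a consequence. In particular, the paper does not supply any saturation argument; the passage from the projective GIT quotient of $X^{ss}$ to a good quotient of $\mu(\widetilde Q)$ is left implicit.

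Your proposal therefore goes further than the paper in that you isolate saturation as the substantive issue and attempt to prove it. That instinct is correct. However, your saturation argument contains a genuine gap. You claim that any $\widetilde y\in X^{ss}\setminus\mu(\widetilde Q)$ lying in a fibre $\overline\pi^{-1}(\widetilde m)$ with $\widetilde m\in\widetilde M$ must arise as $\widetilde y=\lim_{t\to 0}\lambda(t)\widetilde x$ for some $\widetilde x\in\mu(\widetilde Q)$ and some one-parameter subgroup $\lambda$. This is not what GIT-equivalence gives you: two semistable points have the same image in the quotient precisely when the closures of their orbits in $X^{ss}$ meet, i.e.\ when they share the unique closed orbit $O$ in that fibre. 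Hilbert--Mumford lets you reach $O$ from $\widetilde x$ by a $\lambda$-degeneration, and your use of Proposition~\ref{fixpoints} together with closedness of $Q$ under such degenerations then shows $O\subset\mu(\widetilde Q)$. But $\widetilde y$ lies \emph{above} $O$, not below $\widetilde x$; nothing forces $\widetilde y$ itself to be a specialization of a point of $\mu(\widetilde Q)$, and knowing that $\widetilde y$ degenerates to a point of $\mu(\widetilde Q)$ does not place $\widetilde y$ inside $\mu(\widetilde Q)$. So as written the argument only establishes that each fibre of $\overline\pi$ over $\widetilde M$ has its closed orbit in $\mu(\widetilde Q)$, which is weaker than saturation.

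In summary: your first two steps reproduce the paper's reasoning, and you correctly identify a point the paper glosses over. To make the saturation step rigorous you would need either a direct argument that boundary points of $\overline{\mu(\widetilde Q)}$ are $\widetilde L_l$-unstable (yielding $X^{ss}=\mu(\widetilde Q)$ outright), or an argument controlling points that degenerate \emph{to} $\mu(\widetilde Q)$ rather than \emph{from} it.
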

\begin{remark} Let $Q_0\subset Q$ be an open subset of points
corresponding to locally free quotient sheaves, $\widetilde
Q_0\cong Q_0$ its image under standard resolution, $\mu(\widetilde
Q_0)$ cor\-responding subset in the scheme $\Hilb^{P(t)}G(V,r).$
Since the morphism  $\mu$ takes distinct classes of isomorphic
pairs $((\widetilde S, \widetilde L),\widetilde E)$ to distinct
classes of isomorphic subschemes in the Grassmannian and since
 $GL(V)$-actions on schemes $Q$ and $\mu(\widetilde Q)$ are concordant,
we have an isomorphism of good GIT-quotients $\widetilde
M_0:=\mu(\widetilde Q_0)/GL(V)\cong $ $Q_0/GL(V)=:M_0.$
\end{remark}
\begin{remark} Since the scheme  $\mu(\widetilde Q)$ is reduced
then its quotient \linebreak $\mu(\widetilde Q)/PGL(V)=\widetilde
M$ is reduced scheme (\cite[ch.0, \S 2, (2)]{MumFo}).
\end{remark}
\begin{remark} In our case  $char (k)=0$ and by
\cite[ch.1, \S  2, theorem 1.1]{MumFo}, the scheme $\widetilde M$
is Noetherian algebraic scheme because $\mu(\widetilde Q)$ is
Noetherian algebraic scheme.
\end{remark}

\section{Morphisms of compactifications and projectivity of $\widetilde M.$}

Recall that the subset  $Q$ formed in $\Quot^{rp(t)}(V\otimes
L^{(-m)})$ by semistable coherent sheaves, is a quasiprojective
algebraic scheme. The morphism of standard resolution $\phi:
\widetilde Q \to Q$ is a projective morphism of algebraic schemes.
This implies that $\widetilde Q$ is quasiprojective algebraic
scheme. By the construction, it is supplied with flat family of
$(S, L, r, rp_E(m))$-admissible schemes $\Sigma_{\widetilde Q}$,
with locally free sheaf $\widetilde \E_{\widetilde Q}.$ This sheaf
induces a mapping  $\Sigma_{\widetilde Q} \to G(r,V)$ which
becomes a closed immersion when restricted to fibres of the family
$\Sigma_{\widetilde Q}$. There is a morphism of the base
$\widetilde Q$ of the family $\Sigma_{\widetilde Q}$ into the
Hilbert scheme of subschemes in the Grassmannian $\mu: \widetilde
Q \to \Hilb^{P(t)}G(r,V).$ We denote by the symbol $\overline M$
the union of components of the Gieseker -- Maruyama scheme that
contain locally free sheaves.

Note that there is (set-theoretical) surjective mapping $\kappa:
\overline M' \to \widetilde M$ given by the formula  $E \mapsto
(\widetilde S, \sigma^{\ast} E /tors)$. The schemes $\overline M$
and $\widetilde M$ contain open subschemes  $M^s_0\subset
\overline M$ and $\widetilde M^s_0 \subset \widetilde M$. The
restriction  $\kappa_0:=\kappa |_{M^s_0}: M^s_0 \to \widetilde
M^s_0$ defines scheme isomorphism.

\begin{proposition} There is a birational morphism of Noetherian
schemes $\kappa: \overline M \to \widetilde M.$
\end{proposition}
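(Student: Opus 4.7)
The plan is to construct $\kappa$ by descending the $PGL(V)$-equivariant correspondence
\[
Q \stackrel{\phi}{\longleftarrow} \widetilde Q \stackrel{\mu}{\longrightarrow} \mu(\widetilde Q)
\]
to the good GIT quotients on both sides: here $\phi$ is the projective birational standard-resolution morphism of \S 2 and $\mu$ is the projective $PGL(V)$-equivariant morphism of \S 9, both compatible with the $PGL(V)$-action.

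First I would restrict $\phi$ to the open locus $Q_0\subset Q$ of locally free stable quotients, where it gives an isomorphism $\widetilde Q_0\to Q_0$. The $PGL(V)$-invariant composite $\mu\circ\phi^{-1}: Q_0\to \widetilde M$ then descends, by the categorical property of the good quotient $Q\to \overline M$, to the already-noted scheme isomorphism $\kappa_0: M_0\to\widetilde M_0$ of open subschemes.

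Second, to extend $\kappa_0$ across the strictly semistable locus, I would pass to the quotient $\widetilde Q//PGL(V)$. Since both $\phi$ and $\mu$ are $PGL(V)$-equivariant projective morphisms, they descend to projective morphisms $\overline\phi:\widetilde Q//PGL(V)\to\overline M$ and $\overline\mu:\widetilde Q//PGL(V)\to\widetilde M$. The key step is to show that $\overline\phi$ is an isomorphism: by Proposition \ref{resdes} the pair $(\widetilde S,\sigma^{\ast}E/tors)$ is canonically determined by $E$, so every fibre $\phi^{-1}(q)$ lies in a single $PGL(V)$-orbit, rendering $\overline\phi$ bijective on closed points in addition to projective and birational. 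Setting $\kappa:=\overline\mu\circ\overline\phi^{-1}$ produces the desired morphism; birationality is immediate from $\kappa|_{M_0}=\kappa_0$, and Noetherianness of both source and target has already been established.

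The main obstacle is the final upgrade of the bijective projective birational $\overline\phi$ to a scheme-theoretic isomorphism, since $\overline M$ is not known to be normal and Zariski's Main Theorem is not immediately applicable. The natural substitute is to verify that $\overline\phi$ is formally \'{e}tale at every closed point by exploiting the flatness of the family $\widetilde\E_Q$ over $\widetilde Q$ established in \S 2, thereby reducing the question to the uniqueness, up to $PGL(V)$-action on deformation-theoretic data, of the standard resolution of a fixed semistable sheaf --- a uniqueness already provided pointwise by Proposition \ref{resdes}. The whole argument is carried out on reduced schemes in line with the convention fixed in the introduction.
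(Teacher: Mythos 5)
Your route is genuinely different from the paper's, but as written it has gaps that are not merely technical. First, the quotient $\widetilde Q//PGL(V)$ on which the whole construction rests is never shown to exist: the GIT analysis of \S 9 linearizes the action only on $\mu(\widetilde Q)\subset \Hilb^{P(t)}G(V,r)$ by means of the sheaves $\widetilde L_l$, and no linearized ample sheaf or semistability analysis is available on $\widetilde Q$ itself; hence $\overline\phi$ and $\overline\mu$ are not defined without substantial extra work. Second, your key claim that every fibre $\phi^{-1}(q)$ lies in a single $PGL(V)$-orbit is not what Proposition \ref{resdes} gives and is false in general: $\phi$ is a composition of blow-ups, so over a point $q$ whose sheaf $E$ is not locally free the fibre is typically positive-dimensional, whereas its intersection with any $PGL(V)$-orbit is an orbit of the stabilizer $\Stab(q)$, which for stable $E$ is finite. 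What Proposition \ref{resdes} does give is that all points of $\phi^{-1}(q)$ determine one and the same pair $(\widetilde S,\sigma^{\ast}E/tor\!s)$ up to isomorphism, i.e.\ that $\mu(\phi^{-1}(q))$ lies in a single orbit of the Hilbert scheme; that is a statement about $\mu$, not about injectivity of $\overline\phi$. Moreover, injectivity of any descended map over the strictly semistable locus requires matching closed orbits on the two sides, i.e.\ comparing S-equivalence of sheaves with M-equivalence of pairs (Proposition \ref{eqeq}); your argument never touches this.

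Finally, the step you yourself single out as the main obstacle --- upgrading a projective, birational map which is bijective on closed points to an isomorphism --- is left unresolved: ``formally \'{e}tale by flatness of $\widetilde\E_Q$'' is not a workable argument at strictly semistable points, where the fibres of the quotient maps are unions of orbits and $\overline M$ may be badly singular and non-normal. The paper avoids inverting a descended morphism altogether: it defines the set-theoretic map $\kappa: E\mapsto (\widetilde S,\sigma^{\ast}E/tor\!s)$, takes the graph $A_0$ of its restriction to the stable locus inside $\overline M\times \widetilde M$, shows by applying the standard resolution to families over $\A^1$ that the closure $\overline A_0$ coincides with the full graph $A$, argues that the projection $A\to\overline M$ is an isomorphism (here a set-theoretic inverse $x\mapsto(x,\kappa(x))$ is available), and obtains $\kappa$ as the composite of its inverse with the projection to $\widetilde M$. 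If you wish to keep your strategy, you should at least replace $\widetilde Q//PGL(V)$ by $\widetilde M=\mu(\widetilde Q)//PGL(V)$, use the $\widetilde Q$-concordance of the $PGL(V)$-actions together with Proposition \ref{eqeq} to control the strictly semistable points, and then still supply a genuine argument for inverting the resulting morphism.
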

\begin{proof}
Consider a product  $\overline M \times \widetilde M$ with
projections $\overline M \stackrel{\overline p}{\longleftarrow}
\overline M \times \widetilde M \stackrel{\widetilde
p}{\longrightarrow} \widetilde M$ and a subset $A:=\{(x, \kappa
(x))\in \overline M \times \widetilde M| x \in \overline M \}$.
Also take a subset $A_0=\{(x, \kappa (x))\in \overline M \times
\widetilde M| x \in \overline M^s_0\}=A\cap M^s_0 \times
\widetilde M^s_0$ corresponding to GIT-stable $S$-pairs. The
inclusion $A_0 \hookrightarrow \overline M \times \widetilde M$
supplies the subset $A_0$ with structure of locally closed
subscheme in the product $\overline M \times \widetilde M.$ Form a
closure $\overline A_0$ of subscheme $A_0$ in the product
$\overline M \times \widetilde M$.

Note that there is an inclusion of sets $A\subset \overline A_0.$
The image of the subset  $A$ coincides with the set $\overline
A_0$. This follows immediately from the standard resolution of a
family of semistable coherent sheaves with a base $\A^1$. The
generic fibre of the family defines the point in $A_0$, and
special fibre of the family defines the point in $\overline A_0
\backslash A_0$. In this case the special fibre corresponds to the
point of the subset $A \backslash A_0.$ Considering different
immersions  $\A^1 \hookrightarrow \overline M'$ we get a bijection
$\overline A_0\simeq A$. Then the subset $A$ is supplied with a
structure of a closed subscheme in the product $\overline M \times
\widetilde M$. By the construction of the subset $A$ we have
$\overline p (A)=\overline M$. By the construction of scheme
$\widetilde M$ also $\widetilde p (A)=\widetilde M.$

Morphisms $\overline \kappa$ and $\widetilde \kappa$ are defined
as composite maps due to commutative diagram
\begin{equation*}\xymatrix{& A
\ar[ld]_{\overline \kappa} \ar@{^(->}[d] \ar[rd]^{\widetilde \kappa}\\
\overline M&\ar[l]_{\overline p} \overline M \times \widetilde M
\ar[r]^{\;\;\widetilde p}& \widetilde M }
\end{equation*}
By the construction of the morphism $\kappa$, morphisms $\overline
\kappa$ and $\widetilde \kappa$ are surjective and birational.
Besides, for any closed point $x\in \overline M$ the
correspondence $x \mapsto (x, \kappa(x))$ defines set-theoretical
map $\overline M \to A$. This map is an inverse for the morphism
$\overline \kappa$ if this morphism is considered as a map of
sets. Then the morphism  $\overline \kappa$ is birational and
bijective on every component of the scheme  $A$. Hence $\overline
\kappa: A \to \overline M$ is an isomorphism. Redenoting the
composite as $\kappa: \overline M \stackrel{\overline \kappa
^{-1}}{\longrightarrow} A \stackrel{\widetilde
\kappa}{\longrightarrow} \widetilde M$ we get the required
morphism of schemes.
\end{proof}
\begin{proposition}\label{pro} $\widetilde M$ is a projective scheme.
\end{proposition}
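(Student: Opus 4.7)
The plan is to deduce projectivity of $\widetilde M$ from projectivity of $\overline M$ together with the surjective birational morphism $\kappa: \overline M \to \widetilde M$ just constructed. Since $\widetilde M$ has already been shown to be quasiprojective (as a GIT quotient of $\mu(\widetilde Q)$ by $PGL(V)$ with respect to a very ample linearized bundle $\widetilde L_l$), the task reduces to verifying properness of $\widetilde M$ over $\Spec k$; then quasiprojective plus proper will yield projective.

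First I would check that $\kappa: \overline M \to \widetilde M$ is surjective as a morphism of schemes. The previous proposition exhibits $\kappa$ as a scheme morphism whose underlying set-theoretic map sends $[E] \mapsto [(\widetilde S, \sigma^\ast E/tors)]$, and surjectivity on closed points follows from the fact that every semistable pair $((\widetilde S,\widetilde L),\widetilde E)$ parametrized by $\mu(\widetilde Q)$ arises by standard resolution from some semistable sheaf $E$ corresponding to a point of $Q$ (by the coincidence $\mu(\widetilde Q)=K^{dS}$ proven in \S 8, together with the $PGL(V)$-equivariance of the construction). Since both schemes are of finite type over the algebraically closed field $k$, surjectivity on closed points is enough to conclude that $\kappa$ is surjective as a scheme morphism.

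Next I would invoke the standard fact that the scheme-theoretic image of a proper scheme under a morphism to a separated scheme of finite type is itself proper. Since $\overline M$ is the Gieseker--Maruyama scheme it is projective, hence proper over $k$; since $\widetilde M$ is quasiprojective (in particular separated of finite type), the image $\kappa(\overline M)$ is a closed proper subscheme of $\widetilde M$. By surjectivity, this image is all of $\widetilde M$, so $\widetilde M$ is proper over $k$. Combined with quasiprojectivity, this forces $\widetilde M$ to be projective (a proper quasiprojective $k$-scheme admits a closed immersion into a projective space).

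The main obstacle I anticipate is the surjectivity step, specifically ensuring that every closed point of $\widetilde M = \mu(\widetilde Q)/PGL(V)$ is in the image of $\kappa$, i.e.\ that no additional semistable $dS$-pair escapes the construction: one must use the identification $\mu(\widetilde Q)=K^{dS}$ from \S 8 and trace through the fibred diagram relating $\widetilde Q \to Q$ with $\mu:\widetilde Q \to \mu(\widetilde Q)$ to see that the composition $\overline M \to \widetilde M$ factors through a surjection obtained from the surjection $Q^{ss} \twoheadrightarrow \overline M$ composed with $\phi^{-1}$ up to equivalence. Once this set-theoretic surjectivity is in hand, everything else is formal from the properness--separatedness yoga.
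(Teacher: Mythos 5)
Your proposal is correct and follows essentially the same route as the paper: the paper likewise combines quasiprojectivity of $\widetilde M$ with the surjective morphism $\kappa:\overline M\to\widetilde M$ from the projective Gieseker--Maruyama scheme, verifying completeness (universal closedness) of $\widetilde M$ by pulling closed subschemes back along $\kappa\times\id_Y$, and then invokes the lemma that a quasiprojective complete scheme is projective. Your use of the standard fact that the image of a proper scheme in a separated finite-type scheme is proper is just a packaged form of the paper's hand-checked diagram argument, and the surjectivity of $\kappa$ you worry about is exactly what the preceding proposition supplies.
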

The proof of this proposition is based on the simple lemma.
\begin{lemma} Quasiprojective complete scheme is projective.
\end{lemma}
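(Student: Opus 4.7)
The plan is to reduce Proposition \ref{pro} to the lemma about quasiprojective complete schemes by observing that $\widetilde M$ is simultaneously quasiprojective (already established via the GIT construction at the end of \S 9) and complete (to be derived from the surjectivity of the birational morphism $\kappa$ from the projective scheme $\overline M$).

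First I would dispose of the lemma. Let $X$ be a quasiprojective scheme over $k$, so there exists a locally closed immersion $\iota: X \hookrightarrow \mathbb{P}^n_k$ for some $n$. The projective space $\mathbb{P}^n_k$ is separated, so any complete subscheme of $\mathbb{P}^n_k$ is automatically closed in it. Assuming $X$ is complete (i.e.\ proper over $\Spec k$), the morphism $\iota$ is proper, hence $\iota(X)$ is closed in $\mathbb{P}^n_k$; combined with the fact that $\iota$ is a locally closed immersion, this forces $\iota$ to be a closed immersion. Thus $X$ is a closed subscheme of $\mathbb{P}^n_k$ and therefore projective.

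Next I would verify that $\widetilde M$ satisfies the hypotheses of the lemma. Quasiprojectivity and Noetherianity of $\widetilde M$ were already obtained in \S 9 as consequences of its realization as the good GIT quotient $\mu(\widetilde Q)/PGL(V)$ of the quasiprojective scheme $\mu(\widetilde Q) \subset \Hilb^{P(t)}G(V,r)$. For completeness, I would invoke the morphism $\kappa: \overline M \to \widetilde M$ constructed in the preceding proposition. By its construction the induced map on closed points $x \mapsto (\widetilde S, \sigma^{\ast}E/\operatorname{tors})$ is surjective: every admissible pair is obtained from some semistable coherent sheaf on $S$ via the standard resolution, and each isomorphism class of such pair is the $\kappa$-image of some point of $\overline M$ by the very construction of $\widetilde M$ as the parameter scheme for $dS$-pairs (cf.\ \S 8). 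Since $\overline M$ is projective over $k$, it is proper, and the image of a proper scheme under any morphism to a separated $k$-scheme is a proper closed subscheme of the target. Applied to the surjection $\kappa$, this yields that $\widetilde M = \kappa(\overline M)$ is complete.

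With quasiprojectivity and completeness both in hand, the lemma delivers projectivity of $\widetilde M$, finishing the proof. The main substantive point is the surjectivity of $\kappa$; this relies on the results of \S 8, where it was shown that $K^{dS}=\mu(\widetilde Q)$ and that every $dS$-pair arises from the standard resolution of a semistable coherent sheaf on $S$. Once surjectivity is granted, everything else is standard separation/properness manipulation, so I do not anticipate genuine technical difficulty beyond carefully citing the earlier results.
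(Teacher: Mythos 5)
Your proof of the lemma is correct and essentially the same as the paper's: both deduce that the locally closed immersion $X \hookrightarrow \P^n$ has closed image from completeness of $X$ together with separatedness of $\P^n$, the paper merely unwinding the cancellation property you cite into an explicit graph-and-diagonal argument with the projection $X \times \P \to \P$. (Your additional paragraphs on completeness of $\widetilde M$ via surjectivity of $\kappa$ concern Proposition \ref{pro} rather than the lemma itself, and they too parallel the paper's argument.)
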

\begin{proof} Let $X$ be a quasiprojective complete scheme.
Since $X$ is quasiprojective, then there is an appropriate
projective space $\P$ and an immersion $X \hookrightarrow \P$.
Since  $X$ is complete, then for any scheme  $Y$ the projection
onto the first factor $pr_2: X \times Y \to Y$ takes closed
subschemes to closed subschemes. Set  $Y=\P$ and consider the
diagonal embedding $\Delta:\P \hookrightarrow \P \times \P$. By
the separatedness of the scheme $\P$ this diagonal embedding is
closed. In the commutative diagram with fibred square
\begin{equation*}\xymatrix{ X \ar[r]^{\!\!\!\delta} \ar@{^(->}[d]& X \times \P \ar@{^(->}[d] \ar[r]^{\;\;\;pr_2} & \P\\
\P \ar@{^(->} [r]^{\Delta}& \P \times \P \ar[ur]_{pr_2}}
\end{equation*}
the diagonal immersion $\Delta$ is closed. Hence the morphism
$\delta$ is closed immersion. The image  $pr_2 \circ \delta (X)
\cong X$ is closed in $\P$ by the completeness of the scheme $X$.
Then $X$ is a projective scheme.
\end{proof}
\begin{proof}[Proof of the proposition  \ref{pro}] By the lemma
it is enough to confirm that $\widetilde M$ is a complete scheme.
As proven before, there is a morphism  $\alpha: \overline M \to
\widetilde M,$ where $\overline M$ is a projective scheme. We
prove that for any scheme  $Y$ and for the  projection $pr_2:
\widetilde M \times Y \to Y$ the image $pr_2(Z)$ of any closed
subscheme  $Z \subset \widetilde M \times Y$ is closed in $Y$. Let
$Z'$ be a preimage of the subscheme $Z$ in  $\overline M \times
Y$. Then there is a commutative diagram where the square is fibred
\begin{equation*}\xymatrix{Z' \ar@{^(->}[r] \ar[d]&\overline M \times Y
\ar[d] \ar[r]^{\;\;\;pr_2}& Y\\
Z \ar@{^(->}[r]&\widetilde M \times Y \ar[ur]_{pr_2}}
\end{equation*}
Since  $\overline M$ is complete, then the image
$pr_2(Z')=pr_2(Z)$ is closed in $Y$. This completes the proof.
\end{proof}
\begin{remark} In papers  \cite{Tim0, Tim1, Tim2} we constructed
the compactification of moduli of stable vector bundles which is
called  as {\it constructive compact\-ification} and denoted by
$\widetilde M^c$. It is shown that the constructive
compact\-ification has a birational projective morphism  $\phi^c:
\widetilde M^c \to \overline M$ onto the scheme of Gieseker --
Maruyama. Then the composite of this morphism with the morphism
$\kappa$ yields a birational projective morphism of schemes $\phi:
\widetilde M^c \stackrel{\phi^c}{\rightarrow} \overline M
\stackrel{\kappa}{\rightarrow} \widetilde M.$
\end{remark}

\section{Comparison of equivalences}

The purpose of this section is to examine the relation among
M-equivalence of semistable pairs and GIT-equivalence on the
scheme  $\mu (\widetilde Q).$

We consider the following procedure of процедуру {\it
passing-to-the-limit}. This computation is completely parallel to
that in \cite[lemma 4.4.3]{HL}.

Take a pair $((\widetilde S, \widetilde L), \widetilde E)$ and fix
an epimorphism $h:H^0(\widetilde S, \widetilde E \otimes
\widetilde L) \otimes \widetilde L^{\vee}\twoheadrightarrow
\widetilde E $. If also the isomorphism $H^0(\widetilde S,
\widetilde E \otimes \widetilde L) \stackrel{\sim}{\to} V$ is
fixed then the epimorphism $h$ defines the point $h\in \Hilb
^{P(t)}G(V,r)$ and the point $h\in \Quot^{rp(t)}(V\otimes
\widetilde L^{\vee})$. One-parameter subgroup  $\lambda:
\A^1\setminus 0 \hookrightarrow SL(V)$ is defined completely by
means of the decomposition  $V=\bigoplus_{n\in \Z} V_n$ of vector
space $V$ into the direct sum of weight subspaces $V_n$, $n\in
\Z$, of weight $n$. Namely, for any $v\in V_n$ the action of
elements of the subgroup $\lambda$ is defined by the expression $v
\cdot \lambda (T)=T^n v$. Of course, for almost all $n$ holds
$V_n=0$. Define ascending filtrations for  $V$ and for the sheaf
$\widetilde E$ by the expressions $V_{(n)}=\bigoplus_{\nu \le n}
V_{\nu}, $ $\widetilde E_{(n)}=h(V_{(n)}\otimes \widetilde
L^{\vee}).$ Then the following epimorphisms are defined: $h_n: V_n
\otimes \widetilde L^{\vee} \twoheadrightarrow \widetilde E_n,$
for $\widetilde E_n= \widetilde E_{(n)}/\widetilde E_{(n-1)}$.
Taking the sum over all weights yields in an epimorphism
$\overline h= \bigoplus h_n: V \otimes \widetilde
L^{\vee}\twoheadrightarrow \bigoplus \widetilde E_n =gr(\widetilde
E)=: \overline E.$
\begin{claim} In $\Quot^{rp(t)} (V\otimes
\widetilde L^{\vee})$ one has $\overline h= \lim_{T\to 0} h \cdot
\lambda (T).$
\end{claim}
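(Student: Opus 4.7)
The plan is to exhibit an explicit $\A^1$-flat family of quotients of $V\otimes \widetilde L^\vee$ whose restriction to $\A^1\setminus 0$ recovers the orbit $T\mapsto h\cdot \lambda(T)$ and whose fiber at $T=0$ is exactly $\overline h$. By the universal property of $\Quot^{rp(t)}(V\otimes \widetilde L^\vee)$ this family corresponds to a morphism $\A^1\to \Quot^{rp(t)}(V\otimes \widetilde L^\vee)$, and the limit in the statement is then read off as the value at $0$; equivalently, the valuative criterion applied to the projective scheme $\Quot^{rp(t)}(V\otimes \widetilde L^\vee)$ guarantees a unique limit and the family identifies it.

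First I would form the Rees-type $k[T]$-submodule
\[
\mathcal V \;:=\; \sum_{n\in \Z} T^{-n}\,V_{(n)}\otimes_k k[T] \;\subset\; V\otimes_k k[T,T^{-1}],
\]
observe that the weight decomposition $V=\bigoplus V_n$ makes $\mathcal V$ a finitely generated free $k[T]$-module with canonical isomorphisms $\mathcal V\otimes_{k[T]}k[T,T^{-1}]\cong V\otimes k[T,T^{-1}]$ and $\mathcal V\otimes_{k[T]}k[T]/(T)\cong \bigoplus_n V_{(n)}/V_{(n-1)}=\bigoplus_n V_n$. Tensoring the epimorphism $h$ with $k[T]$ and restricting gives a surjection $\mathcal V\otimes \widetilde L^\vee \twoheadrightarrow \mathcal E$ of sheaves on $\widetilde S\times \A^1$, where $\mathcal E$ is defined as the image. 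For $T\ne 0$ the identification $\mathcal V|_{T=T_0}\xrightarrow{\sim} V$ sending $T^{-n}v$ to $T_0^{-n}v$ (for $v\in V_n$) is exactly the action of $\lambda(T_0)^{-1}$, so the fiber of $\mathcal E$ at $T_0$ is the quotient $h\cdot\lambda(T_0):V\otimes \widetilde L^\vee\twoheadrightarrow \widetilde E$.

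Next I would identify the special fiber. The submodules $\mathcal V_{(n)}:=\sum_{\nu\le n}T^{-\nu}V_{(\nu)}\otimes k[T]$ give an ascending filtration of $\mathcal V$ whose $n$-th graded piece, after tensoring with $k[T]/(T)$, is $V_n$. Pushing this filtration through $h$ yields on the fiber at $T=0$ precisely the associated graded quotient $\overline h=\bigoplus h_n:\bigoplus V_n\otimes \widetilde L^\vee\twoheadrightarrow \bigoplus \widetilde E_n = gr(\widetilde E)=\overline E$. A Hilbert-polynomial count (the fibers over $T\ne 0$ all have reduced Hilbert polynomial $p(t)$, and $gr(\widetilde E)$ is built from the successive quotients of the filtration $\widetilde E_{(n)}$, so it has the same polynomial $rp(t)$) forces $\mathcal E$ to be $\A^1$-flat and the fiber at $0$ to coincide with $\overline h$.

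The routine bits are the explicit calculations identifying the generic and special fibers. The main obstacle is verifying $T$-flatness of $\mathcal E$: one must rule out $T$-torsion in the kernel of $\mathcal V\otimes \widetilde L^\vee\twoheadrightarrow \mathcal E$, which amounts to checking that the associated graded map induced by $h$ is surjective onto $gr(\widetilde E)$ at $T=0$ (no collapse of the Hilbert polynomial). Since $h$ is surjective and the filtration $\widetilde E_{(n)}=h(V_{(n)}\otimes \widetilde L^\vee)$ is by definition generated from $V_{(n)}$, this surjectivity onto $\bigoplus \widetilde E_n$ is automatic, and constancy of Hilbert polynomials in fibers then yields flatness via the standard criterion.
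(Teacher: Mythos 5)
Your construction is essentially the paper's own proof (and the argument of Huybrechts--Lehn, Lemma 4.4.3): degenerate $h$ along the Rees-type module attached to the weight filtration, identify the fibre over $T\ne 0$ with $h\cdot\lambda(T)$ and the fibre over $T=0$ with $\overline h=\bigoplus_n h_n$, and conclude from flatness and the universal property of the Quot scheme. So the route is the same; the differences are in execution, and one of them is a genuine defect of the write-up.

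Concretely, with the paper's convention $v\cdot\lambda(T)=T^n v$ for $v\in V_n$ and the ascending filtration $V_{(n)}=\bigoplus_{\nu\le n}V_\nu$, your module $\mathcal V=\sum_n T^{-n}V_{(n)}\otimes_k k[T]$ is not the Rees module: since $V_{(n)}=V$ for all $n\gg 0$ and the sum runs over all $n$, it contains $T^{-n}V$ for every $n$, hence equals $V\otimes_k k[T,T^{-1}]$. It is then not finitely generated over $k[T]$, and its fibre at $T=0$ is zero rather than $\bigoplus_n V_{(n)}/V_{(n-1)}$, so the two ``canonical isomorphisms'' you assert fail as written. The fix is just the opposite sign of the grading: take $\mathcal V=\bigoplus_n V_{(n)}\otimes T^{n}$ (the paper's $\VV$), and correspondingly $\EE=\bigoplus_n \widetilde E_{(n)}\otimes T^n\subset \widetilde E\otimes_k k[T,T^{-1}]$; with that correction your identifications of the special and generic fibres are exactly the paper's computation via the isomorphism $\gamma|_{V_\nu}=T^\nu\,\id_{V_\nu}$. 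Separately, your flatness discussion is more complicated than needed and slightly misdirected: the issue is torsion in $\EE$ itself, not in the kernel of $\VV\otimes\widetilde L^{\vee}\twoheadrightarrow\EE$, and it does not arise because $\EE$ is by construction a subsheaf of $\widetilde E\otimes T^{-N}k[T]$, hence $T$-torsion-free and therefore flat over the one-dimensional regular base $\A^1$; no Hilbert-polynomial constancy argument is required (and invoking it before flatness is established is circular). With these two repairs your argument coincides with the paper's.
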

We construct explicitly the family $\theta: V \otimes \widetilde
L^{\vee} \otimes k[T]\twoheadrightarrow \EE$ parametrized by
affine line  $\A^1=\Spec k[T]$, such that $\theta_0=\overline h$
and $\theta_{\rho}=h \cdot \lambda (\rho)$ for $\rho \ne 0.$ Let
$\EE:= \bigoplus_n \widetilde E_{(n)}\otimes T^n \subset
\widetilde E \otimes_k k[T,T^{-1}]$. Since the direct sum contains
finite collection of nonzero summands, then  $\EE$ is a coherent
sheaf on  $\A^1 \times \widetilde S.$ Indeed, let  $N$ be positive
integer such that $V_n=0$ and $\widetilde E_n=0$ for  $n\le -N$.
Then $\EE \subset \widetilde E \otimes T^{-N} k[T]$. Similarly,
define a module $\VV:=\bigoplus_n V_{(n)} \otimes \widetilde
L^{\vee} \otimes T^n \subset V \otimes_k \widetilde L^{\vee}
\otimes _k k[T, T^{-1}].$ It is clear that the epimorphism  $h$
induces a surjection  $h':\VV \twoheadrightarrow \EE$ of
$\A^1$-flat coherent $\OO_{\A^1 \times \widetilde S}$-sheaves.
Finally, define the isomorphism $\gamma :V \otimes _k k[T]
\stackrel{\sim}{\to} \bigoplus_n V_{(n)}\otimes T^n$ by means of
restrictions $\gamma|_{V_{\nu}}= T^{\nu} id_{V_{\nu}}$ for all
$\nu$. Also define the morphism  $\theta $ by the commutative
diagram
\begin{equation*}\xymatrix{
\bigoplus_n \widetilde E_{(n)} \otimes T^n \ar@{=}[r]& \EE
\ar@{^(->}[r]&
\widetilde E \otimes T^{-N}k[T]\\
V\otimes \widetilde L^{\vee} \otimes k[T] \ar[u]^{\theta}
\ar[r]^>>>>>>>{\gamma}_>>>>>>>{\sim}& \VV \ar[u]^{h'}
\ar@{^(->}[r]& V \otimes_k \widetilde L^{\vee} \otimes _k
T^{-N}k[T] \ar[u]_{h\otimes id}}
\end{equation*}
Restriction to the fibre corresponding to  $T=0$ leads to
$$\EE/T\EE=\bigoplus_n \widetilde E_n $$
and hence  $\theta_0=\bigoplus_n h_n.$

Restriction to the open complement $\A^1 \setminus 0$ corresponds
to the invert\-ibility of the element $T$. Hence tensoring by
$\otimes_{k[T]} k[T,T^{-1}]$ we get a commutative diagram
\begin{equation*}\xymatrix{\EE \otimes_{k[T]}k[T, T^{-1}]
\ar[r]^{\sim}& \widetilde E
\otimes_k k[T, T^{-1}]\\
V\otimes_k \widetilde L^{\vee} \otimes _k k[T,T^{-1}]
\ar[u]^{\theta} \ar[r]^{\gamma}& V\otimes _k \widetilde L^{\vee}
\otimes_k k[T,T^{-1}] \ar[u]_{h\otimes id}}
\end{equation*}
The map  $\gamma$ defines action of one-parameter subgroup
$\lambda.$ Then, $\theta$ is the required mapping.

\begin{definition} Let $G$ be an algebraic group and $f: Y \to X$
a GIT-quotient. Closed points  $y_1$ and $y_2$ of the  scheme  $Y$
are {\it GIT-equivalent} if $f(y_1)=f(y_2).$
\end{definition}

\begin{proposition} \label{eqeq} M-equivalence implies
GIT-equivalence, and vice versa.
\end{proposition}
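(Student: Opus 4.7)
The plan is to use the passing-to-the-limit procedure developed just above to identify, for every $PGL(V)$-orbit in the semistable locus of $\mu(\widetilde Q)$, a canonical polystable representative in its closure, and then to match GIT-equivalence with M-equivalence through this common polystable object. Concretely, for a semistable pair $((\widetilde S, \widetilde L), \widetilde E)$ with Jordan--Hölder filtration $0\subset\widetilde F_1\subset\dots\subset\widetilde F_\ell=\widetilde E$, I would form the weight decomposition $V=\bigoplus_n V_n$ by splitting the filtration of $V=H^0(\widetilde S,\widetilde E\otimes\widetilde L^m)$ induced by $H^0(\widetilde S,\widetilde F_n\otimes\widetilde L^m)$, obtain a one-parameter subgroup $\lambda_{JH}:\A^1\setminus 0\to SL(V)$, and apply the claim proved above to conclude $\lim_{T\to 0} h\cdot\lambda_{JH}(T)=\overline h$, the epimorphism $V\otimes\widetilde L^{-m}\twoheadrightarrow\bigoplus_i gr_i(\widetilde E)$ representing the pair $(\widetilde S, gr(\widetilde E))$ on the same admissible scheme. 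The orbit of this polystable point is closed by standard GIT (a polystable pair splits as a direct sum of stable pairs whose extra torus of automorphisms keeps any $\lambda$-limit inside the orbit), and is therefore the unique closed orbit in the closure of $PGL(V)\cdot h$.

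For the implication M-equivalence $\Rightarrow$ GIT-equivalence, starting from M-equivalent pairs $((\widetilde S_i,\widetilde L_i),\widetilde E_i)$, $i=1,2$, the defining isomorphism
\begin{equation*}
\overline\sigma'^{\ast}\bigoplus_i gr_i(\widetilde E_1)/tor\!s\;\cong\;\overline\sigma^{\ast}\bigoplus_i gr_i(\widetilde E_2)/tor\!s
\end{equation*}
on $\widetilde S_1\diamond\widetilde S_2$ exhibits both polystable representatives as pullbacks of a single polystable sheaf on the $\diamond$-product; combined with the two $\lambda_{JH}^{(i)}$-degenerations this produces a family in $\mu(\widetilde Q)$ whose fibres at both ends lie on the same closed polystable orbit, whence GIT-equivalence. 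Conversely, if the points are GIT-equivalent, then the two closures share their unique closed polystable orbit, forcing $(\widetilde S_1,gr(\widetilde E_1))$ and $(\widetilde S_2,gr(\widetilde E_2))$ to become isomorphic after transfer to a common underlying scheme; combining this with the identifications $gr_i(\widetilde E_j)=\sigma_j^{\ast}gr_i(E_j)/tor\!s$ from Proposition \ref{corr} and Corollary \ref{stabquot}, and with the functoriality of the $\diamond$-product established in Section 6, yields exactly the isomorphism required by the definition of M-equivalence.

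The main obstacle I anticipate is matching the underlying schemes across the two notions: the passing-to-the-limit computation keeps the scheme $\widetilde S$ fixed and only modifies the sheaf, whereas M-equivalence is stated between polystable data living on possibly different admissible schemes $\widetilde S_1,\widetilde S_2$ and compared via $\widetilde S_1\diamond\widetilde S_2$. Bridging the two levels will require an additional degeneration inside $\mu(\widetilde Q)$ that moves $(\widetilde S_i,gr(\widetilde E_i))$ to its minimal-resolution form on the scheme $\overline S_\ast$ canonically associated with the Fitting-ideal singularities of $gr(\widetilde E_i)$ (Section 7), together with a verification that this refinement is compatible with the $\diamond$-product construction; the bulk of the technical work will lie in tracking the canonical morphisms $\overline\sigma,\overline\sigma'$ and preserving the quasi-ideal structure on additional components throughout this argument.
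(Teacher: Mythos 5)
Your overall strategy (degenerate each semistable pair to a polystable representative with closed orbit and match closed orbits) is the same as the paper's, but the step on which everything hinges is exactly the one you defer to the final ``obstacle'' paragraph, and as written it fails. You propose to run the passing-to-the-limit with the admissible scheme $\widetilde S$ held fixed and to declare that the limit $\overline h$ represents ``the pair $(\widetilde S, gr(\widetilde E))$ on the same admissible scheme,'' whose orbit is closed ``by standard GIT.'' In this moduli problem that is not justified: the closed orbits in $\mu(\widetilde Q)$ are precisely those of pairs $((\widetilde S_{gr},\widetilde L_{gr}),\widetilde E_{gr})$, where the underlying scheme is the admissible scheme attached to the Fitting ideal of $gr(E)$, which in general differs from $\widetilde S$ (the paper's example in Section 6 has ${\mathfrak m}_x$ for $E$ but ${\mathfrak m}_x^2$ for $gr(E)=I\oplus I'$). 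Moreover the limit sheaf $gr(\widetilde E)$ on the fixed scheme $\widetilde S$ need not be quasi-ideal on the additional components, so the point $(\widetilde S, gr(\widetilde E))$ need not even lie in $K^{dS}=\mu(\widetilde Q)$; consequently your $\lambda_{JH}$-degeneration does not by itself exhibit the closed orbit in the orbit closure, and the ``both ends on the same closed polystable orbit'' step in the M-equivalence $\Rightarrow$ GIT-equivalence direction is unsupported. The same scheme-mismatch also affects your use of the $\diamond$-product: a pullback to $\widetilde S_1\diamond\widetilde S_2$ is not directly a point of $\Hilb^{P(t)}G(V,r)$ with the right Hilbert polynomial, so it cannot serve as the connecting family without further work.

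The paper's proof supplies precisely the missing mechanism. It reduces to the case where one of the two pairs is $((\widetilde S_{gr},\widetilde L_{gr}),\widetilde E_{gr})$, lifts the situation to the minimal resolution $\overline\sigma:\overline S_{\ast}\to S$ in the monoid $\diamondsuit[E]$ (Section 7 guarantees that there the pullbacks modulo torsion of $E$, of the Jordan--H\"older terms $F_i$, and of the quotients $F_i/F_{i-1}$ are all locally free), runs the passing-to-the-limit construction in $\Quot^{r\overline p(t)}(V\otimes\overline L^{\vee})$ on $\overline S_{\ast}$, and only then returns to $\mu(\widetilde Q)$ by taking direct images: under $\sigma'_{gr}$ the generic fibres push down to $\widetilde E$ on $\widetilde S$, while under $\sigma'$ the special fibre pushes down to $\widetilde E_{gr}$ on $\widetilde S_{gr}$. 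This produces a genuine family of points of $\mu(\widetilde Q)$ placing the orbit of $((\widetilde S_{gr},\widetilde L_{gr}),\widetilde E_{gr})$ in the closure of the orbit of $((\widetilde S,\widetilde L),\widetilde E)$, which is what identifies M-equivalence with GIT-equivalence. You correctly guessed that $\overline S_{\ast}$ would be needed, but since the lift, the limit on $\overline S_{\ast}$, and the push-down identification of the two ends constitute the actual content of the proof, your proposal as it stands has a genuine gap rather than a complete alternative argument.
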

\begin{proof} Consider two  M-equivalent semistable pairs
$((\widetilde S, \widetilde L), \widetilde E)$ and
\linebreak$((\widetilde S_{gr}, \widetilde L_{gr}), \widetilde
E_{gr}).$ Each fibre of the morphism of formation of GIT-quotient
of $\mu(\widetilde Q)$ contains one closed orbit. Indeed, it is
known \cite[Theorem 4.3.3]{HL} that points representing polystable
coherent sheaves and only these points have closed orbits in
$\Quot$. By the concordance of $PGL(V)$-actions, the pairs of the
form $((\widetilde S_{gr}, \widetilde L_{gr}), \widetilde E_{gr})$
and only these pairs have closed orbits in Hilbert scheme.

It is enough to prove the proposition for such points that one of
them has the form  $((\widetilde S_{gr}, \widetilde L_{gr}),
\widetilde E_{gr})$.

Consider an epimorphism
\begin{equation}\label{ep1} V \otimes \widetilde
L^{\vee}\twoheadrightarrow \widetilde E. \end{equation} Since
$K^{dS}=\mu(\widetilde Q),$ then the pair $((\widetilde S,
\widetilde L), \widetilde E)$ has a preimage in $Q$. Let this is
an epimorphism $V \otimes L^{\vee}\twoheadrightarrow E.$ The only
nontrivial situation is that when $E$ is strictly semistable.
Consider its S-equivalence class $[E]$. Let $\overline \sigma:
\overline S_{\ast} \to S$ be the minimal resolution in the monoid
$\diamondsuit [E]$, $\overline L$ be a very ample invertible sheaf
on the scheme  $\overline S_{\ast}$. The morphism of the minimal
resolution includes into the commutative diagram
\begin{equation*}\xymatrix{\overline S_{\ast} \ar[d]_{\sigma'_{gr}} \ar[rd]_{\overline \sigma }
\ar[r]^{\sigma'}& \widetilde S_{gr} \ar[d]^{\sigma_{gr}}\\
\widetilde S \ar[r]_{\sigma}&S}
\end{equation*}
Then the epimorphism  (\ref{ep1}) induces the epimorphism
\begin{equation}\label{ep2} V \otimes \sigma'^{\ast}_{gr}\widetilde
L^{\vee}\twoheadrightarrow \sigma'^{\ast}_{gr}\widetilde E.
\end{equation}
Let $r\overline p(t)= \chi(\sigma'^{\ast}_{gr}\widetilde E \otimes
\overline L^t)).$ By the results of section 7, if $F_i$ are
subsheaves in Jordan -- H\"{o}lder filtration for the sheaf $E$
then quotient sheaves  $\overline F_i / \overline
F_{i-1}=\overline \sigma^{\ast}
(F_i/F_{i-1})/tors=\sigma'^{\ast}_{gr}(\widetilde F_i/\widetilde
F_{i-1})/tors$ are locally free.

Consider the scheme of quotients $\Quot ^{r\overline
p(t)}(V\otimes \overline L^{\vee}).$  In this scheme the
passing-to-\linebreak the-limit process in the family of locally
free sheaves with general sheaf of the form (\ref{ep2}) is also
considered. The the limit object has the form $\bigoplus_i
\overline F_i /\overline F_{i-1}=\bigoplus_i \overline \sigma
(F_i/F_{i-1})/tors= \sigma'^{\ast}_{gr} gr(\widetilde E)/tors
=\sigma'^{\ast}\widetilde E_{gr}.$ The family of $\OO_{\overline
S_{\ast}}$-sheaves $\EE$ obtained in the passing-to-the-limit
process, induces the morphism of its base into Hilbert scheme
$\Hilb^{P(t)}G(V,r).$ Indeed, formation of direct images for
sheaves on fibres at general points $T \ne 0$ under the morphism
$\sigma'_{gr}$ leads to $\OO_{\widetilde S}$-sheaves. These
sheaves are isomorphic to $\widetilde E$. Analogously, the direct
image of the sheaf on the fibre at the point $T=0$ under the
morphism $\sigma'$ leads to $\OO_{\widetilde S_{gr}}$-sheaf
$\widetilde E_{gr}.$

The reasoning done shows that the orbit of the point representing
the object  $((\widetilde S_{gr}, \widetilde L_{gr}), \widetilde
E_{gr})$, belongs to the closure of the orbit of M-equivalent
point $((\widetilde S, \widetilde L), \widetilde E)$. This implies
that GIT-equivalence is equi\-valent to M-equivalence.
\end{proof}

\section{$\widetilde M$ as moduli space} In this section we
prove that the constructed scheme $\widetilde M$ is a coarse
moduli space for the functor
$$ {\mathfrak f}:(RSchemes_k) \to (Sets)
$$ in the theorem \ref{th}.

It is enough to confirm that the functor $\mathfrak f$ is
corepresented by the scheme $\widetilde M$. Choose an object
$((\widetilde S, \widetilde L), \widetilde E)$. Note that the
sheaf $\widetilde E \otimes \widetilde L$ defines the immersion
$j: \widetilde S \hookrightarrow G(V,r)$. This immersion is
defined not in unique way but up to the class of the isomorphism
$H^0(\widetilde S, \widetilde E \otimes \widetilde L)
\stackrel{\sim} {\longrightarrow} V$ modulo multiplication by
nonzero scalars $\vartheta \in k^{\ast}.$ Hence the point
corresponding to the subscheme $j(\widetilde S) \subset G(V,r),$
is defined in the Hilbert scheme $\Hilb^{P(t)}G(V,r)$ up to the
action of the group $PGL(V).$ Then the object $((\widetilde S,
\widetilde L), \widetilde E)$ defines the morphism  $h\in \Hom
(\Spec k, \widetilde M).$

Inversely, by the proposition \ref{eqeq}, the morphism  $h\in \Hom
(\Spec k, \widetilde M)$ distinguishes a point representing the
M-equivalence class of object  $((\widetilde S, \widetilde L),
\widetilde E)$.

We construct for any scheme
 $B$ and for natural transformation  $\psi': \mathfrak f
\to \underline F'$ a unique natural transformation $\omega:
\underline {\widetilde M}\to \underline F'$ such that
$\psi'=\omega \circ \psi.$

Let the transformation  $\alpha$ correspond to the  flat family
$\pi: \widetilde \Sigma \to B$ with fibrewise polarization
$\widetilde \L$, supplied with the family of locally free sheaves
$\widetilde \E$.

In this case the restriction onto any fibre $\pi^{-1}(y)$ of the
morphism $\pi$ provides an object $((\pi^{-1}(b), \widetilde
\L|_{\pi^{-1}(b)}), \widetilde \E|_{\pi^{-1}(b)})$. This object
belongs to the class  $\mathfrak F$. Then there is a morphism
$\widetilde \Sigma \to G(\pi_{\ast} (\widetilde \E \otimes
\widetilde \L) , r)$ such that the triangle
\begin{equation*}\xymatrix{G(\pi_{\ast} (\widetilde \E \otimes
\widetilde \L) , r)\ar[d]& \ar[l] \widetilde
\Sigma \ar[dl]_{\pi}\\
B}
\end{equation*}
commutes. The sheaf $\pi_{\ast} (\widetilde \E \otimes \widetilde
\L)$ is locally free, hence the Grassmannian bundle $G(\pi_{\ast}
(\widetilde \E \otimes \widetilde \L) , r)$ is locally trivial
over $B$. Let  $\bigcup_i B_i = B$ be the trivializing open cover.
Subfamilies $\widetilde \Sigma_i$ are defined as fibred products
\begin{equation*} \xymatrix{\widetilde \Sigma \ar[d]_{\pi} & \ar[l] \widetilde \Sigma_i \ar[d]^{\pi_i}\\
B & \ar[l] B_i}
\end{equation*}
The horizontal arrows are open immersions. Fix isomorphisms of
trivial\-i\-zations $\tau_i:G(\pi_{\ast} (\widetilde \E \otimes
\widetilde \L) , r)|_{B_i}\to G(V, r) \times B_i.$ The composite
map  $\widetilde \Sigma_i \stackrel{\!j_i}{\to} G(\pi_{\ast}
(\widetilde \E \otimes \widetilde \L) ,
r)|_{B_i}\stackrel{\!\!\tau_i}{\to} G(V, r) \times B_i
\stackrel{pr_1}{\to} G(V,r)$ provides a morphism of the base to
the Hilbert scheme $\mu_i: B_i \to \Hilb^{P(t)}G(V,r).$ This
morphism is defined up to $PGL(V)$-action. Elements of $PGL(V)$
define gluing the elements of the trivializing cover. Then the
formation of GIT-quotient leads to the morphism $B \to \widetilde
M$. Its dual in the opposite category $(Scheme\!s_k)^o$ defines
the natural transformation $\omega:\underline{\widetilde M} \to
\underline F'.$

{\it Acknowledgments.} The author expresses deep and sincere
gratitude to D.~Orlov in Steklov Mathematical Institute (Moscow,
Russia), M.~Reid in the Institute of Mathematical Research of
Warwick university (Great Britain), N.~Shefferd-Barron in
Cambridge university (Great Britain),  A.~Langer in Warsaw
university (Poland), for fruitful discussions and interest to this
work.

\end{document}